\numberwithin{equation}{section}
\numberwithin{equation}{section}
\def\@settitle{\begin{center}%
  \baselineskip14\p@\relax
  \bfseries
  \uppercasenonmath\@title
  \@title
  \ifx\@subtitle\@empty\else
     \\[1ex]\uppercasenonmath\@subtitle
     \footnotesize\mdseries\@subtitle
  \fi
  \end{center}%
}
\def\subtitle#1{\gdef\@subtitle{#1}}
\def\@subtitle{}
\newcommand\blfootnote[1]{%
  \begingroup
  \renewcommand\thefootnote{}\footnote{#1}%
  \addtocounter{footnote}{-1}%
  \endgroup
}
\def\@settitle{\begin{center}%
  \baselineskip14\p@\relax
  \bfseries
  \uppercasenonmath\@title
  \@title
  \ifx\@subtitle\@empty\else
     \\[1ex]\uppercasenonmath\@subtitle
     \footnotesize\mdseries\@subtitle
  \fi
  \end{center}%
}
\def\subtitle#1{\gdef\@subtitle{#1}}
\def\@subtitle{}
\theoremstyle{plain}
\newtheorem{thm}{Theorem}[subsection] 
\theoremstyle{definition}
\newtheorem{defi}[thm]{Definition}
\newtheorem{conjecture}[thm]{Conjecture}
\newtheorem{rmk}[thm]{Remark}
\newtheorem{rem}[thm]{Remark}
\theoremstyle{definition}
\theoremstyle{plain}
\newtheorem{prop}[thm]{Proposition}
\theoremstyle{plain}
\newtheorem{lemma}[thm]{Lemma}
\theoremstyle{plain}
\newtheorem{cor}[thm]{Corollary}
\theoremstyle{plain}
\newtheorem{thmintro}{Theorem}
\newtheorem{conj}[thmintro]{Conjecture}
\newcounter{parentnumber}
\DeclareMathOperator{\ord}{ord}
\newcommand{\Hom}{\operatorname{Hom}}
\newcommand{\Frac}{\operatorname{Frac}}
\newcommand{\ind}{\operatorname{ind}}
\newcommand{\Gal}{\operatorname{Gal}}
\newcommand{\bQ}{\mathbb{Q}}
\newcommand{\colim@}[2]{%
  \vtop{\m@th\ialign{##\cr
    \hfil$#1\operator@font colim$\hfil\cr
    \noalign{\nointerlineskip\kern1.5\ex@}#2\cr
    \noalign{\nointerlineskip\kern-\ex@}\cr}}%
}
\newcommand{\colim}{%
  \mathop{\mathpalette\colim@{\rightarrowfill@\scriptscriptstyle}}\nmlimits@
}
\renewcommand{\varprojlim}{%
  \mathop{\mathpalette\varlim@{\leftarrowfill@\scriptscriptstyle}}\nmlimits@
}
\renewcommand{\varinjlim}{%
  \mathop{\mathpalette\varlim@{\rightarrowfill@\scriptscriptstyle}}\nmlimits@
}
\newcommand{\M}{\mathfrak{M}}
\newcommand{\Z}{\mathbb{Z}}
\newcommand{\Q}{\mathbb{Q}}
\newcommand{\Qp}{\mathbb{Q}_{p}}
\newcommand{\Zp}{\mathbb{Z}_{p}}
\newcommand{\R}{\mathbb{R}}
\newcommand{\C}{\mathbb{C}}
\newcommand{\F}{\mathcal{F}}
\newcommand{\X}{\mathfrak{X}}
\newcommand{\et}{\rm \acute{e}t}
\newcommand{\bg}{\boldsymbol{g}}
\newcommand{\sk}{\vspace{0.1in}}
\font\wncyr=wncyr9.8
\newcommand{\sha}{\text{\wncyr{W}}}
\newcommand{\CE}{\mathcal{E}}
\newcommand{\CM}{\mathcal{M}}
\DeclareMathOperator{\SL}{SL}
\DeclareMathOperator{\ch}{ch}
\DeclareMathOperator{\GL}{GL}
\DeclareMathOperator{\frob}{Frob}
\DeclareMathOperator{\Gr}{Gr}
\DeclareMathOperator{\End}{End}
\DeclareMathOperator{\loc}{loc}
\DeclareMathOperator{\im}{Im}
\DeclareMathOperator{\Aut}{Aut}
\DeclareMathOperator{\length}{length}
\newcommand{\Lcal}{\mathcal{L}}
\newcommand{\cF}{\mathcal{F}}
\newcommand{\CF}{\mathcal{F}}
\newcommand{\Fcal}{\mathcal{F}}
\newcommand{\cL}{\mathscr{L}}
\newcommand{\cN}{\mathscr{N}}
\newcommand{\kap}{\kappa}
\newcommand{\fm}{\mathfrak{m}}
\newcommand{\rH}{\mathrm{H}}
\begin{document}
\title{Mazur's main conjecture 
at Eisenstein primes}\blfootnote{\emph{2020 Mathematics Subject Classification}. 11R23, 11G05, 11G40.}
\author{Francesc Castella}
\address[F.~Castella]{University of California Santa Barbara, South Hall, Santa Barbara, CA 93106, USA}
\email{castella@ucsb.edu}

\author{Giada Grossi}
\address[G.~Grossi]{CNRS, Institut Galilée, Université Sorbonne Paris Nord, 93430 Villetaneuse, FRANCE}
\email{grossi@math.univ-paris13.fr}

\author{Christopher Skinner} 
\address[C.~Skinner]{Princeton University, Fine Hall, Washington Road, Princeton, NJ 08544-1000, USA}
\email{cmcls@princeton.edu}

\begin{abstract}
Let $E/\Q$ be an elliptic curve and let $p$ be an odd prime of good reduction for $E$. Assume that $E$ admits a rational $p$-isogeny $\varphi:E\rightarrow E'$, and let $\phi:G_{\bQ}\rightarrow\mathbb{F}_p^\times$ be the character by which $G_{\bQ}$ acts on ${\rm ker}(\varphi)$. In this paper, we prove the Iwasawa main conjecture for $E$, as formulated by B.\,Mazur in 1972, when $\phi\vert_{G_p}\neq 1,\omega$, where $G_p\subset G_{\bQ}$ is a decomposition group at $p$ and $\omega$ is the Teichm\"uller character. 

Two key innovations in our proof are a Kolyvagin system argument for the Selmer group of $E$ twisted by anticyclotomic Hecke characters arbitrarily close to the trivial character, and a congruence argument exploiting Beilinson--Flach classes and their explicit reciprocity laws. 
%
\end{abstract}

\maketitle
\tableofcontents

\section{Introduction}
\addtocontents{toc}{\protect\setcounter{tocdepth}{2}}

Let $E$ be an elliptic curve over $\Q$, and let $p$ be an odd prime of good reduction for $E$. In the early 1970s, motivated by Iwasawa's theory for the $p$-part of class groups of number fields in $\Z_p$-extensions, Mazur 
initiated a parallel study for the arithmetic of $E$ over the cyclotomic $\Z_p$-extension $\Q_\infty/\Q$. In particular, in \cite{mazur-towers} he proved a foundational ``control theorem'' for the $p$-primary Selmer group ${\rm Sel}_{p^\infty}(E/\Q_\infty)$, viewed as a module over the Iwasawa algebra $\Lambda_\Q=\Z_p\llbracket{{\rm Gal}(\Q_\infty/\Q)\rrbracket}$,  
and formulated an analogue of Iwasawa's main conjecture, expressing the characteristic ideal of the Pontryagin dual of $\X_{\rm ord}(E/\Q_\infty)={\rm Sel}_{p^\infty}(E/\Q_\infty)^\vee$ in terms of the $p$-adic $L$-function attached to $E$ by the work Mazur--Swinnerton-Dyer \cite{M-SwD}:
\begin{equation}\label{MC}
\ch_{\Lambda_\Q}\bigl(\X_{\rm ord}(E/\Q_\infty)\bigr)\overset{}=\bigl(\Lcal_p^{\rm MSD}(E/\Q)\bigr)\tag{MC}
\end{equation}
(see [\emph{op.\,cit.}, Conj.~3]). By the Weierstrass preparation theorem, conjecture (\ref{MC}) can be viewed as the equality between two integral $p$-adic polynomials attached to $E$, one by means of the arithmetic of $E$ (i.e., its Mordell--Weil and Tate--Shafarevich groups) 
over $\Q_\infty$ and the other by 
means of the modular symbols associated with $E$ (available thanks to its modularity \cite{wiles,TW,BCDT}), encoding the special values of the Hasse--Weil $L$-function of $E$ twisted by finite order characters of ${\rm Gal}(\Q_\infty/\Q)$.

The main result in this paper is the proof (under a mild hypothesis) of Mazur's main conjecture (\ref{MC}) when $p$ is an 
\emph{Eisenstein prime} for $E$, meaning that $E$ admits a rational $p$-isogeny. (In the non-Eisenstein case, the conjecture is largely known \cite{kato-euler-systems,skinner-urban,wan-hilbert}.) In other words, we consider the case where $E[p]$ is \emph{reducible} as a module over the absolute Galois group $G_\Q={\rm Gal}(\overline{\Q}/\Q)$, so that
\[
E[p]^{ss}=\mathbb{F}(\phi)\oplus\mathbb{F}(\psi)
\] 
as $G_\Q$-modules, where $\phi,\psi:G_\Q\rightarrow\mathbb{F}^\times$ are characters whose product $\phi\psi=\omega$ is the mod $p$ cyclotomic character. 

\begin{thmintro}\label{thm:CYC}
Let $E/\Q$ be an elliptic curve, and let $p>2$ be a prime of good reduction  for $E$. Suppose that $p$ is Eisenstein with 
$\phi\vert_{G_p}\neq 1,\omega$, 
where $G_p\subset G_\Q$ is a decomposition group at $p$. Then $\X_{\rm ord}(E/\Q_\infty)$ is $\Lambda_\Q$-torsion with
\[
\ch_{\Lambda_\Q}\bigl(\X_{\rm ord}(E/\Q_\infty)\bigr)\overset{}=\bigl(\Lcal_p^{\rm MSD}(E/\Q)\bigr),
\]
and hence Mazur's main conjecture holds.
\end{thmintro}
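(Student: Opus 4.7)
The plan is to establish both divisibilities of the claimed equality. The divisibility $(\Lcal_p^{\rm MSD}(E/\Q))\subseteq\ch_{\Lambda_\Q}(\X_{\rm ord}(E/\Q_\infty))$ is available integrally from Kato's Euler system as refined by W\"uthrich, and requires no hypothesis on the isogeny character. The real task is to produce the opposite divisibility without any restriction on $\mu$-invariants, since in the Eisenstein setting both sides can have arbitrarily large $\mu$-invariant.

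Rather than attacking the cyclotomic conjecture over $\Q$ head-on, I would pass to an auxiliary imaginary quadratic field $K$ in which $p$ splits as $\p\bar{\p}$, satisfying a suitable Heegner-type hypothesis relative to the conductor of $E$ and chosen so that the quadratic twist $E^K$ has analytic rank zero. Over the $\Z_p^2$-extension $K_\infty/K$, with Iwasawa algebra $\Lambda_K$ and Galois group $\Gamma_K=\Gamma^{\rm cyc}\times\Gamma^{\rm ac}$, there is a two-variable main conjecture for $E$; the factorization $L(E/K,s)=L(E/\Q,s)L(E^K/\Q,s)$ together with the interpolation properties of the various $p$-adic $L$-functions reduces Mazur's conjecture for $E/\Q$ to the two-variable main conjecture over $K$.

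I would attack the two-variable conjecture in two stages. First, establish the \emph{anticyclotomic} main conjecture for $E/K$. Because $p$ splits in $K$ and because $\phi|_{G_p}\neq 1,\omega$, the two Jordan--H\"older constituents $\phi$ and $\psi=\omega\phi^{-1}$ of $E[p]$ restrict to generic characters at each prime above $p$; this lets the relevant dual Selmer groups be analyzed cleanly along the filtration even though $\bar\rho_E$ is reducible, and opens the way to a Heegner point / theta-element Euler system argument. Second, propagate the anticyclotomic bound in the cyclotomic direction via the Beilinson--Flach Euler system attached to the Rankin--Selberg convolution of $f_E$ with a suitable theta series of $K$: its explicit reciprocity law delivers the expected bound on two-variable Selmer groups in terms of a Rankin--Selberg $p$-adic $L$-function.

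The decisive, and most delicate, step is the congruence argument feeding the reducibility of $\bar\rho_E$ into the Beilinson--Flach machinery. One must track the Beilinson--Flach classes along the two constituents of $E[p]$, compare them with the classes attached to the corresponding Eisenstein deformations, and control the error terms that arise from the failure of $\bar\rho_E$ to be irreducible. The hypothesis $\phi|_{G_p}\neq 1,\omega$ is used precisely to ensure that the local conditions at $\p$ and $\bar\p$ align with the global filtration and that the relevant Iwasawa cohomology modules carry no pseudo-null submodules, so that the two opposite divisibilities that emerge combine into an honest equality of characteristic ideals. Once the two-variable main conjecture over $K$ is in hand, specializing along the cyclotomic line and combining with Kato--W\"uthrich on the other side yields the stated equality over $\Q$.
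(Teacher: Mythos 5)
Your global architecture (pass to an auxiliary imaginary quadratic $K$ where $p$ splits and a Heegner condition holds, prove an anticyclotomic main conjecture over $K$, and bridge to the cyclotomic direction via a Beilinson--Flach Euler system) does match the paper's. But the decisive step, which you correctly identify as the crux, is described incorrectly, and a second essential new ingredient is missing entirely.

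\smallskip
\noindent\textbf{The congruence argument is of a different nature.} You propose to ``track the Beilinson--Flach classes along the two constituents of $E[p]$, compare them with the classes attached to the corresponding Eisenstein deformations, and control the error terms.'' This is the Greenberg--Vatsal strategy, and it is precisely what fails without hypothesis (\ref{GV}): when (\ref{GV}) does not hold, both $\mu$-invariants can be positive, and a mod-$p$ comparison of $E[p]$ with an Eisenstein module no longer controls the Iwasawa invariants of the characteristic power series and the $p$-adic $L$-function. The paper's congruence is of an entirely different kind. Working with the W\"uthrich curve $E_\bullet$ and a crystalline anticyclotomic character $\alpha\colon\Gamma_K^-\rightarrow R^\times$ with $\alpha\equiv 1\pmod{\varpi^m}$ chosen so that $\Lcal_p^{\rm BDP}(f(\alpha)/K)(0)\neq 0$ (escaping the possible vanishing at the trivial character when the rank exceeds one), one first proves
\[
\ch_{\Lambda_K^+}\bigl(\X_{\rm ord}(E_\bullet(\alpha)/K_\infty^+)\bigr)=\bigl(\Lcal_p^{\rm PR}(E_\bullet(\alpha)/K)^+\bigr)
\]
by combining the twisted anticyclotomic IMC, the BF Euler system upper bound and an Euler characteristic comparison at the trivial character (Proposition~\ref{prop:euler-char}). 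Then two congruences modulo $\varpi^m$---one for $S$-imprimitive Selmer characteristic power series (Proposition~\ref{prop:cong-Sel}), one for the imprimitive $p$-adic $L$-functions (Lemma~\ref{lem:cong-L})---show that for $m\gg 0$ the untwisted $\Fcal_{\rm ord}^S(E_\bullet/K_\infty^+)$ and $\Lcal_p^{\rm PR}(E_\bullet/K)^{+,S}$ have identical $\lambda$- and $\mu$-invariants, upgrading Kato's integral divisibility to an equality. There is no comparison with Eisenstein deformations anywhere in this chain.

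\smallskip
\noindent\textbf{The corank-one restriction must be removed.} You gesture at a ``Heegner point / theta-element Euler system argument'' for the anticyclotomic main conjecture, but the existing form of that argument (as in \cite{eisenstein}) carries an error term that blows up at the augmentation ideal $\mathfrak{P}_0=(\gamma^- - 1)$, forcing the hypothesis (\ref{eq:intro-Sel}) that $\mathrm{corank}_{\Z_p}{\rm Sel}_{p^\infty}(E/K)=1$. That hypothesis is incompatible with proving Mazur's conjecture for $E$ of rank $\geq 2$. The paper's genuinely new contribution here is Theorem~\ref{thm:Zp-twisted}: a reworked Kolyvagin system/\v{C}ebotarev argument for twists $\alpha\equiv 1\pmod{\varpi^m}$ in which the error term is \emph{bounded independently of $m$} (at the cost of working with $\varpi^m$-torsion submodules throughout and introducing the bookkeeping conditions (a)--(g)). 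This is what makes the divisibility at $\mathfrak{P}_0$ available and hence removes (\ref{eq:intro-Sel}). Without some substitute for this result, the anticyclotomic input your proposal relies on is not available in the generality needed.

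\smallskip
\noindent\textbf{Smaller points.} Choosing $K$ so that $E^K$ has analytic rank zero is not required for the main conjecture; (\ref{eq:Heeg}), (\ref{eq:spl}), (\ref{eq:disc}) suffice. Also, the paper does not prove a genuine two-variable main conjecture over $K$, only the two one-variable statements (anticyclotomic, and cyclotomic over $K$), tied together through the BF classes and the Euler characteristic computation at the trivial character; reducing to a two-variable statement as you suggest would require more than what is actually proved.
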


In the setting of Theorem~\ref{thm:CYC}, Kato proved \cite{kato-euler-systems} that $\X_{\rm ord}(E/\Q_\infty)$ is $\Lambda_\Q$-torsion and that its characteristic ideal contains $\Lcal_p^{\rm MSD}(E/\Q)$ after inverting $p$. This ambiguity of powers of $p$ was subsequently removed by W\"uthrich \cite{wuthrich-int}. On the other hand, in a foundational paper \cite{greenvats}, Greenberg--Vatsal proved conjecture (\ref{MC}) for Eisenstein primes $p$ under the assumption that
\begin{equation}\label{GV}
\textrm{$\phi$ is either}\,
\begin{cases}
\textrm{unramified at $p$ and odd, or} \\
\textrm{ramified at $p$ and even.}\tag{GV}
\end{cases}
\end{equation}
Under this hypothesis, they could show that both $\X_{\rm ord}(E/\Q_\infty)$ and $\Lcal_p^{\rm MSD}(E/\Q)$ have vanishing $\mu$-invariant by building on the work of Ferrero--Washington \cite{FW} and 
Mazur--Wiles \cite{MW}, thereby reducing their result on (\ref{MC}) to a delicate comparison of Iwasawa $\lambda$-invariants.

Without hypothesis (\ref{GV}), the situation is known to be more complicated. Indeed,  Greenberg showed that if $E[p^\infty]$ contains a $G_\Q$-invariant cyclic subgroup $\Phi$ of order $p^m$ which is ramified at $p$ and odd, then $\X_{\rm ord}(E/\Q_\infty)$ has $\mu$-invariant $\geq m$ (see \cite[Prop.~5.7]{greenberg-cetraro}). On the analytic side, a conjecture by Stevens \cite{stevens-invmath} predicts a similar phenomenon for $\Lcal_p^{\rm MSD}(E/\Q)$.
The methods in this paper allow us to prove Mazur's main conjecture (\ref{MC}) for Eisenstein primes regardless of the value of the $\mu$-invariant. In particular, our results include giving a new proof for the case previously handled by Greenberg--Vatsal.

\subsection{Some ideas from the proof} 

Our proof of Theorem~\ref{thm:CYC} goes through  \emph{anticyclotomic} Iwasawa theory for $E$ over an auxiliary imaginary quadratic field $K/\Q$ in which $p$ splits. Roughly speaking, this is used to show that $\mathfrak{X}_{\rm ord}(E/\Q_\infty)$ and $\Lcal_p^{\rm MSD}(E/\Q)$ have the same Iwasawa invariants\footnote{Strictly speaking, our results only show these equalities for \emph{the sum} of the Iwasawa invariants of $E$ and the quadratic twist $E^K$, but this suffices for the proof of Theorem~\ref{thm:CYC} thanks to Kato's work.}: 
\[
\mu(\mathfrak{X}_{\rm ord}(E/\Q_\infty))=\mu(\Lcal_p^{\rm MSD}(E/\Q))
\quad\textrm{and}\quad
\lambda(\mathfrak{X}_{\rm ord}(E/\Q_\infty))=\lambda(\Lcal_p^{\rm MSD}(E/\Q)),
\]
even in situations of positive $\mu$-invariant. 


More precisely, our argument rests on the proof of an Iwasawa main conjecture for $E$ over the anticyclotomic $\Z_p$-extension of $K$ (see Theorem~\ref{thm:AC} below), and a congruence argument building on the cyclotomic Euler system of Beilinson--Flach classes of Lei--Loeffler--Zerbes \cite{RSCMF} and Kings--Loeffler--Zerbes \cite{explicit}. 

 
%

\subsubsection*{Anticyclotomic main conjectures}

Denote by $N$ the conductor of $E$, and let $K$ be an imaginary quadratic field such that
\begin{equation}\label{eq:intro-disc}
\textrm{the discriminant $D_K$ is odd and $D_K\neq -3$,}\tag{disc}
\end{equation}
and such that the following \emph{Heegner hypothesis} holds:
\begin{equation}\label{eq:intro-Heeg}
\textrm{every prime $\ell\vert N$ splits in $K$.}\tag{Heeg}
\end{equation}
Let $\Gamma_K^-={\rm Gal}(K^{-}_{\infty}/K)$ be the Galois group of the anticyclotomic $\Z_p$-extension of $K$, and 
for each $n$ denote by $K_n^-$ the subfield of $K^{-}_{\infty}$ with $[K_n^-:K]=p^n$. In sharp contrast with the case of $\Q_\infty/\Q$, one can show that ${\rm rank}_\Z E(K_n^-)$ is unbounded as $n\to\infty$, and therefore the Pontryagin dual $\X_{\rm ord}(E/K_\infty^-)$ 
of the Selmer group ${\rm Sel}_{p^\infty}(E/K_\infty^-)$ has positive rank as a module over the anticyclotomic Iwasawa algebra $\Lambda_K^-=\Z_p\llbracket{\Gamma_K^-}\rrbracket$. This unbounded growth is accounted for by the existence of Heegner points on $E$ associated with a given modular parametrization
\[
\pi:X_0(N)\rightarrow E.
\]
This system of points gives rise to a $\Lambda_K^-$-adic class $\kappa_1^{\rm Hg}$ which was shown to be non-torsion by Cornut \cite{cornut} and Vatsal \cite{vatsal} in their proof of ``Mazur's conjecture'' on higher Heegner points \cite{mazur-ICM83}. 
As recalled below, a formulation of the Iwasawa main conjecture in this context was given by Perrin-Riou \cite{perrinriou}. Write
\[
\mathfrak{S}_{\rm ord}(E/K_\infty^-):=\varprojlim_n\varprojlim_m{\rm Sel}_{p^m}(E/K_n^-),
\]
which is a compact $\Lambda_K^-$-module containing $\kappa_1^{\rm Hg}$. 

%
\begin{conj}[Perrin-Riou]\label{conj:PR}
Let $E/\Q$ be an elliptic curve, $p>2$ a prime of good ordinary reduction for $E$, and let $K$ be an imaginary quadratic field satisfying {\rm(\ref{eq:intro-Heeg})} and {\rm(\ref{eq:intro-disc})}. Then $\mathfrak{S}_{\rm ord}(E/K_\infty^-)$ and $\X_{\rm ord}(E/K_\infty^-)$ both
have $\Lambda^-_K$-rank one, and
\[
{\rm char}_{\Lambda^-_{K}}\bigl(\X_{\rm ord}(E/K_\infty^-)_{\rm tors}\bigr)={\rm char}_{\Lambda^-_{K}}\bigl(\mathfrak{S}_{\rm ord}(E/K_\infty^-)/(\kappa_1^{\rm Hg})\bigr)^2, 
\] 
where the subscript {\rm tors} denotes the $\Lambda^-_{K}$-torsion submodule. 
\end{conj}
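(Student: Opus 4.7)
The plan is to prove the conjecture by sandwiching the characteristic ideal from both sides via two opposing divisibilities and reconciling them, handling the rank claim separately.

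First I would verify the rank statement. The non-triviality of $\kappa_1^{\rm Hg}$, due to Cornut--Vatsal under the Heegner hypothesis~\eqref{eq:intro-Heeg}, forces $\mathfrak{S}_{\rm ord}(E/K_\infty^-)$ to have $\Lambda_K^-$-rank at least one. Global duality (Poitou--Tate), combined with the fact that $p$ splits in $K$ so that each of the two local conditions at the primes above $p$ is $\Lambda_K^-$-free of rank one, yields a rank equality between $\mathfrak{S}_{\rm ord}(E/K_\infty^-)$ and $\X_{\rm ord}(E/K_\infty^-)$; together these pin the common rank to one.

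Next I would establish the Heegner-side divisibility
\[
\mathrm{char}_{\Lambda_K^-}\bigl(\X_{\rm ord}(E/K_\infty^-)_{\rm tors}\bigr) \supseteq \mathrm{char}_{\Lambda_K^-}\bigl(\mathfrak{S}_{\rm ord}(E/K_\infty^-)/(\kappa_1^{\rm Hg})\bigr)^2
\]
by a $\Lambda_K^-$-adic Kolyvagin argument built from Heegner points on $X_0(N)$ over the ring-class extensions of $K$. In the non-Eisenstein setting this is Howard's theorem; here the residual reducibility forces me to work simultaneously with the isogenous curve $E' = E/C$, comparing Selmer groups along the $p$-isogeny so as to detect ideals otherwise hidden in the filtration, and to use the regularity hypothesis $\phi|_{G_p}\neq 1,\omega$ to keep the local Tate terms at the primes above $p$ integral. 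The opposite divisibility I would obtain by factoring $E[p]^{\rm ss}\cong\phi\oplus\omega\phi^{-1}$ and reducing to Rubin's two-variable main conjecture for $K$ (with Katz's $p$-adic $L$-function), specialized to the anticyclotomic line. The condition $\phi|_{G_p}\neq 1,\omega$ is precisely what makes this specialization land in the correct Selmer structure for $E$, because it ensures that neither character-piece becomes trivial or cyclotomic on $G_p$ and hence that the ordinary local conditions at the primes above $p$ respect the semisimplification.

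The main obstacle is the Eisenstein phenomenon itself. In the non-Eisenstein case, irreducibility of $E[p]$ gives Euler-system arguments clean integral divisibilities and turns the Selmer groups into well-behaved $\Lambda_K^-$-modules; in our setting every step produces error terms arising from the short exact sequence $0\to C\to E[p]\to E[p]/C\to 0$, and the heart of the proof is showing that these errors cancel exactly (not merely up to a power of $p$) between the two divisibilities. The hypothesis $\phi|_{G_p}\neq 1,\omega$ appears to be the minimal input ensuring both the Kolyvagin argument and the Rubin-type input are compatible with the ordinary local condition at the primes of $K$ above $p$; securing this compatibility integrally, and then invoking a Ferrero--Washington / Hida-type vanishing of the relevant $\mu$-invariants to rule out excess divisors, is what I would expect to carry the bulk of the technical work.
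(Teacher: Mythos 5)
Your sandwich strategy (upper bound via a Heegner--Kolyvagin argument, lower bound via the Katz/Rubin main conjecture for $K$) correctly captures the coarse structure, and that is indeed the framework of the prior work this paper builds on. But you have not identified the actual obstruction that this paper exists to overcome, and the step you do propose for the upper bound would fail. In the Eisenstein setting the Kolyvagin system argument of the earlier paper produces, for each height-one prime $\mathfrak{P}$ dividing the relevant characteristic power series, an inequality of orders with an error term $\mathcal{E}_{\mathfrak{P},m}$ that grows linearly in $m$ as one twists by characters $\alpha\equiv 1\pmod{\varpi^m}$ specializing near $\mathfrak{P}$. For every $\mathfrak{P}\neq\mathfrak{P}_0:=(\gamma^- -1)$ one can still let $m\to\infty$ and recover the clean divisibility; at $\mathfrak{P}_0$ itself (which is precisely where ${\rm corank}_{\Z_p}{\rm Sel}_{p^\infty}(E/K)>1$ forces a contribution) the estimate collapses. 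The heart of the present paper is Theorem~\ref{thm:Zp-twisted}: a reworked \v Cebotarev-plus-Kolyvagin argument that, by working throughout with the $\varpi^m$-torsion of the mod-$\varpi^k$ Selmer groups and exploiting the complex-conjugation eigenspaces directly, produces an error term bounded independently of $m$, at the cost of only applying to $\alpha$ sufficiently close to $1$. That is what yields the upper-bound divisibility at $\mathfrak{P}_0$ and removes the hypothesis (\ref{eq:intro-Sel}). Your proposal of "comparing Selmer groups along the $p$-isogeny $E\to E/C$" is not how the paper handles reducibility and does not touch this issue; the Kolyvagin system is run directly on $T_pE\otimes R(\alpha)$.

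Your final sentence is also off in a way that matters: you propose to "invoke a Ferrero--Washington / Hida-type vanishing of the relevant $\mu$-invariants to rule out excess divisors," but for Eisenstein primes the anticyclotomic and cyclotomic $\mu$-invariants can be \emph{strictly positive} (this is exactly the phenomenon that Greenberg--Vatsal's hypothesis (\ref{GV}) was designed to sidestep, and which the present paper deliberately does not avoid). The paper emphasizes that its conclusions hold "regardless of the value of the $\mu$-invariant." The role of Ferrero--Washington in this circle of ideas is confined to W\"uthrich's integrality of Kato's divisibility on the cyclotomic side, not to the anticyclotomic Kolyvagin argument you need here. Without Theorem~\ref{thm:Zp-twisted} your proof would reproduce only the statement with $p$ and $(\gamma^- -1)$ inverted, which is strictly weaker than Conjecture~\ref{conj:PR}.
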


The first general result towards Conjecture~\ref{conj:PR} for Eisenstein primes $p$ was obtained in \cite{eisenstein}. Namely, it was shown 
that Perrin-Riou's main conjecture holds for Eisenstein primes $p$ under the additional hypotheses that \begin{equation}\label{eq:intro-spl}
\textrm{$(p)=v\bar{v}$ splits in $K$,}\tag{spl} 
\end{equation} 
that $\phi\vert_{G_p}\neq 1,\omega$, and that 
\begin{equation}\label{eq:intro-Sel}
\textrm{the $\Z_p$-corank of ${\rm Sel}_{p^\infty}(E/K)$ is $1$}.\tag{Sel}
\end{equation}

In particular, hypothesis (\ref{eq:intro-Sel}), 
which obviously excludes elliptic curves with ${\rm rank}_\Z E(\Q)\geq 2$, was imposed to account for the inability of the methods in [\emph{op.\,cit.}, \S{3}] to control certain error terms appearing at height one primes of $\Lambda_K^-$ approaching the augmentation ideal $\mathfrak{P}_0\subset \Lambda_K^-$. 

A key technical innovation in this paper is the proof of a Kolyvagin system argument for the Selmer group of $E/K$ twisted by characters of $\Gamma_K^-$ arbitrarily close to $1$ yielding the ``upper bound''  divisibility in Conjecture~\ref{conj:PR} after localization at height one primes of $\Lambda_K^-$ approaching $\mathfrak{P}_0$. 
Together with complementary results obtained in \cite{eisenstein}, this argument yields the following.

\begin{thmintro}\label{thm:AC}
Let $E/\mathbb{Q}$ be an elliptic curve, let $p\nmid 2N$ be an Eisenstein prime for $E$, and let $K$ be an imaginary quadratic field satisfying {\rm (\ref{eq:intro-Heeg})}, {\rm (\ref{eq:intro-disc})}, and {\rm (\ref{eq:intro-spl})}. Suppose that   
$\phi\vert_{G_p}\neq 1,\omega$. Then Conjecture~\ref{conj:PR} holds.
\end{thmintro}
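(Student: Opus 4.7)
The plan is to deduce Theorem~\ref{thm:AC} by combining the partial results of \cite{eisenstein} with a new Kolyvagin system argument that removes the dependence on hypothesis (\ref{eq:intro-Sel}). The methods of \cite[\S{3}]{eisenstein} already yield the divisibility
\[
{\rm char}_{\Lambda^-_{K}}\bigl(\X_{\rm ord}(E/K_\infty^-)_{\rm tors}\bigr) \subseteq {\rm char}_{\Lambda^-_{K}}\bigl(\mathfrak{S}_{\rm ord}(E/K_\infty^-)/(\kappa_1^{\rm Hg})\bigr)^2
\]
at every height one prime of $\Lambda_K^-$ sufficiently far from the augmentation ideal $\mathfrak{P}_0=(\gamma^- -1)$, and at all height one primes (including $\mathfrak{P}_0$) under the additional assumption (\ref{eq:intro-Sel}); moreover, that paper also establishes the non-triviality of $\kappa_1^{\rm Hg}$ needed for $\mathfrak{S}_{\rm ord}(E/K_\infty^-)$ and $\X_{\rm ord}(E/K_\infty^-)$ to have $\Lambda_K^-$-rank one. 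The remaining task is therefore to handle the height one primes close to $\mathfrak{P}_0$ unconditionally, and then to upgrade the two-sided divisibility into an equality.

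The central new input, carried out in Sect.~\ref{sec:anticyc}, is a refined $\Lambda$-adic Kolyvagin system argument. I would take the Heegner point Kolyvagin system associated with ring class fields of $p$-power conductor, build its $\Lambda_K^-$-adic derivative classes, and track the order of their localizations at admissible primes modulo successive powers of $\mathfrak{P}_0$. The hypothesis $\phi\vert_{G_p}\neq 1,\omega$ is essential: it guarantees that neither Jordan--H\"older constituent of $E[p]$ is trivial or cyclotomic on $G_p$, so the ordinary (Bloch--Kato) local conditions at the two primes of $K$ above $p$ decompose cleanly along the filtration of $T_pE$, and the local error terms that obstructed \cite[\S{3}]{eisenstein} near $\mathfrak{P}_0$ are controlled by Iwasawa cohomology groups of characters that, thanks to (\ref{eq:intro-spl}) and the genericity of $\phi$, are themselves pseudo-null or have explicitly computable torsion. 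Feeding these refined bounds through Mazur--Rubin's machinery produces the upper bound divisibility \emph{uniformly} over $\Lambda_K^-$.

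For the opposite divisibility, I would exploit the Euler system side. Under (\ref{eq:intro-spl}) the explicit reciprocity law of Castella--Hsieh identifies the image of $\kappa_1^{\rm Hg}$ under the Perrin-Riou map with a Bertolini--Darmon--Prasanna type $p$-adic $L$-function; combined with the divisibility in the anticyclotomic $L$-function main conjecture proved via Eisenstein congruences in \cite{eisenstein}, this forces the ``other'' inclusion. Because the upper bound is now valid at every height one prime, and the reverse inclusion also holds everywhere (on the analytic side there is no issue at $\mathfrak{P}_0$), the two characteristic ideals in Conjecture~\ref{conj:PR} must coincide.

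The main obstacle is unquestionably Step~2: the specialization of $\kappa_1^{\rm Hg}$ at $\mathfrak{P}_0$ is the classical Heegner class over $K$, whose vanishing is governed by the analytic rank of $E/K$, and without (\ref{eq:intro-Sel}) one cannot \emph{a priori} rule out that the Kolyvagin derivative classes degenerate there. Making the $\Lambda$-adic congruences between derivatives at consecutive Kolyvagin levels tight enough to survive reduction modulo $\mathfrak{P}_0^n$ for all $n$---while simultaneously keeping the local conditions at $p$ under control in the reducible residual setting---is the core technical difficulty, and it is precisely this that the anticyclotomic part of the paper is organised to overcome.
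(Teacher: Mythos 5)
Your proposal correctly identifies the global shape of the argument: removing hypothesis (\ref{eq:intro-Sel}) by strengthening the Kolyvagin system bound near $\mathfrak{P}_0=(\gamma^--1)$, then feeding the resulting upper bound through the existing non-vanishing of $\kappa_1^{\rm Hg}$ and the explicit reciprocity law to close the loop. That is indeed the strategy of Sect.~\ref{sec:anticyc} and \S\ref{subsec:ac-IMC}, and the final reduction via Theorem~\ref{thm:howard}, Theorem~\ref{thm:howard-HP}, Theorem~\ref{thm:BDP-IMC}, and Corollary~\ref{cor:PR} matches your sketch.

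However, you misidentify the source of the obstruction near $\mathfrak{P}_0$ and, consequently, the mechanism that removes it. You attribute the problem to the \emph{local conditions at $p$} and claim the fix is that ``the local error terms $\dots$ are controlled by Iwasawa cohomology groups of characters that $\dots$ are themselves pseudo-null or have explicitly computable torsion,'' invoking $\phi\vert_{G_p}\neq 1,\omega$ and (\ref{eq:intro-spl}) for this. That is not what goes wrong. In \cite[\S3.3]{eisenstein} the difficulty lies in the \v{C}ebotarev argument producing the Kolyvagin primes: the error constant there (denoted $C_\alpha$ in \emph{op.\,cit.}) grows linearly in $m$ when one twists by a character $\alpha\equiv 1\pmod{\varpi^m}$, because one works over the field trivializing $\alpha\bmod\varpi^k$ and must pay for the resulting loss of control on linear independence of localizations. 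The fix in this paper is purely global and combinatorial: Proposition~\ref{prop:prime2} redoes the \v{C}ebotarev selection working directly with the $\fm^m$-torsion of the Selmer groups, on which complex conjugation acts and over which the residual representation is $T_pE\otimes R/\fm^m$ \emph{without} any twist by $\alpha$; this eliminates $C_\alpha$ from the error term at the cost of a weaker control on the individual elementary divisors $d_t(M(n_i))$, which in turn forces the more elaborate bookkeeping conditions (f)--(g) (tracking $\rho_{x_ie}(n_i)$, the number of ``large'' cyclic summands) in the proof of Theorem~\ref{thm:Zp-twisted}. The hypothesis $\phi\vert_{G_p}\neq 1,\omega$ is not what enters this \v{C}ebotarev argument at all (the running hypothesis there is $E(K)[p]=0$); it enters downstream, in deducing Theorem~\ref{thm:BDP-IMC} from Theorem~\ref{thm:howard-HP}. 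So while your outline would lead a reader to the right places in the paper, the actual technical content that makes the new Kolyvagin bound work is absent from it, and the stated reason for why the local error terms are controlled is incorrect.
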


To go from Theorem~\ref{thm:AC} to Theorem~\ref{thm:CYC}, we use a reformulation of the former in terms of $p$-adic $L$-functions. Let $\Z_p^{\rm ur}$ be the completion of the ring of integers of the maximal unramified extension of $\Q_p$, and put
\[
\Lambda^{-,\rm ur}_K:=\Lambda_K^-\widehat\otimes_{\Z_p}\Z_p^{\rm ur}.
\]
It follows from the explicit reciprocity law for $\kappa_1^{\rm Hg}$, that Conjecture~\ref{conj:PR} is equivalent to the Iwasawa--Greenberg main conjecture for the $p$-adic $L$-function $\Lcal_p^{\rm BDP}(f/K)$ introduced in \cite{BDP}. 
Under the same hypotheses of Theorem~\ref{thm:AC}, we thus deduce that a different Greenberg Selmer group denoted $\X_{\rm Gr}(E/K_\infty^-)$ 
is $\Lambda_K^-$-torsion, with
\[
{\rm char}_{\Lambda_K^-}\bigl(\mathfrak{X}_{\rm Gr}(E/K_{\infty}^-)\bigr)\Lambda_K^{-,\rm ur}=\bigl(\Lcal_p^{\rm BDP}(f/K)\bigr)
\]
as ideals in $\Lambda_K^{-,\rm ur}$. This equality of characterisitc ideals is the first key ingredient in the proof of Theorem~\ref{thm:CYC}.

\subsubsection*{Comparing Iwasawa invariants}

In \cite{RSCMF} and \cite{explicit}, 
Lei--Loeffler--Zerbes and Kings--Loeffler--Zerbes constructed a cyclotomic Euler system 
(over $\Q$) for the Rankin--Selberg convolution of two modular forms moving in Hida families. To be able to use these classes as a bridge between the anticyclotomic $\Z_p$-extension $K_\infty^-/K$ and the cyclotomic $\Z_p$-extension $\Q_\infty/\Q$ (or rather its translate by $K$),  
here we are led to consider a variant of their construction attached to the pair $(f,\boldsymbol{g})$, where $f$ 
is the weight $2$ newform attached to $E$, and $\boldsymbol{g}$ is a suitable CM Hida family. Because our $\boldsymbol{g}$ specializes in weight $1$ to the $p$-irregular Eisenstein series ${\rm Eis}_{1,\eta}$, 
where $\eta=\eta_{K/\Q}$ is the quadratic character associated to $K/\Q$, in fact we use a refinement of the construction in \cite{explicit} studied in \cite{BST}.

Let $\Lambda_K$ (resp. $\Lambda_K^+$) be the Iwasawa algebra for the $\Z_p^2$-extension of $K$ (resp. the cyclotomic $\Z_p$-extension $K_\infty^+/K$). In particular, from these works we obtain a two-variable Iwasawa cohomology class 
\[
BF\in\rH^1_{\rm Iw}(K_\infty,T_pE_\bullet),
\]
where $E_\bullet/\Q$ is the distinguished elliptic curve in the isogeny class of $E$ constructed by W\"uthrich \cite{wuthrich-int}. Combined with the  relations between different $p$-adic $L$-functions established in Sect.~\ref{sec:Lp}, we also deduce two explicit reciprocity laws:
\begin{enumerate}
\item One relating ${\rm loc}_{\bar{v}}(BF)$ to a $p$-adic $L$-function $\Lcal_p^{\rm PR}(E_\bullet/K)\in\Lambda_K$ whose image $\Lcal_p^{\rm PR}(E_\bullet/K)^+$ under the natural projection $\Lambda_K\rightarrow\Lambda_K^+$ satisfies
\[
\Lcal_p^{\rm PR}(E_\bullet/K)^+=\Lcal_p^{\rm MSD}(E_\bullet/\Q)\cdot\Lcal_p^{\rm MSD}(E_\bullet^K/\Q)
\]
up to a unit, where $E_\bullet^K$ is the twist of $E_\bullet$ by the quadratic field $K$.
\item A second one relating ${\rm loc}_v(BF)$ to a $p$-adic $L$-function $\Lcal_p^{\rm Gr}(f/K)\in\Lambda_K^{\rm ur}=\Lambda_K\widehat\otimes\Z_p^{\rm ur}$ whose image $\Lcal_p^{\rm Gr}(f/K)^-$ under the natural projection $\Lambda_K^{\rm ur}\rightarrow\Lambda_K^{-,\rm ur}$ satisfies
\[
\Lcal_p^{\rm Gr}(f/K)^-=\Lcal_p^{\rm BDP}(f/K)
\]
up to a unit. 
\end{enumerate}


The next key idea to have Theorem~\ref{thm:AC} to bring to bear on the proof of Theorem~\ref{thm:CYC} is to consider the restriction of $BF$ to $\rH^1_{\rm Iw}(K_\infty^+,T_pE_\bullet)$, and exploit the fact that the anticyclotomic and the cyclotomic $\Z_p$-extensions meet at the trivial character (in other words, $K_\infty^-\cap K_\infty^+=K$). When $\Lcal_p^{\rm BDP}(f/K)(1)\neq 0$ (which by the main result of \cite{BDP} can only happen when ${\rm rank}_\Z E(\Q)\leq 1$), the argument for the implication Theorem~\ref{thm:AC}$\Longrightarrow$Theorem~\ref{thm:CYC} is relatively simple, and to help orient the reader, this simpler case is explained in detail in Sect.~\ref{sec:one}.  

To make the argument work in arbitrary rank, we take a character $\alpha$ of $\Gamma_K^-$ with
\[
\alpha\equiv 1\;({\rm mod}\,p^m)
\]
for some $m\gg 0$ such that $\Lcal_p^{\rm BDP}(f/K)(\alpha)\neq 0$ (as always possible by the nonvanishing of $\Lcal_p^{\rm BDP}(f/K)$), and consider $\alpha$-twisted versions of the above Selmer groups and $p$-adic $L$-functions projected to $\Lambda_K^+$. With the aid of a cyclotomic Euler system extending the $\alpha$-twist of the class $BF$ projected to $\rH^1_{\rm Iw}(K_\infty^+,T_pE_\bullet)$, building on Theorem~\ref{thm:AC} we deduce that the twisted Selmer group $\mathfrak{X}_{\rm ord}(E_\bullet(\alpha)/K_\infty^+)$ is $\Lambda_K^+$-torsion, with
\begin{equation}\label{eq:alpha-MC}
\ch_{\Lambda_K^+}\bigl(\X_{\rm ord}(E_\bullet(\alpha)/K_\infty^+)\bigr)=\bigl(\Lcal_p^{\rm PR}(E_\bullet(\alpha)/K)^+\bigr)
\end{equation} 
as ideals in $\Lambda_K^+$. On the other hand, Kato's divisibility \cite{kato-euler-systems} (as refined by W\"uthrich \cite{wuthrich-int}) applied to $E_\bullet$ and $E_\bullet^K$ implies that the untwisted Selmer group $\mathfrak{X}_{\rm ord}(E_\bullet/K_\infty^+)$ is $\Lambda_K^+$-torsion, with the divisibility
\begin{equation}\label{eq:one-MC}
\ch_{\Lambda_K^+}\bigl(\X_{\rm ord}(E_\bullet/K_\infty^+)\bigr)\supset\bigl(\Lcal_p^{\rm PR}(E_\bullet/K)^+\bigr)
\end{equation} 
as ideals in $\Lambda_K^+$. By a new congruence argument using the study of the variation of both sides of \eqref{eq:alpha-MC} as $\alpha$ varies carried out in the earlier parts of the paper, 
we deduce from this equality (for $\alpha$ sufficiently close to $1$) that $\X_{\rm ord}(E_\bullet/K_\infty^+)$ and $\Lcal_p^{\rm PR}(E_\bullet/K)^+$ have the same Iwasawa invariants, and so equality holds in (\ref{eq:one-MC}). The proof of Theorem~\ref{thm:CYC} for both $E_\bullet$ and the original elliptic curve $E$, can then be deduced from Kato's work and the invariance of Mazur's main conjecture under isogenies.  


\subsection{Application to the $p$-part of the Birch--Swinnerton Dyer formula}

As a standard consequence of Theorem~\ref{thm:CYC}, 
 we deduce most cases of the $p$-part of the Birch--Swinnerton Dyer formula for elliptic curves $E/\Q$ of analytic rank $\leq 1$ at Eisenstein primes $p$. 

\begin{thmintro}\label{thm:pBSD}
Let $E/\Q$ and $p>2$ be as in Theorem~\ref{thm:CYC}, and let $r\in\{0,1\}$. If ${\rm ord}_{s=1}L(E,s)=r$, then
\begin{displaymath}
\ord_{p}\biggl(\frac{L^{(r)}(E, 1)}{\operatorname{Reg}(E / \Q) \cdot \Omega_{E}}\biggr)=\ord_{p}\biggl(\# \sha(E / \Q) \prod_{\ell \nmid \infty} c_{\ell}(E / \Q)\biggr),
\end{displaymath}
where 
\begin{itemize}
\item ${\rm Reg}(E/\Q)$ is the regulator of $E(\Q)$, 
\item $\Omega_E=\int_{E(\mathbb{R})}\vert\omega_E\vert$ is the N\'{e}ron  period associated to the N\'{e}ron differential $\omega_E$, and 
\item $c_\ell(E/\Q)$ is the Tamagawa number of $E$ at the prime $\ell$.
\end{itemize}
In other words, the $p$-part of the Birch--Swinnerton-Dyer formula for $E$ holds.
\end{thmintro}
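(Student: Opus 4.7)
My plan is to deduce the $p$-part of BSD from the main conjectures established above (Theorem~\ref{thm:CYC} for $r=0$; Theorem~\ref{thm:AC} together with its BDP-formulation for $r=1$), in each case by specializing a characteristic ideal identity at the trivial character and comparing both sides via an appropriate control theorem.

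For $r=0$, I would combine Theorem~\ref{thm:CYC} with Mazur's control theorem. Since $L(E,1)\neq 0$, Kato's theorem gives that $E(\Q)$ is finite, so $\mathrm{Sel}_{p^\infty}(E/\Q) = \sha(E/\Q)[p^\infty]$ up to the torsion of $E(\Q)$. Mazur's control theorem identifies the Pontryagin dual of the $\Gamma$-coinvariants of $\X_{\rm ord}(E/\Q_\infty)$ with $\mathrm{Sel}_{p^\infty}(E/\Q)^\vee$ modulo explicit kernels and cokernels governed by $\prod_{\ell}c_{\ell}(E/\Q)$ and the local condition at $p$, while Theorem~\ref{thm:CYC} yields
\[
\ord_p\bigl|\X_{\rm ord}(E/\Q_\infty)_\Gamma\bigr| = \ord_p\,\Lcal_p^{\rm MSD}(E/\Q)(\mathbf{1}).
\]
The Mazur--Swinnerton-Dyer interpolation formula expresses $\Lcal_p^{\rm MSD}(E/\Q)(\mathbf{1})$ as $(1-\alpha_p^{-1})^2 L(E,1)/\Omega_E$, and the hypothesis $\phi|_{G_p}\neq 1,\omega$ forces the unit root of Frobenius to satisfy $\alpha_p\not\equiv 1\pmod p$, so this Euler factor is a $p$-adic unit. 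Cancelling the local corrections on both sides then yields the desired equality of $p$-adic valuations.

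For $r=1$, I would choose an imaginary quadratic field $K$ satisfying (\ref{eq:intro-Heeg}), (\ref{eq:intro-disc}), (\ref{eq:intro-spl}) with $L(E^K,1)\neq 0$ (possible by the non-vanishing theorems of Bump--Friedberg--Hoffstein). Then $\ord_{s=1}L(E/K,s) = 1$, the sign of the functional equation for $E/K$ is $-1$, and by Gross--Zagier and Kolyvagin the Heegner point $P_K\in E(K)$ generates $E(K)$ up to torsion and finite index. Using the equivalence between Theorem~\ref{thm:AC} and the Iwasawa--Greenberg main conjecture recorded after Theorem~\ref{thm:AC}, namely
\[
\mathrm{char}_{\Lambda_K^-}\bigl(\X_{\rm Gr}(E/K_\infty^-)\bigr)\Lambda_K^{-,\rm ur} = \bigl(\Lcal_p^{\rm BDP}(f/K)\bigr),
\]
I would specialize at the trivial character: the Bertolini--Darmon--Prasanna formula expresses $\Lcal_p^{\rm BDP}(f/K)(\mathbf{1})$ up to an explicit Euler factor and $p$-adic unit as $\log_{\omega_E}(P_K)^2$, while a control theorem for $\X_{\rm Gr}(E/K_\infty^-)$ computes its $\Gamma_K^-$-coinvariants in terms of $\sha(E/K)[p^\infty]$ and the Tamagawa factors of $E/K$. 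Combining this with the Gross--Zagier formula, which converts $\log_p(P_K)^2$ into $L'(E/K,1)\cdot[E(K):\Z P_K]^{-2}/\mathrm{Reg}(E(K))$ up to unit periods, yields the $p$-part of BSD for $E/K$. The factorization $L(E/K,s) = L(E,s)L(E^K,s)$ together with the $r=0$ case applied to the twist $E^K$ (of analytic rank zero, and sharing the same reduction to $\Fp(\phi)$ since $p$ splits in $K$) then isolates the $E/\Q$ contribution.

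The main technical obstacle is the precise bookkeeping of the auxiliary $p$-adic factors: Euler factors at $p$, Tamagawa numbers, torsion contributions, $p$-adic periods, and the discrepancy between $E$ and the distinguished isogenous model $E_\bullet$ used in the construction of $\Lcal_p^{\rm BDP}(f/K)$ and of the Beilinson--Flach classes. Their cancellation hinges on the isogeny invariance of the $p$-part of BSD under the given rational $p$-isogeny, itself a delicate input going back to Cassels and refined in the Eisenstein setting by Greenberg, W\"uthrich, and others. A further subtlety specific to the rank one case is the $p$-adic/archimedean comparison relating the $p$-adic formal logarithm of $P_K$ to its N\'eron--Tate height, which again depends on the hypothesis $\phi|_{G_p}\neq 1,\omega$ ensuring that the Euler factor entering the BDP formula is a $p$-adic unit.
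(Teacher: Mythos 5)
Your proposal follows essentially the same route as the paper's: for $r=0$, Theorem~\ref{thm:CYC} combined with Mazur's control theorem and the interpolation formula of Theorem~\ref{thm:MSD} (with the Euler factor $(1-\alpha_p^{-1})^2$ a $p$-adic unit precisely because $\phi|_{G_p}\neq 1,\omega$ forces $a_p\not\equiv 1\pmod p$); for $r=1$, the $p$-part of BSD over a suitably chosen $K$ with $L(E^K,1)\neq 0$ obtained from the anticyclotomic main conjecture, the BDP formula, and Gross--Zagier, then divided by the $r=0$ case applied to $E^K$. The paper itself gives only this outline, deferring the detailed bookkeeping of periods, Euler factors, Tamagawa numbers, and isogeny invariance to \cite[Thms.~5.1.4, 5.3.1]{eisenstein} --- the same material you correctly identify as the technical core.
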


\begin{proof}
In the case $r=0$, the argument is the same as in  \cite[Thm.~5.1.4]{eisenstein}, replacing the appeal to \cite[Thm.~1.3]{greenvats} by an appeal to our Theorem~\ref{thm:CYC}. In the case $r=1$, we argue as in \cite[Thm.~5.3.1]{eisenstein}, choosing a suitable $K$ with $L(E^K,1)\neq 0$ and applying our result in the rank $0$ case  to $E^K$.
\end{proof}

\begin{rem}
Previously, by the work of Greenberg--Vatsal \cite{greenvats} in analytic rank $0$, and of the authors with Lee \cite{eisenstein} in analytic rank $1$, Theorem~\ref{thm:pBSD} was only known for roughly ``half'' of the $p$-Eisenstein cases, as both results required a certain parity hypothesis on $\phi$.
\end{rem}


It is easy to produce examples of elliptic curves to which Theorem \ref{thm:pBSD} can be applied. Let $p=5$ and $J$ be any of the three non-isomorphic elliptic curves of conductor 11; one could take for example 
the elliptic curve of Weierstrass equation
\[
y^2+y=x^3-x^2-10x-20,
\]
which is the strong Weil curve of conductor $11$.  
It is known that $J[5]$ is reducible and its semi-simplification is isomorphic to $\mu_5\oplus \Z/5\Z$. Moreover, the rank of $J$ is equal to zero. We can then consider $\psi$ any quadratic character such that $\psi(5)=-1$ and take $E=J_\psi$ the quadratic twist of $J$ by $\psi$. Since the root number of $J$ is 1, applying \cite[Thm.~B.1]{twist}, we can produce infinitely many $\psi$ such that $E=J_\psi$ has analytic (and hence algebraic) rank zero and for which the hypothesis of Theorem \ref{thm:pBSD} are satisfied for $p=5$. Note that prior to this paper, the only elliptic curves in this family for which the theorem was known are of the form $E=J_\psi$ where $\psi$ is odd.

Similarly, take $p=5$ and consider the elliptic curve
\begin{displaymath}
J: y^2+y=x^3+x^2-10x+10.
\end{displaymath} 
The curve $J$ has conductor $123$ and analytic rank $1$, and satisfies $J[5]^{ss}=\mu_5\oplus \Z/5\Z$ as $G_\Q$-modules. Let $\psi$ any quadratic character such that $\psi(5)=-1$ and take $E=J_\psi$ the quadratic twist of $J$ by $\psi$. Since the root number of $J$ is $-1$ (being of analytic rank one), by \cite[Thm.~B.2]{twist} we can find infinitely many $\psi$ as above for which $E=J_\psi$ also has analytic rank one and hence to which Theorem~\ref{thm:pBSD} can be applied for $p=5$. 
Earlier result in this direction would only apply to the elliptic curve in this family of the form $E=J_\psi$ with $\psi$ even. 

One can proceed similarly for $p=3,7,13,37$, taking quadratic twists of elliptic curves of rank zero or one and with a rational $p$-isogeny. If the character describing the kernel of the isogeny is not trivial (which has to be the case for $p=13$ or $p=37$), one might have to impose further conditions to the quadratic character at some bad primes in order to apply \cite[Thm.~B]{twist}.

\subsection{Further applications and relation to previous works}

In addition to being a key ingredient in the proof of our main results, the Kolyvagin system argument with error terms for the $p$-adic Tate module of $E$ twisted by characters close to the trivial one 
contained in Sect.~\ref{sec:anticyc} (see Theorem~\ref{thm:Zp-twisted}) is used in a forthcoming work of the authors with A.\,Burungale \cite{BCGS} to establish Kolyvagin's conjecture on the nonvanishing of derived Heegner classes under  mild hypotheses on $E[p]$, including the first cases for  Eisenstein primes $p$ (many cases in the non-Eisenstein case were first proved by W.\,Zhang \cite{wei-zhang}).

Finally, we note that the idea of using Beilinson--Flach classes to relate different Iwasawa main conjectures has appeared in earlier works, notably in \cite{wan-ss,CW-adv}, but the congruence argument mechanism introduced in this paper to deduce Theorem~\ref{thm:CYC} from Theorem~\ref{thm:AC} is new, and should be useful in other settings, 
as we plan to exploit in forthcoming work. 




\subsection{Outline of the paper}

We begin in Sect.~\ref{sec:Lp} by introducing the different one- and two-variable 
$p$-adic $L$-functions, and the various relations between them that will be needed for our arguments. Then in Sect.~\ref{sec:Sel} we introduce corresponding Selmer groups, both primitive and imprimitive, and prove some ancillary results for the latter ones. In Sect.~\ref{BF} we discuss Beilinson--Flach classes and some direct applications of their relations with $p$-adic $L$-functions. As a guide to the general case, 
in Sect.~\ref{sec:one} we give a simplified proof of Theorem~\ref{thm:CYC} in the case of rank one.  
In Sect.~\ref{sec:anticyc}
we prove our new Kolyvagin system bound for twists by characters arbitrarily close to $1$, resulting in Theorem~\ref{thm:AC}.
Finally, in Sect.~\ref{sec:MC} we run our congruence argument and conclude the proof of Theorem~\ref{thm:CYC}.

\subsection{Acknowledgements}
We would like to thank Ashay Burungale, Shinichi Kobayashi, and Romyar Sharifi for useful exchanges related to this work. We also thank the referees for their useful comments and suggestions.  
During the preparation of this paper, F.C. was supported by the NSF grants DMS-2101458 and DMS-2401321; G.G. was partially supported by the postdoctoral fellowship of the Fondation Sciences Math\'{e}matiques de Paris; C.S. was partially supported by the Simons Investigator Grant \#376203 from the Simons Foundation and by the NSF grant DMS-1901985.

\section{$p$-adic $L$-functions}
\addtocontents{toc}{\protect\setcounter{tocdepth}{2}}
\label{sec:Lp}

Let $E/\Q$ be an elliptic curve of conductor $N$, and let $p\nmid 2N$ be a prime of good ordinary reduction for $E$. Fix embeddings $\iota_p:\overline{\Q}\hookrightarrow\overline{\Q}_p$ and $\iota_\infty:\overline{\Q}\hookrightarrow\C$. Let $K$ be an imaginary quadratic field of discriminant $D_K<0$ prime to $N$, and assume that
\begin{equation}\label{eq:spl}
\textrm{$(p)=v\bar{v}$ splits in $K$,}\tag{spl}
\end{equation}
with $v$ the prime above $p$ induced by $\iota_p$. 
We denote by $\Gamma_{\Q}$  (resp. $\Gamma_K$) the Galois group of the cyclotomic $\Z_p$-extension $\Q_{\infty}/\Q$ (resp. the $\Z_p^2$-extension $K_{\infty}/K$). Since $p>2$ the action of complex conjugation gives an eigenspace decomposition 
\[
\Gamma_K=\Gamma_K^+\times \Gamma_K^-.
\] 
Note that $\Gamma_K^+$ (resp. $\Gamma_K^-$) is identified with the Galois group of the cyclotomic $\Z_p$-extension $K_\infty^+/K$ (resp. the anticyclotomic $\Z_p$-extension of $K^-_{\infty}/K$), and 
hence $\Gamma^+_K$ is naturally identified with $\Gamma_\Q$. Let 
\[
\Lambda_\Q=\Z_p\llbracket{\Gamma_\Q}\rrbracket,\quad
\Lambda_K=\Z_p\llbracket{\Gamma_K}\rrbracket,\quad\Lambda_K^\pm=\Z_p\llbracket\Gamma_K^\pm\rrbracket
\]
be the corresponding Iwasawa algebras, so in particular $\Lambda_K^+$ is naturally identified with $\Lambda_\Q$.

\subsection{Cyclotomic $p$-adic $L$-function}
\label{subsec:L-cyc} 

Fix a modular parametrization $\pi_E:X_0(N)\rightarrow E$, and denote by $c_E\in\Z$ the corresponding Manin constant, so that 
\[
\pi_E^*(\omega_E)=c_E\cdot 2\pi i f(\tau)d\tau,
\] 
where $\omega_E$ is a minimal differential on $E$ and $f=\sum_{n=1}^\infty a_nq^n\in S_2(\Gamma_0(N))$ is the  newform associated with $E$.  Pick generators $\delta^\pm$ of $H_1(E,\Z)^\pm$, and define the N\'{e}ron periods $\Omega_E^\pm$ by 
\[
\Omega_E^\pm=\int_{\delta^\pm}\omega_E.
\]
We normalize the $\delta^\pm$
so that $\Omega_E^+\in\R_{>0}$ and $\Omega_E^-\in i\R_{>0}$.

The Fourier coefficient $a_p$ is a $p$-adic unit by hypothesis, and we let $\alpha_p$ be the $p$-adic unit root of $x^2-a_px+p$. 

\begin{thm}
\label{thm:MSD}
There exists an element $\Lcal^{\rm MSD}_p(E/\Q)\in\Lambda_\Q$ such that for any finite order character $\chi$ of $\Gamma_\Q$ of conductor $p^r$ with $r>0$, we have
\[
\Lcal^{\rm MSD}_p(E/\Q)(\chi)=\frac{p^r}{\tau(\overline{\chi})\alpha_p^r}\cdot\frac{L(E,\overline{\chi},1)}{\Omega_E^+},
\]
where $\tau(\overline{\chi})=\sum_{a\;{\rm mod}\;p^r}\overline{\chi}(a)e^{2\pi ia/p^r}$ is the Gauss sum, and
\[
\Lcal_p^{\rm MSD}(E/\Q)(1)=\bigl(1-\alpha_p^{-1}\bigr)^2\cdot\frac{ L(E,1)}{\Omega_E^+}.
\]
\end{thm}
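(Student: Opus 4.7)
The plan is to construct $\mathcal{L}_p^{\rm MSD}(E/\Q)$ as a $p$-adic measure on $\Gamma_\Q$ via modular symbols, following Mazur--Swinnerton-Dyer. For each $a/m \in \Q$, define the modular symbols
\[
\left\{\frac{a}{m}\right\}^\pm := \frac{1}{\Omega_E^\pm}\Bigl(-2\pi i\int_{a/m}^{i\infty}f(\tau)\,d\tau \;\pm\; -2\pi i\int_{-a/m}^{i\infty}f(\tau)\,d\tau\Bigr).
\]
By the Eichler--Shimura isomorphism and the definition of $\Omega_E^\pm$, these lie in $\Q$, and integrality considerations involving the Manin constant $c_E$ show their denominators are uniformly bounded. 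The key input is the Hecke relation $T_p f = a_p f$, which for $n \geq 1$ yields the distribution relation
\[
\sum_{\substack{b\;({\rm mod}\,p^{n+1}) \\ b\equiv a\,({\rm mod}\,p^n)}}\left\{\frac{b}{p^{n+1}}\right\}^\pm \;=\; a_p\left\{\frac{a}{p^n}\right\}^\pm - \left\{\frac{a}{p^{n-1}}\right\}^\pm.
\]

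The second step is to stabilize by $\alpha_p$ so as to obtain compatible measures. Set
\[
\theta_n^+ := \frac{1}{\alpha_p^n}\sum_{a\in(\Z/p^n\Z)^\times}\left\{\frac{a}{p^n}\right\}^+ \cdot [a]^{-1} \;\in\; \Q[(\Z/p^n\Z)^\times],
\]
where $a \mapsto [a]$ identifies $(\Z/p^n\Z)^\times$ with the Galois group of $\Q(\mu_{p^n})/\Q$. Using the distribution relation together with $\alpha_p + p/\alpha_p = a_p$ and $\alpha_p \in \Z_p^\times$, one checks that the images of the $\theta_n^+$ under the natural projections $\Z_p[(\Z/p^{n+1}\Z)^\times] \to \Z_p[(\Z/p^n\Z)^\times]$ form a compatible sequence. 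Projecting further from $(\Z/p^n\Z)^\times$ onto the Galois group of the $n$-th layer of $\Q_\infty/\Q$ and passing to the limit produces an element of $\Lambda_\Q \otimes_{\Z_p} \Q_p$, which we define to be $\Lcal_p^{\rm MSD}(E/\Q)$. Integrality (that it lies in $\Lambda_\Q$ itself) follows from Mazur's original argument exploiting the boundedness of modular symbols together with the fact that $\alpha_p$ is a unit.

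For the interpolation, the classical formula expressing twisted central $L$-values as Gauss-sum weighted sums of modular symbols reads
\[
\frac{L(E,\overline{\chi},1)}{\Omega_E^+} \;=\; \frac{\tau(\overline\chi)}{p^r}\sum_{a\in(\Z/p^r\Z)^\times}\overline{\chi}(a)\left\{\frac{a}{p^r}\right\}^+
\]
for $\chi$ of conductor $p^r$ and sign matching $+$; evaluating $\theta_r^+$ against $\chi$ and using that $\chi$ factors through $(\Z/p^r\Z)^\times$ gives precisely the claimed formula after multiplying by $\alpha_p^{-r}$. For the trivial character, the two Euler factors at $p$ removed by the stabilization contribute the $(1-\alpha_p^{-1})^2$ (one from $\alpha_p$ and one from $\beta_p = p/\alpha_p$, which becomes $\alpha_p^{-1}$ after multiplying through by the factor $\alpha_p^{-1}\cdot p/\alpha_p$ evaluated at weight $2$). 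The main obstacle is really the integrality statement; the interpolation and the distribution relation are formal consequences of Hecke theory once the modular symbols are set up. Since the result is classical and we are merely recording it in our normalization, we may take it as immediate from \cite{M-SwD}.
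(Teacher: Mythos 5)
Your construction of the measure, the $T_p$ distribution relation, the $\alpha_p$-stabilization, and the interpolation formulas are all correct and match the Mazur--Swinnerton-Dyer construction that the paper also cites. However, your treatment of integrality contains a genuine gap, and it is precisely the point that matters here. Mazur's boundedness argument (finite-dimensionality of the space of modular symbols plus $\alpha_p\in\Z_p^\times$) shows only that the measure lies in $\Lambda_\Q\otimes_{\Z_p}\Q_p$ with bounded denominators; it does \emph{not} show that the values $\{a/p^n\}^+ = (\text{period integral})/\Omega_E^+$ are $p$-integral. Those modular symbols become integers when normalized by the ``geometric'' period coming from $H^1(X_0(N),\Z_p)$, and the discrepancy with $\Omega_E^+$ — governed by the $p$-part of the ratio of the N\'eron lattice to the geometric lattice $V_{\Z_p}(f)(1)$ — is exactly what needs to be controlled.

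When $E[p]$ is irreducible this discrepancy is a $p$-adic unit, which is the content of \cite[Prop.~3.7]{greenvats}. But the case relevant to this paper is $E[p]$ \emph{reducible} (the Eisenstein situation), where the lattice $T_pE$ can genuinely differ from $V_{\Z_p}(f)(1)$ at $p$. Integrality in that case is not ``immediate from \cite{M-SwD}''; it is a theorem of W\"uthrich (\cite[Cor.~18]{wuthrich-int}) whose proof passes through a distinguished curve $E_\bullet$ in the isogeny class with $T_pE_\bullet\simeq V_{\Z_p}(f)(1)$, Kato's integral divisibility in the main conjecture ``without zeta functions,'' the Ferrero--Washington theorem, global duality, and isogeny invariance of the main conjecture. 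The paper's proof deliberately outlines this route because $E_\bullet$ and these ingredients recur throughout the later arguments (see Sect.~\ref{BF} and Sect.~\ref{sec:MC}). So your proposal is fine for the soft parts of the statement but elides exactly the hard part, and does so by asserting something false about what boundedness of modular symbols yields.
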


\begin{proof}
The construction of $\Lcal_p^{\rm MSD}(E/\Q)$ as an element in $\Lambda_{\Q}\otimes\Q_p$ with the stated interpolation property is given in \cite[\S{9}]{M-SwD}. 
The integrality of $\Lcal_p^{\rm MSD}(E/\Q)$ is shown in \cite[Prop.~3.7]{greenvats} in the case where $E[p]$ is irreducible as a $G_\Q$-module, and in \cite[Cor.~18]{wuthrich-int} in the reducible case. Since the latter integrality result and some of the ingredients in the proof will be important later, we briefly outline the argument. First, W\"uthrich shows the existence of an elliptic curve $E_\bullet/\Q$ in the isogeny class of $E$ with $\mathcal{L}_p^{\rm MSD}(E_\bullet/\Q)\in\Lambda_{\Q}$ and whose $p$-adic Tate module satisfies 
\[
T_pE_{\bullet}\simeq V_{\Z_p}(f)(1),
\] 
where $V_{\Z_p}(f)$ is the geometric lattice in the $p$-adic Galois representation $V_{\Q_p}(f)$ associated to $f$ considered in \cite[\S{8.3}]{kato-euler-systems}. Building on the theorem of Ferrero--Washington \cite{FW}, Kato's divisibility in the Iwasawa main conjecture for $E_\bullet$ ``without zeta functions''  \cite[Thm.~12.5(4)]{kato-euler-systems} is shown to be integral, from which the integrality of $\mathcal{L}_p^{\rm MSD}(E_\bullet/\Q)$ and of $\mathcal{L}_p^{\rm MSD}(E/\Q)$ itself follows by global duality and the invariance of Mazur's main conjecture under isogenies (see Prop. \ref{prop:isog-inv}).
\end{proof}


\subsection{Two-variable $p$-adic $L$-function, I}
\label{subsec:L-2var-I} 

In this section we recall the $p$-adic $L$-function constructed in \cite{PR-JLMS} following Hida's $p$-adic Rankin method \cite{hida-measure-I}, and explain the relation with the $p$-adic $L$-function of Mazur--Swinnerton-Dyer. 

Let $\Sigma$ be the set of infinity types of Hecke characters $\psi$ of $K$ for which $s=1$ is a critical value of the Rankin $L$-function $L(f/K,\psi,s)$. Following the notations and conventions in \cite[\S{6.1}]{LLZ-K}, the set $\Sigma$ decomposes as 
\[
\Sigma=\Sigma^{(1)}\cup\Sigma^{(2)}\cup\Sigma^{(2')},
\]
where $\Sigma^{(1)}=\{(0,0)\}$ (i.e., corresponding to characters $\psi$ of finite order), $\Sigma^{(2)}=\{(a,b)\colon a\leq -1, b\geq 1\}$, and $\Sigma^{(2')}=\{(b,a)\colon a\leq -1, b\geq 1\}$. Note that the regions $\Sigma^{(2)}$ and $\Sigma^{(2')}$ are interchanged by the involution $\psi\mapsto\psi^\tau$, where $\psi^\tau$ denotes the composition of $\psi$ with the non-trivial automorphism $\tau\in{\rm Gal}(K/\Q)$.

For $\psi$ a finite order Hecke character of $K$ of conductor $\mathfrak{c}$, denote by $\theta(\psi)$ the associated theta series, which is an eigenform of weight one and level $M=D_K\mathbf{N}(\mathfrak{c})$. As in \cite[p.\,457]{PR-GZ}, define the ``Artin root number'' of $\psi$ to be the complex number $W(\psi)$ with $\vert W(\psi)\vert=1$ given by
\[
\theta(\psi)\vert_1\left(\begin{smallmatrix}&-1\\M&\end{smallmatrix}\right)=-iW(\psi)\cdot\theta(\psi);
\]
then the completed $L$-function $\Lambda(\psi,s)=M^{s/2}(2\pi)^{-s}\Gamma(s)\sum_{\mathfrak{a}}\psi(\mathfrak{a})\mathbf{N}(\mathfrak{a})^{-s}$ satisfies the functional equation $\Lambda(\psi,s)=W(\psi)\Lambda(\overline{\psi},1-s)$. As before, let $f\in S_2(\Gamma_0(N))$ be the newform associated with $E$, and put
\[
H_p(f)=\Bigl(1-\frac{p}{\alpha_p^2}\Bigr)\Bigl(1-\frac{1}{\alpha_p^2}\Bigr).
\] 
Denote by $c_f\in\Z_p$ the \emph{congruence number} of $f$, as defined in \cite[\S{7}]{hida-congruences,ribet-modp}; this is such that $c_f^{-1}$ is a $\Z_p$-generator of the congruence ideal $I_f\subset\Q_p$ defined in $\S\ref{subsec:ERL}$.  

\begin{thm}
\label{thm:PR}
There exists an element $\Lcal_p(f/K,\Sigma^{(1)})\in c_f^{-1}\Lambda_K$ such that for every finite order character $\psi$ of $\Gamma_K$ of conductor $v^m\bar{v}^n$ with $m+n>0$, we have
\[
\Lcal_p(f/K,\Sigma^{(1)})(\psi)=\frac{W(\psi)p^{(m+n)/2}}{\alpha_p^{m+n}H_p(f)}\cdot\frac{L(f/K,\overline{\psi},1)}{8\pi^2\langle f,f\rangle_N},
\]
where $W(\psi)$ is the Artin root number of $\psi$, and
\[
\Lcal_p(f/K,\Sigma^{(1)})(1)=
\frac{(1-\alpha_p^{-1})^{4}}{H_p(f)}
\cdot\frac{L(f/K,1)}{8\pi^2\langle f,f\rangle_N},
\]
where $\langle f,g\rangle_N=\int_{\Gamma_0(N)\backslash\mathcal{H}}\overline{f(\tau)}g(\tau)dxdy$ is the Petersson inner product on $S_2(\Gamma_0(N))$.
\end{thm}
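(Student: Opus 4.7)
My plan is to recall Perrin-Riou's construction from \cite{PR-JLMS}, which applies Hida's $p$-adic Rankin--Selberg method \cite{hida-measure-I} to the pair consisting of $f$ and the family of $p$-ordinary CM eigenforms attached to finite-order Hecke characters of $K$ of $p$-power conductor, and then verify that the normalizations yield the interpolation formula in the stated form.

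First I would pass to the $p$-ordinary stabilization $f_\alpha(\tau)=f(\tau)-\beta_p\, f(p\tau)$, where $\beta_p=p/\alpha_p$ is the non-unit root of $x^2-a_p x+p$. For a finite-order Hecke character $\psi$ of $\Gamma_K$ of $p$-power conductor $v^m\bar{v}^n$, the theta series $\theta(\overline{\psi})$ is a weight-one eigenform of level dividing $D_K\cdot p^{m+n}$, and the classical Rankin--Selberg integral computing $L(f/K,\overline{\psi},1)=L(f\otimes\theta(\overline{\psi}),1)$ rewrites, after applying Hida's ordinary projector, as a Petersson pairing on the ordinary subspace of modular forms of $p$-power level. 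Hida's control theorem for these ordinary pairings then packages the resulting ratios into a single element of $\Lambda_K\otimes_{\Z_p}\Q_p$, which is the sought measure on $\Gamma_K$.

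Next I would verify that the interpolation constants match. The factor $p^{(m+n)/2}/\alpha_p^{m+n}$ reflects the correction at $p$ in passing between $f$ and its ordinary $p$-stabilization inside the Rankin integrand; the normalization by $8\pi^2\langle f,f\rangle_N$ is Hida's canonical period for the ordinary part; and the Artin root number $W(\psi)$ appears naturally from applying the functional equation for $\theta(\overline{\psi})$ during the unfolding of the Rankin--Selberg integral, together with the Gauss-sum normalization for characters of $p$-power conductor. The value at the trivial character $\psi=1$ lies outside the interpolation range (the conductor condition $m+n>0$ fails) and must be treated separately: the local factors at the split primes $v$ and $\bar{v}$, both unramified and carrying the same Satake parameters for $f$, combine with the canonical modification factor of the self-Rankin convolution into the expression $(1-\alpha_p^{-1})^4/H_p(f)$, by a direct local calculation using $\alpha_p\beta_p=p$.

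Finally, the construction a priori yields an element of $\Lambda_K\otimes_{\Z_p}\Q_p$; the obstruction to $\Lambda_K$-integrality is controlled by the cuspidal congruence ideal of $f$, which is principal and generated by $c_f$, so clearing denominators by $c_f$ places the measure in $\Lambda_K$ as required. The main obstacle will be the bookkeeping: reconciling precisely the normalizations of \cite{PR-JLMS} and \cite{hida-measure-I} with those above, in particular tracking the exact shape of the Gauss-sum factor $W(\psi)$ and the $p$- and $\alpha_p$-power normalizations, and handling carefully the special value at $\psi=1$, where the generic interpolation formula has a removable singularity coming from the $\tau(\overline{\psi})$ denominator.
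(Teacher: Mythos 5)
Your proposal is essentially the paper's approach: the paper proves this by citing it as a reformulation of Perrin-Riou's Theorem~1.1 in the paper on Hida's $p$-adic Rankin method (and notes, for later use, that the same measure is a specialization of the Kings--Loeffler--Zerbes three-variable $p$-adic $L$-function), which is exactly the construction you outline; your sketch simply expands what "reformulation" means by tracking the $p$-stabilization, the ordinary projector, the normalization $8\pi^2\langle f,f\rangle_N$, the source of $W(\psi)$ from the functional equation of $\theta(\overline{\psi})$, the Euler-factor computation at $\psi=1$, and the role of the congruence number $c_f$ in integrality. The only small imprecision is the remark about a removable singularity from a $\tau(\overline{\psi})$ denominator, which does not appear in the $\Sigma^{(1)}$ formula as stated; the trivial character is excluded simply because $m+n=0$ falls outside the interpolation range, not because of a literal singularity in the displayed expression.
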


\begin{proof}
This is a reformulation of \cite[Thm.~1.1]{PR-GZ}. For our later use, we note that this is the same as the two-variable $p$-adic $L$-function 
\[
L_p(f,\boldsymbol{g})\in c_f^{-1}\Z_p\widehat\otimes\Lambda_{\boldsymbol{g}}\llbracket\Gamma_\Q\rrbracket
\]
obtained by specializing to $f$ the three-variable $p$-adic Rankin $L$-series in \cite[Thm.~7.7.2]{explicit}, where $\boldsymbol{g}$ is the CM Hida family introduced in the proof of Lemma~\ref{lem:integral} below. 
\end{proof}

\begin{defi}[``Perrin-Riou's $p$-adic $L$-function'']\label{def:PR}
Let $f\in S_2(\Gamma_0(N))$ be the newform associated to $E$, and let $\pi_E:X_0(N)\rightarrow E$ be a modular parametrization. Then we
put
\[
\Lcal_p^{\rm PR}(E/K):=\frac{\deg(\pi_E)}{c_E^2}\cdot H_p(f)\cdot \Lcal_p(f/K,\Sigma^{(1)}),\nonumber
\]
where $c_E$ is the Manin constant associated to $\pi_E$.
\end{defi}


\begin{rem}\label{rem:periods}
Note that the factor $H_p(f)$ can be interpreted as an Euler factor coming from the adjoint $L$-function of $f$ (see \cite[Rem.~6.5.10]{KLZ-AJM}). Moreover, setting 
\[
\Omega_{E/K}:=\frac{1}{\sqrt{\vert D_K\vert}}\int_{E(\C)}\omega_E\wedge i\,\overline{\omega_E}
\]
we find
\[
8\pi^2\langle f,f\rangle_N=\int_{X_0(N)}\omega_f\wedge i\,\overline{\omega_f}
=\frac{\deg(\pi_E)}{c_E^2}\cdot\Omega_{E/K},
\]
and so Theorem~\ref{thm:PR} says that $\Lcal_p^{\rm PR}(E/K)$ interpolates the finite order twists of $L(f/K,1)$ normalized by the complex period $\Omega_{E/K}$.
\end{rem}

Denote by $\Lcal^{\rm PR}_p(E/K)^+\in\Lambda_\Q$ the image of $\Lcal^{\rm PR}_p(E/K)$ under the map induced by the projection $\Gamma_K\twoheadrightarrow\Gamma_K^+\simeq\Gamma_\Q$.

\begin{prop}
\label{prop:comp-Lcyc}
Up to a unit in $\Lambda_\Q^\times$, we have 
\[
\Lcal_p^{\rm PR}(E/K)^+=\Lcal_p^{\rm MSD}(E/\Q)\cdot\Lcal^{\rm MSD}_p(E^K/\Q),
\] 
where $E^K$ is the twist of $E$ by the quadratic character corresponding to $K/\Q$.
\end{prop}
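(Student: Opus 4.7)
The plan is to establish the identity by comparing interpolation formulas at a Zariski-dense set of specializations. Both sides lie in $\Lambda_\Q$: the right-hand side by Theorem~\ref{thm:MSD}, and the left-hand side by Definition~\ref{def:PR} combined with Theorem~\ref{thm:PR} and the relation between $c_f$, the congruence number $c_E$, and $\deg(\pi_E)$ that absorbs the denominator $c_f^{-1}$. Since $\Lambda_\Q$ is a UFD, it suffices to show that the ratio of the two sides, evaluated at all finite order characters $\chi$ of $\Gamma_\Q$ of $p$-power conductor $p^r$ with $r\geq 1$, equals a fixed $u\in\Z_p^\times$ times $\chi(\gamma)$ for some fixed $\gamma\in\Gamma_\Q$ independent of $\chi$.

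For such a $\chi$, viewed as a Hecke character of $K$ via $\chi\circ\nm$, the conductor is $v^r\bar{v}^r$ (since $(p)=v\bar{v}$ splits). Combining Definition~\ref{def:PR}, Theorem~\ref{thm:PR}, and the identity $8\pi^2\langle f,f\rangle_N=(\deg\pi_E/c_E^2)\,\Omega_{E/K}$ from Remark~\ref{rem:periods}, I would obtain a formula for $\Lcal_p^{\rm PR}(E/K)^+(\chi)$ involving $L(f/K,\overline\chi\circ\nm,1)/\Omega_{E/K}$, the Artin root number $W(\chi\circ\nm)$, a power of $p$, and $\alpha_p^{-2r}$. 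Applying Theorem~\ref{thm:MSD} to both $E$ and its quadratic twist $E^K$, and using that $\alpha_p(E^K)=\alpha_p(E)$ (since $\eta_{K/\Q}(p)=1$), I would express the right-hand side evaluated at $\chi$ in terms of $L(E,\overline\chi,1)L(E^K,\overline\chi,1)/(\Omega_E^+\Omega_{E^K}^+)$, the Gauss sum $\tau(\overline\chi)^2$, and $\alpha_p^{-2r}$.

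To identify the two resulting expressions, I would appeal to three classical ingredients: (i) the Artin factorization $L(f/K,\overline\chi\circ\nm,s)=L(E,\overline\chi,s)\,L(E^K,\overline\chi,s)$ arising from $\mathrm{Ind}_K^\Q(\overline\chi\circ\nm)=\overline\chi\oplus\overline\chi\eta$ together with $L(E^K,\overline\chi,s)=L(E,\overline\chi\eta,s)$; (ii) the period comparison $\Omega_{E/K}\in\Q^\times\cdot\Omega_E^+\Omega_{E^K}^+$, obtained by combining the direct computation $\Omega_{E/K}=-2i\,\Omega_E^+\Omega_E^-/\sqrt{|D_K|}$ with the Shimura-type relation $\sqrt{|D_K|}\cdot\Omega_{E^K}^+\in\Q^\times\cdot\Omega_E^-$ for quadratic twists; and (iii) the factorization $W(\chi\circ\nm)=W(\chi)\,W(\chi\eta)$ coming from the decomposition of Hecke $L$-functions, together with the standard Gauss sum identities $\tau(\overline\chi\eta)=\overline\chi(|D_K|)\,\tau(\overline\chi)\,\tau(\eta)$ and $\tau(\overline\chi)\tau(\chi)=\chi(-1)\,p^r$. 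The net effect is that the discrepancy between the two $\chi$-evaluations reduces to a product of a constant depending only on $(E,K,p)$ and the character value $\overline\chi(|D_K|)$.

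The main technical obstacle I anticipate is the careful bookkeeping of the Artin root numbers, Gauss sums, and square-root constants ($\sqrt{p^r}$, $\sqrt{|D_K|}$, signs and powers of $i$) required to verify that the overall constant lies in $\Z_p^\times$, not merely in $\Q_p^\times$. Since both sides have already been shown to be integral, this integrality is automatic once the ratio is shown to be \emph{some} fixed non-zero constant times a group-like element of $\Lambda_\Q$; the equality in $\Lambda_\Q$ up to a unit then follows from density of finite-order characters in $\mathrm{Spec}\,\Lambda_\Q$.
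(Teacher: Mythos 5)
Your proposal follows essentially the same strategy as the paper's proof: evaluate both sides at finite-order characters $\chi$ of $p$-power conductor, combine the interpolation formulas of Theorems~\ref{thm:MSD} and \ref{thm:PR} with the Artin factorization $L(f/K,\overline\chi\circ\mathbf{N},1)=L(E,\overline\chi,1)L(E^K,\overline\chi,1)$ and the period relation $\Omega_{E/K}=[E(\R):E^0(\R)]\,\Omega_E^+\Omega_{E^K}^+$, and observe that the discrepancy is a fixed unit times the group-like element $\chi(|D_K|)$. You take a slightly different path through the same classical identities---factorizing $W(\chi\circ\mathbf{N})=W(\chi)W(\chi\eta)$ plus Gauss-sum manipulations where the paper quotes $W(\chi\circ\mathbf{N})=\chi(|D_K|)\tau(\chi)^2/p^r$ from Miyake directly, and deriving the period comparison in two steps through $\Omega_E^-$ where the paper cites Gross--Zagier---and one should note that with the paper's normalization $\Omega_E^-\in i\R_{>0}$, your Shimura-type relation should carry an extra factor of $i$ (since $\sqrt{|D_K|}\,\Omega_{E^K}^+$ is real), but this is harmlessly absorbed into the constant, as you anticipate.
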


\begin{proof}
Let $\chi$ be a Dirichlet character of conductor $p^r$. Using the standard formula 
\[
W(\chi\circ\mathbf{N})=\chi(\vert D_K\vert)\frac{\tau(\chi)^2}{p^r}=\chi(\vert D_K\vert)\frac{p^r}{\tau(\overline{\chi})^2}
\]
(see e.g. \cite[Lem.~4.8.1]{miyake}) and the factorization
\[
L(f/K,\overline{\chi}\circ\mathbf{N},1)=L(E,\overline{\chi},1)\cdot L(E^K,\overline{\chi},1),
\]
from Theorem~\ref{thm:MSD} and Theorem~\ref{thm:PR} we find
\[
\begin{aligned}
\Lcal_p(f/K,\Sigma^{(1)})(\chi\circ\mathbf{N})\cdot H_p(f)&=W(\chi\circ\mathbf{N})\cdot\frac{p^r}{\alpha_p^{2r}}\cdot L(f/K,\overline{\chi}\circ\mathbf{N},1)\\
&=\chi(\vert D_K\vert)\cdot\frac{p^{2r}}{\tau(\overline{\chi})\alpha_p^{2r}}\cdot L(E,\overline{\chi},1)\cdot L(E^K,\overline{\chi},1)\\
&=\chi(\vert D_K\vert)\cdot\Lcal_p^{\rm MSD}(E/\Q)(\chi)\cdot\Lcal_p^{\rm MSD}(E^K/\Q)(\chi)\cdot\frac{\Omega_E^+\cdot\Omega_{E^K}^+}{8\pi^2\langle f,f\rangle_N}.
\end{aligned}
\]
Since the complex period $\Omega_{E/K}$ in Remark~\ref{rem:periods} satisfies
\[
\Omega_{E/K}=[E(\R):E^0(\R)]\cdot\Omega_E^+\cdot\Omega_{E^K}^+
\]
(see \cite[\S{V.2}]{grosszagier}), the result in now clear from the definition of $\Lcal_p^{\rm PR}(E/K)^+$.
%
\end{proof}

\subsection{Anticyclotomic $p$-adic $L$-function}
\label{subsec:L-ac} 

We next recall the ``square-root'' $p$-adic $L$-function of Bertolini--Darmon--Prasanna \cite{BDP}, explicitly shown to be a $p$-adic measure in \cite{cas-hsieh1}. Assume that 
\begin{equation}\label{eq:Heeg}
\textrm{every prime $\ell\vert N$ splits in $K$,}\tag{Heeg}
\end{equation}
and fix an integral ideal $\mathfrak{N}\subset\mathcal{O}_K$ with $\mathcal{O}_K/\mathfrak{N}=\Z/N\Z$.  For simplicity, we also assume that
\begin{equation}\label{eq:disc}
\textrm{the discriminant $D_K<0$ is odd and $D_K\neq -3$}.\tag{disc}
\end{equation}

In the following we let $\Omega_p$ and $\Omega_K$ be CM periods attached to $K$ as in \cite[\S{2.5}]{cas-hsieh1} and put  $\Lambda_{K}^{-,\rm ur}=\Lambda_K\widehat\otimes\Z_p^{\rm ur}$, where $\Z_p^{\rm ur}$ is the completion of the ring of integers of the maximal unramified extension of $\Q_p$.

\begin{thm}
\label{thm:BDP}
There exists an element $\Lcal_p^{\rm BDP}(f/K)\in\Lambda_K^{-,{\rm ur}}$ characterized by the following interpolation property: for every character $\xi$ of $\Gamma_K^-$ crystalline at both $v$ and $\bar{v}$ and corresponding to a Hecke character of $K$ of infinity type $(n,-n)$ with $n\in\Z_{>0}$ and $n\equiv 0\pmod{p-1}$, we have
\[
\Lcal_p^{\rm BDP}(f/K)(\xi)=\frac{\Omega_p^{4n}}{\Omega_K^{4n}}\cdot\frac{\Gamma(n)\Gamma(n+1)\xi(\mathfrak{N}^{-1})}{4(2\pi)^{2n+1}\sqrt{D_K}^{2n-1}}\cdot\bigl(1-a_p\xi(\bar{v})p^{-1}+\xi(\bar{v})^2p^{-1}\bigr)^2\cdot L(f/K,\xi,1).
\]  
Moreover, $\Lcal_p^{\rm BDP}(f/K)$ is a nonzero element of $\Lambda_K^{-,{\rm ur}}$.
\end{thm}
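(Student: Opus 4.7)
The plan is to invoke the construction from \cite{BDP}, promoted to an honest $\Lambda_K^{-,\rm ur}$-valued measure in \cite{cas-hsieh1}, and then supplement with a mod $p$ non-vanishing result for the final assertion. First I would fix an embedding $\iota_p\colon\overline{\Q}\hookrightarrow\overline{\Q}_p$ compatible with $v$; this identifies the Serre--Tate formal group of the canonical ordinary CM elliptic curve with CM by $\mathcal{O}_K$ with $\widehat{\mathbb{G}}_m$ over $\Z_p^{\rm ur}$, giving rise to the $p$-adic period $\Omega_p$ compatibly with the complex CM period $\Omega_K$. I would then define $\Lcal_p^{\rm BDP}(f/K)$ as the unique measure on $\Gamma_K^-$ whose value at an arithmetic character $\xi$ of infinity type $(n,-n)$ with $n\equiv 0\pmod{p-1}$ is the $p$-adic quantity obtained by applying the Atkin--Serre operator $d=q\,d/dq$ iterated $(n-1)$ times to the $\bar v$-depletion $f^{[\bar v]}$ of $f$, evaluating at the CM point on $X_0(N)$ attached to $(\C/\mathcal{O}_K, \N^{-1}/\mathcal{O}_K)$, and twisting by $\xi$. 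That this prescription extends to a measure valued in $\Z_p^{\rm ur}$, and hence defines an element of $\Lambda_K^{-,\rm ur}$, is established in \cite{cas-hsieh1}.

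For the interpolation formula, I would appeal to the Waldspurger-type identity of \cite{BDP}: the central value $L(f/K,\xi,1)$ decomposes as the square of a period whose $p$-adic avatar matches the above evaluation. Converting the complex period into its $p$-adic counterpart introduces the ratio $\Omega_p^{4n}/\Omega_K^{4n}$; the Euler factor $\bigl(1-a_p\xi(\bar v)p^{-1}+\xi(\bar v)^2p^{-1}\bigr)^2$ accounts for the passage between $f$ and $f^{[\bar v]}$; and the Gamma factors together with the $\xi(\N^{-1})$ and $\sqrt{D_K}^{2n-1}$ terms encode the analytic weight--$n$ contribution of the anticyclotomic twist. The congruence $n\equiv 0\pmod{p-1}$ is needed to trivialize the torsion part of the Galois group over the ring of integers, so that $\xi$ genuinely factors through $\Gamma_K^-$.

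Finally, I would establish the non-vanishing by invoking Hsieh's theorem on the vanishing of the $\mu$-invariant of $\Lcal_p^{\rm BDP}(f/K)$ under the running hypotheses (\ref{eq:Heeg}), (\ref{eq:spl}), and (\ref{eq:disc}): its proof uses Vatsal-style equidistribution of CM points modulo $p$ together with mod $p$ non-vanishing of toric periods, and directly yields $\Lcal_p^{\rm BDP}(f/K)\neq 0$ in $\Lambda_K^{-,\rm ur}$. This non-vanishing is the main obstacle: it is \emph{not} a formal consequence of the interpolation formula, because one cannot a priori exhibit $\xi$ of infinity type $(n,-n)$ in the interpolation range with $L(f/K,\xi,1)\neq 0$ in an elementary manner, and it instead rests on the full strength of the mod $p$ equidistribution theory of Cornut--Vatsal as refined by Hsieh.
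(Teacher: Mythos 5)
Your proposal is correct and follows essentially the same route as the paper: the paper's proof is a one-line citation to \cite[Thm.~2.1.1]{eisenstein} (itself a reformulation of \cite[\S 3]{cas-hsieh1}) for the construction and interpolation, and to \cite[Thm.~3.9]{cas-hsieh1} for the non-vanishing, and your sketch simply unfolds the content of those cited results (Serre--Tate expansions, the Atkin--Serre operator applied to the $\bar v$-depletion of $f$, CM points, the Waldspurger-type identity from \cite{BDP}, and Hsieh's $\mu=0$ theorem built on Cornut--Vatsal equidistribution). Nothing to change.
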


\begin{proof}
See \cite[Thm.~2.1.1]{eisenstein} for the construction, 
which is deduced from \cite[\S{3}]{cas-hsieh1}. Since (\ref{eq:Heeg}) implies that $f$ does not have CM by $K$, the nonvanishing of $\Lcal_p^{\rm BDP}(f/K)$ follows from \cite[Thm.~3.9]{cas-hsieh1}.
\end{proof}

\begin{rmk}\label{rem:periods}
The CM period $\Omega_{K}\in\C^\times$ in Theorem~\ref{thm:BDP} agrees with that in \cite[(5.1.16)]{BDP}, but is \emph{different} from the period $\Omega_\infty$ defined in \cite[p.\,66]{deshalit} and \cite[(4.4b)]{hidatilouineI}. In fact, one has 
\[
\Omega_{\infty}=2\pi i\cdot\Omega_K.
\]
In terms of $\Omega_\infty$, the interpolation formula in Theorem~\ref{thm:BDP} reads
\[
\Lcal_{p}^{\rm BDP}(f/K)(\xi)=\frac{\Omega_p^{4n}}{\Omega_\infty^{4n}}\cdot\frac{\Gamma(n)\Gamma(n+1)\xi(\mathfrak{N}^{-1})}{4(2\pi)^{1-2n}\sqrt{D_K}^{2n-1}}\cdot\bigl(1-a_p\xi(\overline{v})p^{-1}+\xi(\bar{v})^2p^{-1}\bigr)^2\cdot L(f/K,\xi,1).
\]
This is the form of the interpolation that we shall use later.
\end{rmk}

\subsection{Two-variable $p$-adic $L$-function, II}
\label{subsec:L-2var-II} 

The main result of this section is Proposition~\ref{prop:comp-Lac}, relating the 
$p$-adic $L$-function $\Lcal_p^{\rm BDP}(f/K)$ of Theorem~\ref{thm:BDP} to the anticyclotomic projection of the following two-variable $p$-adic Rankin $L$-series.

\begin{thm}\label{thm:Hida-2}
\label{thm:Gr}
There exists an element $\Lcal_p(f/K,\Sigma^{(2')})\in{\rm Frac}\,\Lambda_K$ such that for every character $\xi$ of $\Gamma_K$ 
crystalline at both $v$ and $\bar{v}$, and of infinity type $(b,a)$ with $a\leq -1$ and $b\geq 1$, we have
\[
\Lcal_p(f/K,\Sigma^{(2')})(\psi)=\frac{2^{a-b}i^{b-a-1}\Gamma(b+1)\Gamma(b)N^{a+b+1}}{(2\pi)^{2b+1}\langle\theta_{\psi_b},\theta_{\psi_b}\rangle_N}\cdot\frac{\mathcal{E}(\psi,f,1)}{(1-\psi^{1-\tau}(\bar{v}))(1-p^{-1}\psi^{1-\tau}(\bar{v}))}\cdot L(f/K,\psi,1),
\]
where $\theta_{\psi_b}$ is the theta series of weight $b-a+1\geq 3$ associated to the Hecke character $\psi_b=\psi\vert\;\vert^{-b}$ of $\infty$-type $(0,a-b)$, and 
\[
\mathcal{E}(\psi,f,1)=(1-p^{-1}\psi(\bar{v})\alpha_p)(1-\psi(\bar{v})\alpha^{-1}_p)(1-\psi^{-1}({v})\alpha_p^{-1})(1-p^{-1}\psi^{-1}({v})\alpha_p).
\]
\end{thm}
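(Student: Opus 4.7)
The plan is to deduce this statement from the three-variable $p$-adic Rankin $L$-series of Kings--Loeffler--Zerbes \cite{explicit}, specialized to the present setting, in the same spirit as the proof of Theorem~\ref{thm:PR}. Concretely, I would fix the weight-two newform $f$ in one variable, and let the second Hida-family variable range over a CM Hida family $\boldsymbol{g}/K$ whose arithmetic specializations in weight $k\geq 2$ are (ordinary stabilizations of) theta series $\theta_{\psi_b}$ attached to Hecke characters of $K$ of infinity type $(0,a-b)$. Since $\boldsymbol{g}$ has complex multiplication by $K$, its weight space is naturally identified with the characters of $\Gamma_K$, so the specialization $L_p(f,\boldsymbol{g})$ of \cite[Thm.~7.7.2]{explicit} becomes a one- (in fact, ``two-over-$K$-'') variable element of ${\rm Frac}\,\Lambda_K$, which I propose to take as $\Lcal_p(f/K,\Sigma^{(2')})$.

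Second, I would check the interpolation formula at a character $\psi$ of infinity type $(b,a)\in\Sigma^{(2')}$ with $b\geq 1$, $a\leq -1$. The point $\psi$ then corresponds to specializing $\boldsymbol{g}$ in weight $b-a+1\geq 3$ to the theta series $\theta_{\psi_b}$, which is a cuspidal eigenform. For such a point the Kings--Loeffler--Zerbes interpolation formula expresses $L_p(f,\boldsymbol{g})(\psi)$ as
\[
\frac{(\text{explicit archimedean factor})\cdot\mathcal{E}_p(\psi,f,1)\cdot L(f\otimes\theta_{\psi_b},1+b)}{\langle\theta_{\psi_b},\theta_{\psi_b}\rangle_N},
\]
where the archimedean factor is a product of $\Gamma$-values and powers of $2\pi$ coming from Shimura's algebraicity formula for critical values of Rankin--Selberg $L$-functions, and the $p$-adic multiplier is the standard product of four Euler factors at $v$ and $\bar v$ that are removed in passing to the $p$-adic $L$-function; the factorization $L(f\otimes\theta_{\psi_b},s+b)=L(f/K,\psi,s)$ then converts this into the shape stated in the theorem. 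What remains is a routine (if tedious) comparison of normalizations: the constant $i^{b-a-1}$ reflects the CM period convention, $N^{a+b+1}$ comes from the level/conductor normalization when writing the Petersson inner product on $\Gamma_1(N)$, and the factor $(1-\psi^{1-\tau}(\bar v))(1-p^{-1}\psi^{1-\tau}(\bar v))$ in the denominator absorbs the ``adjoint'' congruence factor of the CM family $\boldsymbol{g}$ at the specialization $\theta_{\psi_b}$ (this is the analogue of the factor $H_p(f)$ appearing in Theorem~\ref{thm:PR}, but now on the $\boldsymbol{g}$-side and in the opposite range of interpolation).

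Third, the reason $\Lcal_p(f/K,\Sigma^{(2')})$ lies only in ${\rm Frac}\,\Lambda_K$, and not in $\Lambda_K$, is precisely this division by the congruence ideal of $\boldsymbol{g}$: the construction produces an element whose denominator divides the (CM) congruence series of $\boldsymbol{g}$, which is an element of $\Lambda_K$ that may have zeros. By contrast, in Theorem~\ref{thm:PR} the analogous ``adjoint'' correction $H_p(f)$ at the fixed form $f$ is a scalar and can be absorbed into the Manin-like normalization, whereas here it varies along $\boldsymbol{g}$ and appears as the $\Lambda_K$-denominator.

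The main technical obstacle I expect is the bookkeeping of archimedean and $p$-adic normalizations across the two conventions (those of \cite{explicit} versus those of \cite{PR-JLMS} and of the present paper), in particular the precise identification of $\boldsymbol{g}$, the correct normalization of $\theta_{\psi_b}$ (it is $\psi_b$ and not $\psi$ which gives a classical theta series of weight $b-a+1$), and the coherent handling of the involution $\psi\mapsto\psi^\tau$ that swaps $\Sigma^{(2)}$ with $\Sigma^{(2')}$. Once these identifications are fixed, everything else should follow by direct substitution into \cite[Thm.~7.7.2]{explicit} and Shimura's formula for critical values.
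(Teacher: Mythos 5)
Your strategy---deduce the statement from Hida's $p$-adic Rankin $L$-series via specialization of the Kings--Loeffler--Zerbes construction against a CM Hida family $\boldsymbol{g}$---matches what the paper does: the proof in the text is a one-line citation to \cite[Thm.~6.1.3]{LLZ-K}, and the paper itself spells out the identification with the KLZ construction in the proof of Lemma~\ref{lem:integral}. There is, however, one point that would trip up your argument as written: you propose to take $L_p(f,\boldsymbol{g})$ of \cite[Thm.~7.7.2]{explicit} as $\Lcal_p(f/K,\Sigma^{(2')})$, but that object has $f$ as the dominant form and interpolates precisely in the region $\Sigma^{(1)}$; it is the object used in the proof of Theorem~\ref{thm:PR}, where the Petersson norm $\langle f,f\rangle_N$ appears in the denominator. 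For $\Sigma^{(2')}$, where the theta series $\theta_{\psi_b}$ has weight $b-a+1\geq 3$, strictly larger than the weight $2$ of $f$, the CM family $\boldsymbol{g}$ must be the dominant form and the relevant Hida $L$-series is $L_p(\boldsymbol{g},f)$; this is also why $\langle\theta_{\psi_b},\theta_{\psi_b}\rangle_N$, and not $\langle f,f\rangle_N$, appears in the interpolation formula of the theorem. Your subsequent discussion (specializing $\boldsymbol{g}$ to weight $\geq 3$, the adjoint congruence factor ``on the $\boldsymbol{g}$-side'') is consistent with the correct object, so this is most likely a notational slip rather than a conceptual error---but as stated, the weight-$\geq 3$ specializations you describe lie outside the interpolation range of the $p$-adic $L$-function you actually named, and the argument should be run with $L_p(\boldsymbol{g},f)$ throughout.
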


\begin{proof}
This is another instance of Hida's $p$-adic Rankin $L$-series, as explained in \cite[Thm.~6.1.3]{LLZ-K} (note, however, that we have reversed the roles of $v$ and $\bar{v}$ with respect to \emph{loc.\,cit.}).
\end{proof}

We also need to recall the interpolation property of the Katz $p$-adic $L$-functions \cite{katz}, following the formulation in \cite{deshalit}. Put $\Lambda_K^{\rm ur}=\Lambda_K\widehat\otimes_{\Z_p}\Z_p^{\rm ur}$.

\begin{thm}\label{thm:katz}
There exists an element $\Lcal_{\bar{v}}(K)\in\Lambda_K^{\rm ur}$ such that for every character $\xi$ of $\Gamma_K$ of infinity type $(k,j)$ with $0\leq-j<k$ satisfies 
\[
\Lcal_{\bar{v}}(K)(\xi)=\frac{\Omega_p^{k-j}}{\Omega_\infty^{k-j}}\cdot\Gamma(k)\cdot\biggl(\frac{\sqrt{D_K}}{2\pi}\biggr)^j\cdot(1-\xi^{-1}(\bar{v})p^{-1})(1-\xi({v}))\cdot L(\xi,0).
\]
Similarly, there exists an element $\Lcal_v(K)\in\Lambda_K^{\rm ur}$ such that for every character $\xi$ of $\Gamma_K$ of infinity type $(j,k)$ with $0\leq-j<k$, we have 
\[
\Lcal_{v}(K)(\xi)=\frac{\Omega_p^{k-j}}{\Omega_\infty^{k-j}}\cdot\Gamma(k)\cdot\biggl(\frac{\sqrt{D_K}}{2\pi}\biggr)^j\cdot(1-\xi^{-1}({v})p^{-1})(1-\xi(\bar{v}))\cdot L(\xi,0).
\]
Moreover, we have the functional equation 
\[
\Lcal_{\bar{v}}(\xi)=\Lcal_{v}(\xi^{-1}\mathbf{N}^{-1}),
\] 
where the equality is up to a $p$-adic unit.
\end{thm}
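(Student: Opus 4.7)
The plan is to recognize this statement as a reformulation of the classical Katz $p$-adic $L$-function for imaginary quadratic fields, due to Katz \cite{katz} and presented in the precise form used here by de Shalit \cite{deshalit}. First, I would recall Katz's construction: fixing a CM type $\Sigma$ for $K$, one defines a $\Z_p^{\rm ur}$-valued measure on $\Gamma_K$ by interpolating values of nearly holomorphic Eisenstein series at CM elliptic curves on the relevant moduli space, using the comparison between $p$-adic and complex periods of such CM curves. The CM type singled out by the embedding $K \hookrightarrow \overline{\Q}_p$ determined by $\bar{v}$ yields the element $\Lcal_{\bar{v}}(K)$, while the opposite choice (through $v$) yields $\Lcal_v(K)$.

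To verify the interpolation formula for a character $\xi$ of infinity type $(k,j)$ with $0 \leq -j < k$, one invokes Damerell's theorem, which expresses $L(\xi,0)$ as an algebraic multiple of $\Omega_\infty^{k-j}$; the measure records that algebraic number transported to $\C_p$ via the $p$-adic period $\Omega_p^{k-j}$. The factors $\Gamma(k)$ and $(\sqrt{D_K}/2\pi)^j$ come from the normalization of the completed $L$-function, while the Euler factor $(1-\xi^{-1}(\bar{v})p^{-1})(1-\xi(v))$ arises from the $p$-stabilization needed to build the Eisenstein measure; the asymmetry between its two factors reflects the choice of CM type, with the unstabilized factor corresponding to the prime \emph{not} distinguished by $\Sigma$. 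The element $\Lcal_v(K)$ is then obtained by the same construction applied to the opposite CM type, which amounts to swapping the roles of $v$ and $\bar{v}$ throughout.

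The functional equation follows from the global functional equation for Hecke $L$-functions over $K$ under the substitution $\xi \mapsto \xi^{-1}\mathbf{N}^{-1}$, which is the $p$-adic manifestation of complex conjugation and exchanges the two CM types. Comparing the interpolation formulas for $\Lcal_{\bar{v}}(\xi)$ and $\Lcal_v(\xi^{-1}\mathbf{N}^{-1})$ at a dense set of characters produces an equality up to a unit in $\Lambda_K^{\rm ur}$ that absorbs the Artin root number, the discriminant-dependent factors, and the symmetry $\Omega_p^{k-j}/\Omega_\infty^{k-j} \leftrightarrow \Omega_p^{j-k}/\Omega_\infty^{j-k}$ of the period ratios. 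The main obstacle is not a conceptual one but the reconciliation of period, Gauss-sum, and CM-type conventions between Katz's original work and de Shalit's reformulation; once this bookkeeping is carried out, the statement is an immediate consequence of the cited constructions.
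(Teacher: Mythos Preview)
Your proposal is correct and takes essentially the same approach as the paper: both recognize the statement as a direct citation of Katz's construction in the form given by de Shalit. The paper's proof simply refers to \cite[Thm.~II.4.14]{deshalit} for the interpolation formulae (identifying $\Lcal_{\bar{v}}(K)$ and $\Lcal_v(K)$ with the measures $\mu(\bar{v}^\infty)$ and $\mu(v^\infty)$ there) and to \cite[Thm.~II.6.4]{deshalit} (via \cite[Lem.~3.3.2(b)]{7author}) for the functional equation, whereas you additionally sketch the ideas behind those results; but the content is the same.
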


\begin{proof}
This is \cite[Thm.~II.4.14]{deshalit}, with $\Lcal_{\bar{v}}(K)$ (resp. $\Lcal_{{v}}(K)$) corresponding to the measure $\mu(\bar{v}^\infty)$ (resp. $\mu(v^\infty)$) in \emph{loc.\,cit.}. On the other hand, as explained in \cite[Lem.~3.3.2(b)]{7author}, the stated functional equation is a reformulation of \cite[Thm.~II.6.4]{deshalit}.
\end{proof}

\begin{defi}[``Greenberg's $p$-adic $L$-function'']\label{def:Gr}
Put
\[
\Lcal_p^{\rm Gr}(f/K):=h_K\cdot\Lcal_v(K)^-\cdot\Lcal_p(f/K,\Sigma^{(2')}),
\]
where $h_K$ is the class number of $K$ and $\Lcal_v(K)^-$ the image of $\Lcal_v(K)$ under the map $\Lambda_K^{\rm ur}\rightarrow\Lambda_K^{\rm ur}$ given by $\gamma\mapsto\gamma^{1-\tau}$ for $\gamma\in\Gamma_K$. 
\end{defi}

Note that \emph{a priori} we have $\Lcal_p^{\rm Gr}(f/K)\in{\rm Frac}\,\Lambda_K^{\rm ur}$. 

\begin{lemma}\label{lem:integral}
The $p$-adic $L$-function $\Lcal_p^{\rm Gr}(f/K)$ is integral, i.e., $\Lcal_p^{\rm Gr}(f/K)\in\Lambda_K^{\rm ur}$.
\end{lemma}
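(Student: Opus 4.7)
The plan is to realize $\Lcal_p(f/K,\Sigma^{(2')})$ as a specialization of a three-variable Hida--Rankin $p$-adic $L$-series whose denominator is bounded by the congruence ideal of a CM Hida family, and to cancel this denominator using the Hida--Tilouine formula expressing that congruence ideal in terms of a Katz $p$-adic $L$-function.

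More concretely, let $\boldsymbol{g}$ be the CM Hida family over $K$ whose classical specializations are theta series of Hecke characters of $K$ (of infinity type varying in the $\Sigma^{(2')}$ region). Hida's $p$-adic Rankin construction produces a three-variable element $L_p(f,\boldsymbol{g})$ living in the fraction field of the corresponding Iwasawa algebra $\Lambda_{\boldsymbol{g}}\widehat\otimes\Lambda_K$; restricting to the $f$-component and to the two-variable direction along $\boldsymbol{g}$ recovers $\Lcal_p(f/K,\Sigma^{(2')})$, up to an explicit unit matched against the interpolation formula of Theorem~\ref{thm:Gr}. A priori $L_p(f,\boldsymbol{g})$ is only rational, the denominator arising from Hida's holomorphic projection/trace step and being bounded by the congruence power series $C(\boldsymbol{g})$ of the CM branch of $\boldsymbol{g}$.

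Next, I would invoke the Hida--Tilouine theorem \cite{hidatilouineI}, which identifies $C(\boldsymbol{g})$ with a Katz-type $p$-adic $L$-function: in the normalizations of \cite{deshalit}, and up to a unit, one has $C(\boldsymbol{g})=h_K\cdot\Lcal_v(K)^-$. Here the factor $h_K$ accounts for the passage from ideal class groups to ray class groups inherent in the construction of $\boldsymbol{g}$, while the $1-\tau$ twist built into $\Lcal_v(K)^-$ reflects the fact that the congruence ideal measures the variation of $\boldsymbol{g}$ transverse to the CM locus of the relevant Hida Hecke algebra. Multiplying $L_p(f,\boldsymbol{g})$ by $h_K\cdot\Lcal_v(K)^-$ thus clears the denominator, producing a genuine element of $\Lambda_{\boldsymbol{g}}\widehat\otimes\Lambda_K^{\rm ur}$; specializing along $\boldsymbol{g}$ gives the desired $\Lcal_p^{\rm Gr}(f/K)=h_K\cdot\Lcal_v(K)^-\cdot\Lcal_p(f/K,\Sigma^{(2')})\in\Lambda_K^{\rm ur}$.

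The main obstacle I expect is careful bookkeeping: matching the period normalizations (cf.\ Remark~\ref{rem:periods}) between the interpolation formulas of Theorem~\ref{thm:Gr} and Theorem~\ref{thm:katz}, and confirming that the congruence ideal of the CM branch of $\boldsymbol{g}$ equals $h_K\cdot\Lcal_v(K)^-$ rather than, say, $h_K\cdot\Lcal_{\bar v}(K)^-$, including the precise sign of the $1-\tau$ twist and the fact that $(\Sigma^{(2')})^\tau=\Sigma^{(2)}$ relates the two conventions. Once these normalizations are pinned down, integrality is immediate from the definition of $\Lcal_p^{\rm Gr}(f/K)$.
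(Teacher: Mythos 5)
Your overall strategy is the same as the paper's: bound the denominator of $\Lcal_p(f/K,\Sigma^{(2')})$ by the congruence power series $H(\boldsymbol{g})$ of the CM Hida family $\boldsymbol{g}$, then show $H(\boldsymbol{g})$ divides $h_K\cdot\Lcal_v(K)^-$. But your step ``invoke Hida--Tilouine to identify $C(\boldsymbol{g})$ with $h_K\cdot\Lcal_v(K)^-$'' conceals a genuine gap that the paper's proof addresses explicitly. What one gets from Hida--Tilouine (\cite[Thm.~0.3]{hidatilouineII}) combined with Rubin's proof of the Iwasawa main conjecture for $K$ is the divisibility $H(\boldsymbol{g})\mid h_K\cdot\Lcal_v(K)^-$ only \emph{up to powers of the augmentation ideal} $(\gamma_v-1)\subset\Z_p\llbracket\Gamma_v\rrbracket$. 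This ambiguity is not an artifact of sloppiness in the references: it reflects a potential trivial-zero phenomenon, and it is precisely sharp in the present situation, because here $p$ splits in $K$ so ${\rm Eis}_{1,\eta}$ is $p$-irregular and the weight-one fiber of the eigencurve above it is non-étale. A priori nothing prevents $\gamma_v-1$ from dividing $H(\boldsymbol{g})$, in which case your claimed equality would fail and the denominator would not be cleared.

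The paper closes this gap by appealing to \cite[Thm.~A(i)]{Betina-Dimitrov-Pozzi}, which guarantees that the congruence power series $H(\boldsymbol{g})$ of the (unique) cuspidal Hida family through the $p$-stabilization of ${\rm Eis}_{1,\eta}$ is \emph{not} divisible by $\gamma_v-1$. Combined with the ``up to $(\gamma_v-1)$-powers'' divisibility from \cite{hidatilouineII} and \cite{rubinmainconj}, this gives the clean divisibility $H(\boldsymbol{g})\mid h_K\cdot\Lcal_v(K)^-$, and hence the integrality of $\Lcal_p^{\rm Gr}(f/K)$. You should also note that for this lemma one only needs the one-sided divisibility, not the equality you assert (the reverse divisibility, via \cite[Cor.~5.6]{hida-coates}, is used only later in the paper, in the proof of Corollary~\ref{cor:KLZ}). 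As a minor point, the relevant congruence-ideal result is \cite[Thm.~0.3]{hidatilouineII} rather than \cite[Thm.~7.1]{hidatilouineI}; the latter (the adjoint $L$-value formula) enters the paper elsewhere, in the proof of Proposition~\ref{prop:factor-L-Gr}.
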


\begin{proof}
Denote by $\eta=\eta_{K/\Q}$ the quadratic character corresponding to $K/\Q$, and let ${\rm Eis}_{1,\eta}(q)$ be the weight one Eisenstein series
\[
{\rm Eis}_{1,\eta}(q)=\sum_{n\geq 1}q^n\sum_{d\vert n}\eta_{}(d).
\]
Since $p$ splits in $K$, ${\rm Eis}_{1,\eta}(q)$ is $p$-irregular. Letting $g$ denote the unique $p$-stabilization of ${\rm Eis}_{1,\eta}$, by \cite[Thm.~A(i)]{Betina-Dimitrov-Pozzi} there is a unique cuspidal Hida family $\boldsymbol{g}$ passing through $g$. Moreover, $\boldsymbol{g}$ is of CM type, given explicitly as the $q$-series
\[
\boldsymbol{g}=\sum_{(\mathfrak{a},\bar{v})=1}[\mathfrak{a}]q^{\mathbf{N}(\mathfrak{a})}\in\Lambda_{\boldsymbol{g}}\llbracket q\rrbracket,
\]
where $\Lambda_{\boldsymbol{g}}=\Z_p\llbracket\Gamma_v\rrbracket$, for $\Gamma_v$ the Galois group of the maximal $\Z_p$-extension inside $K_\infty/K$ unramified at $v$, and where $[\mathfrak{a}]$ denotes the natural image of the ideal $\mathfrak{a}\subset\mathcal{O}_K$ in $\Gamma_v$ under the Artin reciprocity map. 

Now, the $p$-adic $L$-function $\mathcal{L}_p(f/K,\Sigma^{(2')})$ in Theorem~\ref{thm:Hida-2} arises from Hida's $p$-adic Rankin $L$-series 
\[
L_p(\boldsymbol{g},f)\in I_{\boldsymbol{g}}^{\rm cusp}\widehat\otimes_{\Z_p}\Z_p\llbracket\Gamma_\Q\rrbracket,
\] 
where $I_{\boldsymbol{g}}^{\rm cusp}\subset{\rm Frac}\,\Lambda_{\boldsymbol{g}}$ is the cuspidal congruence ideal of $\boldsymbol{g}$ (see $\S\ref{subsec:ERL}$ for the definition).  
Thus if $H_{\bg}^{\rm cusp}\in\Lambda_{\boldsymbol{g}}$ denotes a characteristic power series for the denominator of  $I_{\boldsymbol{g}}$, then the product $H_{\bg}^{\rm cusp}\cdot\Lcal_p(f/K,\Sigma^{(2)})$ is integral, so it suffices to show that $h_K\cdot\Lcal_v(K)^-$ is divisible by $H_{\bg}^{\rm cusp}$. 

By \cite[Thm.~0.3]{hidatilouineII} and Rubin's proof of the Iwasawa main conjecture for $K$, \cite{rubinmainconj},  one has that such divisibility holds up to powers of the augmentation ideal $(\gamma_v-1)\subset\Z_p\llbracket\Gamma_v\rrbracket$; since by \cite[Thm.~A(i)]{Betina-Dimitrov-Pozzi} one knows that $H_{\bg}^{\rm cusp}$ is not divisible by $\gamma_v-1$, the result follows.
\end{proof}

Denote by $\Lcal_p^{\rm Gr}(f/K)^-$ the image of $\Lcal_p^{\rm Gr}(f/K)$ under the natural projection $\Lambda_K^{\rm ur}\rightarrow\Lambda_K^{-,{\rm ur}}$.

\begin{prop}\label{prop:factor-L-Gr}
\label{prop:comp-Lac}
We have the equality
\[
\Lcal_p^{\rm Gr}(f/K)^-\cdot \Lambda_K^{-,\rm{ur}}=\Lcal_p^{\rm BDP}(f/K)\cdot \Lambda_K^{-,\rm{ur}}.
\]
\end{prop}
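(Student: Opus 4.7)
My plan is to prove the equality of ideals by showing the two elements of $\Lambda_K^{-,\rm ur}$ differ by a unit, via comparing their specializations on a Zariski-dense family of characters of $\Gamma_K^-$. The natural test set consists of crystalline anticyclotomic Hecke characters $\xi$ of infinity type $(n,-n)$ with $n \geq 1$ and $n \equiv 0 \pmod{p-1}$, at which Theorem~\ref{thm:BDP} (together with Remark~\ref{rem:periods}) gives a clean interpolation formula for $\Lcal_p^{\rm BDP}(f/K)(\xi)$ in terms of the ratio $\Omega_p^{4n}/\Omega_\infty^{4n}$, Gamma factors, the Euler factor $(1 - a_p\xi(\bar v)p^{-1} + \xi(\bar v)^2 p^{-1})^2$, and the classical $L$-value $L(f/K, \xi, 1)$.

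For the same $\xi$, I expand the Greenberg side as $\Lcal_p^{\rm Gr}(f/K)^-(\xi) = h_K \cdot \Lcal_v(K)(\xi^{1-\tau}) \cdot \Lcal_p(f/K, \Sigma^{(2')})(\xi)$, using $\Lcal_v(K)^-(\xi) = \Lcal_v(K)(\xi^{1-\tau})$ from the definition of the $(1-\tau)$-twisted projection. Since $\xi$ is anticyclotomic, $\xi^{1-\tau}$ has infinity type $(2n, -2n)$, and Theorem~\ref{thm:katz} expresses $\Lcal_v(K)(\xi^{1-\tau})$ in terms of $\Omega_p^{4n}/\Omega_\infty^{4n}$, two Euler factors at $v$ and $\bar v$, and $L(\xi^{1-\tau}, 0)$. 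On the other hand, Theorem~\ref{thm:Hida-2} expresses $\Lcal_p(f/K, \Sigma^{(2')})(\xi)$ in terms of $L(f/K,\xi,1)$, the Euler product $\mathcal{E}(\xi,f,1)$ divided by two local factors at $\bar v$, and the Petersson norm $\langle \theta_{\psi_b}, \theta_{\psi_b}\rangle_N$ of the theta series attached to $\psi_b = \xi|\cdot|^{-n}$.

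The bridge between the two sides is the classical Shimura--Hida formula for Petersson norms of CM theta series (as in e.g.\ \cite{hida-congruences} and \cite{hidatilouineI}), which, up to elementary factors (powers of $2$, $\pi$, $i$, $\Gamma$-values, $\sqrt{D_K}$, and $h_K$), reads
\[
\langle \theta_{\psi_b}, \theta_{\psi_b}\rangle_N \;=\; C(n) \cdot \Omega_\infty^{4n} \cdot L(\psi_b^{1-\tau}, 0),
\]
and crucially $\psi_b^{1-\tau} = \xi^{1-\tau}$ since the twist by $|\cdot|^{-n}$ is killed by $1-\tau$. Substituting, the Katz $L$-value appearing in $\Lcal_v(K)(\xi^{1-\tau})$ cancels against the one arising from the Petersson norm; the period ratio $\Omega_\infty^{4n}/\Omega_p^{4n}$ emerges correctly; and two of the four Euler factors in $\mathcal{E}(\xi,f,1)$ combine with the two appearing in Theorem~\ref{thm:katz} (using $a_p = \alpha_p + p/\alpha_p$) to produce precisely the BDP Euler factor $(1 - a_p \xi(\bar v) p^{-1} + \xi(\bar v)^2 p^{-1})^2$. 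The remaining two Euler factors from $\mathcal{E}(\xi,f,1)$, the Euler factors from the denominator $(1-\psi^{1-\tau}(\bar v))(1-p^{-1}\psi^{1-\tau}(\bar v))$, and $h_K$ cancel or combine into a $p$-adically bounded unit.

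What remains is an elementary function of $n$ (from $\Gamma$-values, powers of $2\pi i$, and $\sqrt{D_K}$) whose $p$-adic interpolation is a unit of $\Lambda_K^{-,\rm ur}$. Zariski density of the chosen characters, together with the fact that elements of $\Lambda_K^{-,\rm ur}$ are determined by their values on them, then gives the desired equality of ideals. The main obstacle is the bookkeeping: reconciling the two CM periods ($\Omega_K$ versus $\Omega_\infty = 2\pi i \cdot \Omega_K$), the Petersson normalization on $\Gamma_0(N)$ versus the Rankin--Selberg integral used in \cite{LLZ-K}, Artin root numbers versus Gauss sums, and---most delicately---verifying that every residual factor genuinely interpolates to an element of $\Lambda_K^{-,\rm ur,\times}$ rather than one with arithmetic zeros.
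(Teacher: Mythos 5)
Your overall strategy is the same as the paper's: compare specializations at a Zariski-dense set of anticyclotomic characters $\xi$ of infinity type $(n,-n)$, use Hida's formula for the Petersson norm of CM theta series and the class number formula to rewrite $\langle\theta_{\xi_n},\theta_{\xi_n}\rangle_N$ in terms of $L(\xi^{1-\tau},1)$, and match periods and Euler factors against the Katz and BDP interpolation formulas. The key inputs (Theorem~\ref{thm:katz}, Theorem~\ref{thm:Hida-2}, Theorem~\ref{thm:BDP}, Remark~\ref{rem:periods}) are all identified correctly, and the observation that $\psi_b^{1-\tau}=\xi^{1-\tau}$ because $|\cdot|^{-n}$ is killed by $1-\tau$ is the right bridge.

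However, there is a concrete step that fails as written. You propose to invoke the interpolation formula of Theorem~\ref{thm:katz} to evaluate $\Lcal_v(K)(\xi^{1-\tau})$ directly. For anticyclotomic $\xi$ of type $(n,-n)$ with $n\geq 1$, the character $\xi^{1-\tau}$ has infinity type $(2n,-2n)$, and this lies \emph{outside} the interpolation range stated in Theorem~\ref{thm:katz} for $\Lcal_v(K)$, which requires the first coordinate of the infinity type to be $\leq 0$. There is simply no interpolation formula for $\Lcal_v(K)$ at that character. The paper avoids this by first evaluating $\Lcal_{\bar{v}}(K)$ at the shifted character $\xi^{1-\tau}\mathbf{N}^{-1}$, whose infinity type $(2n+1,1-2n)$ \emph{is} in the stated range for $\Lcal_{\bar{v}}$ (using $L(\xi^{1-\tau}\mathbf{N}^{-1},0)=L(\xi^{1-\tau},1)$), and then applying the functional equation $\Lcal_{\bar{v}}(\chi)=\Lcal_v(\chi^{-1}\mathbf{N}^{-1})$ from the last part of Theorem~\ref{thm:katz} to transfer to $\Lcal_v$. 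Without this detour your Greenberg-side computation has no starting formula. A secondary bookkeeping slip: you say two of the four factors in $\mathcal{E}(\xi,f,1)$ combine with two Katz factors to produce the BDP Euler factor, but for anticyclotomic $\xi$ one has $\xi^{-1}(v)=\xi(\bar v)$, so $\mathcal{E}(\xi,f,1)$ already collapses on its own to $\bigl[(1-\alpha_p\xi(\bar v)p^{-1})(1-\xi(\bar v)\alpha_p^{-1})\bigr]^2=\bigl(1-a_p\xi(\bar v)p^{-1}+\xi(\bar v)^2p^{-1}\bigr)^2$, and the Euler factors from the Katz interpolation are instead absorbed, together with the denominator factors $(1-\psi^{1-\tau}(\bar v))(1-p^{-1}\psi^{1-\tau}(\bar v))$ of Theorem~\ref{thm:Hida-2}, into the residual unit.
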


\begin{proof}
This follows from a direct comparison of the interpolation formulas in Theorem~\ref{thm:Gr}, Theorem~\ref{thm:BDP} and Theorem~\ref{thm:katz}, together with an application of Dirichlet's class number formula (cf. \cite[Thm.~1.7]{castellaheights}). 

Indeed, let $\xi$ be a Hecke character of infinity type $(n,-n)$, $n\in\Z_{\geq 0}$, as in the statement of Theorem~\ref{thm:BDP}. Then the character $\xi^{1-\tau}\mathbf{N}^{-1}$, of infinity type $(2n+1,1-2n)$, is in the range of interpolation of $\Lcal_{\bar{v}}(K)$, and noting that $L(\xi^{1-\tau}\mathbf{N}^{-1},0)=L(\xi^{1-\tau},1)$, by Theorem~\ref{thm:katz} we have
\begin{equation}\label{eq:ev-Katz}
\Lcal_{\bar{v}}(K)(\xi^{1-\tau}\mathbf{N}^{-1})=\frac{\Omega_p^{4n}}{\Omega_\infty^{4n}}\cdot\Gamma(2n+1)\cdot\biggl(\frac{2\pi}{\sqrt{D_K}}\biggr)^{2n-1}\cdot(1-\xi^{1-\tau}(\bar{v}))(1-\xi^{1-\tau}({v})p^{-1})\cdot L(\xi^{1-\tau},1).
\end{equation}
On the other hand, from Hida's formula for the adjoint $L$-value (see \cite[Thm~.7.1]{hidatilouineI}) and Dirichlet's class number formula we obtain 
\begin{equation}\label{eq:classnumber}
\langle \theta_{\xi_n},\theta_{\xi_n}\rangle_N\;\sim_p\;
\Gamma(2n+1)\cdot\frac{1}{2^{4n-1}\pi^{2n+1}}\cdot h_K\cdot L(\xi^{1-\tau},1),
\end{equation}
where $\sim_p$ denotes equality up to $p$-adic unit independent of $n$, and similarly as in Theorem~\ref{thm:Hida-2}, $\xi_n$ is the theta series of weight $2n+1\geq 3$ associated to the Hecke character $\xi_n=\xi\vert\;\vert^{-n}$ of infinity type $(0,-2n)$. Combining (\ref{eq:ev-Katz}), (\ref{eq:classnumber}) and the functional equation in Theorem~\ref{thm:katz} this gives
\[
h_K\cdot\Lcal_{v}(K)(\xi^{1-\tau})\;\sim_p\;\frac{\Omega_p^{4n}}{\Omega_\infty^{4n}}\cdot\biggl(\frac{2\pi}{\sqrt{D_K}}\biggr)^{2n-1}\cdot(1-\xi^{1-\tau}(\bar{v}))(1-\xi^{1-\tau}({v})p^{-1})\cdot 2^{4n-1}\pi^{2n+1}\cdot \langle\theta_{\xi_n},\theta_{\xi_n}\rangle_N.
\]
Noting that the $p$-Euler factor $\mathcal{E}(\psi,f,1)$ in Theorem~\ref{thm:Hida-2} satisfies
\[
\mathcal{E}(\xi,f,1)=\bigl(1-a_p\xi(\overline{v})p^{-1}+\xi(\bar{v})^2p^{-1}\bigr)^2,
\]
from Definition~\ref{def:Gr}, Theorem~\ref{thm:Hida-2}, and Theorem~\ref{thm:BDP} we thus find that
\[
\Lcal_p^{\rm Gr}(f/K)(\xi)\;\sim_p\;\xi(\mathfrak{N})\cdot 2^{3n-2}i^{2n-1}\cdot\Lcal_p^{\rm BDP}(f/K)(\xi).
\]
Since $\xi(\mathfrak{N})\cdot 2^{3n-2}i^{2n-1}$ is interpolated by a unit in $\Lambda_K^{-,\rm{ur}}$, this completes the proof.
\end{proof}

\subsection{Twists and imprimitive $p$-adic $L$-functions}\label{subsec:tw-L}


Let $\alpha:\Gamma_K\rightarrow R^\times$ be a character with values in the ring of integers $R$ of a finite extension $\Phi/\Q_p$ with uniformiser $\varpi\in R$. Let $\Lambda_{K,R} = R\widehat\otimes_{\Z_p}\Lambda_K = R[\![\Gamma_K]\!]$, and define
\[
{\rm Tw}_\alpha:\Lambda_{K,R}\rightarrow\Lambda_{K,R}
\]
to be the $R$-linear isomorphism given by $\gamma\mapsto\alpha(\gamma)\gamma$ for $\gamma\in\Gamma_K$. Denote by 
$\Lcal_p^{\rm PR}(E(\alpha)/K), \Lcal_p^{\rm Gr}(f(\alpha)/K)$ the image of $\Lcal_p^{\rm PR}(E/K), \Lcal_p^{\rm Gr}(f/K)$, respectively, under ${\rm Tw}_\alpha$. 

\begin{lemma}\label{lem:cong-L}
Suppose $\alpha\equiv 1\pmod{\varpi^m}$. Then 
\[
\Lcal_p^{\rm PR}(E(\alpha)/K)^+\equiv\Lcal_p^{\rm PR}(E/K)^+\;({\rm mod}\,\varpi^m).
\]
\end{lemma}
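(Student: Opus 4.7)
The plan is to unwind the definitions and observe that the whole statement reduces to an elementary congruence between measures on $\Gamma_K$.

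First, I would recall that $\mathrm{Tw}_\alpha \colon \Lambda_{K,R} \to \Lambda_{K,R}$ is the continuous $R$-algebra automorphism characterized by $\gamma \mapsto \alpha(\gamma)\gamma$ for $\gamma \in \Gamma_K$. Thus for each such $\gamma$,
\[
\mathrm{Tw}_\alpha(\gamma) - \gamma = (\alpha(\gamma) - 1)\gamma.
\]
The hypothesis $\alpha \equiv 1 \pmod{\varpi^m}$ means that $\alpha(\gamma) - 1 \in \varpi^m R$ for every $\gamma$, so $\mathrm{Tw}_\alpha(\gamma) - \gamma \in \varpi^m \Lambda_{K,R}$. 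By $R$-linearity and $\varpi$-adic continuity, this congruence propagates from the dense subset $R[\Gamma_K]$ to all of $\Lambda_{K,R}$; that is, for any element $\mathcal{L} \in \Lambda_{K,R}$ one has $\mathrm{Tw}_\alpha(\mathcal{L}) \equiv \mathcal{L} \pmod{\varpi^m\Lambda_{K,R}}$.

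Next, I would apply this to $\mathcal{L} = \Lcal_p^{\rm PR}(E/K)$, viewed inside $\Lambda_{K,R}$ (the integrality of $\Lcal_p^{\rm PR}(E/K)$ itself is not needed; one just has to be able to make sense of the congruence in a common $R$-algebra, which is automatic after base change). By the definition recalled in the excerpt, $\Lcal_p^{\rm PR}(E(\alpha)/K) = \mathrm{Tw}_\alpha\bigl(\Lcal_p^{\rm PR}(E/K)\bigr)$, and hence
\[
\Lcal_p^{\rm PR}(E(\alpha)/K) \equiv \Lcal_p^{\rm PR}(E/K) \pmod{\varpi^m\Lambda_{K,R}}.
\]

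Finally, I would push this congruence down to $\Lambda_{K,R}^+$ via the continuous $R$-algebra projection $\Lambda_{K,R} \twoheadrightarrow \Lambda_{K,R}^+$ induced by $\Gamma_K \twoheadrightarrow \Gamma_K^+$. Since this projection is a ring homomorphism over $R$, it respects the ideal $\varpi^m\Lambda_{K,R}$ and maps it into $\varpi^m\Lambda_{K,R}^+$; taking the image of both sides of the displayed congruence gives the claim. There is no genuine obstacle here: the whole lemma is a formal consequence of the definition of $\mathrm{Tw}_\alpha$ together with the trivial observation that $\alpha - 1$ takes values in $\varpi^m R$, and the only point to check carefully is that the projection to $\Lambda_{K,R}^+$ commutes with the congruence, which it does because it is a continuous $R$-algebra homomorphism.
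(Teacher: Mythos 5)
Your argument is correct and matches what the paper intends: the paper's proof is just the one-line remark ``This is clear from the definitions,'' and your write-up simply spells out that remark -- $\mathrm{Tw}_\alpha(\gamma)\equiv\gamma\pmod{\varpi^m}$ on group elements, extend by $R$-linearity and continuity, then push forward along the projection $\Lambda_{K,R}\twoheadrightarrow\Lambda_{K,R}^+$.
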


\begin{proof}
This is clear from the definitions.
%
\end{proof}


For a prime $w$ split in $K$, lying over the rational prime $\ell\neq p$, we let $\Gamma_w^\pm$ be the corresponding decomposition group in $\Gamma_K^\pm$, and $\gamma_w^\pm\in\Gamma_w^\pm$ be the image of an arithmetic Frobenius ${\rm Frob}_w$ under the projection $G_K\rightarrow\Gamma_K^\pm$. 

\begin{defi}\label{def:w-Euler}
Put 
\[
\mathcal{P}^\pm_w(\alpha):=P_w(\ell^{-1}\gamma_w^\pm)\in\Lambda_{K,R}^{\pm},
\]
where $P_w(X)=\det(1-{\rm Frob}_wX\,\vert\,V(\alpha)_{I_w})$ is the Euler factor at $w$ of the $L$-function of $V(\alpha)=T_pE(\alpha)\otimes\Q_p$. For $S'$ a finite set of primes $w$ as above, define
\begin{align*}
\Lcal_{p}^{\rm PR}(E(\alpha)/K)^{+,S'}&:=\Lcal_{p}^{\rm PR}(E(\alpha)/K)^+\cdot\prod_{w\in S'}\mathcal{P}_w^+(\alpha),\\\Lcal_p^{\rm Gr}(f(\alpha)/K)^{\pm,S'}&:=\Lcal_p^{\rm Gr}(f(\alpha)/K)^\pm\cdot\prod_{w\in S'}\mathcal{P}^\pm_w(\alpha),
\end{align*}
and similarly $\Lcal_p^{\rm BDP}(f(\alpha)/K)^{S'}:=\Lcal_p^{\rm BDP}(f(\alpha)/K)\cdot\prod_{w\in S'}\mathcal{P}^-_w(\alpha)$.
\end{defi}

Of course, the results of Proposition~\ref{prop:comp-Lcyc} and Proposition~\ref{prop:comp-Lac} directly extend to their analogues for these $S'$-imprimitive $p$-adic $L$-functions.

\section{Selmer groups}\label{sec:Sel}

In this section, we let $E/\Q$ be an elliptic curve of conductor $N$, $p$ an odd  prime of good ordinary reduction for $E$, and $K$ an imaginary quadratic field satisfying (\ref{eq:Heeg}) and (\ref{eq:spl}). 

\subsection{Selmer structures}\label{subsec:Sel-str}

Let $\Sigma$ be a finite set of places of $\Q$ containing the prime $p$, $\infty$, and the prime factors of $N$. We assume throughout that 
\[
\textrm{all finite primes in $\Sigma$ split in $K$.}
\]
With a slight abuse of notation, we 
also write $\Sigma$ 
for the set of places of $K$ lying above the places in $\Sigma$. 

\subsubsection{Discrete coefficients}\label{subsec:discrete-Sel}

For a discrete $\Z_p$-module $M$, we let
\[
M^\vee={\rm Hom}_{\rm cts}(M,\Q_p/\Z_p)
\]
be the Pontryagin dual. The module $\Lambda_{\Q}^{\vee}$ is equipped with a $G_\Q$-action via $\Psi^{-1}$, where $\Psi:G_\Q\rightarrow\Lambda_{\Q}^\times$ is the character arising from the projection $G_\Q\twoheadrightarrow\Gamma_\Q$. Similarly, $\Lambda_K^{\vee}$ and $(\Lambda^\pm_K)^{\vee}$ are equipped with $G_K$-actions.

\begin{defi}\label{defilocalconds}
Let $F$ be $\Q$ or $K$ and $w$ a prime above $p$. For $\Lambda$ any of the Iwasawa algebras $\Lambda_\Q$, $\Lambda_K$, or $\Lambda_K^\pm$, 
we put
\begin{align*}
\rH^1_{\rm rel}(F_w,T_pE\otimes \Lambda^{\vee})&=\rH^1(F_w,T_pE\otimes \Lambda^{\vee}),\\
\rH^1_{\ord}(F_w,T_pE\otimes \Lambda^{\vee})&= {\rm im}\bigl\{\rH^1(F_w,{\rm Fil}_w^+(T_pE)\otimes \Lambda^{\vee})\rightarrow\rH^1(F_w,T_pE\otimes \Lambda^{\vee})\bigr\},\\
\rH^1_{\rm str}(F_w,T_pE\otimes \Lambda^{\vee})&=\{0\}, 
\end{align*}
where ${\rm Fil}_w^+(T_pE):={\rm ker}\bigl\{T_pE\rightarrow T_p\tilde{E}\bigr\}$ with $\tilde{E}$ the reduction of $E$ at $w$.
\end{defi}
Let $G_{\Q,\Sigma}$ and $G_{K,\Sigma}$ denote the Galois group of the maximal extension of $\Q$ and $K$ respectively unramified outside $\Sigma$. 
For $\bullet \in \{\ord, \rm str, \rm rel \}$ and $M=T_pE\otimes \Lambda_\Q^{\vee}$, we define the Selmer group
\begin{equation}
\rH^1_{\Fcal_{\bullet}}(\Q, M)=\ker \biggl(\rH^1(G_{\Q,\Sigma},M) \to \prod_{w\in\Sigma,w\nmid p}\rH^1(\Q_w, M)\times\frac{\rH^1(\Q_{p},M)}{\rH^1_{\bullet}(\Q_{p},M)}\biggr).
\end{equation}
Similarly, for $\star,\bullet \in \{\ord, \rm str, \rm rel \}$ and $M=T_pE\otimes\Lambda^{\vee}$, where $\Lambda$ is any of the Iwasawa algebras $\Lambda_K$ or $\Lambda_K^\pm$, we let
\begin{equation}\label{eq:defSelK}
\rH^1_{\Fcal_{\star,\bullet}}(K, M)=\ker \biggl(\rH^1(G_{K,\Sigma},M) \to \prod_{w\in\Sigma,w\nmid p}\rH^1(K_w, M)\times\frac{\rH^1(K_v,M)}{\rH^1_{\star}(K_v,M)}\times\frac{\rH^1(K_{\bar{v}},M)}{\rH^1_{\bullet}(K_{\bar{v}},M)}\biggr).
\end{equation}
To ease notation, we write $\rH^1_{\Fcal_{\ord}}(K, M)=\rH^1_{\Fcal_{\ord,\ord}}(K, M)$ and $\rH^1_{\Fcal_{\rm Gr}}(K, M)=\rH^1_{\Fcal_{\rm rel, str}}(K, M)$, and put
\begin{align*}
\X_{\rm ord}(E/\Q_{\infty})&=\rH^1_{\Fcal_{\ord}}(\Q, T_pE\otimes\Lambda_\Q^{\vee})^{\vee},\\
\X_{\rm ord}(E/K_\infty^\pm)&=\rH^1_{\Fcal_{\ord}}(K, T_pE\otimes (\Lambda^\pm_K)^{\vee})^{\vee},\\
\X_{\rm Gr}(E/K_\infty^\pm)&=\rH^1_{\Fcal_{\rm Gr}}(K, T_pE\otimes (\Lambda^\pm_K)^{\vee})^{\vee}.
\end{align*}
It is a standard fact that these are finitely generated modules over the corresponding Iwasawa algebras. 

\subsubsection{Compact coefficients} 

For $\Lambda$ any of the Iwasawa algebras $\Lambda_\Q$, $\Lambda_K$, or $\Lambda_K^\pm$, consider the compact module
\[
T_pE\widehat\otimes_{\Z_p}\Lambda,
\]
where $\Lambda$ is equipped with a $G_K$-action via $\Psi:G_K\rightarrow\Lambda^\times$. For $\bullet\in\{\rm ord, \rm str, \rm rel\}$ and $w$ a prime of $K$ above $p$, we define the local conditions $\rH^1_{\bullet}(K_w,T_pE\widehat\otimes_{\Z_p}\Lambda)\subset\rH^1(K_w,T_pE\widehat\otimes_{\Z_p}\Lambda)$ similarly as in Definition~\ref{defilocalconds}, and for $\star,\bullet\in\{\rm ord,\rm str,\rm rel\}$ we define the Selmer group $\rH^1_{\Fcal_{*,\bullet}}(K,T_pE\widehat\otimes_{\Z_p}\Lambda)$ by the same recipe as in (\ref{eq:defSelK}). Put
\[
\mathfrak{S}_{\rm ord,\rm rel}(E/K_\infty)=\rH^1_{\Fcal_{\rm ord,\rm rel}}(K,T_pE\widehat\otimes_{\Z_p}\Lambda_K),\quad
\mathfrak{S}_{\rm ord}(E/K_\infty)=\rH^1_{\Fcal_{\rm ord,\rm ord}}(K,T_pE\widehat\otimes_{\Z_p}\Lambda_K),
\]
etc., and similarly for $E/K_\infty^\pm$ with $\Lambda_K$ in place of $\Lambda_K^\pm$, respectively.

\subsection{Iwasawa main conjectures}

The Iwasawa main conjecture for $E$, as formulated by Mazur \cite{mazur-towers}, \cite[\S{9.5}, Conj.~3]{M-SwD}, is the following. 

\begin{conjecture}[Mazur]\label{conj:IMC}
The module $\X_{\rm ord}(E/\Q_{\infty})$ is $\Lambda_\Q$-torsion, with 
\[
\ch_{\Lambda_\Q}\bigl(\X_{\rm ord}(E/\Q_{\infty})\bigr)=\bigl(\Lcal_p^{\rm MSD}(E/\Q)\bigr).
\]
\end{conjecture}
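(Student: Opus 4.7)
The plan is to deduce Mazur's main conjecture for $E$ (under the hypotheses of Theorem~\ref{thm:CYC}) from the anticyclotomic main conjecture (Theorem~\ref{thm:AC}) over an auxiliary imaginary quadratic field $K$ satisfying (\ref{eq:Heeg}), (\ref{eq:disc}), and (\ref{eq:spl}), using the Beilinson--Flach Euler system for $(f,\bg)$ as a bridge. Since Mazur's conjecture is invariant under isogeny, we may replace $E$ by W\"uthrich's distinguished curve $E_\bullet$, whose Tate module matches the geometric Kato lattice. By the factorization $\Lcal_p^{\rm PR}(E_\bullet/K)^+=\Lcal_p^{\rm MSD}(E_\bullet/\Q)\cdot\Lcal_p^{\rm MSD}(E_\bullet^K/\Q)$ of Proposition~\ref{prop:comp-Lcyc} together with the corresponding factorization on the Selmer side, it suffices to establish the equality
\[
\ch_{\Lambda_K^+}\bigl(\X_{\rm ord}(E_\bullet/K_\infty^+)\bigr)=\bigl(\Lcal_p^{\rm PR}(E_\bullet/K)^+\bigr)
\]
as ideals in $\Lambda_K^+$, since Kato (as refined by W\"uthrich) already yields the ``$\supseteq$'' inclusion and isogeny invariance then descends the equality from $E_\bullet$ and $E_\bullet^K$ to $E$ itself.

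To produce the reverse divisibility, I would first construct the two-variable Beilinson--Flach class $BF\in\rH^1_{\rm Iw}(K_\infty,T_pE_\bullet)$ via the Kings--Loeffler--Zerbes machinery applied to $(f,\bg)$, where $\bg$ is the CM Hida family through the $p$-irregular Eisenstein series ${\rm Eis}_{1,\eta}$ (using the refinement of \cite{BST} to handle the $p$-irregular specialization). The two explicit reciprocity laws then identify ${\rm loc}_{\bar v}(BF)$ with $\Lcal_p^{\rm PR}(E_\bullet/K)$ and ${\rm loc}_v(BF)$ with $\Lcal_p^{\rm Gr}(f/K)$, whose anticyclotomic projection agrees with $\Lcal_p^{\rm BDP}(f/K)$ up to a unit by Proposition~\ref{prop:comp-Lac}. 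Next I would pick a character $\alpha$ of $\Gamma_K^-$ with $\alpha\equiv 1\pmod{\varpi^m}$ for $m$ large, chosen so that $\Lcal_p^{\rm BDP}(f/K)(\alpha)\neq 0$; twisting $BF$ by $\alpha$ and running a Poitou--Tate/Euler system argument using Theorem~\ref{thm:AC}, whose full strength (including control near $\mathfrak{P}_0$ provided by the new Kolyvagin system input) pins down the characteristic ideal of ${\rm loc}_v(BF)$, yields the $\alpha$-twisted equality
\[
\ch_{\Lambda_K^+}\bigl(\X_{\rm ord}(E_\bullet(\alpha)/K_\infty^+)\bigr)=\bigl(\Lcal_p^{\rm PR}(E_\bullet(\alpha)/K)^+\bigr).
\]

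Finally, I would pass from the $\alpha$-twisted equality back to the desired untwisted statement by a congruence argument. Lemma~\ref{lem:cong-L} gives $\Lcal_p^{\rm PR}(E_\bullet(\alpha)/K)^+\equiv\Lcal_p^{\rm PR}(E_\bullet/K)^+\pmod{\varpi^m}$, and a parallel mod-$\varpi^m$ comparison of Selmer groups (using that the twist by $\alpha$ is trivial modulo $\varpi^m$) forces equality of their Iwasawa $\mu$- and $\lambda$-invariants with those of $\Lcal_p^{\rm PR}(E_\bullet/K)^+$ in the limit $m\to\infty$, upgrading Kato's one-sided divisibility to equality. Factoring through $\Lambda_K^+\simeq\Lambda_\Q$ via Proposition~\ref{prop:comp-Lcyc} recovers Mazur's conjecture for $E_\bullet$, and isogeny-invariance concludes the proof for $E$. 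The principal obstacle is the $\alpha$-twisted step: extending $BF$ along the $\alpha$-twist and executing the Euler system descent without losing sharp control at height-one primes of $\Lambda_K^+$ near the augmentation ideal requires the Kolyvagin-system refinement of Theorem~\ref{thm:Zp-twisted}, whose error terms must remain uniformly bounded as $\alpha\to 1$.
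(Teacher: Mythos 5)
Your proposal matches the paper's own strategy essentially step for step: replace $E$ by W\"uthrich's $E_\bullet$, reduce to the equality $\ch_{\Lambda_K^+}\bigl(\X_{\rm ord}(E_\bullet/K_\infty^+)\bigr)=\bigl(\Lcal_p^{\rm PR}(E_\bullet/K)^+\bigr)$ via Propositions~\ref{prop:comp-Lcyc} and \ref{prop:comp-Selcyc}, establish the $\alpha$-twisted analogue of this equality using the Beilinson--Flach Euler system together with the full-strength anticyclotomic input of Theorem~\ref{thm:AC}, and then pass back to $\alpha=1$ by the congruences of Lemma~\ref{lem:cong-L} and Proposition~\ref{prop:cong-Sel}. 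The one mechanism you leave implicit is how the anticyclotomic equality is actually converted into a cyclotomic one: the paper compares the Euler characteristics of $\X_{\rm Gr}(E_\bullet(\alpha)/K_\infty^+)$ and $\X_{\rm Gr}(E_\bullet(\alpha)/K_\infty^-)$ at the trivial character (Proposition~\ref{prop:euler-char}), combines this with $\Lcal_p^{\rm Gr}(f(\alpha)/K)^+(0)=\Lcal_p^{\rm Gr}(f(\alpha)/K)^-(0)$, and invokes a specialization argument to upgrade the Beilinson--Flach divisibility to equality; one also works throughout with the $S$-imprimitive Selmer groups and $p$-adic $L$-functions so that the mod-$\varpi^m$ comparison of characteristic power series behaves well.
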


More generally, a vast generalization of Mazur's main conjecture to $p$-adic deformations of motives formulated by Greenberg \cite{greenberg-reps,greenberg-motives},
predicts the following.

\begin{conjecture}[Greenberg]\label{conj:IMC-K}
The modules $\X_{\rm ord}(E/K^+_\infty)$ and $\X_{\rm Gr}(E/K_\infty^-)$ are torsion over $\Lambda_K^+$ and $\Lambda_K^-$ respectively, with
\begin{align*}
\ch_{\Lambda_K^+}\bigl(\X_{\rm ord}(E/K_{\infty}^+)\bigr)&=\bigl(\Lcal_p^{\rm PR}(E/K)^+\bigr),\\
\ch_{\Lambda_K^-}\bigl(\X_{\rm Gr}(E/K_{\infty}^-)\bigr)\Lambda_K^{-,\rm ur}&=\bigl(\Lcal_p^{\rm Gr}(f/K)^-\bigr).
\end{align*}
\end{conjecture}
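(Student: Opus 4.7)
The plan is to derive both equalities from the two main theorems stated in the introduction together with the $p$-adic $L$-function identifications established in Section~\ref{sec:Lp}. For the anticyclotomic statement, I would appeal to the well-known equivalence between Perrin-Riou's Heegner-point formulation (Conjecture~\ref{conj:PR}) and the Greenberg-style formulation in terms of $\Lcal_p^{\rm BDP}(f/K)$, as recorded in \cite[Thm.~5.2]{BCK}. Under the hypotheses of Theorem~\ref{thm:AC}, that equivalence combined with Theorem~\ref{thm:AC} yields
\[
\ch_{\Lambda_K^-}\bigl(\X_{\rm Gr}(E/K_{\infty}^-)\bigr)\Lambda_K^{-,\rm ur}=\bigl(\Lcal_p^{\rm BDP}(f/K)\bigr),
\]
and the desired equality in terms of $\Lcal_p^{\rm Gr}(f/K)^-$ then follows directly from Proposition~\ref{prop:comp-Lac}, which identifies the two $p$-adic $L$-functions up to a unit in $\Lambda_K^{-,\rm ur}$.

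For the cyclotomic statement, I would exploit that $K_\infty^+=K\cdot\Q_\infty$ and that $\Gamma_K^+\simeq\Gamma_\Q$. Since $\Gal(K/\Q)$ acts on $\X_{\rm ord}(E/K_\infty^+)$ (the whole setup being $\Gal(K/\Q)$-equivariant), one gets a decomposition into $\pm$-eigenspaces, which upon identifying the nontrivial eigenspace with the quadratic twist by $\eta_{K/\Q}$ produces a pseudo-isomorphism of $\Lambda_\Q$-modules
\[
\X_{\rm ord}(E/K_\infty^+)\sim \X_{\rm ord}(E/\Q_\infty)\oplus\X_{\rm ord}(E^K/\Q_\infty).
\]
Hypothesis (\ref{eq:spl}) ensures that the ordinary local conditions at each prime above $p$ in $K_\infty^+$ match the ordinary condition for $E$ (resp.\ $E^K$) at $p$ in $\Q_\infty$; the imprimitive contributions at finite primes of $\Sigma\setminus\{p\}$ split in $K$ match symmetrically between the two sides. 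On the analytic side, Proposition~\ref{prop:comp-Lcyc} factors $\Lcal_p^{\rm PR}(E/K)^+$ as $\Lcal_p^{\rm MSD}(E/\Q)\cdot\Lcal_p^{\rm MSD}(E^K/\Q)$ up to a unit. The isogeny character of $E^K$ is $\phi\cdot\eta_{K/\Q}$, and since $\eta_{K/\Q}$ is trivial on $G_p$ by (\ref{eq:spl}), it coincides with $\phi$ on $G_p$; hence Theorem~\ref{thm:CYC} applies to both $E$ and $E^K$, and multiplying the resulting characteristic ideals via the decomposition above gives the desired equality over $\Lambda_K^+$.

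The main obstacle in this reduction is not the descent/factorization itself---which is a routine bookkeeping once local conditions and Galois-eigenspace decompositions are tracked carefully---but rather the inputs, namely Theorems~\ref{thm:CYC} and~\ref{thm:AC}. The anticyclotomic input depends on the new Kolyvagin system argument at height-one primes near the augmentation ideal developed in Section~\ref{sec:anticyc}, while the cyclotomic input in turn relies on the anticyclotomic result together with the Beilinson--Flach Euler system, the $\alpha$-twist congruence argument sketched in the introduction, and W\"uthrich's integral refinement of Kato's divisibility. Beyond these, a subtler point to verify is that the pseudo-isomorphism above induces an \emph{equality} of characteristic ideals with no spurious correction at the augmentation ideal, which requires checking that neither summand contributes a nontrivial pseudo-null defect at the primes where the two local conditions glue---a check that can be made using the standard control theorems for Selmer groups over the cyclotomic tower.
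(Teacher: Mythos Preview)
The statement is a \emph{conjecture}, not a theorem with its own proof in the paper; the paper establishes it only under the hypotheses of Theorems~\ref{thm:CYC} and~\ref{thm:AC}, via Theorem~\ref{thm:BDP-IMC} (anticyclotomic part) and Theorem~\ref{thm:PR-IMC} (cyclotomic part). Your anticyclotomic argument is correct and is exactly what the paper does: Theorem~\ref{thm:AC} plus the equivalence with the $\Lcal_p^{\rm BDP}$-formulation, followed by Proposition~\ref{prop:comp-Lac}.

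Your cyclotomic argument, however, inverts the paper's logical flow and is therefore circular as written. In the paper, Theorem~\ref{thm:PR-IMC} (the $K_\infty^+$ equality you are trying to prove) is established \emph{directly} via the $\alpha$-twist congruence argument of Section~\ref{sec:MC}, and only \emph{afterwards} is Theorem~\ref{thm:CYC} deduced from it using Propositions~\ref{prop:comp-Lcyc} and~\ref{prop:comp-Selcyc} together with Kato's divisibility. You propose to run this backwards: take Theorem~\ref{thm:CYC} for $E$ and $E^K$ as input and reassemble the $K_\infty^+$ result via the eigenspace decomposition. That decomposition is indeed Proposition~\ref{prop:comp-Selcyc} (it is even an isomorphism, so your worry about pseudo-null defects is moot), and the argument is formally valid once Theorem~\ref{thm:CYC} is granted. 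But since the paper's proof of Theorem~\ref{thm:CYC} already passes through Theorem~\ref{thm:PR-IMC}, invoking it here is a detour that presupposes the very statement you want. The substantive content you should point to for the cyclotomic part is the direct $\alpha$-twist argument culminating in equation~\eqref{eq:alpha-equality} and Theorem~\ref{thm:PR-IMC}, not Theorem~\ref{thm:CYC}.
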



In this paper we shall prove Mazur's Main Conjecture~\ref{conj:IMC} (in the case where $p$ is a good Eisenstein prime for $E$) by first proving  Conjecture~\ref{conj:IMC-K} for a suitable $K$. 

\subsubsection{Isogeny invariance}

Conjectures~\ref{conj:IMC} and \ref{conj:IMC-K} are known to be invariant under isogenies. This follows from a computation in global duality due to Schneider and Perrin-Riou \cite{schneider-isogenies,perrinriou}.


\begin{prop}\label{prop:isog-inv}
Suppose $E_1/\Q$ and $E_2/\Q$ are isogenous elliptic curves with good ordinary reduction at $p$. Assume that $\mathfrak{X}_{\rm ord}(E_i/K_\infty^\pm)$ is $\Lambda_K^+$-torsion ($i=1,2$), and let $\mathcal{F}_{\rm ord}(E_i/K_\infty^+)$ and $\mathcal{F}_{\rm ord}(E_i/\Q_\infty)$ be characteristic power series for $\mathfrak{X}_{\rm ord}(E_i/K_\infty^+)$ and $\mathfrak{X}_{\rm ord}(E/\Q_\infty)$, respectively. Then we have the equalities up to a $p$-adic unit:
\begin{align*}
\Omega_{E_1}\cdot\Fcal_{\rm ord}(E_1/\Q_\infty)&\;\sim_p\;\Omega_{E_2}\cdot\Fcal_{\rm ord}(E_2/\Q_\infty),\\
\Omega_{E_1/K}\cdot\Fcal_{\rm ord}(E_1/K_\infty^+)&\;\sim_p\;\Omega_{E_2/K}\cdot\Fcal_{\rm ord}(E_2/K_\infty^+).
\end{align*} 
In particular, the main conjectures for $\mathfrak{X}_{\rm ord}(E/\Q_\infty)$ and $\mathfrak{X}_{\rm ord}(E/K_{\infty}^+)$ are both invariant under isogenies.
\end{prop}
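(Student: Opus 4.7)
My plan is to combine the classical Schneider--Perrin-Riou global duality computation on the Selmer side with a direct comparison of $p$-adic $L$-functions on the analytic side. Since an isogeny of degree prime to $p$ induces an isomorphism $E_1[p^\infty] \cong E_2[p^\infty]$ of $G_\Q$-modules and alters the N\'eron periods only by units in $\Z_p^\times$, both sides of each claimed equality are preserved up to $p$-adic units under such an isogeny. By the usual devissage along the factorization of a $p$-power isogeny into degree-$p$ steps, it therefore suffices to treat the case of a cyclic isogeny $\phi \colon E_1 \to E_2$ of degree exactly $p$, with kernel $C \cong \mathbb{F}_p(\psi)$ for some character $\psi \colon G_\Q \to \mathbb{F}_p^\times$.

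For the algebraic side, I would apply the snake lemma to the short exact sequence
\[
0 \to C \to E_1[p^\infty] \to E_2[p^\infty] \to 0,
\]
tensored with $(\Lambda_K^+)^\vee$ for the statement over $K$ (respectively $\Lambda_\Q^\vee$ for the statement over $\Q$), comparing the defining complexes of the Selmer groups $\X_{\rm ord}(E_i/K_\infty^+)$ (respectively $\X_{\rm ord}(E_i/\Q_\infty)$). Global duality and the Poitou--Tate exact sequence then reduce the comparison of characteristic ideals to a product of explicit local contributions at the places in $\Sigma$, each of which is a finitely generated torsion $\Lambda$-module whose characteristic power series is computable directly from $C$ and the local Galois representation at the relevant prime. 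This is precisely the calculation carried out in \cite{schneider-isogenies,perrinriou}, the output being that $\Fcal_{\rm ord}(E_1/\Q_\infty)/\Fcal_{\rm ord}(E_2/\Q_\infty) \sim_p \Omega_{E_2}/\Omega_{E_1}$, and likewise $\Fcal_{\rm ord}(E_1/K_\infty^+)/\Fcal_{\rm ord}(E_2/K_\infty^+) \sim_p \Omega_{E_2/K}/\Omega_{E_1/K}$.

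On the analytic side, the coincidences $L(E_1,\chi,s) = L(E_2,\chi,s)$ for every Dirichlet (resp.\ Hecke) character $\chi$, together with the equality of the unit Frobenius eigenvalues $\alpha_p$ and the conductors, give via the interpolation formulas of Theorem~\ref{thm:MSD} and Theorem~\ref{thm:PR} the identities
\[
\Omega_{E_1} \cdot \Lcal_p^{\rm MSD}(E_1/\Q) \;\sim_p\; \Omega_{E_2} \cdot \Lcal_p^{\rm MSD}(E_2/\Q),
\qquad
\Omega_{E_1/K} \cdot \Lcal_p^{\rm PR}(E_1/K)^+ \;\sim_p\; \Omega_{E_2/K} \cdot \Lcal_p^{\rm PR}(E_2/K)^+,
\]
where for the second identity one additionally exploits the period comparison recalled in Remark~\ref{rem:periods}. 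Combining these analytic identities with the algebraic identities above yields the two displayed equalities of the proposition, after which the isogeny invariance of the main conjectures themselves is formal.

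The main obstacle will be the careful matching of local factors in the Schneider--Perrin-Riou computation at primes $\ell \neq p$ of bad reduction: one must verify that the contribution to the characteristic ideal coming from $\rH^1(\Q_\ell, C \otimes \Lambda^\vee)$ (or its $K$-analogue) cancels against the corresponding Tamagawa-factor discrepancy between $E_1$ and $E_2$, leaving exactly the period ratio. A clean way to handle this is to compute both the algebraic and the analytic sides modulo Tamagawa factors and then verify term-by-term compatibility, using the well-known invariance of the full Birch--Swinnerton-Dyer quotient under isogeny. This matches precisely the scope of the Schneider--Perrin-Riou formula, so the argument ultimately reduces to citing those works.
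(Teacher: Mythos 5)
Your proof takes essentially the same route as the paper's: the paper's proof is a one-line citation of \cite[Appendice]{perrinriou}, and your argument likewise reduces the displayed equalities to the Schneider--Perrin-Riou global duality computation via the standard devissage to degree-$p$ isogenies. One point to fix: the two displayed equalities of the proposition are purely algebraic, being the direct output of that duality computation, so your sentence about ``combining these analytic identities with the algebraic identities above'' to obtain the displayed equalities has the logic reversed. The analytic comparison of $p$-adic $L$-functions (which, as you correctly observe, is immediate from the interpolation formulas of Theorems~\ref{thm:MSD} and~\ref{thm:PR} since the period-normalized $p$-adic $L$-function depends only on the newform $f$) is needed only to deduce the concluding ``In particular'' claim about isogeny-invariance of the main conjectures, not to establish the displayed equalities themselves.
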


\begin{proof}
See \cite[Appendice]{perrinriou}.
\end{proof}

Although not directly needed for our arguments, we also note that the isogeny invariance of the main conjecture for $\mathfrak{X}_{\rm Gr}(E/K_\infty^-)$ similarly follows from the main result of \cite{PR-isogenies} (see \cite[Prop.\,2.9]{KO-iwasawa}).

\subsection{Imprimitive Selmer groups}\label{subsec:imp}

For any subset $S'\subset\Sigma$ consisting of primes away from $p$, we define $S'$-imprimitive versions of the  Selmer groups of $\S\ref{subsec:discrete-Sel}$ by relaxing the local conditions at the primes $w\in S'$, e.g. for $M=T_pE\otimes\Lambda_\Q^\vee$:
\[
\rH^1_{\Fcal_{\rm ord}^{S'}}(\Q,M)=\ker \biggl(\rH^1(G_{\Q,\Sigma},M) \to \prod_{w\in\Sigma\smallsetminus S',w\nmid p}\rH^1(\Q_w, M)\times\frac{\rH^1(\Q_{p},M)}{\rH^1_{\rm ord}(\Q_{p},M)}\biggr).
\] 
%
We denote with a superscript $S'$ the Pontryagin duals of these modules:
\begin{align*}
\X_{\rm ord}^{S'}(E/\Q_{\infty})&=\rH^1_{\Fcal_{\ord}^{S'}}(\Q, T_pE\otimes \Lambda_\Q^{\vee})^{\vee},\\
\X_{\rm ord}^{S'}(E/K^\pm_\infty)&=\rH^1_{\Fcal_{\ord}^{S'}}(K, T_pE\otimes (\Lambda^\pm_K)^{\vee})^{\vee},\\
\X_{\rm Gr}^{S'}(E/K_\infty^\pm)&=\rH^1_{\Fcal_{\rm Gr}^{S'}}(K, T_pE\otimes (\Lambda^\pm_K)^{\vee})^{\vee}.
\end{align*}

The next result will be used to descend from $K$ to $\Q$ (\emph{cf.} Proposition~\ref{prop:comp-Lcyc}). 
As done here, in the following we shall often identify the Iwasawa algebras $\Lambda_K^+$ and $\Lambda_\Q$ (via the natural projection $\Lambda_K^+\stackrel{\sim}{\rightarrow}\Gamma_\Q$).

\begin{prop}\label{prop:comp-Selcyc}
Let $S'\subset\Sigma$ be any subset of primes not lying above $p$. Then the restriction map from $G_\Q$ to $G_K$ induces a $\Lambda_\Q$-module isomorphism 
\[
\X_{\rm ord}^{S'}(E/K_{\infty}^+)\simeq \X_{\rm ord}^{S'}(E/\Q_\infty)\oplus\X_{\rm ord}^{S'}(E^K/\Q_\infty).
\]
In particular, 
\[
\ch_{\Lambda^+_K}\bigl(\X_{\rm ord}^{S'}(E/K_{\infty}^+)\bigr)=\ch_{\Lambda_{\Q}}\bigl(\X_{\rm ord}^{S'}(E/\Q_\infty)\bigr)\cdot \ch_{\Lambda_{\Q}}\bigl(\X_{\rm ord}^{S'}(E^K/\Q_\infty)\bigr).
\]
\end{prop}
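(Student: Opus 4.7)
The plan is to establish the isomorphism via Shapiro's lemma for the restriction $G_K \hookrightarrow G_\Q$, together with a decomposition of the induced representation, and to check that the local conditions match on either side.

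\textbf{Step 1 (Shapiro's lemma).} Since $K_\infty^+ = K\cdot\Q_\infty$ and the projection $\Gamma_K^+\stackrel{\sim}{\to}\Gamma_\Q$ identifies $\Lambda_K^+$ with $\Lambda_\Q$ as topological rings, the $G_K$-action on $(\Lambda_K^+)^\vee$ factors through $G_\Q\twoheadrightarrow\Gamma_\Q$; in other words, $T_pE\otimes(\Lambda_K^+)^\vee$ is the restriction to $G_K$ of a $G_\Q$-module canonically isomorphic to $T_pE\otimes\Lambda_\Q^\vee$. Shapiro's lemma then yields
\[
\rH^1\bigl(G_K,T_pE\otimes(\Lambda_K^+)^\vee\bigr)\simeq\rH^1\bigl(G_\Q,\mathrm{Ind}_{G_K}^{G_\Q}(T_pE\otimes\Lambda_\Q^\vee)\bigr).
\]
Since $[K\colon\Q]=2$ and $T_pE\otimes\Lambda_\Q^\vee$ already extends to a $G_\Q$-module, the induced representation splits as
\[
\mathrm{Ind}_{G_K}^{G_\Q}(T_pE\otimes\Lambda_\Q^\vee)\simeq (T_pE\otimes\Lambda_\Q^\vee)\oplus(T_pE^K\otimes\Lambda_\Q^\vee),
\]
where we have used $T_pE^K\simeq T_pE\otimes\eta$ with $\eta=\eta_{K/\Q}$. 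This gives the desired decomposition for global cohomology (after enlarging $\Sigma$ to include the primes dividing $D_K$, which changes neither Selmer group by the usual unramified-Euler-characteristic argument, since conditions at primes of good reduction away from $p$ are automatic up to terms that cancel in pairs).

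\textbf{Step 2 (Matching local conditions).} The hypothesis that every finite prime in $\Sigma$ splits in $K$ ensures that for each finite $w\in\Sigma$, the two primes $v,\bar{v}\vert w$ of $K$ satisfy $K_v=K_{\bar{v}}=\Q_w$, and the semi-local cohomology $\rH^1(K_v,M_K^+)\oplus\rH^1(K_{\bar v},M_K^+)$ corresponds under Shapiro's lemma to $\rH^1(\Q_w,T_pE\otimes\Lambda_\Q^\vee)\oplus\rH^1(\Q_w,T_pE^K\otimes\Lambda_\Q^\vee)$. For $w\in\Sigma\smallsetminus S'$ with $w\nmid p$, the ``class equals zero'' condition on both $v$ and $\bar{v}$ on the left matches the strict condition on each $\Q$-side summand; for $w\in S'$, no condition is imposed on either side. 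At the prime $p=v\bar{v}$, the splitting hypothesis gives $\eta\vert_{G_{\Q_p}}=1$, hence $T_pE^K\vert_{G_{\Q_p}}\simeq T_pE\vert_{G_{\Q_p}}$ and $\mathrm{Fil}_p^+T_pE^K=\mathrm{Fil}_p^+T_pE$. Therefore the ordinary conditions at $v$ and $\bar{v}$ translate under Shapiro into the ordinary condition at $p$ on each summand.

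\textbf{Step 3 (Conclusion).} Combining Steps 1 and 2 gives a $G_\Q$-equivariant isomorphism
\[
\rH^1_{\Fcal_\ord^{S'}}\bigl(K,T_pE\otimes(\Lambda_K^+)^\vee\bigr)\simeq\rH^1_{\Fcal_\ord^{S'}}\bigl(\Q,T_pE\otimes\Lambda_\Q^\vee\bigr)\oplus\rH^1_{\Fcal_\ord^{S'}}\bigl(\Q,T_pE^K\otimes\Lambda_\Q^\vee\bigr),
\]
which is $\Lambda_\Q$-linear under the identification $\Lambda_K^+=\Lambda_\Q$. Taking Pontryagin duals yields the asserted isomorphism, and multiplicativity of characteristic ideals under direct sums gives the factorization of $\ch_{\Lambda_K^+}(\X_{\rm ord}^{S'}(E/K_\infty^+))$. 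The main obstacle is the bookkeeping in Step 2: one must verify that the ordinary filtration for $(T_pE^K)\otimes\Lambda_\Q^\vee$ really is the descent of the ordinary filtration at $v$ (resp.\ $\bar{v}$) of $M_K^+$, and similarly that enlarging $\Sigma$ to accommodate the ramification of $\eta$ does not alter any of the Selmer groups --- both points are standard, but they are where the splitting hypothesis (\ref{eq:spl}) and the assumption that primes in $\Sigma$ split in $K$ are essential.
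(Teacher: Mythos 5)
Your proof is correct and follows the same Shapiro's-lemma strategy that the paper invokes via its one-line citation of \cite[Lem.~3.6]{skinner-urban}: use Shapiro to identify $\rH^1(G_K, T_pE\otimes(\Lambda_K^+)^\vee)$ with the $G_\Q$-cohomology of the induced module, split the induced module as $(T_pE\oplus T_pE^K)\otimes\Lambda_\Q^\vee$, and then match the local conditions summand by summand, with the splitting of $p$ in $K$ ensuring the ordinary filtrations on the two summands coincide at $p$. The one place where the justification should be tightened is the handling of the primes $\ell\mid D_K$: the reason enlarging $\Sigma$ to $\Sigma\cup\{\ell\mid D_K\}$ is harmless is not an Euler-characteristic cancellation, but rather that $\eta|_{I_\ell}$ is the nontrivial quadratic character and $p$ is odd, so $(T_pE^K\otimes\Lambda_\Q^\vee)^{I_\ell}=0$ and hence $\rH^1_{\rm ur}(\Q_\ell,T_pE^K\otimes\Lambda_\Q^\vee)=0$, while for the untwisted summand any class coming from $\rH^1(G_{\Q,\Sigma},T_pE\otimes\Lambda_\Q^\vee)$ is already unramified at $\ell$.
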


\begin{proof}
This follows readily from the inflation-restriction exact sequence and Shapiro's lemma (see e.g. \cite[Lem.\,3.6]{skinner-urban}).
%
\end{proof}



\subsubsection{From imprimitive to primitive}

As observed by Greenberg, imprimitive Selmer groups as above tend to have better properties with respect to congruences than their primitive counterparts. For our arguments, we shall also find it convenient to work first with imprimitive Selmer group, and so the next results will be useful.

\begin{prop}\label{prop:prim-imprim-ord}
Assume that $E(K)[p]=0$ and that $\X_{\rm ord}(E/K_\infty^+)$ is $\Lambda_K^+$-torsion. Then for any $S'\subset\Sigma$ consisting of primes away from $p$, the Selmer group $\X_{\rm ord}^{S'}(E/K_\infty^+)$ is also $\Lambda_K^+$-torsion, with
\[
\ch_{\Lambda_K^+}\bigl(\X_{\rm ord}^{S'}(E/K_\infty^+)\bigr)=
\ch_{\Lambda_K^+}\bigl(\X_{\rm ord}(E/K_\infty^+)\bigr)\cdot\prod_{w\in S'}\bigl(\mathcal{P}_w^+(1)\bigr),
\]
where $\mathcal{P}_w^+(1)\in\Lambda_K^+$ is as in Definition~\ref{def:w-Euler}, with $\alpha=1$.
\end{prop}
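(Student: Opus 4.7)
The plan is to compare the primitive and $S'$-imprimitive Selmer groups via the tautological short exact sequence coming from their definitions, and then identify each local term with the Euler factor $\mathcal{P}_w^+(1)$. Concretely, writing $M=T_pE\otimes(\Lambda_K^+)^\vee$, relaxing the local conditions at the primes $w\in S'$ (which for the primitive group is the strict condition $\{0\}$ at primes not above $p$) yields the four-term exact sequence
\[
0\longrightarrow\rH^1_{\Fcal_{\rm ord}}(K,M)\longrightarrow\rH^1_{\Fcal_{\rm ord}^{S'}}(K,M)\stackrel{\rho}\longrightarrow\bigoplus_{w\in S'}\rH^1(K_w,M)\longrightarrow\mathrm{coker}(\rho)\to 0,
\]
whose Pontryagin dual gives
\[
0\to\mathrm{coker}(\rho)^\vee\to\bigoplus_{w\in S'}\rH^1(K_w,M)^\vee\to\X_{\rm ord}^{S'}(E/K_\infty^+)\to\X_{\rm ord}(E/K_\infty^+)\to 0.
\]

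The first step is to compute, for each $w\in S'$, the characteristic ideal of $\rH^1(K_w,M)^\vee$ as a $\Lambda_K^+$-module. Since $w\nmid p$, the prime $w$ is unramified in $K_\infty^+/K$, and Shapiro's lemma together with the standard identification of local Iwasawa cohomology at primes away from $p$ gives, up to pseudo-isomorphism,
\[
\rH^1(K_w,M)^\vee\;\sim\;\bigl(T_pE\otimes\Lambda_K^+\bigr)^{I_w}\big/\bigl(1-\ell^{-1}\gamma_w^+\cdot\mathrm{Frob}_w\bigr),
\]
whose characteristic ideal is precisely $\bigl(\det(1-\mathrm{Frob}_w\cdot\ell^{-1}\gamma_w^+\mid V_pE^{I_w})\bigr)=\bigl(\mathcal{P}_w^+(1)\bigr)$ by Definition~\ref{def:w-Euler}.

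The second (and main) step is to show that $\rho$ is surjective, so that $\mathrm{coker}(\rho)^\vee=0$. This is where the hypothesis $E(K)[p]=0$ enters. By Poitou--Tate global duality, the cokernel of $\rho$ is controlled by the Pontryagin dual of a dual Selmer group attached to $T_pE\widehat\otimes\Lambda_K^+$ cut out by the ``orthogonal'' local conditions: strict at the primes $w\in S'$, relaxed at all $w\in\Sigma\setminus(S'\cup\{p\})$, and the Greenberg-type condition at the primes above $p$ complementary to $\rH^1_{\rm ord}$. Using that $\X_{\rm ord}(E/K_\infty^+)$ is $\Lambda_K^+$-torsion, the control theorem forces this dual Selmer to have $\Lambda_K^+$-corank $0$, and the hypothesis $E(K)[p]=0$, applied at the bottom of the tower via a standard control argument, upgrades this to the outright vanishing of $\mathrm{coker}(\rho)^\vee$ (equivalently, the surjectivity of $\rho$). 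I expect this surjectivity to be the most delicate point of the argument.

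With surjectivity of $\rho$ in hand, the displayed four-term exact sequence reduces to the short exact sequence
\[
0\to\bigoplus_{w\in S'}\rH^1(K_w,M)^\vee\to\X_{\rm ord}^{S'}(E/K_\infty^+)\to\X_{\rm ord}(E/K_\infty^+)\to 0,
\]
showing at once that $\X_{\rm ord}^{S'}(E/K_\infty^+)$ is $\Lambda_K^+$-torsion and that its characteristic ideal is the product of the characteristic ideals of the three non-trivial terms; combined with Step~1 this yields the stated formula
\[
\ch_{\Lambda_K^+}\bigl(\X_{\rm ord}^{S'}(E/K_\infty^+)\bigr)=\ch_{\Lambda_K^+}\bigl(\X_{\rm ord}(E/K_\infty^+)\bigr)\cdot\prod_{w\in S'}\bigl(\mathcal{P}_w^+(1)\bigr). \qedhere
\]
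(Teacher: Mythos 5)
Your proposal is correct and follows essentially the same route as the paper's proof: form the short exact sequence comparing the $S'$-imprimitive and primitive Selmer groups (which requires the surjectivity of the global-to-local map), then identify each local term's characteristic ideal with the Euler factor $\mathcal{P}_w^+(1)$ and multiply characteristic ideals. The only cosmetic difference is that where you sketch the surjectivity via a direct Poitou--Tate/dual-Selmer argument and the local computation via Shapiro's lemma, the paper instead cites \cite[Prop.~A.2]{pollack2011anticyclotomic} for the first and \cite[Prop.~2.4]{greenvats} for the second.
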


\begin{proof}
From the assumption that $E(K)[p]=0$, we see that the $G_{K_\infty^+}$-invariants of ${\rm Hom}(T_pE,\mu_{p^\infty})$ are trivial, and so by \cite[Prop.~A.2]{pollack2011anticyclotomic} the global-to-local map defining $\rH^1_{\Fcal_{\rm ord}}(K,T_pE\otimes(\Lambda_K^+)^\vee)$ as in (\ref{eq:defSelK}) is surjective. We therefore find an exact sequence
\begin{equation}\label{eq:seq-imp-ord}
0\to\prod_{w\in S'}\rH^1(K_w,T_pE\otimes(\Lambda_K^+)^\vee)^\vee\to\X_{\rm ord}^{S'}(E/K_\infty^+)\to\X_{\rm ord}(E/K_\infty^+)\to 0.
\end{equation}
Since the primes $w\in S'$ split in $K$, by  \cite[Prop.~2.4]{greenvats} the module $\rH^1(K_w,T_pE\otimes(\Lambda_K^+)^\vee)^\vee$ is $\Lambda_K^+$-torsion, with
\[
\ch_{\Lambda_K^+}\bigl(\rH^1(K_w,T_pE\otimes(\Lambda_K^+)^\vee)^\vee\bigr)=\bigl(\mathcal{P}_w^+(1)\bigr).
\]
The result now follows by taking characteristic ideals in (\ref{eq:seq-imp-ord}).
\end{proof}

\begin{cor}\label{cor:imp-IMC} 
Assume that $E(K)[p]=0$ and let $S'\subset\Sigma$ be any subset consisting of primes away from $p$. Then Conjecture~\ref{conj:IMC-K} for $\mathfrak{X}_{\rm ord}(E/K_\infty^+)$ holds if and only if $\mathfrak{X}_{\rm ord}^{S'}(E/K_\infty^+)$ is $\Lambda_K^+$-torsion, with
\[
\ch_{\Lambda_K^+}\bigl(\X_{\rm ord}^{S'}(E/K_{\infty}^+)\bigr)=\bigl(\Lcal_p^{\rm PR}(E/K)^{+,S'}\bigr).
\]
\end{cor}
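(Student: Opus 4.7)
The plan is to deduce this corollary from Proposition~\ref{prop:prim-imprim-ord} by matching up Euler factors on both sides of the conjectural equality of characteristic ideals. First I would note that, specializing Definition~\ref{def:w-Euler} to $\alpha = 1$, the imprimitive $p$-adic $L$-function factors as
\[
\Lcal_p^{\rm PR}(E/K)^{+,S'} = \Lcal_p^{\rm PR}(E/K)^+\cdot\prod_{w\in S'}\mathcal{P}_w^+(1),
\]
so the only question is how the passage from the primitive Selmer group to the imprimitive one affects characteristic ideals. This is exactly what Proposition~\ref{prop:prim-imprim-ord} computes, with the same Euler factors $\mathcal{P}_w^+(1)$ appearing on the algebraic side.

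For the forward direction, assume the primitive main conjecture: $\mathfrak{X}_{\rm ord}(E/K_\infty^+)$ is $\Lambda_K^+$-torsion with characteristic ideal $(\Lcal_p^{\rm PR}(E/K)^+)$. Then Proposition~\ref{prop:prim-imprim-ord} applies (its torsion hypothesis is satisfied) and gives that $\mathfrak{X}^{S'}_{\rm ord}(E/K_\infty^+)$ is also $\Lambda_K^+$-torsion with
\[
\ch_{\Lambda_K^+}\bigl(\mathfrak{X}^{S'}_{\rm ord}(E/K_\infty^+)\bigr) = \bigl(\Lcal_p^{\rm PR}(E/K)^+\bigr)\cdot\prod_{w\in S'}\bigl(\mathcal{P}_w^+(1)\bigr) = \bigl(\Lcal_p^{\rm PR}(E/K)^{+,S'}\bigr).
\]

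For the reverse direction, suppose $\mathfrak{X}^{S'}_{\rm ord}(E/K_\infty^+)$ is $\Lambda_K^+$-torsion with characteristic ideal $(\Lcal_p^{\rm PR}(E/K)^{+,S'})$. The exact sequence (\ref{eq:seq-imp-ord}) from the proof of Proposition~\ref{prop:prim-imprim-ord} exhibits $\mathfrak{X}_{\rm ord}(E/K_\infty^+)$ as a quotient of the torsion module $\mathfrak{X}^{S'}_{\rm ord}(E/K_\infty^+)$, so it is itself torsion. Hence Proposition~\ref{prop:prim-imprim-ord} applies and yields
\[
\ch_{\Lambda_K^+}\bigl(\mathfrak{X}_{\rm ord}(E/K_\infty^+)\bigr)\cdot\prod_{w\in S'}\bigl(\mathcal{P}_w^+(1)\bigr) = \bigl(\Lcal_p^{\rm PR}(E/K)^+\bigr)\cdot\prod_{w\in S'}\bigl(\mathcal{P}_w^+(1)\bigr).
\]
The only minor point to check is that one can cancel the Euler factors: each $\mathcal{P}_w^+(1) = P_w(\ell^{-1}\gamma_w^+)$ is a nonzero element of the UFD $\Lambda_K^+$ (the polynomial $P_w$ has constant term $1$, so this is clear), and the characteristic ideals in question are principal, so the equality of ideals persists after dividing. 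This gives the primitive main conjecture, completing the proof. No substantial obstacle is expected; the argument is purely formal once Proposition~\ref{prop:prim-imprim-ord} and Definition~\ref{def:w-Euler} are in hand.
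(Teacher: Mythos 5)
Your proof is correct and follows the same route as the paper's, which simply observes that the corollary is immediate from Definition~\ref{def:w-Euler} and Proposition~\ref{prop:prim-imprim-ord} once one notes that each $\mathcal{P}_w^+(1)$ is nonzero in $\Lambda_K^+$. You have merely made both directions of the equivalence explicit, including the (always valid, since it is dual to an inclusion of Selmer groups) observation that $\X_{\rm ord}(E/K_\infty^+)$ is a quotient of $\X_{\rm ord}^{S'}(E/K_\infty^+)$, hence torsion whenever the latter is.
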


\begin{proof}
Since for any prime $w\in S'$, the element $\mathcal{P}_w^+(1)\in\Lambda_K^+$ is nonzero, this is clear from Definition~\ref{def:w-Euler} and Proposition~\ref{prop:prim-imprim-ord}.
\end{proof}



\subsection{Anticyclotomic twists and congruences}\label{sec:seltwist}

We now introduce the twisted variants of the Selmer groups from the preceding sections that we shall need. The results of Proposition~\ref{prop:euler-char} and Proposition~\ref{prop:cong-Sel} will play an important role later.


As in $\S\ref{subsec:tw-L}$, let $\Phi$ be a finite extension of $\Q_p$, and let $R$ be the ring of integers of $\Phi$ with uniformizer $\varpi$. We consider a character $\alpha:\Gamma_K^-\to R^{\times}$ which satisfies
\[
\alpha\equiv 1\;({\rm mod}\,\varpi^m)
\]
for some $m>0$. Let $S'\subset\Sigma$ be any subset consisting of primes away from $p$. Replacing $T_pE$ by the twist 
\[
T_\alpha:=T_pE\otimes_{\Z_p}R(\alpha),
\]  
we define (imprimitive) Selmer groups $\rH^1_{\Fcal_{\rm ord}^{S'}}(K,T_\alpha\otimes(\Lambda_K^\pm)^\vee)$ and $\rH^1_{\Fcal_{\rm Gr}^{S'}}(K_,T_\alpha\otimes(\Lambda_K^\pm)^\vee)$ (with Pontryagin duals $\X^{S'}_{\rm ord}(E(\alpha)/K_\infty^\pm)$ and $\X_{\Gr}^{S'}(E(\alpha)/K_\infty^\pm)$, respectively)  
in the same way as before, replacing ${\rm Fil}_w^+(T_pE)$ by ${\rm Fil}_w^+(T_pE)\otimes_{\Z_p}R(\alpha)$ in Definition \ref{defilocalconds}. We also consider their counterparts with $K_\infty^\pm$ and $\Lambda_K^\pm$ replaced by $K_\infty$ and $\Lambda_K$, respectively. 
Each of these Selmer groups is a module for the Iwasawa algebra $\Lambda_{R}= R\widehat\otimes_{\Z_p}\Lambda$ with $\Lambda$ either $\Lambda_\Q$, $\Lambda_K$, or $\Lambda_K^\pm$ (per the definitions).

Put $V_\alpha=T_\alpha\otimes\Q_p$ and $W_\alpha=T_\alpha\otimes\Q_p/\Z_p=V_\alpha/T_\alpha$. We shall also need to consider the following variant of the above Selmer groups for the module $W_\alpha:=T_\alpha\otimes\Q_p/\Z_p$:
\[
\rH^1_{\Fcal_{\rm Gr}^{S'}}(K,V_\alpha):=\ker\Biggl(\rH^1(G_{K,\Sigma},V_\alpha)\rightarrow
\prod_{w\in\Sigma\smallsetminus S', w\nmid p}\rH^1(K_w,V_\alpha)\times\rH^1(K_{\bar{v}},V_\alpha)\Biggr),
\]
and the resulting $\rH^1_{\Fcal_{\rm Gr}^{S'}}(K,T_\alpha)$ and 
$\rH^1_{\Fcal_{\rm Gr}^{S'}}(K,W_\alpha)$ obtained by propagation via $0\rightarrow T_\alpha\rightarrow V_\alpha\rightarrow W_\alpha\rightarrow 0$. 
We also consider the 
Selmer group $\rH^1_{\Fcal_{\rm ord}}(K,V_\alpha)$ consisting of classes that land in 
\[
%
\rH^1_{\rm ord}(K_w,V_\alpha):={\rm im}(\rH^1(K_w,{\rm Fil}_w^+V_\alpha)\rightarrow\rH^1(K_w,V_\alpha))
\] 
at the primes $w\mid p$ and are trivial at the primes $w\nmid p$, and its counterparts $\rH^1_{\Fcal_{\rm ord}}(K,T_\alpha)$ and $\rH^1_{\Fcal_{\rm ord}}(K,W_\alpha)$ defined by propagating the local conditions.

\subsubsection{Ancillary results for $\X_{\rm Gr}(E(\alpha)/K_\infty^\pm)$}
The main result of this section is a relation between the specializations  of the cyclotomic Selmer group $\X_{\rm Gr}(E(\alpha)/K_\infty^+)$ and the anticyclotomic Selmer group $\X_{\rm Gr}(E(\alpha)/K_\infty^-)$ at the trivial character. 

Define
\[
M_\alpha^\pm=T_\alpha\widehat\otimes_{\Z_p}(\Lambda_K^\pm)^\vee,
\]
and for any $S'\subset\Sigma$ consisting of primes not dividing $p$, put
\[
\mathcal{P}_{\rm Gr}(M_\alpha;S'):=\rH^1(K_{\bar{v}},M_\alpha)\times
\prod_{w\in\Sigma\smallsetminus S', w\nmid p}\rH^1(K_w,M_\alpha),
\]
and similarly,
\[
\mathcal{P}_{\Gr}(W_\alpha;S'):=\frac{\rH^1(K_v,W_\alpha)}{\rH^1(K_v,W_\alpha)_{\rm div}}\times\rH^1(K_{\bar{v}},W_\alpha)\times
\prod_{w\in\Sigma\smallsetminus S', w\nmid p}\rH^1(K_w,W_\alpha),
\]
so in particular $\rH^1_{\Fcal_{\Gr}^{S'}}(K,W_\alpha)$ is the kernel of the global-to-local map $\rH^1(G_{K,\Sigma},W_\alpha)\rightarrow\mathcal{P}_{\Gr}(W_\alpha;S')$.

\begin{lemma}\label{lem:coinv}
Assume $E(K)[p]=0$ and $\alpha:\Gamma_K^-\rightarrow R^\times$ is a crystalline character such that:
\begin{itemize}
\item[(a)] ${\rm corank}_{R}\rH^1_{\Fcal_{\rm BK}}(K,W_{\alpha^{-1}})=1$,
\item[(b)] The restriction map
\[
\rH^1_{\Fcal_{\rm ord}}(K,W_{\alpha^{-1}})\xrightarrow{{\rm loc}_v}\rH^1_{\rm ord}(K_v,W_{\alpha^{-1}})
\]
is nonzero. 
\end{itemize}
Then the following hold:
\begin{itemize}
\item[(i)] $\rH^1_{\Fcal_{\rm Gr}}(K,W_\alpha)$ is finite, and  $\rH^1_{\Fcal_{\rm Gr}}(K,T_\alpha)=0$.
\item[(ii)] $\rH^1(G_{K,\Sigma},M_\alpha^\pm)_{\Gamma_K^\pm}=0$.
\item[(iii)] For any $S'\subset\Sigma$ consisting of primes away from $p$, $\rH^1_{\Fcal_{\rm Gr}^{S'}}(K,M_\alpha^\pm)_{\Gamma_K^\pm}=0$.
\end{itemize}
\end{lemma}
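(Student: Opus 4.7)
The strategy, following the approach of \cite[\S 3]{jsw}, is to establish (i) first via Poitou--Tate global duality at the bottom level, and then to derive (ii) and (iii) through control-theorem arguments bootstrapped from (i).

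For (i), assumption (a) gives $\dim_\Phi \rH^1_{\Fcal_{\mathrm{BK}}}(K,V_{\alpha^{-1}})=1$; by (b), the map $\loc_v$ on this $1$-dimensional space is nonzero, hence injective. Its kernel---the strict-at-$v$ subgroup $\rH^1_{\Fcal_{\mathrm{str},\ord}}(K,V_{\alpha^{-1}})$---therefore vanishes. Since $\alpha$ is crystalline and $E$ is ordinary at $p$, the Bloch--Kato and Greenberg ordinary local conditions at $v$ and $\bar v$ coincide, and are self-matched under local Tate duality via the Weil pairing. By Poitou--Tate global duality together with a Greenberg--Wiles local Euler-characteristic count (whose local contributions cancel precisely because of this self-duality), the above vanishing propagates to $\rH^1_{\Fcal_{\mathrm{Gr}}}(K,V_\alpha)=0$. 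The exact sequence $0\to T_\alpha\to V_\alpha\to W_\alpha\to 0$, combined with $V_\alpha^{G_K}=0$ (implied by $E(K)[p]=0$), then yields the finiteness of $\rH^1_{\Fcal_{\mathrm{Gr}}}(K,W_\alpha)$ and the vanishing $\rH^1_{\Fcal_{\mathrm{Gr}}}(K,T_\alpha)=0$.

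For (ii) and (iii), Pontryagin--Tate duality reinterprets the claims as saying that $\rH^1_{\mathrm{Iw}}(K_\infty^\pm/K,T_{\alpha^{-1}})$ and $\X_{\mathrm{Gr}}^{S'}(E(\alpha)/K_\infty^\pm)$, respectively, have no $(\gamma^\pm-1)$-torsion. Via the standard control-theorem formalism---Hochschild--Serre for $\Gamma_K^\pm$ combined with the snake lemma on the local-to-global sequence defining the Selmer group---each reduces to the vanishing of $\rH^1_{\Fcal_{\mathrm{Gr}}^{S'}}(K,T_\alpha)$ (which follows from (i), since imprimitivization at primes $w\in S'$ does not disturb the injection into $\rH^1_{\Fcal_{\mathrm{Gr}}}(K,V_\alpha)=0$), together with the vanishing of the relevant $H^0$-invariants (which follows from $E(K)[p]=0$). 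For (iii) one must additionally rule out $(\gamma^\pm-1)$-invariants arising from the relaxed local terms at $w\in S'$: these are controlled by the imprimitive Euler factors $\mathcal{P}_w^\pm(\alpha)$, whose values at the augmentation ideal of $\Lambda_K^\pm$ reduce modulo $\varpi^m$ to the $p$-adic unit $|\tilde{E}(\Ff_\ell)|/\ell$, so $\mathcal{P}_w^\pm(\alpha)$ is coprime to $(\gamma^\pm-1)$.

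\textbf{Main obstacle.} The principal technical step is the control-theorem argument in (iii), requiring careful tracking of the local cohomological contributions at primes in $S'$ to verify that they introduce no spurious $(\gamma^\pm-1)$-invariants in the Iwasawa Selmer module. While (ii) and the primitive analogue of (iii) follow from standard Iwasawa-theoretic descent, the imprimitive refinement hinges on the non-vanishing of the Euler factors $\mathcal{P}_w^\pm(\alpha)$ modulo the augmentation---the key input that ties the argument back to the specific congruence $\alpha\equiv 1\pmod{\varpi^m}$.
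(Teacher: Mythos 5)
Your sketch of part (i) is essentially the right idea and matches the spirit of what the paper cites (\cite[Prop.~3.2.1]{jsw}): conditions (a) and (b) kill the strict-at-$v$ Bloch--Kato Selmer group for $V_{\alpha^{-1}}$, and a duality/Euler-characteristic comparison then transfers this to $\rH^1_{\Fcal_{\rm Gr}}(K,V_\alpha)$. One caveat: the aside that ``the Bloch--Kato and Greenberg \emph{ordinary} local conditions coincide'' is a red herring here, since $\Fcal_{\rm Gr}$ in this lemma is the relaxed-at-$v$/strict-at-$\bar v$ structure, not the ordinary one, so the needed comparison is between two Selmer groups that genuinely differ at $v$ and $\bar v$.

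Your treatment of (ii) and (iii), however, diverges from the paper and contains real gaps. First, you conflate the two claims: (ii) concerns $\rH^1(G_{K,\Sigma},M_\alpha^\pm)_{\Gamma_K^\pm}$ for the \emph{full} global $\rH^1$ (no Selmer condition), so ``the snake lemma on the local-to-global sequence defining the Selmer group'' has nothing to act on; the paper instead observes that $\rH^1(G_{K,\Sigma},M_\alpha^\pm)_{\Gamma_K^\pm}$ injects into $\rH^2(G_{K,\Sigma},W_\alpha)$, and kills the latter by a weight/purity argument at the local places plus Poitou--Tate identification with $\sha^1_\Sigma(K,T_{\alpha^{-1}})^\vee$, which vanishes by $E(K)[p]=0$ and (a). Your proposed ``reinterpretation'' of (ii) as an absence of $(\gamma^\pm-1)$-torsion in $\rH^1_{\rm Iw}(K_\infty^\pm,T_{\alpha^{-1}})$ is not a clean duality statement and is not justified.

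The flaw in (iii) is more concrete: you assert that $\mathcal{P}^\pm_w(\alpha)$ evaluated at the augmentation reduces modulo $\varpi^m$ to ``the $p$-adic unit $|\tilde{E}(\mathbb{F}_\ell)|/\ell$.'' This is false: $|\tilde{E}(\mathbb{F}_\ell)|$ is very often divisible by $p$ (indeed in the Kolyvagin-prime arguments of the paper one deliberately chooses $\ell$ with $p\mid|\tilde E(\mathbb{F}_\ell)|$), and at multiplicative primes $P_w(\ell^{-1})=(\ell\mp 1)/\ell$ can likewise be divisible by $p$. What is true is that $P_w(\ell^{-1})\neq 0$, which would at least give that $\mathcal{P}^\pm_w(\alpha)\notin(\gamma^\pm-1)$, but you never state or use that weaker fact, and in any case Euler factors are not what drives the paper's proof of (iii). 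The actual argument there derives (iii) from (ii) by showing the global-to-local map $\lambda$ onto $\mathcal{P}_{\rm Gr}(M_\alpha^\pm;S')^{\Gamma_K^\pm}$ is \emph{surjective}, using the vanishing of the dual Selmer group $\rH^1_{\Fcal^{S'}_{\rm Gr}}(K,T_\alpha)=0$ from (i) together with the fact that the local decomposition groups in $\Gamma_K^\pm$ have $p$-cohomological dimension $\leq 1$. Your sketch does not supply a substitute for this surjectivity argument, so as written (iii) is not established.
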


\begin{proof}
It follows from their definition by propagation that $\rH^1_{\Fcal_{\rm Gr}}(K,T_\alpha)$ is the $p$-adic Tate module of $\rH^1_{\Fcal_{\rm Gr}}(K,W_\alpha)$, and so part (i) is shown in Proposition\,3.2.1 of \cite{jsw}. For the proof of parts (ii) and (iii),  we shall adapt the arguments in the proof of [\emph{op.\,cit.}, Lem.\,3.3.5]. 
Let $\gamma^\pm\in\Gamma_K^{\pm}$ be any topological generator. From the relation $W_\alpha=
(M_\alpha^\pm)^{\Gamma_K^\pm}=M_\alpha^\pm[\gamma^\pm-1]$ we get an injection 
\begin{equation}\label{eq:inj-h2}
\rH^1(G_{K,\Sigma},M_\alpha^\pm)_{\Gamma_K^\pm}\hookrightarrow\rH^2(G_{K,\Sigma},W_\alpha).\nonumber
\end{equation}
Since the $p$-adic representation $V_{\alpha^{-1}}$ is pure of weight $-1$, we have $\rH^2(K_w,W_\alpha)=\rH^0(K_w,T_{\alpha^{-1}})^\vee=0$ for all $w\in\Sigma$, and so $\rH^2(G_{K,\Sigma},W_\alpha)=\sha_\Sigma^2(K,W_\alpha)$ 
which by Poitou--Tate duality is dual to $\sha_\Sigma^1(K,T_{\alpha^{-1}})$. 
However, from $E(K)[p]=0$ the group $\sha_\Sigma^1(K,T_{\alpha^{-1}})$ is torsion-free, and from our assumption on $\alpha$ it is also of $\Z_p$-corank $0$. Hence $\sha_\Sigma^1(K,T_{\alpha^{-1}})=0$, so also $\rH^2(G_{K,\Sigma},W_\alpha)=0$, and the vanishing of $\rH^1(G_{K,\Sigma},M_\alpha^\pm)_{\Gamma_K^\pm}$ follows. This shows part (ii), and the vanishing of $\rH^1_{\Fcal_{\rm Gr}^{S'}}(K,M_\alpha^\pm)_{\Gamma_K^\pm}$ for any $S'\subset\Sigma$ as in part (iii) then follows from the exact sequence
\[
\rH^1(G_{K,\Sigma},W)=\rH^1(G_{K,\Sigma},M_\alpha^\pm)^{\Gamma_K^\pm}\xrightarrow{\lambda}\mathcal{P}_{\rm Gr}(M_\alpha^{\pm};S')^{\Gamma_K^\pm}\rightarrow\rH^1_{\Fcal_{\rm Gr}^{S'}}(K,M_\alpha^\pm)_{\Gamma_K^\pm}\rightarrow \rH^1(K^\Sigma/K,M_\alpha^\pm)_{\Gamma_K^\pm},
\]
in which the map $\lambda$ is surjective, being the composition of the restriction map $\rH^1(G_{K,\Sigma},W_\alpha)\rightarrow\mathcal{P}_{\Gr}(W_\alpha;S')$ (whose cokernel naturally injects into the dual of $\rH^1_{\Fcal_{\Gr}^{S'}}(K,T_{\alpha})=0$) and the natural map $\mathcal{P}_{\Gr}(W_\alpha;S')\rightarrow\mathcal{P}_{\Gr}(M_\alpha^{\pm};S')^{\Gamma_K^\pm}$, whose surjectivity follows from the fact that for $w\in\Sigma$, the local Galois group ${\rm Gal}(K_{\infty,\eta}^\pm/K_w)$ (for any $\eta\vert w$ in $K_\infty^\pm$) is either trivial or isomorphic to $\Z_p$, and so has $p$-cohomological dimension $\leq 1$.
\end{proof}

As standard, we shall often identify $\Lambda_K^\pm$ with the one variable power series ring $\Z_p[\![T^\pm]\!]$ via $\gamma^\pm=1+T^\pm$ upon the choice of a topological generator $\gamma^\pm\in\Lambda_K^\pm$. Thus denoting by $\mathcal{L}\mapsto\mathcal{L}^\pm$ the natural projection $\Lambda_K\rightarrow\Lambda_K^\pm$, the equality 
\[
\Lcal_p^{\rm PR}(E(\alpha)/K)^{+,S'}(0)=\Lcal_p^{\rm PR}(E(\alpha)/K)^{-,S'}(0)
\] 
is clear, reflecting in particular the fact that the trivial character is both cyclotomic and anticyclotomic. The next  result (which we shall only need for $S'=\emptyset$) is a parallel equality for characteristic power series. 

\begin{prop}\label{prop:euler-char}
Suppose $E(K)[p]=0$ and $\alpha:\Gamma_K^-\rightarrow R^\times$ is such that the conditions in Lemma~\ref{lem:coinv} hold.
Then the Selmer groups $\mathfrak{X}_{\rm Gr}^{S'}(E(\alpha)/K_\infty^+)$ and $\mathfrak{X}^{S'}_{\rm Gr}(E(\alpha)/K_\infty^-)$ are torsion over $\Lambda_K^+$ and $\Lambda_K^-$, respectively, where $S'\subset\Sigma$ is any subset consisting of primes away from $p$. 
%
Furthermore, we have the equality up to a $p$-adic unit:
\[
\mathcal{F}_{\rm Gr}^{S'}(E(\alpha)/K_\infty^+)(0)\;\sim_p\;\mathcal{F}_{\rm Gr}^{S'}(E(\alpha)/K_\infty^-)(0),
\]
where $\Fcal_{\rm Gr}^{S'}(E(\alpha)/K_\infty^\pm)\in\Lambda^\pm_{K,R}$ is any characteristic power series for $\X_{\rm Gr}^{S'}(E(\alpha)/K_\infty^\pm)$.
\end{prop}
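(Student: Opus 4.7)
The plan is to establish, for each sign $\pm$ separately, a control-theorem-style computation expressing the $\Gamma_K^\pm$-coinvariants of $\mathfrak{X}^\pm:=\mathfrak{X}_{\rm Gr}^{S'}(E(\alpha)/K_\infty^\pm)$ in terms of a common finite quantity depending only on data over $K$, and then to use the standard fact that for a finitely generated $\Lambda_K^\pm$-module $X$ with $X^{\Gamma_K^\pm}=0$ and $X_{\Gamma_K^\pm}$ finite, $X$ is automatically torsion and $\mathcal{F}(0)\sim_p|X_{\Gamma_K^\pm}|$ for any characteristic power series $\mathcal{F}$ of $X$.

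By Lemma~\ref{lem:coinv}(iii) applied to both signs, $\rH^1_{\Fcal_{\rm Gr}^{S'}}(K,M_\alpha^\pm)_{\Gamma_K^\pm}=0$, so Pontryagin duality gives $(\mathfrak{X}^\pm)^{\Gamma_K^\pm}=0$. Next, mimicking the snake-lemma argument of \cite[\S{3}]{jsw}, I would apply Hochschild--Serre (equivalently, Shapiro plus inflation--restriction) to the tower $K\subset K_\infty^\pm$ and compare the global-to-local maps defining $\rH^1_{\Fcal_{\rm Gr}^{S'}}(K,W_\alpha)$ and $\rH^1_{\Fcal_{\rm Gr}^{S'}}(K,M_\alpha^\pm)$. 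This yields, for each $\pm$, an exact sequence
\[
0\to\rH^1_{\Fcal_{\rm Gr}^{S'}}(K,W_\alpha)\to\rH^1_{\Fcal_{\rm Gr}^{S'}}(K,M_\alpha^\pm)^{\Gamma_K^\pm}\to\mathcal{L}^\pm\to 0,
\]
where $\mathcal{L}^\pm$ is assembled from kernels and cokernels of restriction maps on local cohomology at the places of $\Sigma$, modified by the Greenberg filtrations at $v,\bar v$. By Lemma~\ref{lem:coinv}(i), $\rH^1_{\Fcal_{\rm Gr}}(K,W_\alpha)$ is finite; propagation to the $S'$-imprimitive version only contributes the (finite) local factors at primes $w\in S'$. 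A direct local computation shows $\mathcal{L}^\pm$ is finite as well. Combined with $(\mathfrak{X}^\pm)^{\Gamma_K^\pm}=0$, this forces $\mathfrak{X}^\pm$ to be $\Lambda_K^\pm$-torsion and gives
\[
\Fcal_{\rm Gr}^{S'}(E(\alpha)/K_\infty^\pm)(0)\;\sim_p\;\bigl|\mathfrak{X}^\pm_{\Gamma_K^\pm}\bigr|\;\sim_p\;\bigl|\rH^1_{\Fcal_{\rm Gr}^{S'}}(K,W_\alpha)\bigr|\cdot|\mathcal{L}^\pm|.
\]

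The crucial observation is then that $|\mathcal{L}^+|=|\mathcal{L}^-|$. This will follow from the fact that for every $w\in\Sigma$ the local $\Z_p$-extensions $K_{\infty,\mathfrak{w}}^\pm/K_w$ coincide regardless of the sign: at $v$ and $\bar v$ one has $K_w=\Q_p$ by (\ref{eq:spl}), and $\Q_p$ admits a unique $\Z_p$-extension, so $K_{\infty,v}^\pm/K_v$ (and likewise at $\bar v$) is in both cases the cyclotomic $\Z_p$-extension of $\Q_p$; at the other primes of $\Sigma$, both $\Z_p$-extensions are unramified outside $p$ and agree with the unique unramified $\Z_p$-extension of $K_w$. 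Since the terms that assemble into $\mathcal{L}^\pm$ depend only on these local extensions (and on the Greenberg filtration at $v,\bar v$, which is intrinsic to $T_pE$ and independent of the direction in the tower), the two sides are canonically isomorphic, giving $|\mathcal{L}^+|=|\mathcal{L}^-|$ and hence the claimed equality of specializations.

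The main technical obstacle will be the precise identification of $\mathcal{L}^\pm$, and in particular checking that the local condition at $v$ appearing in the statement, namely $\rH^1(K_v,W_\alpha)/\rH^1(K_v,W_\alpha)_{\rm div}$, matches the one obtained by propagating the Greenberg structure from $M_\alpha^\pm$ through $(\cdot)^{\Gamma_K^\pm}$; similarly the strict condition at $\bar v$ must be shown to specialize compatibly. The hypothesis that $\alpha$ is crystalline and close to the trivial character (so that $\alpha$-twisting does not disturb the Hodge--Tate structure of $T_pE$) is what makes these local comparisons work without spurious contributions.
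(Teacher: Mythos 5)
Your overall scaffolding matches the paper's: you use Lemma~\ref{lem:coinv}(iii) to kill the invariants of $\X^\pm$, a snake-lemma comparison to relate $\rH^1_{\Fcal_{\rm Gr}^{S'}}(K,M_\alpha^\pm)^{\Gamma_K^\pm}$ to the finite group $\rH^1_{\Fcal_{\rm Gr}^{S'}}(K,W_\alpha)$ and local error terms, and Greenberg's Euler-characteristic formula to convert $\Fcal(0)$ into the order of $\Gamma_K^\pm$-invariants. The gap is in your ``crucial observation'': the assertion that $\Q_p$ has a unique $\Z_p$-extension is false ($\Gal(\Q_p^{\rm ab}/\Q_p)$ has a free pro-$p$ quotient of rank two, so $\Q_p$ has a $\mathbb{P}^1(\Z_p)$ of distinct $\Z_p$-extensions), and correspondingly the local extensions $K_{\infty,v}^+/K_v$ and $K_{\infty,v}^-/K_v$ are \emph{not} the same: the first is the cyclotomic $\Z_p$-extension of $\Q_p$ while the second is a genuinely different (ramified) $\Z_p$-extension. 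So the claim that $\mathcal{L}^\pm$ are ``canonically isomorphic'' because the local towers coincide does not hold, and the argument as written does not establish $|\mathcal{L}^+|=|\mathcal{L}^-|$.

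The paper instead computes the error term $\#\ker(r^\pm)$ directly, following \cite[Prop.\,3.3.7]{jsw}. After noting that $\ker(h^\pm)=0$ (by $E(K)[p]=0$), the argument shows that although the local data at $v$ and $\bar v$ differ between the two towers (the Greenberg local conditions there are asymmetric: relaxed at $v$, strict at $\bar v$), a careful accounting of the kernels of the local restriction maps yields
\[
\#\ker(r^\pm)=\#\rH^0(K_v,W_{\alpha^{-1}})^2\cdot\prod_{w\in\Sigma\smallsetminus S',\,w\nmid p}c_w^{(p)}(W_\alpha)
\]
for both choices of sign, with the contributions at $w\nmid p$ being (sign-independent) local Tamagawa numbers and the $v$- and $\bar v$-contributions balancing out. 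This is a computational coincidence in the bookkeeping, not a consequence of an isomorphism of local towers, and it is the input you would need to supply to complete your proof.
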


\begin{proof} 
By Poitou--Tate duality, the cokernel of the global-to-local map in the defining exact sequence
\[
0\rightarrow\rH^1_{\Fcal_{\Gr}}(K,W_\alpha)\rightarrow\rH^1(G_{K,\Sigma},W_\alpha)\rightarrow\mathcal{P}_{\Gr}(W_\alpha;\emptyset)
\]
injects into the Pontryagin dual of the Selmer group $\rH^1_{\Fcal_{\rm Gr}^*}(K,T_{\alpha^{-1}})$ dual to $\rH^1_{\Fcal_{\rm Gr}}(K,W_\alpha)$, defined by the local conditions given by the orthogonal complement of those in $\mathcal{P}_{\rm Gr}(W_\alpha;\emptyset)$ under local Tate duality. Since this dual Selmer group is torsion-free by the assumption $E(K)[p]=0$, and it follows from part (i) of Lemma~\ref{lem:coinv} and the finiteness of $\rH^1(K_w,T_{\alpha^{-1}})$ for finite primes $w\nmid p$  that it has finite order, we conclude that the above global-to-local map is surjective. It is then immediate that for any $S'$ as in the statement, we have an analogous exact sequence
\[
0\rightarrow\rH^1_{\Fcal_{\Gr}^{S'}}(K,W_\alpha)\rightarrow\rH^1(G_{K,\Sigma},W_\alpha)\rightarrow\mathcal{P}_{\Gr}(W_\alpha;S')\rightarrow 0.
\] 
A variant of Mazur's control theorem shows that the natural restriction map
\[
\rH^1_{\Fcal_{\Gr}^{S'}}(K,W_\alpha)\rightarrow
\rH^1_{\Fcal_{\rm Gr}^{S'}}(K,M_\alpha^\pm)^{\Gamma_K^\pm}
\]
has finite kernel and cokernel. By Lemma~\ref{lem:coinv} and the above remarks, it follows that $\mathfrak{X}_{\rm Gr}^{S'}(E(\alpha)/K_\infty^\pm)$ is $\Lambda_K^\pm$-torsion (for both choices of sign $\pm$), and together with the general result \cite[Lem.\,4.2]{greenberg-cetraro} for the $\Gamma_K^\pm$-Euler characteristic of $\mathcal{F}_{\rm Gr}^{S'}(E(\alpha)/K_\infty^\pm)$ we obtain the equality up to a $p$-adic unit:
\begin{equation}\label{eq:Euler-char}
\mathcal{F}_{\Gr}^{S'}(E(\alpha)/K_\infty^\pm)(0)\,\sim_p\;\#\rH^1_{\Fcal_{\Gr}^{S'}}(K,M_\alpha^\pm)^{\Gamma_K^\pm}.
\end{equation}
Now, from the Snake Lemma applied to the commutative diagram with exact rows
\[
\xymatrix{
0\ar[r]&\rH^1_{\Fcal_{\Gr}^{S'}}(K,W_\alpha)\ar[r]\ar[d]^{s^\pm}&\rH^1(K^\Sigma/K,W_\alpha)\ar[r]\ar[d]^{h^\pm}&\mathcal{P}_{\Gr}(W_\alpha;S')\ar[r]\ar[d]^{r^\pm}&0\\
0\ar[r]&\rH^1_{\Fcal_{\rm Gr}^{S'}}(K,M_\alpha^\pm)^{\Gamma_K^\pm}\ar[r]&\rH^1(K^\Sigma/K,M_\alpha^\pm)^{\Gamma_K^\pm}\ar[r]&\mathcal{P}_{\rm Gr}(M_\alpha^\pm;S')^{\Gamma_K^\pm},
}
\]
we obtain
\begin{equation}\label{eq:snake}
\#\rH^1_{\Fcal_{\rm Gr}^{S'}}(K,M_\alpha^\pm)^{\Gamma_K^\pm}=\#\rH^1_{\Fcal_{\Gr}^{S'}}(K,W_\alpha)\cdot\frac{\#{\rm ker}(r^\pm)}{\#{\rm ker}(h^\pm)}.
\end{equation}

Clearly, 
\[
{\rm ker}(h^\pm)=\rH^1(\Gamma_K^\pm,(M_\alpha^\pm)^{G_K})=(M_\alpha^\pm)^{G_K}/(\gamma^\pm-1)(M_\alpha^\pm)^{G_K}=\rH^0(K_\infty^\pm,W_\alpha),
\]
and this vanishes by our assumptions. On the other hand, since we assume that every finite prime $w\in\Sigma$ splits in $K$, the argument in the proof of \cite[Prop.\,3.3.7]{jsw} (but noting that here the roles $v$ and $\bar{v}$ are reversed) shows that for both choices of sign $\pm$, the order of ${\rm ker}(r^\pm)$ is given 
\[
\#{\rm ker}(r^\pm)=\#\rH^0(K_v,W_{\alpha^{-1}})^2\cdot\prod_{w\in\Sigma\smallsetminus S',w\nmid p}c_w^{(p)}(W_\alpha),
\]
where $c_w^{(p)}=\#\rH^1_{\rm ur}(K_w,W_\alpha)$ is the $p$-part of the local Tamagawa number of $W_\alpha$ at $w$. Thus the value in (\ref{eq:snake}) is the same for both choices of sign $\pm$, and together with (\ref{eq:Euler-char}) this yields the result.
%
\end{proof}

\subsubsection{Ancillary results for $\X_{\rm ord}(E(\alpha)/K_\infty^+)$}
Finally, in this section we prove a congruence relation modulo $\varpi^m$ for the (characteristic power series of the) Selmer groups $\X_{\rm ord}^S(E(\alpha)/K_\infty^+)$ and $\X_{\rm ord}^S(E/K_\infty^+)$, where $\alpha$ is an anticyclotomic character as above such that $\alpha\equiv 1\mod\varpi^m$. We start with the following preliminary lemma. 


\begin{lemma}\label{lem:almost-div-ord}
Assume that $E(K)[p]=0$ and that $\X_{\rm ord}^{S'}(E(\alpha)/K_\infty^+)$ is $\Lambda_{K,R}^+$-torsion, where $S'\subset\Sigma$ is a subset consisting of primes away from $p$. Then $\X_{\rm ord}^{S'}(E(\alpha)/K_\infty^+)$ has no nonzero finite $\Lambda_{K,R}^+$-submodules.
\end{lemma}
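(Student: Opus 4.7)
The plan is to adapt the template of Greenberg's classical argument for the absence of nonzero finite submodules in Iwasawa-theoretic Selmer duals (see \cite[Prop.~4.14]{greenberg-cetraro}) to our twisted, imprimitive setting. The first step is to establish the surjectivity of the global-to-local map defining $\rH^1_{\Fcal_{\rm ord}^{S'}}(K, M_\alpha^+)$. Under our assumption $E(K)[p]=0$, the fact that $K_\infty^+/K$ is a pro-$p$ extension implies $E[p^\infty](K_\infty^+)=0$; since $\alpha \equiv 1 \pmod{\varpi^m}$, this carries over to $W_\alpha^{G_{K_\infty^+}}=0$. Invoking \cite[Prop.~A.2]{pollack2011anticyclotomic} as in the proof of Proposition~\ref{prop:prim-imprim-ord}, one then obtains the short exact sequence
\[
0 \to \rH^1_{\Fcal_{\rm ord}^{S'}}(K, M_\alpha^+) \to \rH^1(G_{K,\Sigma}, M_\alpha^+) \to \mathcal{P}^{S'}_{\rm ord}(M_\alpha^+) \to 0,
\]
where $\mathcal{P}^{S'}_{\rm ord}(M_\alpha^+)$ denotes the product of the local factors appearing in the definition.

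The absence of nonzero finite $\Lambda_{K,R}^+$-submodules in the Pontryagin dual of a discrete cotorsion module is equivalent to the absence of proper $\Lambda_{K,R}^+$-submodules of finite index in the module itself. Applying this reformulation, it suffices to show separately that $\rH^1(G_{K,\Sigma}, M_\alpha^+)$ and each local factor appearing in $\mathcal{P}^{S'}_{\rm ord}(M_\alpha^+)$ have no proper $\Lambda_{K,R}^+$-submodule of finite index, and then perform a diagram chase on the displayed sequence. The global piece reduces, via Jannsen's spectral sequence and the standard analysis, to the vanishing of $\rH^2(G_{K,\Sigma}, M_\alpha^+)$ -- a weak-Leopoldt type statement that can be extracted from the hypothesis that $\X_{\rm ord}^{S'}(E(\alpha)/K_\infty^+)$ is $\Lambda_{K,R}^+$-torsion by comparing Euler characteristics in the Poitou--Tate sequence. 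The local pieces are handled directly: for primes $w \in \Sigma \setminus S'$ not dividing $p$, which split in $K$ by hypothesis, the structure of $\rH^1(K_w, M_\alpha^+)^\vee$ as computed in \cite[Prop.~2.4]{greenvats} has no finite $\Lambda_{K,R}^+$-submodule; for primes $w \mid p$, a parallel computation based on the ordinary filtration quotient of $T_\alpha$ yields the analogous conclusion.

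The main obstacle is the vanishing $\rH^2(G_{K,\Sigma}, M_\alpha^+) = 0$, which is the essential global input. Once this is secured, the remaining steps are routine, and the mildness of the twist ($\alpha \equiv 1 \pmod{\varpi^m}$) ensures that all relevant structural properties are preserved; in particular, the key depth/pseudo-null bookkeeping in the regular local ring $\Lambda_{K,R}^+$ is the same as in the untwisted case.
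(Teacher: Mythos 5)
The overall outline you give — surjectivity of the global-to-local map, absence of finite-index submodules in the global cohomology and the local factors, then combine — is the right family of ideas, and most of the individual claims (the use of $E(K)[p]=0$ to kill Galois invariants and invoke Pollack--Weston's surjectivity, the vanishing of $\rH^2(G_{K,\Sigma},M_\alpha^+)$, and the structure of the local $\rH^1$'s from \cite[Prop.~2.4]{greenvats}) are sound. The paper itself does not reproduce the details; it simply cites \cite[Prop.~4.14]{greenberg-cetraro}, \cite[Prop.~2.3.3]{skinner-mult} and \cite{greenberg-Sel}.

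However, your reduction step contains a genuine gap. It is \emph{not} true in general that if $0\to S\to A\to B\to 0$ is exact and both $A$ and $B$ have no proper finite-index $\Lambda$-submodule, then the same holds for $S$. A concrete counterexample: take $\Lambda=\Z_p[\![T]\!]$ and dualize the inclusion $\mathfrak{m}\hookrightarrow\Lambda$; this gives $0\to(\Lambda/\mathfrak{m})^\vee\to\Lambda^\vee\to\mathfrak{m}^\vee\to 0$, where $\Lambda^\vee$ and $\mathfrak{m}^\vee$ have no proper finite-index $\Lambda$-submodule (their duals $\Lambda$ and $\mathfrak{m}$ have no nonzero finite submodule), yet $S=(\Lambda/\mathfrak{m})^\vee$ is itself nonzero and finite. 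Dually: if $F\subset\X_S$ is a nonzero finite submodule, its preimage $\widetilde F\subset\X_A$ is an extension of $F$ by $\X_B$, and to contradict the absence of finite submodules in $\X_A$ one would need this extension to split; this would require ${\rm Ext}^1_\Lambda(F,\X_B)=0$, which demands ${\rm depth}_\Lambda(\X_B)\ge 2$. But $\X_B$ contains $\Lambda$-torsion factors coming from the places $w\in\Sigma\smallsetminus S'$, $w\nmid p$, and such factors have depth exactly $1$, so the Ext group does not vanish. Thus the diagram chase as you describe it simply does not close.

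The correct argument (in Greenberg's Prop.~4.14 or Skinner's Prop.~2.3.3) does not proceed by showing that both the global and local cohomology duals lack finite submodules and combining formally. Instead it uses the hypothesis that $\X_{\rm ord}^{S'}(E(\alpha)/K_\infty^+)$ is $\Lambda_{K,R}^+$-torsion in an essential way: one specializes at a carefully chosen generator $\theta$ of a height-one prime lying outside the support of both the Selmer dual and the torsion part of the local terms (or, in the classical version, one specializes at the primes $(\gamma^{p^n}-1)$ and invokes Mazur's control theorem together with the Cassels pairing at finite layers) to show directly that multiplication by $\theta$ is surjective on the Selmer group. That $\theta$-divisibility, not a formal devissage of the Poitou--Tate sequence, is what gives the absence of finite submodules. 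Your write-up records the hypothesis of $\Lambda_{K,R}^+$-torsion but never actually uses it at this key juncture, and that is exactly where the argument must be repaired.
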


\begin{proof} 
This is shown in \cite[Prop.\,4.14]{greenberg-cetraro} when $\alpha=1$ and $S'=\emptyset$, and the general case follows from a slight variation of the same arguments (see e.g. \cite[Prop.\,2.3.3]{skinner-mult}). Alternatively, this can be seen as a special case of Greenberg's results \cite{greenberg-Sel}.
\end{proof}

Put
\[
S=\Sigma\smallsetminus\{p,\infty\},
\]
which as above we shall view as a set of primes of $\Q$ or of $K$ according to context. The next important result is an algebraic counterpart of Lemma~\ref{lem:cong-L}.

\begin{prop}\label{prop:cong-Sel}
Assume that $E(K)[p]=0$, that $\X_{\rm ord}^S(E(\alpha)/K_\infty^+)$ is $\Lambda^+_{K,R}$-torsion, and that
$\X_{\rm ord}^S(E/K_\infty^+)$ is $\Lambda_K^+$-torsion. 
If $\alpha\equiv 1\;({\rm mod}\,\varpi^m)$, then there are suitable characteristic power series $\Fcal_{\rm ord}^S(E(\alpha)/K_\infty^+)$ and $\Fcal_{\rm ord}^S(E/K_\infty^+)$ for the 
modules $\X_{\rm ord}^S(E(\alpha)/K_\infty^+)$ and $\X_{\rm ord}^S(E/K_\infty^+)$, respectively, such that 
\[
\Fcal_{\rm ord}^S(E(\alpha)/K_\infty^+)\equiv\Fcal_{\rm ord}^S(E/K_\infty^+)\;\,({\rm mod}\,\varpi^m).
\]
\end{prop}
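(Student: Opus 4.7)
The strategy is to exploit the Galois congruence $T_\alpha \equiv T_pE \otimes_{\Z_p} R \pmod{\varpi^m}$ coming from $\alpha \equiv 1 \pmod{\varpi^m}$ in order to identify the two imprimitive Selmer groups modulo $\varpi^m$, and then transfer this identification to a congruence between characteristic power series via the no-finite-submodules property of Lemma~\ref{lem:almost-div-ord}.

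First I would observe that, since $\alpha$ takes values in $1 + \varpi^m R$, its action on $R/\varpi^m$ is trivial, so there is a $G_K$-equivariant identification
\[
T_\alpha / \varpi^m T_\alpha \;\cong\; T_pE \otimes_{\Z_p} R/\varpi^m,
\]
which restricts to an identification of the ordinary filtrations ${\rm Fil}_w^+(T_pE) \otimes R(\alpha)$ and ${\rm Fil}_w^+(T_pE) \otimes R$ at primes $w \mid p$. Because the local conditions at primes in $S$ are relaxed on both sides, the two sets of Selmer local conditions agree modulo $\varpi^m$. Applying the snake lemma to the multiplication-by-$\varpi^m$ map on the Pontryagin duals defining the Selmer groups, I expect to obtain an isomorphism of $\Lambda^+_{K,R/\varpi^m}$-modules
\[
\X^{S}_{\rm ord}(E(\alpha)/K_\infty^+) / \varpi^m \;\cong\; \X^{S}_{\rm ord}(E/K_\infty^+) \otimes_{\Z_p} R/\varpi^m.
\]
The crucial input making this a genuine isomorphism (rather than merely a pseudo-isomorphism) is the vanishing of nonzero finite $\Lambda_{K,R}^+$-submodules in the dual Selmer group, supplied by Lemma~\ref{lem:almost-div-ord}.

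Next I would invoke Lemma~\ref{lem:almost-div-ord} for both $\alpha$ and (by the same argument, now with $R$-coefficients) for the trivial character, so that both $\X^{S}_{\rm ord}(E(\alpha)/K_\infty^+)$ and $\X^{S}_{\rm ord}(E/K_\infty^+) \otimes_{\Z_p} R$ are torsion $\Lambda_{K,R}^+$-modules without nonzero pseudo-null submodules. Since $\Lambda_{K,R}^+ \cong R[\![T]\!]$ is a regular local ring of dimension $2$, a standard consequence of the structure theorem is that for any such torsion module $X$, the characteristic power series of $X/\varpi^m X$ as an $R/\varpi^m[\![T]\!]$-module equals the reduction modulo $\varpi^m$ of a characteristic power series of $X$. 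Combined with the isomorphism from the previous paragraph, this delivers the desired congruence.

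The main technical obstacle is establishing the clean isomorphism of Selmer groups modulo $\varpi^m$ in the first step. This requires that the global-to-local maps defining the Selmer groups on both sides be surjective, so that the snake lemma produces a short exact sequence with no extra error term, and it requires ruling out spurious $\varpi$-torsion after reduction. Both of these reductions are controlled by the absence of nonzero finite submodules in the Selmer groups (Lemma~\ref{lem:almost-div-ord}) together with the relaxation at $S$, which absorbs the local cohomology contributions at bad primes that would otherwise obstruct the comparison.
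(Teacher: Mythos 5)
Your strategy is at heart the same as the paper's: reduce to an isomorphism between the two imprimitive Selmer groups modulo $\varpi^m$ (equivalently, between the $\varpi^m$-torsion of the discrete Selmer groups $\rH^1_{\Fcal_{\rm ord}^S}(K,M)$ and $\rH^1_{\Fcal_{\rm ord}^S}(K,M_\alpha)$), and then pass to the congruence of characteristic power series using Lemma~\ref{lem:almost-div-ord}. However, there is a genuine gap at exactly the step you describe as the ``main technical obstacle.'' The crux is \emph{not} surjectivity of global-to-local maps (the paper's proof never invokes this for Proposition~\ref{prop:cong-Sel}), but rather showing that the ordinary local condition behaves cleanly under taking $[\varpi^m]$. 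Concretely, one must verify that $\rH^1_{\Fcal_{\rm ord}^S}(K,M)[\varpi^m]$ equals the kernel of a map out of $\rH^1(G_{K,\Sigma},M[\varpi^m])$ landing in $\prod_{w\mid p}\rH^1(I_w,M^-[\varpi^m])$. This needs two nontrivial facts: (i) the kernel of $\rH^1(I_w,M^-[\varpi^m])\to\rH^1(I_w,M^-)$ vanishes because $(M^-)^{I_w}\simeq{\rm Hom}_{\rm cts}(R,\Q_p/\Z_p)$ is $p$-divisible, and (ii) for the twisted side, $\alpha$ becomes unramified after restriction to $G_{K_{\infty,w}^+}$, so the same computation applies to $M_\alpha^-$. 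Your proposal asserts the Selmer-module isomorphism but never engages with this local analysis, and merely matching the lattice filtrations $({\rm Fil}_w^+T_\alpha)/\varpi^m$ with $({\rm Fil}_w^+T_pE)\otimes R/\varpi^m$ is insufficient: the issue is about cohomology of the quotient $M^-$, not about the filtered lattices themselves.

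A secondary imprecision: the ring $R/\varpi^m\llbracket T\rrbracket$ is not a domain, so the phrase ``characteristic power series of $X/\varpi^m X$'' is not well-defined, and the appeal to ``the structure theorem'' does not give what you want (the structure theorem over $\Lambda_{K,R}^+$ yields only a pseudo-isomorphism, which does not survive base change to $R/\varpi^m\llbracket T\rrbracket$). The correct tool, used in the paper, is the Fitting ideal: for finitely generated torsion modules with no nonzero finite submodules the Fitting ideal coincides with the characteristic ideal (\cite[Lem.~2.3.4(ii)]{skinner-mult}), and Fitting ideals commute with base change, which is precisely what converts the mod-$\varpi^m$ Selmer isomorphism into the claimed congruence of characteristic power series.
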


\begin{proof}
Since 
 $\X_{\rm ord}^S(E(\alpha)/K_\infty^+)$ and $\X_{\rm ord}^S(E/K_\infty^+)$ have no nonzero finite $\Lambda_K^+$-submodules by Lemma~\ref{lem:almost-div-ord}, 
their characteristic ideals are the same as their Fitting ideals (see \cite[Lem.~2.3.4(ii)]{skinner-mult}), so to prove the result it suffices to show that
\begin{equation}\label{eq:Sel-m}
\rH^1_{\Fcal_{\rm ord}^S}(K,M)[\varpi^m]\simeq\rH^1_{\Fcal_{\rm ord}^S}(K,M_\alpha)[\varpi^m],
\end{equation}
where $M=T_pE\widehat{\otimes}_{\Zp}(\Lambda_{K,R}^+)^\vee$ and $M_\alpha=T_pE(\alpha)\widehat\otimes_{\Z_p}(\Lambda_K^+)^\vee$. By the assumption  $E(K)[p]=0$, the natural maps  
\[
\rH^1(G_{K,\Sigma},M[\varpi^m])\rightarrow\rH^1(G_{K,\Sigma},M)[\varpi^m],\quad 
\rH^1(G_{K,\Sigma},M_\alpha[\varpi^m])\rightarrow\rH^1(G_{K,\Sigma},M_\alpha)[\varpi^m]
\]
are isomorphisms. Moreover, for any $w\vert p$ the restriction map $\rH^1(K_w,M^-)\rightarrow\rH^1(I_w,M^-)$ is an injection (this uses $p\nmid N$), where we put $M^-=(T_pE/{\rm Fil}_w^+T_pE)\otimes(\Lambda_K^+)^\vee$. Thus $\rH^1_{\Fcal_{\rm ord}^S}(K,M)[\varpi^m]$ is naturally identified with the kernel of the restriction map
\[
\rH^1(G_{K,\Sigma},M[\varpi^m])\rightarrow\prod_{w\vert p}\rH^1(I_w,M^-),
\]
which factors through $\rH^1(G_{K,\Sigma},M[\varpi^m])\rightarrow\prod_{w\vert p}\rH^1(I_w,M^-[\varpi^m])$.  Since the kernel of the natural map $\rH^1(I_w,M^-[\varpi^m])\rightarrow\rH^1(I_w,M^-)$ is given by $(M^-)^{I_w}/p^m(M^-)^{I_w}$, and this is zero since $(M^-)^{I_w}\simeq{\rm Hom}_{\rm cts}(R,\Q_p/\Z_p)$ is $p$-divisible, we conclude that 
\[
\rH^1_{\Fcal_{\rm ord}^S}(K,M)[\varpi^m]={\rm ker}\biggl(\rH^1(G_{K,\Sigma},M[\varpi^m])\rightarrow\prod_{w\vert p}\rH^1(I_w,M^-[\varpi^m])\biggr).
\]
Letting $M_\alpha^-=(T_pE(\alpha)/{\rm Fil}_w^+T_pE(\alpha))\otimes(\Lambda_K^+)^\vee$ we similarly find $(M_\alpha^-)^{I_w}\simeq{\rm Hom}_{\rm cts}(R,\Q_p/\Z_p)$, noting that after restriction to $G_{K_{\infty,w}^+}$ the character $\alpha$ becomes unramified; thus  $(M_\alpha^-)^{I_w}/p^m(M_\alpha^-)^{I_w}=0$ and so $\rH^1_{\Fcal_{\rm ord}^S}(K,M_\alpha)[\varpi^m]$ is identified with the kernel of the restriction map
\[
\rH^1(G_{K,\Sigma},M_\alpha[\varpi^m])\rightarrow\prod_{w\vert p}\rH^1(I_w,M_\alpha^-[\varpi^m]).
\]
Since $M_\alpha[\varpi^m]=M[\varpi^m]$, this proves (\ref{eq:Sel-m}) and yields the result.
\end{proof}

\section{Beilinson--Flach classes}\label{BF}

Let $f=\sum_{n=1}^\infty a_nq^n\in S_2(\Gamma_0(N))$ be a newform with 
Fourier coefficients in $\Q$, and fix a prime $p\nmid 2N$. We denote by ${Y_1(N)}$ the modular curve of level $\Gamma_1(N)$. The $p$-adic Galois representation $V_f$ associated to $f$ can be geometrically realized as the maximal quotient of $\rH^1_{\et}(\overline{Y_1(N)},\Q_p)(1)$ on which the Hecke operators $T_n$ acts as $a_n$. (Note that this is denoted $V_{\Q_p}(f)^*$ in \cite[Def.~6.3]{RSCMF}, and corresponds to  $V_{\Q_p}(f)(1)$ in the notations of \cite{kato-euler-systems}.) Let $T_f$ be the $\Z_p$-submodule $V_f$ generated by the image of $\rH^1_{\et}(\overline{Y_1(N)},\Z_p)(1)$, which 
is a $G_\Q$-stable $\Z_p$-lattice 
in $V_f$. 

We assume that $f$ is ordinary at $p$, i.e., $a_p\in\Z_p^\times$, so there is a $G_{p}$-stable filtration 
\[
0\rightarrow T_f^+\rightarrow T_f\rightarrow T_f^-\rightarrow 0
\] 
with $T_f^{\pm}$ free rank one $\Z_p$-modules with the $G_{p}$-action on $T_f^-$ given by the unramified character sending an arithmetic Frobenius to the $p$-adic unit root of $x^2-a_px+p$. Replacing ${\rm Fil}_w^+(T_pE)$ with $T_f^+$ we can define the ordinary local condition for $T_f$ as in Definition \ref{defilocalconds}.

Let $K$ be an imaginary quadratic field satisfying (\ref{eq:spl}). 
In this section we introduce the special case of the Beilinson--Flach classes of \cite{explicit} that will be needed for our arguments, and deduce some applications.

\subsection{Reciprocity laws}\label{subsec:ERL}

Let $\boldsymbol{g}\in\Lambda_{\boldsymbol{g}}\llbracket{q}\rrbracket$ be the CM Hida family from $\S\ref{subsec:L-2var-II}$, and fix a character $\alpha:\Gamma_K\rightarrow R^\times$ with values in the ring of integers of a finite extension $\Phi/\Q_p$. With a slight abuse of notation, we continue to denote by $\Lambda_K$ the Iwasawa algebra $R\llbracket{\Gamma_K}\rrbracket$ and by $\Lambda_{\bg}$ the extension of scalars $\Lambda_{\bg}\otimes_{\Z_p} R$. 


Let $I_f \subset \Qp$ be the image of the unique Hecke-equivariant map $M_2(\Gamma_0(N),\Z_p)\rightarrow \Q_p$, with the Hecke action on $\Q_p$ 
such that $T_n$ acts as multiplication by $a_n$, that sends $f$ to $1$. Then $I_f$ is a free $\Z_p$-module of rank one. 
We similarly define $I_{\bg}^{\rm cusp} \subset \Frac(\Lambda_{\bg})$ as  the image of the $\Lambda_{\boldsymbol{g}}$-adic cuspforms in ${\rm Frac}(\Lambda_{\boldsymbol{g}})$ under the projection corresponding to ${\boldsymbol{g}}$. Note that $I_{\bg}^{\rm cusp}$ is a finitely-generated $\Lambda_{\bg}$-module.

\begin{thm}
\label{thm:KLZ}
There exists a class
\[
{BF}_\alpha\in\rH^1_{\Fcal_{\rm ord,\rm rel}}(K,T_f(\alpha)\widehat\otimes\Lambda_K)
\]
and injective $\Lambda_K$-linear maps with pseudo-null cokernel
\begin{align*}
\widetilde{\rm Col}_{f}:\rH^1(K_{\bar{v}},T_f^-(\alpha)\widehat\otimes\Lambda_K) \to I_f\widehat\otimes\Lambda_K, \ \ 
\widetilde{\rm Col}_{\boldsymbol{g}}:\rH^1(K_{v},T_f^+(\alpha)\widehat\otimes\Lambda_K) \to I_{\bg}^{\rm cusp}\widehat\otimes\Lambda_K,
\end{align*}
satisfying the following explicit reciprocity laws:
\begin{itemize}
\item[(a)] 
$\widetilde{\rm Col}_f(p^-({\rm loc}_{\bar v}({BF}_\alpha)))=\Lcal_p(f(\alpha)/K,\Sigma^{(1)})$, where $p^-({\rm loc}_{\bar{v}}({BF}_\alpha))$ is the natural image of ${\rm loc}_{\bar{v}}({BF}_\alpha)$ in $\rH^1(K_{\bar{v}},T_f^-(\alpha)\widehat\otimes\Lambda_K)$;
\item[(b)] $\widetilde{\rm Col}_{\boldsymbol{g}}({\rm loc}_{v}({BF}_\alpha))=\Lcal_p(f(\alpha)/K,\Sigma^{(2')})$.
\end{itemize}
\end{thm}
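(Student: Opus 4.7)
The plan is to obtain ${BF}_\alpha$ and the maps $\widetilde{\rm Col}_f,\widetilde{\rm Col}_{\boldsymbol{g}}$ as specializations of the universal three-variable constructions of Kings--Loeffler--Zerbes \cite{explicit}, refined at the $p$-irregular weight one point of $\boldsymbol{g}$ as in \cite{BST}. More precisely, I would start with the three-variable Beilinson--Flach Euler system attached to the pair $(f,\boldsymbol{g})$, which lives naturally in $\rH^1_{\rm Iw}(\Q(\mu_{p^\infty}),T_f\widehat\otimes T_{\boldsymbol{g}})$, and use Shapiro's lemma together with the CM nature of $\boldsymbol{g}$ to reinterpret it as a class in $\rH^1(K,T_f\widehat\otimes\Lambda_K)$: concretely, since $\boldsymbol{g}=\sum_{(\mathfrak a,\bar v)=1}[\mathfrak a]q^{\mathbf N(\mathfrak a)}$, the Galois representation $T_{\boldsymbol{g}}$ is an induction from $G_K$ of a character of $\Gamma_v$, and inducing back produces the Iwasawa cohomology over $K$ against the two-variable algebra $\Lambda_K$. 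Twisting this universal class by the character $\alpha\colon\Gamma_K\to R^\times$ (via the natural ${\rm Tw}_\alpha$ action on the coefficients) then yields a candidate for ${BF}_\alpha$.

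The next step is to verify the local behaviour. At $v$, the class lands in the $+$-part because the projection $T_f\widehat\otimes T_{\boldsymbol{g}}\twoheadrightarrow T_f^-\widehat\otimes T_{\boldsymbol{g}}$ kills ${\rm loc}_v({BF}_\alpha)$: this is the usual vanishing of the dual exponential, which in the Hida-theoretic setting follows from the fact that the associated $p$-adic $L$-function interpolating $L(f\otimes g_\psi,1)$ on the $\Sigma^{(2')}$ region is built out of nearly holomorphic Eisenstein series whose image under the unit-root splitting vanishes on the $T_f^-$ quotient (cf.\ \cite[\S{10}]{explicit}). The two Coleman-type maps are then defined via the Perrin-Riou--Loeffler--Zerbes big logarithm applied to the appropriate rank-one quotients: $\widetilde{\rm Col}_f$ corresponds to the unramified direction coming from $T_f^-$ and lands in $I_f\widehat\otimes\Lambda_K$, while $\widetilde{\rm Col}_{\boldsymbol{g}}$ arises from the ordinary filtration of $T_{\boldsymbol{g}}$ and lands in $I_{\boldsymbol{g}}\widehat\otimes\Lambda_K$. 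Injectivity with pseudo-null cokernel is a standard local Iwasawa cohomology computation over the $\Z_p^2$-extension (compare \cite[\S{8}]{KLZ-AJM}), using that $T_f^\pm$ and $T_{\boldsymbol{g}}^\pm$ are free rank one.

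Finally, the explicit reciprocity laws (a) and (b) are established by checking equality at a Zariski-dense set of arithmetic specializations of $\Lambda_K$ (for the $\boldsymbol{g}$-direction, specializations where $\boldsymbol{g}$ becomes a classical theta series of weight $\geq 2$, which populate the $\Sigma^{(2')}$ region for $f\otimes g_\psi$; for the $f$-direction, cyclotomic finite-order twists $\chi$). On each such specialization the reciprocity laws are precisely those of \cite[Thm.~7.7.2, Thm.~10.2.2]{explicit}, and the resulting interpolation formulas match those of $\mathcal{L}_p(f/K,\Sigma^{(1)})$ and $\mathcal{L}_p(f/K,\Sigma^{(2')})$ as normalized in Theorems~\ref{thm:PR} and \ref{thm:Gr} after applying ${\rm Tw}_\alpha$. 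The main obstacle I anticipate is the $p$-irregular weight one point: the classical construction of the CM Hida family $\boldsymbol{g}$ through an Eisenstein series requires the refinement of \cite{BST}, and one must check that the Beilinson--Flach Iwasawa class is well-defined integrally through this $p$-irregular specialization (equivalently, that denominators coming from the cuspidal congruence ideal $I_{\boldsymbol{g}}$ are absorbed into the target of $\widetilde{\rm Col}_{\boldsymbol{g}}$), which is exactly why the target is written $I_{\boldsymbol{g}}\widehat\otimes\Lambda_K$ rather than $\Lambda_{\boldsymbol{g}}\widehat\otimes\Lambda_K$; this matches the integrality statement of Lemma~\ref{lem:integral}.
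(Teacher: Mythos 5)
Your sketch is essentially the same approach as the paper's, which simply cites \cite[\S5]{BST} (where the theorem is deduced from \cite{explicit} and \cite{Betina-Dimitrov-Pozzi}, with the roles of $v$ and $\bar v$ reversed); your outline is a correct reconstruction of the content of that reference — the three-variable Beilinson--Flach class for $(f,\boldsymbol{g})$, Shapiro's lemma via the CM induction, the twist by $\alpha$, the vanishing of the dual exponential at $v$, the Perrin-Riou/Coleman maps on the rank-one pieces, and specialization to the $\Sigma^{(2')}$ and cyclotomic regions to pin down the reciprocity laws. Your identification of the $p$-irregular weight-one specialization of $\boldsymbol{g}$ (handled by the \cite{Betina-Dimitrov-Pozzi} input) as the obstacle that \cite{BST} resolves, and the observation that the targets $I_f\widehat\otimes\Lambda_K$ and $I_{\boldsymbol{g}}\widehat\otimes\Lambda_K$ are chosen precisely to absorb the congruence-ideal denominators, are both accurate and match the normalization used here.
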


\begin{proof} This is proved in \cite[\S5]{BST}, where it is deduced from results in \cite{explicit} 
in combination with results in \cite{Betina-Dimitrov-Pozzi}, though here we have reversed the roles of $v$ and $\bar v$. 
\end{proof}

Let $E_1$ be the optimal curve in the isogeny class associated with $f$, in the sense of \cite{stevens-invmath}, with optimal parametrization $\pi_1:X_1(N)\rightarrow E_1$. 
The inclusion $Y_1(N)\hookrightarrow X_1(N)$ identifies the maximal quotient of $\rH^1_{\et}(\overline{X_1(N)},\Q_p(1))$ on which the Hecke operators $T_n$ acts as $a_n$
with the similar quotient of $\rH^1_{\et}(\overline{Y_1(N)},\Q_p(1))$, that is, with $V_f$. Via this identification and the parametrization $\pi_1$, the Tate module $T_pE_1$ is identified with the image of $\rH^1_{\et}(\overline{X_1(N)},\Z_p(1))$ in $V_f$; this is a sublattice of $T_f$. 
Let $E_\bullet/\Q$ be the elliptic curve in the isogeny class associated with $f$ constructed in \cite{wuthrich-int}. By construction, there is a cyclic isogeny
$\phi:E_1\rightarrow E_\bullet$ that is \'etale in characteristic $p$ and for which the resulting parametrization $\pi_\bullet = \phi\circ\pi_1:X_1(N)\rightarrow E_\bullet$ 
identifies $T_pE_\bullet$ with $T_f$ in $V_f$:
\[
T_pE_\bullet = T_f.
\] 
(cf. Proposition~8 and Theorem~4 of  \emph{op.\,cit.}).


\begin{lemma}\label{lem:cong-deg}
We have $\deg(\pi_\bullet)I_f  = \Zp$.
\end{lemma}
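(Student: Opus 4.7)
The plan is to interpret both sides of the claimed identity in terms of a single invariant, namely the $p$-part of the congruence number $c_f$ of $f$, and show that they agree up to a $p$-adic unit.

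First I would unwind the definition of $I_f$. The unique Hecke-equivariant map $M_2(\Gamma_0(N),\Z_p) \to \Q_p$ sending $f$ to $1$ factors as the projection $M_2(\Gamma_0(N),\Q_p) \twoheadrightarrow \Q_p \cdot f \xrightarrow{\sim} \Q_p$ onto the $f$-isotypic component. Intersecting with the $\Z_p$-lattice $M_2(\Gamma_0(N),\Z_p)$ then identifies the image $I_f$ with $c_f^{-1}\Z_p$, where $c_f \in \Z_p$ is the $p$-part of the congruence number of $f$ measured against the orthogonal complement of $\Q_p \cdot f$ in $M_2(\Gamma_0(N),\Z_p)$ — including any contributions from congruences of $f$ with Eisenstein series, which are relevant in our setting.

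Second I would invoke Wüthrich's key characterization $T_pE_\bullet \simeq V_{\Z_p}(f)(1) = T_f$ from \cite[Prop.~8]{wuthrich-int}, which identifies the Tate module of $E_\bullet$ with the canonical $\Z_p$-lattice in $V_f$ coming from integral étale cohomology of $Y_1(N)$. This property — which in particular forces the Manin constant of $\pi_\bullet$ to be a $p$-adic unit — allows one to compare, on the $f$-component of cohomology, the two integral structures arising from $\pi_\bullet^* \omega_{E_\bullet}$ and from the Hecke projector. A direct computation of the discrepancy (via the intersection pairing on $J_0(N)$ and the Petersson inner product, in the spirit of \cite[\S{7}]{hida-congruences}) gives $\deg(\pi_\bullet) = c_f \cdot u$ for some $u \in \Z_p^\times$. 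Combining the two steps then yields
\[
\deg(\pi_\bullet) I_f \;=\; c_f \cdot u \cdot c_f^{-1}\Z_p \;=\; \Z_p.
\]

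The main obstacle is making this second step precise: one must show that Wüthrich's defining property of $E_\bullet$ forces $v_p(\deg(\pi_\bullet)) = v_p(c_f)$ \emph{on the nose}, and not merely that one divides the other. The role of the minimal étale isogeny $\phi:E_0 \to E_\bullet$ is precisely to absorb into $\deg(\pi_\bullet) = \deg(\phi)\cdot\deg(\pi_0)$ the Eisenstein part of the congruences that may be missing from the classical comparison $\deg(\pi_0) \sim_p c_f^{\rm cusp}$ for the strong Weil curve. The cleanest route is therefore to quote Wüthrich's construction directly, rather than to reprove the optimality of $E_\bullet$ from scratch.
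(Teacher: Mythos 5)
Your plan correctly identifies the ingredients that the paper uses --- the factorisation $\deg(\pi_\bullet)=\deg(\phi)\deg(\pi_0)$, the theorem of Agashe--Ribet--Stein relating $\deg(\pi_0)$ to the cuspidal congruence number, the role of the \'etale isogeny $\phi$ in absorbing Eisenstein congruences, and W\"uthrich's identification $T_pE_\bullet\simeq T_f$ --- but the step that does the work is asserted rather than proved. Your ``direct computation of the discrepancy via the intersection pairing on $J_0(N)$ and the Petersson inner product'' is exactly the classical Hida/ARS argument, and that argument sees only the \emph{cuspidal} congruence ideal: the intersection pairing lives on the Jacobian and has no access to Eisenstein congruences. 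It yields $\deg(\pi_0)\sim_p c_f^{\rm cusp}$ but cannot engage $\deg(\phi)$, which is precisely the Eisenstein discrepancy you need. Appealing to ``W\"uthrich's construction directly'' does not rescue this: W\"uthrich gives $T_pE_\bullet\simeq T_f$ but not a degree formula.

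The missing mechanism in the paper's proof is the integral de Rham--\'etale comparison isomorphism. Using the commutative diagram of ordinary filtrations (with $T_{f,0}^+=T_f^+$), the $p$-part of $\deg(\phi)$ equals the index $[T_f^-:T_{f,0}^-]$. The (log) de Rham--\'etale comparison for $X_0(N)$ and $Y_1(N)$ identifies $T_{f,0}^-\simeq S(f)$ and $T_f^-\simeq M(f)$, where $S(f)$ and $M(f)$ are the $a_\bullet$-isotypic torsion-free quotients of $S_2(\Gamma_0(N),\Z_p)$ and $M_2(\Gamma_1(N),\Z_p)$; this translates the \'etale index into the coherent statement $\deg(\phi)M(f)=S(f)$. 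Combined with ARS ($\deg(\pi_0)S(f)=\Z_p f$) and the identification $M(f)\cong I_f$, $f\mapsto 1$, one gets $\deg(\pi_\bullet)I_f=\deg(\pi_0)\deg(\phi)M(f)=\deg(\pi_0)S(f)=\Z_p$. Without the comparison isomorphism, your step 2 has no way to compute $\deg(\phi)$ and the plan stalls at exactly the point you flag as ``the main obstacle.''
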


\begin{proof} 
Let $\omega_{E_\bullet}$ (resp.~$\omega_{E_1}$) be a N\'eron differential for $E_\bullet$ (resp.~$E_1$). Then 
\[
\pi_{\bullet,*}\pi_\bullet^*\omega_{E_\bullet} = \deg(\pi_\bullet)\omega_{E_\bullet}.
\]
As the isogeny $\phi$ is \'etale, $\phi^*\omega_{E_\bullet} = u_1 \omega_{E_1}$ for some $u_1\in \Z_p^\times$. Similarly,
$\pi_1^* \omega_{E_1} = c_1\omega_f$ for some $c_1\in \Z_p^\times$, as $p\nmid 2N$ (see \cite[Cor.\,4.1]{mazur} and \cite[Prop.\,~3.3]{greenvats}). Hence, $\pi_{\bullet,*}\omega_f = u_1c_1\deg(\pi_\bullet)\omega_{E_\bullet}$
and so it suffices to show that $\pi_{\bullet,*}\omega_f = a \omega_{E_\bullet}$ for some $a\in\Zp$ such that $a I_f = \Zp$.

As $E_\bullet$ has ordinary reduction at the prime $p$, the induced map $\rH^1_{\et}(\overline{Y_1(N)},\Zp(1))\twoheadrightarrow T_f = T_pE_\bullet$ factors through 
projection to the ordinary summand $\mathcal{H}^{\ord} :=e_{\ord} \rH^1_{\et}(\overline{Y_1(N)},\Zp(1))$. There is a corresponding commutative diagram
\[
\begin{tikzcd}
\mathcal{H}^+ \arrow[r,hook] \arrow[d,two heads] & \mathcal{H}^{\ord} \arrow[r,two heads] \arrow[d,two heads] & \mathcal{H}^- \arrow[d,two heads] \\
T_{f}^+ \arrow[r,hook] & T_{f} \arrow[r,two heads] & T_f^-
\end{tikzcd}
\]
of $G_{\Qp}$-modules,
where $\mathcal{H}^+$ (resp.~$\mathcal{H}^-$) is the maximal submodule (resp.~quotient) on which the inertia group $I_p$ acts non-trivially (resp.~trivially); this non-trivial
action is via the cyclotomic character. The middle arrow is the defining projection, which induces the other two maps. 

Let $M(f)$ denote the maximal torsion-free quotient of $M_2(\Gamma_1(N),\Z_p)$ on which the Hecke operator $T_n$ acts as $a_n$ for all $n$. 
Since $p\nmid 2N$, the integral de Rham - \'{e}tale comparison isomorphisms (we need the log version for the open curve $Y_1(N)$)
induce compatible identifications 
\[
e_{\ord}\rH^0(\Omega_{X_1(N)/\Zp}(\log(\mathrm{cusps})) = e_{\ord} M_2(\Gamma_1(N))\simeq \mathcal{H}^-
\]
and
\[ 
M(f) \simeq T_f^-= T_pE_\bullet^- \simeq\rH^0(\Omega_{E_\bullet/\Zp}) = \Zp \omega_{E_\bullet}. 
\]
Via these, $\pi_{\bullet,*}\omega_f$ is identified with the image of $f$ in
$M(f)$. Since $\omega_{E_\bullet}$ is identified with a $\Zp$-generator of $M(f)$ and we have an isomorphism of rank one $\Zp$-modules $M(f) \simeq I_f$, $f\mapsto 1$,
it follows that $\pi_{\bullet,*}\omega_f = a \omega_{E_\bullet}$ for some $a\in \Zp$ such that $aI_f = \Zp$, as desired.
\end{proof}

%

Recall the $\alpha$-twisted versions of the two-variable $p$-adic $L$-functions $\Lcal_p^{\rm PR}(E_\bullet/K)$ and $\Lcal_p^{\rm Gr}(f/K)$ introduced in Definitions~\ref{def:PR} and \ref{def:Gr}, respectively.

\begin{cor}\label{cor:KLZ}
There are injective $\Lambda_K$-linear maps with pseudo-null cokernel 
\[
{\rm Col}_{E_\bullet}:\rH^1(K_{\bar{v}},(T_pE_\bullet)^-(\alpha)\widehat\otimes\Lambda_K)\to\Lambda_K,\quad
{\rm Col}_{\boldsymbol{g}}:\rH^1(K_{v},(T_pE_\bullet)^+(\alpha)\widehat\otimes\Lambda_K)\to\Lambda_K^{\rm ur},
\]
such that 
\[
{\rm Col}_{E_\bullet}(p^-({\rm loc}_{\bar v}(BF_{\alpha})))=\Lcal_p^{\rm PR}(E_\bullet(\alpha)/K),\quad{\rm Col}_{\boldsymbol{g}}({\rm loc}_{v}(BF_{\alpha}))=\Lcal_p^{\rm Gr}(f(\alpha)/K).
\]
\end{cor}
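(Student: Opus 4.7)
The plan is to derive the corollary from Theorem~\ref{thm:KLZ} by post-composing $\widetilde{\rm Col}_f$ and $\widetilde{\rm Col}_{\boldsymbol{g}}$ with suitable $\Lambda_K$-linear scalings, engineered so that the one-variable $p$-adic $L$-functions of Theorems~\ref{thm:PR} and~\ref{thm:Hida-2} get converted into the renormalizations appearing in Definitions~\ref{def:PR} and~\ref{def:Gr}. Since $T_pE_\bullet\simeq T_f$, the source modules of the two pairs of maps already agree, and only the targets need to be adjusted.

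For the construction of ${\rm Col}_{E_\bullet}$, the key input is Lemma~\ref{lem:cong-deg}: from $\deg(\pi_\bullet)\cdot I_f=\Z_p$, multiplication by $\deg(\pi_\bullet)$ induces a $\Lambda_K$-linear isomorphism $I_f\widehat\otimes\Lambda_K\xrightarrow{\sim}\Lambda_K$. Combining this with multiplication by $H_p(f)/c_{E_\bullet}^2\in\Z_p$ (using that $c_{E_\bullet}$ is a $p$-adic unit for $p\nmid 2N$, by the standard result recalled in the proof of Lemma~\ref{lem:cong-deg}), one obtains a $\Lambda_K$-linear injection $j_{E_\bullet}\colon I_f\widehat\otimes\Lambda_K\hookrightarrow\Lambda_K$. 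Setting ${\rm Col}_{E_\bullet}:=j_{E_\bullet}\circ\widetilde{\rm Col}_f$ and applying Theorem~\ref{thm:KLZ}(a) together with Definition~\ref{def:PR} produces the required identity
\[
{\rm Col}_{E_\bullet}\bigl(p^-(\loc_{\bar v}(BF_\alpha))\bigr)=\frac{\deg(\pi_\bullet)\,H_p(f)}{c_{E_\bullet}^2}\cdot\Lcal_p(f(\alpha)/K,\Sigma^{(1)})=\Lcal_p^{\rm PR}(E_\bullet(\alpha)/K).
\]

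For ${\rm Col}_{\boldsymbol{g}}$, the construction is parallel but relies on the integrality input extracted from the proof of Lemma~\ref{lem:integral}: namely, $h_K\cdot\Lcal_v(K)^-\in\Lambda_K^{\rm ur}$ is divisible by any characteristic power series $H(\boldsymbol{g})\in\Lambda_{\boldsymbol{g}}$ for the denominator of the congruence ideal $I_{\boldsymbol{g}}$. Consequently, multiplication by $h_K\cdot\Lcal_v(K)^-$ defines a $\Lambda_K$-linear injection $j_{\boldsymbol{g}}\colon I_{\boldsymbol{g}}\widehat\otimes\Lambda_K\hookrightarrow\Lambda_K^{\rm ur}$, and then ${\rm Col}_{\boldsymbol{g}}:=j_{\boldsymbol{g}}\circ\widetilde{\rm Col}_{\boldsymbol{g}}$ satisfies ${\rm Col}_{\boldsymbol{g}}(\loc_v(BF_\alpha))=\Lcal_p^{\rm Gr}(f(\alpha)/K)$ by Theorem~\ref{thm:KLZ}(b) and Definition~\ref{def:Gr}. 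Injectivity of both ${\rm Col}_{E_\bullet}$ and ${\rm Col}_{\boldsymbol{g}}$ is immediate since $j_{E_\bullet}$ and $j_{\boldsymbol{g}}$ act as multiplication by non-zero-divisors. The main obstacle I foresee is the pseudo-null cokernel assertion: while it transfers from the corresponding claim in Theorem~\ref{thm:KLZ} once one observes that $j_{E_\bullet}$ and $j_{\boldsymbol{g}}$ are isomorphisms onto their images, tracking the explicit principal-ideal contribution of the scalar factors $H_p(f)/c_{E_\bullet}^2$ and $h_K\cdot\Lcal_v(K)^-/H(\boldsymbol{g})$ to the quotient in the enlarged target is the step requiring the most careful bookkeeping.
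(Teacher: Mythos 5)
Your construction of $\mathrm{Col}_{E_\bullet}$ follows the paper's proof essentially verbatim (the paper writes $\mathrm{Col}_{E_\bullet}=\deg(\pi_\bullet)\cdot H_p(f)\cdot\widetilde{\mathrm{Col}}_f$ and separately invokes \cite[Cor.~4.1]{mazur} for $c_{E_\bullet}$ being a $p$-adic unit, whereas you fold $c_{E_\bullet}^{-2}$ into the scalar; the two are the same up to a unit). Both appeal to Lemma~\ref{lem:cong-deg} and Definition~\ref{def:PR} in exactly the same way.

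For $\mathrm{Col}_{\boldsymbol{g}}$, however, there is a genuine gap. You invoke only the divisibility from the proof of Lemma~\ref{lem:integral}, namely that $H(\boldsymbol{g})$ divides $h_K\cdot\Lcal_v(K)^-$. This is enough to see that multiplication by $h_K\cdot\Lcal_v(K)^-$ carries $I_{\boldsymbol{g}}\widehat\otimes\Lambda_K$ \emph{into} $\Lambda_K^{\mathrm{ur}}$, i.e.\ it establishes injectivity of $j_{\boldsymbol{g}}$, but it gives no control over the cokernel. The cokernel of $j_{\boldsymbol{g}}\circ\widetilde{\mathrm{Col}}_{\boldsymbol{g}}$ acquires, beyond the pseudo-null cokernel inherited from $\widetilde{\mathrm{Col}}_{\boldsymbol{g}}$, a contribution with characteristic ideal $\bigl(h_K\cdot\Lcal_v(K)^-/H(\boldsymbol{g})\bigr)$, and a quotient of $\Lambda_K^{\mathrm{ur}}$ by a principal ideal generated by a non-unit is \emph{not} pseudo-null. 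You correctly identify $h_K\cdot\Lcal_v(K)^-/H(\boldsymbol{g})$ as ``the step requiring the most careful bookkeeping,'' but you stop short of producing the ingredient that resolves it. The paper supplies exactly this missing input: by \cite[Cor.~5.6]{hida-coates} one has the \emph{reverse} divisibility, that $h_K\cdot\Lcal_v(K)^-$ divides $H(\boldsymbol{g})$, and the two divisibilities together show that $(H(\boldsymbol{g}))=(h_K\cdot\Lcal_v(K)^-)$, so the scalar factor is a unit and the pseudo-null cokernel assertion follows. Without the Hida--Coates divisibility your argument only yields injectivity of $\mathrm{Col}_{\boldsymbol{g}}$, not the full statement of the corollary.
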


\begin{proof}
By \cite[Cor.~4.1]{mazur} and \cite[Prop.~3.3]{greenvats}, the Manin constant associated to the modular parametrization $\pi_\bullet:X_1(N)\rightarrow E_\bullet$ 
is a $p$-adic unit, so setting 
\[
{\rm Col}_{E_\bullet}:=\deg(\pi_{\bullet})\cdot H_p(f)\cdot\widetilde{\rm Col}_{f},
\]
the first part of the result follows from Theorem~\ref{thm:KLZ} and Lemma~\ref{lem:cong-deg}. On the other hand, as explained in the proof of Lemma~\ref{lem:integral}, the characteristic power series $H_{\bg}^{\rm cusp}$ of the CM family $\boldsymbol{g}$ divides $h_K\cdot\Lcal_v(K)^-$. Since on the other hand by \cite[Cor.~5.6]{hida-coates} $H(\boldsymbol{g})$ is divisible by $h_K\cdot\Lcal_v(K)^-$, it follows that the congruence ideal of $\boldsymbol{g}$ is generated by $h_K\cdot\Lcal_v(K)^-$. Thus setting 
\[
\quad{\rm Col}_{\boldsymbol{g}}:=h_K\cdot\Lcal_v(K)^-\cdot\widetilde{\rm Col}_{\boldsymbol{g}}
\] 
the second part of the result follows from 
Theorem~\ref{thm:KLZ}.
\end{proof}

\subsection{Iwasawa main conjectures}

One key application of Corollary~\ref{cor:KLZ} is in relating different instances of the main conjectures for the cyclotomic $\Z_p$-extension $K_\infty^+/K$. Similarly as in $\S\ref{subsec:Sel-str}$, for any elliptic curve $E/\Q$ we put 
\begin{align*}
\X_{\rm Gr}(E(\alpha)/K_\infty^+):=\X_{\rm rel,\rm str}(E(\alpha)/K_\infty^+),\quad
\mathfrak{S}_{\rm ord,\rm rel}(E(\alpha)/K_\infty^+)&=\rH^1_{\Fcal_{\rm ord,\rm rel}}(K,T_pE(\alpha)\otimes\Lambda_K^+),
\end{align*}
etc.. Write $BF_{\alpha}^+$ for the image of the class $BF_{\alpha}$ of Theorem~\ref{thm:KLZ} under the natural projection
\[
\rH^1_{\Fcal_{\rm ord,\rm rel}}(K,(T_pE_\bullet)(\alpha)\widehat\otimes\Lambda_K)\rightarrow
\rH^1_{\Fcal_{\rm ord,\rm rel}}(K,(T_pE_\bullet)(\alpha)\widehat\otimes\Lambda_K^+), 
\]
so we have $BF_{\alpha}^+\in\mathfrak{S}_{\rm ord,\rm rel}(E_\bullet(\alpha)/K_\infty^+)$.

\begin{prop}\label{prop:equiv}
Suppose $E_\bullet(K)[p]=0$ and that $\Lcal_p^{\rm PR}(E_\bullet(\alpha)/K)^+$ and $\Lcal_p^{\rm Gr}(f(\alpha)/K)^+$ are both nonzero. Then the following are equivalent:
\begin{itemize}
\item[(i)] $\mathfrak{S}_{\rm ord,\rm rel}(E_\bullet(\alpha)/K_\infty^+)$ has $\Lambda_K^+$-rank one, $\X_{\rm ord,\rm str}(E_\bullet(\alpha^{-1})/K_\infty^+)$ is $\Lambda_K^+$-torsion, and
\[
\ch_{\Lambda_K^+}\bigl(\X_{\rm ord,\rm str}(E_\bullet(\alpha^{-1})/K_\infty^+\bigr)\supset\ch_{\Lambda_K^+}\bigl(\mathfrak{S}_{\rm ord,\rm rel}(E_\bullet(\alpha)/K_\infty^+)/\Lambda_K^+BF_{\alpha}^+\bigr).
\]
\item[(ii)] $\mathfrak{S}_{\rm str,\rm rel}(E_\bullet(\alpha)/K_\infty^+)$ and $\X_{\rm Gr}(E_\bullet(\alpha)/K_\infty^+)$ are both $\Lambda_K^+$-torsion, and
\[
\ch_{\Lambda_K^+}\bigl(\X_{\rm Gr}(E_\bullet(\alpha)/K_\infty^+)\bigr)\Lambda_K^{+,\rm ur}\supset\bigl(\Lcal_p^{\rm Gr}(f(\alpha)/K)^+\bigr).
\]
\item[(iii)] $\mathfrak{S}_{\rm ord}(E_\bullet(\alpha)/K_\infty^+)$ and $\X_{\rm ord}(E_\bullet(\alpha)/K_\infty^+)$ are both $\Lambda_K^+$-torsion, and
\[
\ch_{\Lambda^+_K}\bigl(\X_{\rm ord}(E_\bullet(\alpha)/K_\infty^+)\bigr)\supset\bigl(\Lcal_p^{\rm PR}(E_\bullet(\alpha)/K)^+\bigr).
\]
\end{itemize}
The same result holds for the opposite divisibilities.
\end{prop}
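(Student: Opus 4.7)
The plan is to derive each equivalence from Poitou--Tate global duality combined with the explicit reciprocity laws of Corollary~\ref{cor:KLZ}. All three statements can be viewed as successive reformulations of the cyclotomic main conjecture over $K_\infty^+$; they differ by the Selmer conditions imposed at $v$ and $\bar v$, and by whether one packages the analytic data as the Beilinson--Flach class $BF_\alpha^+$ (as in (i)) or as the corresponding $p$-adic $L$-functions themselves (as in (ii) and (iii)). The Coleman maps $\mathrm{Col}_{E_\bullet}$ and $\mathrm{Col}_{\boldsymbol{g}}$ translate between these two packagings, while Poitou--Tate translates between Selmer groups with different local conditions at $v$ and $\bar v$. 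Since each implication is obtained by manipulating exact sequences of $\Lambda_K^+$-torsion modules, running the same argument for the reverse divisibility yields the last sentence of the proposition.

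For $\mathrm{(i)}\Leftrightarrow\mathrm{(iii)}$ I would apply the Poitou--Tate five-term exact sequence for the pair of Selmer structures $(\mathrm{ord},\mathrm{ord})\subset(\mathrm{ord},\mathrm{rel})$ at $v,\bar v$:
\[
0 \to \mathfrak{S}_{\rm ord}(E_\bullet(\alpha)/K_\infty^+) \to \mathfrak{S}_{\rm ord,\rm rel}(E_\bullet(\alpha)/K_\infty^+) \xrightarrow{p^-\circ\mathrm{loc}_{\bar v}} \rH^1(K_{\bar v}, T_pE_\bullet^-(\alpha)\widehat\otimes\Lambda_K^+) \to \X_{\rm ord,\rm str}(E_\bullet(\alpha^{-1})/K_\infty^+) \to \X_{\rm ord}(E_\bullet(\alpha^{-1})/K_\infty^+) \to 0,
\]
in which the discrete Selmer groups are computed for the Cartier dual $T_pE_\bullet(\alpha^{-1})$ via Weil self-duality of $T_pE_\bullet$ (together with the fact that the ordinary condition is its own orthogonal complement at $p$). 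By Corollary~\ref{cor:KLZ}, the Coleman map $\mathrm{Col}_{E_\bullet}$ injects $\rH^1(K_{\bar v}, T_pE_\bullet^-(\alpha)\widehat\otimes\Lambda_K^+)$ into $\Lambda_K^+$ with pseudo-null cokernel and sends $p^-(\mathrm{loc}_{\bar v}(BF_\alpha^+))$ to $\Lcal_p^{\rm PR}(E_\bullet(\alpha)/K)^+$, which is nonzero by hypothesis. This nonvanishing forces $\mathfrak{S}_{\rm ord}(E_\bullet(\alpha))$ and all of the $\X$-modules in the sequence to be $\Lambda_K^+$-torsion as soon as $\mathfrak{S}_{\rm ord,\rm rel}$ has $\Lambda_K^+$-rank one. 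Quotienting the first two terms by $\Lambda_K^+\cdot BF_\alpha^+$ yields an exact sequence of torsion modules whose characteristic-ideal identity directly relates $\ch(\mathfrak{S}_{\rm ord,\rm rel}/BF_\alpha^+)$, $\ch(\X_{\rm ord,\rm str}(E_\bullet(\alpha^{-1})))$, $\Lcal_p^{\rm PR}(E_\bullet(\alpha)/K)^+$, $\ch(\X_{\rm ord}(E_\bullet(\alpha^{-1})))$, and $\ch(\mathfrak{S}_{\rm ord}(E_\bullet(\alpha)))$. Combining this with the complex-conjugation identification $\X_{\rm ord}(E_\bullet(\alpha^{-1})/K_\infty^+)\cong\X_{\rm ord}(E_\bullet(\alpha)/K_\infty^+)$ (legitimate because $\alpha$ is anticyclotomic, $K_\infty^+$ is $\tau$-stable, and the $(\mathrm{ord},\mathrm{ord})$ Selmer structure is symmetric in $v$ and $\bar v$) and a second Poitou--Tate computation controlling $\mathfrak{S}_{\rm ord}(E_\bullet(\alpha))$ against $\X_{\rm ord}(E_\bullet(\alpha))$, the divisibility in (i) converts into the divisibility in (iii).

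The argument for $\mathrm{(i)}\Leftrightarrow\mathrm{(ii)}$ is structurally identical, carried out now at $v$ instead of at $\bar v$. Strengthening the local condition at $v$ from $\mathrm{ord}$ to $\mathrm{str}$ yields the analogous five-term sequence
\[
0 \to \mathfrak{S}_{\rm str,\rm rel}(E_\bullet(\alpha)/K_\infty^+) \to \mathfrak{S}_{\rm ord,\rm rel}(E_\bullet(\alpha)/K_\infty^+) \xrightarrow{\mathrm{loc}_v} \rH^1(K_v, T_pE_\bullet^+(\alpha)\widehat\otimes\Lambda_K^+) \to \X_{\rm Gr}(E_\bullet(\alpha^{-1})/K_\infty^+) \to \X_{\rm ord,\rm str}(E_\bullet(\alpha^{-1})/K_\infty^+) \to 0,
\]
and $\mathrm{Col}_{\boldsymbol{g}}$ identifies the quotient of $\rH^1(K_v, T_pE_\bullet^+(\alpha)\widehat\otimes\Lambda_K^+)$ by the image of $\mathrm{loc}_v(BF_\alpha^+)$ with $\Lambda_K^{+,\rm ur}/(\Lcal_p^{\rm Gr}(f(\alpha)/K)^+)$ modulo pseudo-null. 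The same bookkeeping (now taking place over $\Lambda_K^{+,\rm ur}$, which is harmless for the stated divisibility) transfers the divisibility in (i) to the one in (ii).

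The main technical obstacle is organizing the rank/torsion conditions and controlling the pseudo-null error terms across the Poitou--Tate sequences. Specifically, one must verify, using $E_\bullet(K)[p]=0$ together with Lemma~\ref{lem:almost-div-ord} and its variants, that the relevant $\Lambda_K^+$-modules have no nonzero finite submodules so that characteristic ideals behave multiplicatively in exact sequences; one must check that the nonvanishing of $\Lcal_p^{\rm PR}(E_\bullet(\alpha)/K)^+$ and $\Lcal_p^{\rm Gr}(f(\alpha)/K)^+$ precisely enforces the rank-one condition on the compact Selmer groups and the torsion conditions on the discrete ones; and one must confirm that the $\tau$-symmetry exchanging $\alpha$ and $\alpha^{-1}$ interacts correctly with the $(\mathrm{ord},\mathrm{ord})$ local condition on both sides of the duality. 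Once these verifications are in place, the proof reduces to a standard exercise in Iwasawa-theoretic global duality.
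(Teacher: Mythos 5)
Your strategy is exactly the paper's: the two Poitou--Tate five-term exact sequences obtained by moving from the $(\mathrm{ord},\mathrm{rel})$ to the $(\mathrm{ord},\mathrm{ord})$ and $(\mathrm{str},\mathrm{rel})$ local conditions, combined with the Coleman maps of Corollary~\ref{cor:KLZ} and the complex-conjugation identifications between $\alpha$- and $\alpha^{-1}$-twisted Selmer groups. Your second sequence (for $\mathrm{(i)}\Leftrightarrow\mathrm{(ii)}$) matches the one the paper uses. Two slips in the $\mathrm{(i)}\Leftrightarrow\mathrm{(iii)}$ argument, though, would have to be repaired before the bookkeeping goes through.

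First, you have the final arrow in the Poitou--Tate sequence pointing the wrong way. Relaxing from $(\mathrm{ord},\mathrm{ord})$ to $(\mathrm{ord},\mathrm{rel})$ on the compact side forces the dual conditions to tighten, $(\mathrm{ord},\mathrm{ord})^*=(\mathrm{ord},\mathrm{ord})\supset(\mathrm{ord},\mathrm{str})=(\mathrm{ord},\mathrm{rel})^*$, so on the discrete-dual side one gets a surjection $\X_{\rm ord}(E_\bullet(\alpha^{-1})/K_\infty^+)\twoheadrightarrow\X_{\rm ord,\rm str}(E_\bullet(\alpha^{-1})/K_\infty^+)$, not the reverse. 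Your version of the sequence has $\X_{\rm ord,\rm str}(\alpha^{-1})\to\X_{\rm ord}(\alpha^{-1})\to 0$, which is backwards; carrying through the characteristic-ideal identity with this orientation would give the wrong relation.

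Second, the vague invocation of ``a second Poitou--Tate computation controlling $\mathfrak{S}_{\rm ord}$ against $\X_{\rm ord}$'' papers over a simpler and crucial step. In both implications one first observes that the smaller compact Selmer group ($\mathfrak{S}_{\rm ord}$ or $\mathfrak{S}_{\rm str,\rm rel}$) is $\Lambda_K^+$-torsion; since $E_\bullet(K)[p]=0$ makes these groups torsion-free submodules of $\rH^1(G_{K,\Sigma},T\widehat\otimes\Lambda_K^+)$, they vanish identically. This is what collapses the five-term sequence into a four-term one whose characteristic-ideal identity directly delivers the equivalence; no further duality computation is needed, and the class $BF_\alpha^+$ lives in $\mathfrak{S}_{\rm ord,\rm rel}$, not in the (vanishing) $\mathfrak{S}_{\rm ord}$. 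Relatedly, the ``no finite submodule'' issue you flag via Lemma~\ref{lem:almost-div-ord} is not what the paper's proof uses here; multiplicativity of characteristic ideals along the (four-term) exact sequence is all that is required once the first term has been shown to vanish.

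Once the arrow direction and the vanishing of $\mathfrak{S}_{\rm ord}$, $\mathfrak{S}_{\rm str,\rm rel}$ are fixed, your outline is the paper's proof.
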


\begin{proof}
This is a well-known consequence of Poitou--Tate duality and the reciprocity laws of Theorem~\ref{thm:KLZ}, but we provide the details for the convenience of the reader. Below we set $\mathfrak{S}_{\rm str,\rm rel}=\mathfrak{S}_{\rm str,\rm rel}(E_\bullet(\alpha)/K_\infty^+)$, $\X_{\rm Gr}=\X_{\Gr}(E_\bullet(\alpha)/K_\infty^+)$, etc. for the ease of notation; and similarly, $\X_{\rm ord}(\alpha^{-1})=\X_{\rm ord}(E_\bullet(\alpha^{-1})/K_\infty^+)$, $\X_{\rm Gr}(\alpha^{-1})=\X_{\Gr}(E_\bullet(\alpha^{-1})/K_\infty^+)$, etc.. Note that, since $\alpha^{\tau}=\alpha^{-1}$ and $(T_pE_{0}\otimes (\Lambda_K^+)^{\vee})^{\tau}\simeq T_pE_{0}\otimes (\Lambda_K^+)^{\vee}$, the action of complex conjugation gives rise to isomorphisms of $\Lambda_K^+$-modules:
\begin{equation}\label{eq:cc}
\X_{\Gr}(\alpha^{-1})\simeq \X_{\Gr}, \ \ \ \X_{\rm ord}(\alpha^{-1})\simeq \X_{\rm ord} \ \ \ \X_{\rm ord, str}(\alpha^{-1})\simeq \X_{\rm str, ord}.
\end{equation}

For the equivalence ${\rm (i)}\Leftrightarrow{\rm (ii)}$ consider the exact sequence
\begin{equation}\label{eq:PT1}
0\rightarrow\mathfrak{S}_{\rm str,\rm rel}\rightarrow\mathfrak{S}_{\rm ord,\rm rel}\rightarrow\rH^1_{\rm ord}(K_{v},T_pE_\bullet(\alpha)\otimes\Lambda_K^+)\rightarrow\X_{\rm Gr}(\alpha^{-1})\rightarrow\X_{\rm ord,\rm str}(\alpha^{-1})\rightarrow 0.
\end{equation}
In both cases, we see that $\mathfrak{S}_{\rm str,\rm rel}$ is $\Lambda_K^+$-torsion, hence trivial (by the assumption $E_\bullet(K)[p]=0$), and (\ref{eq:PT1}) yields the exact sequence
\[
0 \to\mathfrak{S}_{\rm ord,\rm rel}/\Lambda_K^+ BF^+_{\alpha} \to\rH^1_{\rm ord}(K_{v},T_pE_\bullet(\alpha)\otimes\Lambda_K^+)/\Lambda_K^+\loc_{v}(BF_{\alpha}^+)\to\X_{\rm Gr}(\alpha^{-1})\to \X_{\rm ord,\rm str}(\alpha^{-1}) \to 0.
\]
Since by Corollary~\ref{cor:KLZ} the second term in this exact sequence is pseudo-isomorphic---via the map ${\rm Col}_{\bg}$---to $\Lambda_K^{+,\rm ur}/(\Lcal_p^{\rm Gr}(f(\alpha)/K)^+)$, the equivalence ${\rm (i)}\Leftrightarrow{\rm (ii)}$ follows from this, the multiplicativity of characteristic ideals, and the isomorphisms \eqref{eq:cc}. On the other hand, for the equivalence ${\rm (i)}\Leftrightarrow{\rm (iii)}$ consider the exact sequence
\begin{equation}\label{eq:PT2}
0\rightarrow\mathfrak{S}_{\rm ord}\rightarrow\mathfrak{S}_{\rm ord,\rm rel}\rightarrow\rH_{/\rm ord}^1(K_{\bar v},T_pE_\bullet(\alpha)\otimes\Lambda_K^+)\rightarrow\X_{\rm ord}(\alpha^{-1})\rightarrow\X_{\rm ord,\rm str}(\alpha^{-1})\rightarrow 0,
\end{equation}
where 
\[
\rH^1_{/\rm ord }(K_{\bar v},T_pE_\bullet(\alpha)\otimes\Lambda_K^+):=\frac{\rH^1(K_{\bar v},T_pE_\bullet(\alpha)\otimes\Lambda_K^+)}{\rH^1_{\rm ord }(K_{\bar v},T_pE_\bullet(\alpha)\otimes\Lambda_K^+)}\simeq\rH^1(K_{\bar v},T_f^-(\alpha)\widehat\otimes\Lambda_K).
\]
Similarly as before, in both cases we find $\mathfrak{S}_{\rm ord}$ is $\Lambda_K^+$-torsion, so from (\ref{eq:PT2}) we obtain the exact sequence
\[
0\to \mathfrak{S}_{\rm ord,\rm rel}/\Lambda_K^+ BF_{\alpha}^+ \to \rH^1_{/\rm ord}(K_{\bar{v}},T_pE_\bullet\otimes\Lambda_K^+)/\Lambda_K^+p^-(\loc_{\bar{v}}(BF_{\alpha}^+))\to \X_{\rm ord}(\alpha^{-1})\to \X_{\rm ord,\rm str}(\alpha^{-1}) \to 0.
\]
Since by Corollary~\ref{thm:KLZ} the second term in this exact sequence is pseudo-isomorphic---via the map ${\rm Col}_{E_\bullet}$---to $\Lambda_K^+/(\Lcal_p^{\rm PR}(E_\bullet(\alpha)/K)^+)$, taking characteristic ideals and applying \eqref{eq:cc} the result follows.
\end{proof}

 
\subsection{The Beilinson--Flach Euler system divisibility}\label{sec:BF}

In the case $\alpha=1$, the ``upper bound'' divisibility in Conjecture~\ref{conj:IMC-K} for $\X_{\rm ord}(E_\bullet/K_\infty^+)$ can be deduced from Kato's work together with Proposition~\ref{prop:comp-Selcyc}. 
For our later arguments (especially for elliptic curves $E/\Q$ of rank $>1$), we will need a similar divisibility for a twist by a non-trivial character $\alpha$ of $\Gamma_K^-$, a result that we shall deduce from 
Proposition~\ref{prop:equiv} and the next result.


\begin{thm}\label{thm:BF-ES} 
Suppose $\alpha\neq 1$ is a non-trivial character of $\Gamma_K^-$ and 
$BF_{\alpha}^+\neq 0$. Then $\X_{\rm ord,\rm str}(E_\bullet(\alpha^{-1})/K_\infty^+)$ is $\Lambda_K^+$-torsion,  $\mathfrak{S}_{\rm ord,\rm rel}(E_\bullet(\alpha)/K_\infty^+)$ has $\Lambda_K^+$-rank one, and we have the divisibility
\[
\ch_{\Lambda_K^+}\bigl(\X_{\rm ord,\rm str}(E_\bullet(\alpha^{-1})/K_\infty^+\bigr)\supset\ch_{\Lambda_K^+}\bigl(\mathfrak{S}_{\rm ord,\rm rel}(E_\bullet(\alpha)/K_\infty^+)/\Lambda_K^+BF_{\alpha}^+\bigr)
\]
in $\Lambda_K^+\otimes\Q_p$.
\end{thm}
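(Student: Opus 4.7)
The plan is to prove Theorem~\ref{thm:BF-ES} by applying the standard Euler system machinery to the Beilinson--Flach classes of Theorem~\ref{thm:KLZ}. First, I would extend $BF_\alpha$ to a full family of Beilinson--Flach classes $\{c_L\}_L$ indexed by finite abelian extensions $L/K$, using the construction of \cite{explicit} (in its refinement \cite{BST} adapted to the $p$-irregular CM Hida family $\boldsymbol{g}$ passing through ${\rm Eis}_{1,\eta}$). The classes live in $\rH^1(L,T_pE_\bullet(\alpha))$ and satisfy the Euler system norm relations at primes of $K$ that are split over $\Q$ and coprime to $Np$, recovering $BF_\alpha^+$ as the image along the cyclotomic tower. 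The hypothesis $BF_\alpha^+\neq 0$ guarantees that the bottom class is non-trivial.

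Next, I would run the standard conversion from Euler systems to Kolyvagin systems (\cite{rubin-book}, \cite{mazur-rubin}, and the adaptation to the Rankin--Selberg setting in \cite[§11]{explicit}) and apply the resulting Kolyvagin bound. Under Poitou--Tate local duality, the conditions defining $\mathfrak{S}_{\rm ord,\rm rel}(E_\bullet(\alpha)/K_\infty^+)$ (ordinary at $v$, relaxed at $\bar v$) are precisely dual to those defining $\X_{\rm ord,\rm str}(E_\bullet(\alpha^{-1})/K_\infty^+)$ (ordinary at $v$, strict at $\bar v$), and part (b) of Theorem~\ref{thm:KLZ} places $BF_\alpha^+$ inside $\mathfrak{S}_{\rm ord,\rm rel}(E_\bullet(\alpha)/K_\infty^+)$. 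The Kolyvagin bound then produces simultaneously the three desired conclusions: $\X_{\rm ord,\rm str}(\alpha^{-1})$ is $\Lambda_K^+$-torsion, the quotient $\mathfrak{S}_{\rm ord,\rm rel}/\Lambda_K^+BF_\alpha^+$ is $\Lambda_K^+$-torsion (whence $\rank_{\Lambda_K^+}\mathfrak{S}_{\rm ord,\rm rel}=1$, since $BF_\alpha^+$ generates a free rank-one submodule under $E_\bullet(K)[p]=0$), and the divisibility
\[
\ch_{\Lambda_K^+}\bigl(\X_{\rm ord,\rm str}(E_\bullet(\alpha^{-1})/K_\infty^+)\bigr)\supset\ch_{\Lambda_K^+}\bigl(\mathfrak{S}_{\rm ord,\rm rel}(E_\bullet(\alpha)/K_\infty^+)/\Lambda_K^+BF_\alpha^+\bigr)
\]
in $\Lambda_K^+\otimes\Q_p$. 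Consistency with the Poitou--Tate exact sequence~(\ref{eq:PT1}) (in which $\mathfrak{S}_{\rm str,\rm rel}$ is then forced to be torsion, hence zero) cross-checks these conclusions.

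The main obstacle is verifying the hypotheses of the Kolyvagin-system machinery in the residually reducible setting: since $p$ is Eisenstein for $E$, the residual representation $\overline\rho$ on $E_\bullet[p]$ fails the big-image assumption of \cite{rubin-book,mazur-rubin}, and the Kolyvagin-derivative argument only produces the Selmer bound with bounded $p$-power denominators. This is precisely why Theorem~\ref{thm:BF-ES} is phrased over $\Lambda_K^+\otimes\Q_p$, where these bounded denominators disappear; a fully integral refinement would require the more delicate Kolyvagin-system analysis carried out in Sect.~\ref{sec:anticyc} for the anticyclotomic tower.
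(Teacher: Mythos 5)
Your outline matches the paper's proof in its essentials: prolong $BF_\alpha^+$ to a cyclotomic Euler system (the Beilinson--Flach classes over $K(\mu_m)$, with norm relations at primes split over $\Q$), feed it into the Euler-system-to-Kolyvagin-system machinery of \cite{explicit}, and read off the rank and divisibility statements via Poitou--Tate duality between the $\Fcal_{\rm ord,\rm rel}$ and $\Fcal_{\rm ord,\rm str}$ structures. You also correctly identify residual reducibility as the reason the bound lands in $\Lambda_K^+\otimes\Q_p$ rather than integrally.

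Where your draft is incomplete is in stopping at the observation that ``big image fails, so tensor with $\Q_p$.'' Even the rational bound requires verifying two hypotheses on $V=(T_pE_\bullet)\otimes{\rm Ind}_K^\Q(\alpha)\otimes\Q_p$: irreducibility of $V$ as a module over $\Gal(\overline{\Q}/\Q^{\rm ab})$, and the existence of $\sigma\in\Gal(\overline{\Q}/\Q(\mu_{p^\infty}))$ with $\dim_\Phi V/(\sigma-1)V=1$. The paper checks the first using that $f$ is non-CM (forced here by the Heegner hypothesis), and the second via \cite[Thm.~4.4.1]{loeffler-image} together with the fact (by \cite[\S{IV.2.2}]{serre-ladic}) that the relevant quaternion algebra $B_f$ is split, and then appeals to the rational variant of Rubin's Euler system theorem (Thm.~2.3.4 of \cite{rubin-ES}, refined as in \cite[\S{12}]{explicit}) rather than the integral Hyp($BI$) variant. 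A complete argument should spell out these hypotheses and their verification. There is also a small lattice point you should acknowledge: the specialization at $\alpha$ of the $\Lambda_{\boldsymbol{g}}$-adic Galois module need not equal ${\rm Ind}_K^\Q(\alpha)$ but may only contain it with finite index, so the Euler system actually extends $\varpi^rBF_\alpha^+$ for some $r\geq 0$; this is harmless after inverting $p$ but should be recorded.
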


\begin{proof}
As a consequence of the cyclotomic Euler system constructed in \cite[Thm.\,8.1.3]{explicit} attached to the pair $(f,\boldsymbol{g})$, there exists an integer $r\ge 0$ such that $\varpi^rBF_\alpha^+$ 
extends to a system of cohomology classes
\[
BF_{\alpha,m}^+\in\rH^1(K(\mu_m),(T_pE_\bullet)(\alpha)\widehat\otimes\Lambda_K^+)
\]
indexed by integers $m\geq 1$ coprime to $pcND_K$ for an auxiliary integer $c>1$ coprime to $6ND_K$ with $BF_{\alpha,1}^+=\varpi^rBF_\alpha^+$ and satisfying the Euler system norm relations. (The $\varpi^r$ appears because the specialization at the character $\alpha$ of the Galois module
associated to $\boldsymbol{g}$ in \cite{explicit} may not equal ${\rm Ind}_K^\Q(\alpha)$ but only contain this lattice with finite index.) 
%
Thus by the results of \cite[\S{12}]{explicit}, giving in particular a refinement of the results of \cite{rubin-ES} for Euler systems with a non-trivial local condition at $p$, it suffices to verify that the $G_\Q$-module $T:=(T_pE_\bullet)\otimes_{\Zp}{\rm Ind}_K^\Q(\alpha)$ satisfies the following hypotheses:
\begin{itemize}
\item[(i)] { $V=T\otimes\Q_p$ is irreducible as a $\Phi[{\rm Gal}(\overline{\Q}/\Q(\mu_{p^\infty}))]$-module,}
\item[(ii)] There exists an element $\sigma\in{\rm Gal}(\overline{\Q}/\Q(\mu_{p^{\infty}}))$ such that ${\rm dim}_{\Phi}(V /(\sigma-1) V)=1$.
\end{itemize}
{Here, as before, $\Phi/\Q_p$ is a finite extension contaning the values of the character $\alpha$.}
(Note that \cite[Thm.~12.3.4]{explicit} gives a refinement of \cite[Thm.~II.3.3]{rubin-ES} under the big image hypothesis ``Hyp$(BI)$'' in \emph{op.\,cit.}; the same methods yield a corresponding refinement of \cite[Thm.~II.3.4]{rubin-ES} under the above hypotheses (i)-(ii).)
{
It remains to verify hypotheses (i) and (ii).  

We first explain that $E_\bullet$ does not have CM. Suppose to the contrary that $E_\bullet$ has CM by an order
in an imaginary quadratic field $\mathcal{K}$. We note that $p$ must be unramified in $\mathcal{K}$, as $E_\bullet$ is assumed to have good reduction at $p$:
$V_\ell E_\bullet = T_\ell E_\bullet\otimes_{\Z_\ell}{\Q_\ell}$ is the induction from $\Gal(\overline{\Q}/\mathcal{K})$ of the $\ell$-adic Galois-representation associated
with a Hecke character of $\mathcal{K}$ and so is ramified at any prime $\neq\ell$ that is ramified in $\mathcal{K}/\Q$ while also being unramified
at any prime $\neq\ell$ at which $E_\bullet$ has good reduction. 
Next, recall that $E_\bullet[p]^{ss} \simeq \mathbb{F}_p(\phi)\oplus \mathbb{F}_p(\psi)$.  As $E_\bullet$ is assumed to have CM by $\mathcal{K}$, $a_\ell(E_\bullet)=0$ for any prime $\ell$ 
of good reduction for $E_\bullet$ that is inert in $\mathcal{K}/\Q$. It follows that $\psi = \phi\eta_{\mathcal{K}}$, where $\eta_\mathcal{K}$ is the quadratic character of the extension
$\mathcal{K}/\Q$. But then $\omega = \phi\psi = \phi^2\eta_{\mathcal{K}}$ implies that both $\phi$ and $\psi$ are ramified at $p$, contradicting the assumption
that $E_\bullet$ has good ordinary reduction at $p$ and hence one of $\phi$ and $\psi$ is unramified at $p$.

Let $\rho_{E_\bullet}:G_\Q\rightarrow\Aut_{\Zp}(T_pE_\bullet) \simeq \GL_2(\Zp)$ be the usual Galois action on the $p$-adic Tate module of $E_\bullet$.  
Let $K_\infty^{cyc} = K(\mu_{p^\infty})$. As $E_\bullet$ does not have CM, it follows from Serre's open image theorem 
that $\rho_{E_\bullet}(\Gal(\overline{\Q}/K_\infty^{cyc}))$ is an open subgroup of $\SL_2(\Zp)$. In particular, 
$V_pE_\bullet = T_pE_\bullet\otimes_{\Zp}\Qp$ is an irreducible $\Gal(\overline{\Q}/K_\infty^{cyc})$-representation.
It follows that the restriction of $V = V_pE_\bullet \otimes_{\Zp}{\mathrm{Ind}}_K^\Q(\alpha)$ to $\Gal(\overline{\Q}/K_\infty^{cyc})$
decomposes as $V = V_pE_\bullet(\alpha) \oplus V_pE_\bullet(\alpha^{-1})$, with the summands being irreducible.
Let $\ell$ be a prime that ramifies in $K$. Then $\ell\neq p$ as $p$ splits in $K$ by \eqref{eq:intro-spl}, and $E_\bullet$ has good reduction at $\ell$ 
as any prime dividing the conductor of $E_\bullet$ is unramified in $K$ by \eqref{eq:intro-Heeg}.  
Let $\tau_\ell\in I_\ell \subset \Gal(\overline{\Q}/\Q(\mu_{p^\infty}))$, where $I_\ell$ denotes the inertia subgroup at any prime above $\ell$, be an element with non-trivial image in $\Gal(K/\Q)$. 
Then $\tau_\ell$ acts trivially on $V_pE_\bullet$ but maps $V_pE_\bullet(\alpha)$ isomorphically onto $V_pE_\bullet(\alpha^{-1})$ 
(and vice versa). 

Suppose $0\neq W \subset V$ is an irreducible $\Phi[\Gal(\overline{\Q}/\Q(\mu_{p^\infty}))]$-subrepresentation.  
Then $W \cap V_pE_\bullet(\alpha)$ is either $0$ or all of $V_pE_\bullet(\alpha)$. As $W$ is $\tau_\ell$-stable and $V_pE_\bullet(\alpha)$ is not, it must
be that $W\cap V_pE_\bullet(\alpha) = 0$. It follows that $W$ projects isomorphically onto $V_pE_\bullet(\alpha^{-1})$ as 
a $\Gal(\overline{\Q}/K_\infty^{cyc})$-representation. The same argument also shows that $W$ projects isomorphically onto $V_pE_\bullet(\alpha)$ as 
a $\Gal(\overline{\Q}/K_\infty^{cyc})$-representation.  But this means that $V_pE_\bullet(\alpha)\simeq V_pE_\bullet(\alpha^{-1})$ as 
$\Gal(\overline{\Q}/K_\infty^{cyc})$-representations. In turn this implies that $V_pE_\bullet \simeq V_pE_\bullet(\alpha^2)$ as 
$\Gal(\overline{\Q}/K_\infty^{cyc})$-representations. However, the determinant of $V_pE_\bullet$ is trivial on $\Gal(\overline{\Q}/K_\infty^{cyc})$
while $\alpha^4$ is not (as $\alpha$ is non-trivial and pro-$p$), so $V_pE_\bullet \not\simeq V_pE_\bullet(\alpha^2)$. This contradiction proves (i).

To establish (ii), we first choose $\sigma_1\in \Gal(\overline{\Q}/K_\infty^{cyc})$ so that 
$\rho_{E_\bullet}(\sigma_1) \simeq \left(\smallmatrix 1 & * \\ 0 & 1 \endsmallmatrix\right)$ with $*\neq 0$. This is possible
as $\rho_{E_\bullet}(\Gal(\overline{\Q}/K_\infty^{cyc}))$ is an open subgroup of $\SL_2(\Zp)$. The same is then 
true of $\rho_{E_\bullet}(\tau_\ell\sigma_1)$ as $\rho_{E_\bullet}(\tau_\ell) = 1$. 
Let $V_\alpha$ be the $\Phi$-space underlying $\mathrm{Ind}_K^\Q(\alpha)$ and let $\rho_\alpha: G_\Q\rightarrow \Aut_\Phi(V_\alpha) \simeq \GL_2(\Phi)$ denote
the action of $G_\Q$ on $V_\alpha$. Then $\det\rho_\alpha = \eta_K$, the quadratic character associated with the extension $K/\Q$.
As $\tau_\ell\sigma_1 \not\in G_K$, it follows that $\rho_\alpha(\tau_\ell\sigma_1) \simeq \left(\smallmatrix 1 & 0 \\ 0 & -1 \endsmallmatrix\right)$.
Clearly, the element $\sigma = \tau_\ell\sigma_1\in \Gal(\overline{\Q}/\Q(\mu_{p^\infty}))$ satisfies (ii).
}
\end{proof}

\section{Interlude: The rank one case and the general strategy}\label{sec:one}

In this section we give a proof of 
Theorem~\ref{thm:CYC} 
under the following two additional hypotheses:
\begin{itemize}
\item[(a)] ${\rm corank}_{\Z_p}{\rm Sel}_{p^\infty}(E/K)=1$.
\item[(b)] The restriction map
\[
{\rm Sel}_{p^\infty}(E/K)\xrightarrow{{\rm loc}_v}E(K_v)\otimes\Q_p/\Z_p
\]
is nonzero.
\end{itemize}
%
%
 
The short argument that follows, albeit independent from the discussion in the later sections, will allow us to motivate the more involved arguments needed for the proof of Theorem~\ref{thm:CYC} in general, and might provide some orientation to the reader.   

Recall that $E_\bullet$ denotes the elliptic curve in the isogeny of $E$ constructed in \cite{wuthrich-int}. 
\sk

\noindent\emph{Step 1}. Under the above additional hypotheses, by  \cite[Thm.~C]{eisenstein} the module $\X_{\rm Gr}^S(E_\bullet/K_\infty^-)$ is $\Lambda^-_K$-torsion, with 
\begin{equation}\label{eq:ac-IMC}
\ch_{\Lambda^-_K}\bigl(\X_{\rm Gr}(E_\bullet/K_\infty^-)\bigr)\Lambda_K^{-,\rm ur}=\bigl(\Lcal_p^{{\rm BDP}}(f/K)\bigr).
\end{equation}

Conditions (a) and (b) above correspond to conditions (a) and (b) in Lemma~\ref{lem:coinv} with $\alpha=1$, and so from \eqref{eq:ac-IMC} we conclude that $\rH^1_{\Fcal_{\Gr}}(K,E_\bullet[p^\infty])$ is finite, and letting $\Fcal_{\rm Gr}(E_\bullet/K_\infty^-)\in\Z_p\llbracket{T}\rrbracket$  be a characteristic power series for $\X_{\rm Gr}(E_\bullet/K_\infty^-)$ we have
\begin{equation}\label{eq:ac-0}
\Fcal_{\rm Gr}(E_\bullet/K_\infty^-)(0)\;\sim_p\;\Lcal_p^{\rm BDP}(f/K)(0)\neq 0,
\end{equation}
where $\sim_p$ denotes equality up to a $p$-adic unit.
\sk

\noindent\emph{Step 2}. From Kato's result \cite[Thm.~17.4]{kato-euler-systems} (as refined in \cite[Thm.~16]{wuthrich-int} to an integral divisibility in the $p$-Eisenstein case) applied to $E_\bullet$ and $E_\bullet^K$, together with Proposition~\ref{prop:comp-Lcyc} and Proposition~\ref{prop:comp-Selcyc}, we deduce that $\X_{\rm ord}(E_\bullet/K_\infty^+)$ is $\Lambda_K^+$-torsion, and that we have the divisibility
\begin{equation}\label{eq:cyc-ord}
\ch_{\Lambda_K^+}\bigl(\X_{\rm ord}(E_\bullet/K_\infty^+)\bigr)\supset\bigl(\Lcal_p^{\rm PR}(E_\bullet/K)^{+}\bigr)
\end{equation}
in $\Lambda_K^+$. Moreover, the nonvanishing of $\Lcal_p^{\rm PR}(E_\bullet/K)^+$ follows from Rohrlich's nonvanishing results \cite{rohrlich-cyc}, while that of $\Lcal_p^{\rm Gr}(f/K)^+$ follows from (\ref{eq:ac-0}) and Proposition~\ref{prop:comp-Lac}, noting that 
\[
\Lcal_p^{\rm Gr}(f/K)^-(0)=\Lcal_p^{\rm Gr}(f/K)^+(0).
\] 
Therefore, by Proposition~\ref{prop:equiv} with $\alpha=1$ we obtain that $\X_{\rm Gr}(E_\bullet/K_\infty^+)$ is $\Lambda_K^+$-torsion, with the divisibility 
\begin{equation}\label{eq:cyc-Gr}
\ch_{\Lambda_K^+}\bigl(\X_{\rm Gr}(E_\bullet/K_\infty^+)\bigr)\supset\bigl(\Lcal_p^{\rm Gr}(f/K)^{+}\bigr)
\end{equation}
in $\Lambda_K^{+,\rm ur}$.
\sk

\noindent\emph{Step 3}. For $\Fcal_{\rm Gr}(E_\bullet/K_\infty^-)\in\Z_p\llbracket{T}\rrbracket$ a characteristic power series for $\X_{\rm Gr}^S(E_\bullet/K_\infty^-)$, we have the chain of relations 
\[
\Fcal_{\rm Gr}(E_\bullet/K_\infty^+)(0)\;\sim_p\;
\Fcal_{\rm Gr}(E_\bullet/K_\infty^-)(0)\;\sim_p\;\Lcal_p^{\rm BDP}(f/K)(0)\;\sim_p\;\Lcal_p^{\rm Gr}(f/K)^{-}(0)=\Lcal_p^{\rm Gr}(f/K)^+(0),
\]
using Proposition~\ref{prop:euler-char} with $\alpha=1$ (resp. Proposition~\ref{prop:factor-L-Gr}) for the first (resp. third) equality up to a $p$-adic unit. 
%
We thus conclude that
\[
\Fcal_{\rm Gr}(E_\bullet/K_\infty^+)(0)\;\sim_p\;\Lcal_p^{\rm Gr}(f/K)^{+}(0)\neq 0,
\]
which by easy commutative algebra (see \cite[Lem.~3.2]{skinner-urban}) implies that equality holds in (\ref{eq:cyc-Gr}):
\[
\ch_{\Lambda_K^+}\bigl(\X_{\rm Gr}(E_\bullet/K_\infty^+)\bigr)=\bigl(\Lcal_p^{\rm Gr}(f/K)^{+}\bigr).
\]
By Proposition~\ref{prop:equiv}, it follows that $\X_{\rm ord}(E_\bullet/K_\infty^+)$ is $\Lambda_K^+$-torsion, with
\[
\ch_{\Lambda_K^+}\bigl(\X_{\rm ord}(E_\bullet/K_\infty^+)\bigr)=\bigl(\Lcal_p^{\rm PR}(E_\bullet/K)^+\bigr),
\]
and by Proposition~\ref{prop:isog-inv} the same conclusion holds with $E$ in place of $E_\bullet$. By Proposition~\ref{prop:comp-Lcyc} and Proposition~\ref{prop:comp-Selcyc}, this equality of characteristic ideals together with Kato's divisibility for $E$ yields Theorem~\ref{thm:CYC} in this case. 
\sk

In order to obtain Theorem~\ref{thm:CYC} in general:

\begin{itemize}
\item We shall prove Theorem~\ref{thm:AC} from the Introduction, removing the assumption ${\rm corank}_{\Z_p}{\rm Sel}_{p^\infty}(E/K)=1$ from  \cite[Thm.~C]{eisenstein}. 
Then, 
similarly as in \emph{Step 1} above, we shall obtain
\begin{equation}\label{eq:ac-alpha}
\Fcal_{\rm Gr}(E_\bullet(\alpha)/K_\infty^-)(0)\;\sim_p\;\Lcal_p^{\rm BDP}(f(\alpha)/K)(0)\neq 0,\nonumber
\end{equation}
for any character $\alpha$ of $\Gamma_K^-$ 
away from the zeros of $\Lcal_p^{\rm BDP}(f/K)$ (so necessarily $\alpha\neq 1$ if the $\Z_p$-corank of ${\rm Sel}_{p^\infty}(E/K)$ is greater than $1$).

\item By arguments similar to those in \emph{Steps 2} and \emph{3} above, but complicated by the need to apply certain congruences and the use of $S$-imprimitive Selmer groups, we will show that for $\alpha$ sufficiently close to $1$, the module $\X_{\rm ord}(E_\bullet(\alpha)/K_\infty^+)$ is $\Lambda_K^+$-torsion, with
\[
\ch_{\Lambda_K^+}\bigl(\X_{\rm ord}(E_\bullet(\alpha)/K_\infty^+)\bigr)=\bigl(\Lcal_p^{\rm PR}(E_\bullet(\alpha)/K)^+\bigr).
\]
%
%
\end{itemize}

From this last equality (taking $\alpha$ to be sufficiently close to $1$), we deduce that the original $\mathfrak{X}_{\rm ord}(E_\bullet/K_\infty^+)$ and $\Lcal_p^{\rm PR}(E_\bullet/K)^+$ have the same Iwasawa invariants, which together with Kato's work will yield the proof of Theorem~\ref{thm:CYC}. 

\section{Anticyclotomic main conjecture}\label{sec:anticyc}

The key new result in this section is Theorem~\ref{thm:Zp-twisted}. The result is a Kolyvagin system bound complementing \cite[Thm.~3.2.1]{eisenstein} for characters $\alpha$ of $\Gamma_K^-$ that are close to $1$. We then use this result to obtain a version of \cite[Thm.~C]{eisenstein} and its corollaries removing the assumption that ${\rm corank}_{\Z_p}{\rm Sel}_{p^\infty}(E/K)=1$ (i.e., assumption (Sel) in \emph{loc.\,cit.}).  

Throughout this section, we let $E/\Q$ be an elliptic curve of conductor $N$, $p\nmid 2N$ be a prime of good ordinary reduction for $E$, and $K$ be an imaginary quadratic field of discriminant $D_K$ prime to $Np$. We assume that
\begin{equation}\label{eq:h1}
E(K)[p]=0.\tag{h1}
\end{equation}

\subsection{A Kolyvagin-style bound for $\alpha\equiv 1\;({\rm mod}\,\varpi^m)$}
Let $R$ be the ring of integers of some finite extension $\Phi/\Qp$ and let $\varpi\in R$ be a uniformizer.
Let $\fm = (\varpi)$ be the maximal ideal of $R$.
Let $\alpha:\Gamma_K^- \to R^{\times}$ be an anticyclotomic character and $m$ a positive integer such that 
\begin{equation}\label{eq:pm}
\alpha\equiv 1\;({\rm mod}\,\varpi^m).
\end{equation}

Denote by $\rho_E:G_\Q\rightarrow{\rm Aut}_{\Z_p}(T_pE)$ the representation on the $p$-adic Tate module of $E$, and consider the $G_K$-modules 
\begin{equation}
T:=T_pE\otimes_{\Z_p}R(\alpha),\quad
V:=T\otimes_R \Phi,\quad A:=T\otimes_R\Phi/R \simeq V/T,\nonumber
\end{equation}
where $R(\alpha)$ is the free $R$-module of rank one on which $G_K$ acts via the composition of the projection $G_K\twoheadrightarrow \Gamma_K^-$ with
$\alpha$,  and the $G_K$-action on $T$ is via $\rho = \rho_E\otimes \alpha$.

Let $\mathscr{L}=\mathscr{L}_E$ be the set of primes $\ell\nmid N$ that are inert in $K$ and satisfy $a_\ell\equiv\ell+1\equiv 0\pmod{p}$, where $a_\ell=\ell+1-\vert\tilde{E}(\mathbb{F}_\ell)\vert$, and let $\mathscr{N}$ denote the set of square-free products of primes $\ell\in\mathscr{L}$. For each $\ell\in\mathscr{L}$, let $I_{\ell}\subset\Z_p$ be the smallest ideal containing $\ell+1$ for which the Frobenius element $\frob_{\lambda} \in G_{K_{\lambda}}$ acts trivially on $T / I_{\ell} T$, where $\lambda\mid \ell \in \mathscr{L}$. 
For any $k\geqslant 0$, let 
\[
T^{(k)}=T/\varpi^kT,\quad\cL^{(k)}=\{\ell\in\cL\colon I_\ell\subset p^k\Z_p\},
\]
and let $\mathscr{N}^{(k)}$ be the set of square-free products of primes $\ell\in\cL^{(k)}$. We refer the reader to \cite[\S{3.1}]{eisenstein} for the definition of the module of Kolyvagin systems $\mathbf{KS}(T,\Fcal_{\rm ord},\cL)$ associated to the triple $(T,\Fcal_{\rm ord},\cL)$ (here and until $\S\ref{subsec:ac-IMC}$, $\Fcal_{\rm ord}$ denotes the ordinary Selmer structure introduced in \cite[p.\,548]{eisenstein}, which is compatible with the discrete coefficients analogue defined in $\S$\ref{sec:seltwist} and Definition \ref{defilocalconds}). 

\begin{thm}\label{thm:Zp-twisted}  
There exist non-negative integers $\CM$ and $\CE$ depending only on $T_pE$ and $\operatorname{rank}_{\Z_p}(R)$ such that if
$m\geq \CM$ and if there is a Kolyvagin system $\kappa=\{\kappa_{n}\}_{n\in\cN}\in\mathbf{KS}(T,\Fcal_{\rm ord},\cL)$ with $\kappa_{1}\neq 0$,
then $\rH^1_{\Fcal_{\rm ord}}(K,T)$ has $R$-rank one and there is a finite $R$-module $M$ such that
\[
{\rm H}^1_{\Fcal_{\rm ord}}(K,A)\simeq \Phi/R \oplus M\oplus M
\]
with 
\[
{\rm length}_R(M)\leqslant {\rm length}_R\bigl({\rm H}^1_{\Fcal_{\rm ord}}(K,T)/R\cdot\kappa_{1}\bigr)+\CE.
\]
\end{thm}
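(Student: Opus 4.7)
The overall approach is to adapt the Kolyvagin system machinery of Mazur--Rubin, building on the argument used in \cite[\S{3}]{eisenstein} for \cite[Thm.~3.2.1]{eisenstein}. The essential novelty is that the constants $\CM$ and $\CE$ must be independent of $m$, whereas the corresponding bound in \emph{loc.\,cit.} grew linearly in $m$. The key observation is that, since $\alpha\equiv 1\pmod{\varpi^m}$, for every $k\le m$ the Galois module $T^{(k)}=T/\varpi^k T$ is canonically isomorphic to $T_pE\otimes_{\Z_p}R/\varpi^k$ with \emph{trivial} $\alpha$-twist, so the residual theory -- the image of Galois, the big-image hypotheses, and the extensions cut out by torsion classes -- is uniform in $\alpha$ and may be controlled by quantities depending only on $T_pE$ and $\operatorname{rank}_{\Z_p}(R)$.

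With this reduction in hand, I would run the Kolyvagin descent in two steps. In \S\ref{seccheb}, I would produce the required Kolyvagin primes via a \v{C}ebotarev argument: for any finite collection of classes in the $\varpi^m$-torsion of the Selmer groups of $T$ and its Cartier dual, I would find primes $\ell\in\cL^{(k)}$, inert in $K$, at which the localisations are non-degenerate. The crucial point is that the Galois extension being cut out depends only on $T^{(k)}$, hence its degree over $K$ is bounded in terms of $k$ and of the image of $G_\Q$ in $\operatorname{Aut}(T_pE/p^k T_pE)$, not on $m$. In \S\ref{secproofkoly}, I would then run the standard Mazur--Rubin descent: the derivatives of $\kappa$ at squarefree products $n\in\cN^{(k)}$ produce cohomology classes in $\rH^1(K,T^{(k)})$ whose local behaviour at $\ell\mid n$ is controlled by the finite--singular isomorphism. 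Dualising via Poitou--Tate global duality yields, for each element of the dual Selmer group, an annihilator of length at most $\operatorname{length}_R(\rH^1_{\Fcal_{\rm ord}}(K,T^{(k)})/R\kappa_{1})+\CE$; the identification $T^*(1)\simeq T\otimes R(\alpha^{-2})$, together with $\alpha^{-2}\equiv 1\pmod{\varpi^m}$, then forces the symmetric shape $M\oplus M$ on the torsion of $\rH^1_{\Fcal_{\rm ord}}(K,A)$. The assumption $\kappa_1\ne 0$ produces a divisible summand $\Phi/R$, and the Kolyvagin bound gives the opposite divisibility, pinning the $R$-corank at $1$ and yielding the claimed decomposition. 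The $R$-rank one statement for $\rH^1_{\Fcal_{\rm ord}}(K,T)$ then follows from Poitou--Tate (or equivalently from the standard long exact sequence relating $\rH^1_{\Fcal_{\rm ord}}(K,T)$ and $\rH^1_{\Fcal_{\rm ord}}(K,A)$).

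The main obstacle is the \v{C}ebotarev step. In \cite[\S{3.3}]{eisenstein}, producing Kolyvagin primes for twisted coefficients required distinguishing classes in the full $\varpi^m$-torsion, so the extension cut out over $K$ had $p$-adic size growing with $m$, and each application of \v{C}ebotarev contributed an error proportional to $m$. The idea here is to decouple the two roles of $m$: the \emph{classes} to be killed indeed live in $\rH^1(K,A[\varpi^m])$, but the \emph{primes} produced need only distinguish their residual images in $\rH^1(K,T^{(k)})$ for some $k$ bounded in terms of $T_pE$ and $\operatorname{rank}_{\Z_p}(R)$. Because $T^{(k)}$ is independent of $\alpha$ for $k\le m$, the resulting density bounds, and hence the error term $\CE$, are uniform in $m$ and $\alpha$, which is exactly what is needed to feed the output of the theorem into the $\alpha\to 1$ limit used in the proof of Theorem~\ref{thm:AC} in \S\ref{subsec:ac-IMC}.
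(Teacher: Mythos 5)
Your central insight is correct and matches the paper: since $\alpha\equiv 1\pmod{\varpi^m}$ the module $T^{(m)}$ is canonically $T_pE\otimes R/\varpi^m$ with trivial twist, so one can run the \v{C}ebotarev argument on classes in the $\varpi^m$-torsion of the Selmer groups, where the complex-conjugation eigenspace decomposition is clean, and the error constant $e=r(C_1+C_2)$ depends only on $T_pE$ and $\operatorname{rank}_{\Z_p}(R)$. This is exactly what the paper does in \S\ref{seccheb}, and it is the mechanism that removes the dependence of the error term on $m$ (and on $\alpha$) that plagued \cite[Thm.~3.2.1]{eisenstein}.

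However, your final paragraph misstates the dichotomy: you say the primes "need only distinguish their residual images in $\rH^1(K,T^{(k)})$ for some $k$ bounded in terms of $T_pE$ and $\operatorname{rank}_{\Z_p}(R)$." In the actual argument the integer $k$ is taken \emph{large} -- indeed $k$ must exceed $\length_R(M(1))+\ind(\kappa_1)+m+(6s(1)+2)e$ -- and what is bounded is the error constant $e$, not $k$. The Kolyvagin primes are taken in $\cL^{(k)}$ for this large $k$, while the \v{C}ebotarev nondegeneracy is only guaranteed for localisations of classes $c^\pm$ lying in the $\varpi^m$-torsion $\rH^1(K,T^{(m)})^\pm$.

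The more serious gap is your assertion that one can then "run the standard Mazur--Rubin descent." As the paper explains (see Remark~\ref{rmkkolyvaginargument}), this is precisely what fails. Because the \v{C}ebotarev output only pins down localisations of $\varpi^m$-torsion classes, one cannot determine which cyclic summand of the full Selmer module $\rH^1_{\Fcal(n_j)}(K,T^{(k)})$ is being destroyed at each step, so the usual chain $d_t(M(n_{i+1}))\geq d_{t+2}(M(n_i))$ on the cyclic-summand exponents cannot be extracted. The paper circumvents this with a genuinely new bookkeeping device: it tracks the coarser invariant $\rho_t(n)$ (the number of eigenspace summands of exponent exceeding $t$) and establishes the additional conditions (f) and (g) in the sequence construction, which show that after at most two steps this count strictly drops. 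Without engaging with this difficulty -- or providing an alternative mechanism in its place -- the descent portion of your proposal is incomplete, and it is this part that constitutes the real technical content of the theorem.
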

\noindent The `error term' $\CE$ in this theorem is independent of $m$, but that comes at the expense of the result applying only to characters $\alpha$ that are sufficiently close to $1$ (as measured by $\CM$). The reader may wish to compare this theorem
with \cite[Thm.~3.2.1]{eisenstein} whose error term $E_\alpha$ is at least as large as $m$ but which also applies to $\alpha$ that are relatively far from $1$. Both results are crucial for the proof of Theorem \ref{thm:AC}.

\subsection{Structure of Selmer groups}

In the following we use $\Fcal$ to denote $\Fcal_{\rm ord}$ for simplicity. For any $k\geq 1$, let $R^{(k)}= R/\fm^k$.
We recall the following structure results:
\begin{lemma}
\label{lemmamod} 
For every $n\in\cN^{(k)}$ and $0\leqslant i\leqslant k$ there are natural isomorphisms
\begin{equation}\label{eq2}
\rH^1_{\Fcal(n)}(K,T^{(k)}/\fm^iT^{(k)})\xrightarrow{\sim}\rH^1_{\Fcal(n)}(K,T^{(k)}[\fm^i])\xrightarrow{\sim}{\rm H}_{\Fcal(n)}^1(K,T^{(k)})[\fm^i]\nonumber
\end{equation}
induced by the maps $T^{(k)}/\fm^iT^{(k)}\xrightarrow{\pi^{k-i}}T^{(k)}[\fm^i]\rightarrow T^{(k)}$.
\end{lemma}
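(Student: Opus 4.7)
The plan is to establish each displayed isomorphism at the level of global $\rH^1(K,-)$ first, and then verify that both maps preserve the local conditions cutting out $\rH^1_{\Fcal(n)}(K,-)$.

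For the first isomorphism, I would exploit that $T=T_pE\otimes_{\Zp}R(\alpha)$ is a free $R$-module (of rank two), so $T^{(k)}=T/\varpi^kT$ is free over $R^{(k)}=R/\fm^k$. Multiplication by $\varpi^{k-i}$ therefore defines an isomorphism of $R^{(k)}[G_K]$-modules
\[
T^{(k)}/\fm^i T^{(k)} \xrightarrow{\;\sim\;} T^{(k)}[\fm^i],
\]
which passes functorially to $\rH^1(K,-)$ and a fortiori to any Selmer subgroup defined through pulled-back local conditions.

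For the second isomorphism, the essential input is the combination of hypothesis (\ref{eq:h1}) with $\alpha\equiv 1\pmod\varpi$: the residual module $T^{(k)}[\varpi]$ is then isomorphic as a $G_K$-module to $T_pE/p\otimes_{\Ff_p}R/\varpi$, and $E(K)[p]=0$ forces $\rH^0(K,T^{(k)}[\varpi])=0$, whence $\rH^0(K,T^{(k)})=0$ by induction on length. The long exact sequence of $0\to T^{(k)}[\fm^i]\to T^{(k)}\xrightarrow{\varpi^i}\varpi^iT^{(k)}\to 0$ then produces an injection $\rH^1(K,T^{(k)}[\fm^i])\hookrightarrow \rH^1(K,T^{(k)})$, and composing this with the inclusion $\varpi^iT^{(k)}\hookrightarrow T^{(k)}$ (itself injective on $\rH^1$ by the same $\rH^0$-vanishing) recovers multiplication by $\varpi^i$ on $\rH^1(K,T^{(k)})$, so the image is precisely $\rH^1(K,T^{(k)})[\fm^i]$.

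To descend both isomorphisms to the Selmer subgroup, I would run the same argument at each local place entering the definition of $\Fcal(n)$. The ordinary condition at $w\mid p$ is defined through the $R$-free filtration $T^+\subset T$, so the coefficient isomorphism preserves it by functoriality and the second map does so by an analogous local Bockstein for $T^{(k),+}$. At a Kolyvagin prime $\ell\mid n$ with $\ell\in\cL^{(k)}$, the hypothesis $n\in\cN^{(k)}$ forces $G_{K_\lambda}$ to act trivially on $T^{(k)}$, so the transverse line is free of rank one over $R^{(k)}$ and the same mechanism applies; for the remaining unramified primes in $\Sigma$, compatibility is standard via inflation--restriction. The main obstacle I anticipate is this last local bookkeeping: verifying that the transverse condition at the Kolyvagin primes is preserved under both maps is precisely what requires $n\in\cN^{(k)}$ rather than merely $n\in\cN$, since only under the stronger hypothesis does $T^{(k)}$ reduce to a trivial $G_{K_\lambda}$-module.
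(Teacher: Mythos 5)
The paper itself does not prove this lemma---it is cited verbatim from \cite[Lem.~3.3.1]{eisenstein}---so there is no argument in the paper to compare against. That said, your reconstruction is correct and fills in exactly the kind of argument one would expect. The global steps are accurate: freeness of $T^{(k)}$ over $R^{(k)}$ makes multiplication by $\varpi^{k-i}$ an isomorphism $T^{(k)}/\fm^iT^{(k)}\to T^{(k)}[\fm^i]$; the hypothesis $E(K)[p]=0$ combined with $\alpha\equiv 1\pmod\varpi$ gives $\rH^0(K,T^{(k)})=0$ by d\'evissage; and the Bockstein sequence for $0\to T^{(k)}[\fm^i]\to T^{(k)}\xrightarrow{\varpi^i}\varpi^iT^{(k)}\to 0$ together with the injectivity of $\rH^1(K,\varpi^iT^{(k)})\hookrightarrow\rH^1(K,T^{(k)})$ identifies the image as precisely $\rH^1(K,T^{(k)})[\fm^i]$. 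Your observation that $n\in\cN^{(k)}$ (rather than merely $n\in\cN$) is what forces $G_{K_\lambda}$ to act trivially on $T^{(k)}$, and hence makes the transverse condition at Kolyvagin primes compatible with both maps, correctly pinpoints the one non-formal piece of local bookkeeping; this is indeed the point where the cartesian property of the Selmer structure must be checked. One small caution in the write-up: the set $\cL^{(k)}$ is defined by $I_\ell\subset p^k\Z_p$ while $T^{(k)}=T/\varpi^kT$, so if $R/\Z_p$ is ramified one should note that triviality of Frobenius on $T/p^kT$ implies triviality on the (coarser) quotient $T^{(k)}$; this is automatic but worth saying. Otherwise the sketch is complete and would serve as a proof.
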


\begin{proof} See \cite[Lem.~3.3.1]{eisenstein}.
\end{proof}

\begin{prop}
\label{propstructure} There is an integer $\epsilon\in\{0,1\}$ such that for all $k$ and every
every $n\in\cN^{(k)}$ there is an $R^{(k)}$-module $M^{(k)}(n)$ such that
\begin{equation}\label{eq1}
\rH^1_{\CF(n)}(K,T^{(k)}) \simeq (R/\fm^k)^\epsilon\oplus M^{(k)}(n)\oplus M^{(k)}(n).\nonumber
\end{equation}
\end{prop}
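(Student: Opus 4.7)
The plan is to follow closely the argument of \cite[Prop.~3.3.2]{eisenstein}, which proves the analogous structure result in their setup, and check that its ingredients continue to apply in the present (slightly extended) framework. The core of the argument is a self-duality statement for the triple $(T^{(k)}, \CF(n), \cL)$, from which the desired decomposition follows by a general structure theorem for Selmer groups equipped with a skew-symmetric self-pairing.

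First I would combine the Weil pairing on $T_pE$ with the anticyclotomic relation $\alpha^\tau = \alpha^{-1}$ to produce a $G_K$-equivariant perfect pairing
\[
T^{(k)} \otimes_{R^{(k)}} (T^{(k)})^\tau \longrightarrow R^{(k)}(1),
\]
i.e., $T^{(k)}$ is canonically isomorphic to its $\tau$-twisted Cartier dual. Next I would verify that the local conditions defining $\CF(n)$ are self-orthogonal under this duality: at $v$ and $\bar{v}$ this holds because the ordinary filtrations at the two primes above $p$ are mutual annihilators under local Tate duality while complex conjugation swaps $v$ and $\bar{v}$; at the Kolyvagin primes $\ell \mid n \in \cN^{(k)}$, where by definition $I_\ell \subset p^k\Z_p$ and the transverse condition is defined on $T^{(k)}$, the self-orthogonality is the standard Mazur--Rubin calculation.

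With this self-duality in place, global Poitou--Tate duality produces a skew-symmetric $R^{(k)}$-bilinear pairing
\[
\rH^1_{\CF(n)}(K, T^{(k)}) \times \rH^1_{\CF(n)}(K, T^{(k)}) \longrightarrow R/\fm^k,
\]
and the structure theorem for finite modules over a discrete valuation ring equipped with such a pairing (as in the proof of \cite[Prop.~3.3.2]{eisenstein}, itself based on results of Howard and Mazur--Rubin) then yields a decomposition
\[
\rH^1_{\CF(n)}(K, T^{(k)}) \simeq (R/\fm^k)^{\epsilon(n,k)} \oplus M^{(k)}(n) \oplus M^{(k)}(n)
\]
for some $\epsilon(n,k) \in \Z_{\geq 0}$ and some $R^{(k)}$-module $M^{(k)}(n)$.

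The main technical obstacle is to show that $\epsilon(n,k)$ is actually independent of $n$ and $k$ and belongs to $\{0,1\}$. This amounts to identifying $\epsilon(n,k)$ with the \emph{core rank} of the Selmer structure $(T, \CF_{\rm ord}, \cL)$ in the sense of Mazur--Rubin: adding a transverse condition at a Kolyvagin prime preserves the core rank, and reduction modulo $\fm^k$ is compatible with it. Under the standing hypothesis $E(K)[p]=0$ and the irreducibility properties of the $G_K$-representation on $T$ implicit in our setup, one checks as in \cite[Prop.~3.3.2]{eisenstein} that this core rank lies in $\{0,1\}$, completing the proof.
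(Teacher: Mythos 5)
The overall skeleton of your argument -- conjugate self-duality of $T^{(k)}$ via the Weil pairing and $\alpha^\tau = \alpha^{-1}$, self-orthogonality of $\mathcal{F}_{\rm ord}(n)$ under local duality with $\tau$ swapping $v$ and $\bar v$, the resulting Flach/Cassels--Tate--type pairing, and a structure theorem for finite $R^{(k)}$-modules carrying such a pairing with the core rank giving $\epsilon$ -- is indeed the route taken in \cite[Prop.~3.3.2]{eisenstein}, which in turn rests on Howard's structure lemma and Mazur--Rubin. So in outline you are reproducing the right proof.

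There is, however, one concrete misstep that matters in precisely this paper's setting: you invoke ``the irreducibility properties of the $G_K$-representation on $T$ implicit in our setup'' to conclude the core rank lies in $\{0,1\}$. But the entire premise of this paper (and of \cite{eisenstein}) is that $p$ is an \emph{Eisenstein} prime for $E$, so $E[p]$ is reducible as a $G_\Q$-module with $E[p]^{ss}=\mathbb{F}_p(\phi)\oplus\mathbb{F}_p(\psi)$, and hence $T/\fm T$ is reducible as a $G_K$-module as well; there is no irreducibility to appeal to. What substitutes for it in \cite{eisenstein} is the standing hypothesis $E(K)[p]=0$ (hypothesis (h1) of Sect.~\ref{sec:anticyc}) together with the constraint $\phi\vert_{G_p}\neq 1,\omega$, and a non-trivial part of that paper's \S{3} is devoted to establishing the Kolyvagin-system machinery (including the structure result you want here) under these weaker hypotheses. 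You should make that replacement explicit; as written, the justification you give for $\epsilon\in\{0,1\}$ would simply not apply. A smaller point: the pairing you construct is best described as the Flach pairing twisted by $\tau$; its alternating nature (needed to force the $M\oplus M$ shape rather than merely self-duality of $M$) comes from the Weil pairing being alternating, and this step deserves a sentence since a merely symmetric self-pairing over a DVR would not yield the stated decomposition.
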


\begin{proof} See \cite[Prop.~3.3.2]{eisenstein}.
\end{proof}


By Lemma \ref{lemmamod} and \eqref{eq:pm}, if $k\geq m$ there is an isomorphism
\begin{equation}\label{eq:pmtorsion}
\rH^1_{\Fcal(n)}(K,T_E^{(m)})\simeq \rH^1_{\Fcal(n)}(K,T^{(k)})[\fm^{m}],
\end{equation}
where $T_E^{(m)}=T_p(E)\otimes_{\Z_p} R/\fm^m$.  We can then exploit the action of complex conjugation on the left hand side
(both $T_E^{(m)}$ and the Selmer structure $\Fcal(n)$ are stable under this action).   We make use of this in our subsequent analysis
of the structure of the $R$-modules $M^{(k)}(n)$ in terms of Kolyvagin classes.


\subsection{The \v{C}ebotarev argument}\label{seccheb}
We recall the definitions of the error terms $C_1,C_2$ of \cite[$\S$3.3.1]{eisenstein}. For $U = \Z_p^\times\cap \mathrm{im}(\rho_E)$ let
\[
C_1 := \min\{v_p(u-1)\colon u\in U\}.
\]
As $U$ is an open subgroup, $C_1<\infty$.
Recall also that $\End_{\Z_p}(T_pE)/\rho_E(\Z_p[G_{\Q}])$ is a torsion $\Z_p$-module and let
\[
C_2:=\min\bigl\{ n\geqslant 0 \colon p^n\End_{\Z_p}(T_pE)\subset\rho_E(\Z_p[G_{\Q}])\bigr\}.
\]
Let $r=\operatorname{rank}_{\Z_p}R$ and 
\[
e:=r(C_1+C_2).
\]

For any finitely-generated torsion $R$-module $M$ and $x\in M$, let
$$
\ord(x):=\min\{m\geqslant 0: \varpi^m\cdot x =0\} \ \ \text{and } \ \ \ \exp(M):=\max\{\ord(x):x\in M\}.
$$
The following result 
is one of the main tools for our proof of Theorem \ref{thm:Zp-twisted}. 
\begin{prop}\label{prop:prime2} 
Let $c^\pm \in \rH^1(K,T_E^{(m)})^\pm$.  Let $k\geq m$. 
Then there exist infinitely many primes $\ell\in \cL^{(k)}$ such that 
$$
\ord(\loc_\ell(c^\pm)) \geqslant \ord(c^\pm) - e.
$$
In particular, $R\cdot \loc_\ell(c^+)+ R\cdot \loc_\ell(c^-)$ has an $R$-submodule isomorphic to  
$$
R/\fm^{\max\{0,\ord(c^+)-e\}}\oplus  R/\fm^{\max\{0,\ord(c^-)-e\}}.
$$
\end{prop}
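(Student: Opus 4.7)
The plan is to adapt the Chebotarev density argument that underlies the standard Kolyvagin-type localization bounds, introducing one key modification to eliminate the dependence on $m$ in the error. Rather than working with the splitting field of $T^{(k)}$ (whose Galois structure is twisted by $\alpha$ and so implicitly depends on $m$), I will reduce modulo $\varpi^m$ and exploit the fact that, since $\alpha\equiv 1\pmod{\varpi^m}$, the $G_K$-modules $T/\varpi^m T$ and $T_E^{(m)}$ coincide. The classes $c^\pm$ already live in $\rH^1(K,T_E^{(m)})^\pm$, so this change of module is harmless and allows the Chebotarev computation to be performed inside extensions of $K$ whose geometry is intrinsic to $E$.

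Concretely, I will form a Galois extension $L/K$ containing $K(E[p^k],\mu_{p^k})$ (the field over which the conditions $\ell$ inert in $K$, $p^k\mid\ell+1$, and $p^k\mid a_\ell$ for membership in $\cL^{(k)}$ become Frobenius conditions), together with a trivializing extension of chosen cocycle representatives of $c^\pm$. The target is to exhibit $\sigma\in\Gal(L/K)$ whose restriction to $K(E[p^k],\mu_{p^k})$ puts any prime below it into $\cL^{(k)}$, and whose cocycle evaluation forces $\ord(\loc_\ell(c^\pm))\geq\ord(c^\pm)-e$. Chebotarev then produces infinitely many primes $\lambda$ of $K$ with $\frob_\lambda=\sigma$.

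The quantitative content is the construction of such $\sigma$ with loss bounded by $e=r(C_1+C_2)$, independent of $m$. Since $\End_{\Z_p}(T_pE)/\rho_E(\Z_p[G_\Q])$ is killed by $p^{C_2}$ and $U=\Z_p^\times\cap\im(\rho_E)$ has depth $C_1$, any prescribed action on $T_E^{(m)}$ can be realized by some $\rho_E(\sigma')$ up to a factor annihilated by $p^{C_1+C_2}$; tensoring from $\Z_p$ up to $R$ of rank $r$ produces the loss $e$. The assumption $m\geq\CM$ is what guarantees that this approximation remains meaningful on $T_E^{(m)}$, namely that the big-image constants do not exhaust the coefficient ring at depth $m$. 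For the direct-sum conclusion, I will use that $\ell$ is inert, so the decomposition group at $\lambda$ contains a lift of complex conjugation, inducing a $\pm$-eigenspace decomposition of $\rH^1(K_\lambda,T_E^{(m)})$; choosing $\sigma$ in the appropriate coset to handle each of $c^+$ and $c^-$ on its own eigenspace prevents any cross-contamination of the losses.

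The main obstacle will be pinning down $\sigma$ satisfying all of these compatibilities simultaneously with error bounded by $e$. The essential advantage over \cite[Prop.~3.3.6]{eisenstein} is that demanding control on $T_E^{(m)}$ modulo $\varpi^m$ (rather than on the twisted $T^{(k)}$ modulo $\varpi^k$) keeps $e$ bounded purely in terms of $T_pE$ and $\rank_{\Z_p}R$; I expect the technical heart of the proof will be a careful bookkeeping of how the $C_1$ and $C_2$ obstructions interact with the eigenspace decomposition, ensuring that a single Frobenius element controls both $c^+$ and $c^-$ up to the claimed loss.
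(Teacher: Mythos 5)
Your proposal takes essentially the same route as the paper: reduce to $T_E^{(m)}$ via $\alpha\equiv 1\pmod{\varpi^m}$, run a \v{C}ebotarev argument over a trivializing field, exploit the complex-conjugation eigenspace decomposition on $T_E^{(m)}$, and charge the loss to $C_1$ and $C_2$. The key insight you identify --- that working with the $\fm^m$-torsion lets one decompose cleanly under $\tau$ and so avoid the $\alpha$-dependent error $C_\alpha$ of \cite[Prop.~3.3.6]{eisenstein} --- is exactly the motivation given in Remark~\ref{rmkcalpha}.

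Two points to correct. First, this proposition does \emph{not} require $m\geq\mathcal{C}_M$; that hypothesis belongs to Theorem~\ref{thm:Zp-twisted}, where this proposition is applied iteratively. The proposition itself holds for all $m$ with $k\geq m$: when $\ord(c^\pm)\leq e$ the conclusions are vacuous, and the paper explicitly treats the case $d^\pm\leq 0$ at the outset. Attributing the $\mathcal{C}_M$ hypothesis here suggests a misreading of where the constraint actually bites. Second, the trivializing field $L$ is \emph{not} intrinsic to $E$: it is the fixed field of $G_K$ acting on $T/p^kT$, hence the composite of the splitting field of $T_E/p^kT_E$ with the field $K_\alpha$ trivializing $\alpha\bmod p^k$. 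The $\alpha$-part is needed to make membership of $\ell$ in $\cL^{(k)}$ a Frobenius condition, since $I_\ell$ is defined in terms of $T=T_pE\otimes R(\alpha)$, not $T_pE$. What makes the error bound $\alpha$-independent is not that $L$ ignores $\alpha$ but that the cocycles $c^\pm$ take values in $T_E^{(m)}$, so the kernel of restriction to $G_L$ is controlled by Sah's lemma through $C_1$ alone, and the image of the restricted homomorphism is controlled through $C_2$ acting on $\End_{\Z_p}(T_pE)$. Finally, the mechanism by which a single prime handles both $c^+$ and $c^-$ is more specific than ``choosing $\sigma$ in the appropriate coset'': the paper takes $\frob_\ell=\tau z$ with a single $z\in Z^+$ outside the union of the two proper subgroups $W_\pm$, after observing that the projections $g^\pm$ of the restricted cocycles to $(T_E^{(m)})^\pm$ kill $Z^-$, so only the $+$-eigenspace of $Z=G_L/H$ matters. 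These are details your sketch would need to fill in, but they are compatible with the strategy you outline.
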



\begin{proof}
The proof of this proposition follows along the lines of that of \cite[Prop. 3.3.6]{eisenstein}. 

Let $u\in  \Z_p^\times\cap \mathrm{im}(\rho_E|_{G_{K_\infty}})= \Z_p^\times\cap \mathrm{im}(\rho_E)\subset \Z_p^\times\cap \mathrm{im}(\rho_E \otimes \alpha)$ (see \cite[Lemma 3.3.3]{eisenstein}) be such that $v_p(u-1) = C_1$.   Let $L$ be 
the composite of the fixed field of the action
of $G_K$ on $T_E/p^kT_E$ and the field $K_\alpha$ trivialising $\alpha \mod p^k$.  Then there is some $h$ in the center of $\Gal(L/K)$ 
such that $h$ acts on $T/p^kT$, and hence on $T^{(m)}\simeq T_E^{(m)}$, as multiplication by $u$. The kernel of the 
restriction map $\rH^1(K,T^{(m)})\rightarrow \rH^1(L,T^{(m)})$ is $\rH^1(L/K,T^{(m)})$ and it follows from the existence
of $h$ that the latter is annihilated by $u-1$ (this is essentially Sah's Lemma: if $c:\Gal(L/K)\rightarrow T^{(m)}$ is 
a $1$-cocycle, then $c(gh) = c(hg)$ and so $(h-1)c(g) = (g-1)c(h)$ and hence $p^{C_1}c$ is a coboundary). It follows that 
\begin{equation}\label{eq:kerm}
p^{C_1}\cdot \ker \bigl( \rH^1(K,T_E^{(m)})\to \rH^1(L, T_E^{(m)})\bigr) = 0.
\end{equation}


Let $d^\pm:= \ord(c^{\pm}) - r(C_1 + C_2)$. If $d^+=d^-\leq 0$, then there is nothing to prove. So assume at least one of $d^\pm$ is positive. 
By (\ref{eq:kerm}), the kernel of the restriction map
$$
\rH^1(K,T_E^{(m)}) \stackrel{res}{\rightarrow} \rH^1(L,T_E^{(m)}) = \Hom_{G_K}(G_L,T_E^{(m)})
$$
is annihilated by $p^{C_1}=\varpi^{rC_1}$. Let $f^\pm\in \Hom_{G_K}(G_L,T_E^{(m)})$ be the image of $c^\pm$. We then have 
$$
\ord(f^\pm) \geq \ord(c^\pm) - rC_1.
$$
As $f^\pm(G_L)$ is a $G_K$-submodule, $f^\pm(G_L) = \im(\rho_E)\cdot f^\pm(G_L)$ and so, by the definition of $C_2$,
the image of $f^\pm$ contains $p^{C_2}\operatorname{End}(T_p(E))\cdot f^\pm(G_L)$. 
Since $\ord(f^\pm) \geq \ord(c^\pm) - rC_1$, it follows that the $R$-span of the image of $f^\pm$ contains $\varpi^{m-\ord(c^\pm) + r(C_1+C_2)} T_E^{(m)}$. 
Since at least one of $d^+$ and $d^-$ is positive, 
it follows that at least one of $f^+$ and $f^-$ is non-trivial. 

Let $H\subset G_L$ be the intersection of the kernels of $f^+$ and of $f^-$, and let $Z = G_L/H$. Note that $H\neq G_L$ since 
some $f^\pm$ is non-trivial, so $Z$ is a non-trivial torsion $R$-module. Note also that $Z$ is stable under the action of complex conjugation since each $f^\pm$ is.  In particular, $Z$ decomposes into eigenspaces under the action of complex conjugation: $Z= Z^+\oplus Z^-$. 

Let  $g^\pm$ be the projection of $f^\pm$ to the summand $(T_E^{(m)})^{\pm} \cong R/\fm^m$.
Then the $R$-span of the image of $g^\pm$ contains an $R$-submodule isomorphic to $R/\fm^{\max\{0,d^\pm\}}$. 
We have $g^\pm(Z^-)=0$ since $f^\pm\in \Hom(G_L,E[p^m])^{\pm}$, which means $f^\pm(Z^-)\subset E[p^m]^{\mp}$. So we find $g^{\pm}(Z) = g^{\pm}(Z^+)$ and
that the $R$-span of $g^\pm(Z^+)$ contains a submodule isomorphic to $R/\fm^{\max\{0,d^\pm\}}$. It follows
that $Z^+$ is non-trivial.

If $d^\pm>0$, let $W_\pm \subset Z^+$ be the proper subgroup such that $g^\pm(W_\pm) = \varpi^{m-(d^\pm-1)}(T_E^{(m)})^\pm$. 
If $d^\pm \leq 0$, let $W_\pm  = 0$. Then both $W_+$ and $W_-$ are proper subgroups of $Z^+$ (since there exists some $z\in Z^+$ such that $g^\pm(z)\in \varpi^{m-d^\pm}(T_E^{(m)})^\pm$). It follows that
$W_+\cup W_-\neq Z^+$.  Let $z\in Z^+$, $z\not\in W_+\cup W_-$. By the definition of $W^\pm$, we have
\begin{equation}\label{eq:ordnek}
\ord(g^\pm(z)) \geq d^\pm.
\end{equation}

Let $M =  \overline{\Q}^{H}$, so $\operatorname{Gal}(M/L) = Z$. Let $g = \tau z \in G_\Q$, and let 
$\ell\nmid Np$ be any prime such that both $c^+$ and $c^-$ are unramified at $\ell$ and $\frob_\ell = g$ in $\operatorname{Gal}(M/\Q)$. 
The \v{C}ebotarev density theorem implies there are infinitely many such primes.
 Since $Z$ fixes $L$ and since $L$ contains the fixed field of the $G_K$-action on $E[p^k]$,
$\frob_\ell$ acts as $\tau$ on both $E[p^k]$ and $K$.
This means that $a_\ell(E) \equiv \ell +1 \equiv 0 \mod p^k$ and $\ell$ is inert in $K$. 
Since $L$ also contain the fixed field of $\alpha\mod p^k$, for $\lambda\mid \ell$ a prime of $K$, it follows that  $\frob_\lambda$
acts trivially on $T/p^kT$ and hence that $\ell\in \mathscr{L}^{(k)}$. 

Since $\ell$ is inert in $K$, the Frobenius element of $\ell$ in $\operatorname{Gal}(\bar{\Q}/K)$ is $\frob_\ell^2$. Consider the restriction of
$c^\pm$ to $K_\ell$. Since $c^\pm$ is unramified at $\ell$, $\loc_\ell(c^\pm)$ is completely determined by the image $c^\pm(\frob_\ell^2)$ in $T_E^{(m)}/(\frob_\ell^2-1)T_E^{(m)}$. By the choice of $\ell$, $\frob_\ell^2$ acts trivially on
$T_E^{(m)}$, so  $T_E^{(m)}/(\frob_\ell^2-1)T_E^{(m)} = T_E^{(m)}$. Moreover, $\frob_\ell^2 = g^2  = z^2 \in \operatorname{Gal}(M/L)$,
so $c^\pm(\frob_\ell^2) = f^\pm(z^2) = 2g^\pm(z)$, where the second equality follows from the fact that the projection of $f^\pm$ to $(T_E^{(m)})^\mp$ maps $z\in Z^+$ to zero. Since $p$ is odd, (\ref{eq:ordnek}) yields $\ord(\loc_{\ell}(c^\pm)) = 
\ord(c^\pm(\frob_\ell^2)) = \ord(2g^\pm(z)) = \ord(g^\pm(z)) \geq d^\pm$.
\end{proof}

\begin{rmk}\label{rmkcalpha}
The primary difference between Proposition \ref{prop:prime2} and \cite[Prop.~3.3.6]{eisenstein} is  that here we have restricted ourselves to the $\fm^m$-torsion of the Selmer groups of $T^{(k)}$  (see (\ref{eq:pmtorsion})) and so we can directly work with the eigenspaces of complex conjugation.
For the proof of {\em loc.~cit.} we worked over an extension trivialising the character $\alpha \mod \varpi^k$ and then used ``some quadratic forms'' to 
estimate the linear independence of the images of the localisations of the classes.  The upshot is that our error term no longer involves
the $C_{\alpha}$ of \cite{eisenstein}. This is crucial for removing the corank one assumption in the proof of the anticyclotomic Iwasawa main conjecture in {\em op.~cit.}. However it causes some complications in the proof of Theorem \ref{thm:Zp-twisted}: we use the $\fm^m$-Selmer groups to control the 
image of the localisation at Kolyvagin primes of classes in the $\fm^k$-Selmer groups, and the resulting control is not as tight as in \cite{eisenstein}.
 \end{rmk}

\subsection{Proof of Theorem \ref{thm:Zp-twisted}}\label{secproofkoly}
The (co-)rank one claim in the theorem follows from \cite[Thm.~3.3.8]{eisenstein}: 
$$
\rH^1_\CF(K,T)\simeq R \ \ \text{and} \ \ \rH^1_\CF(K,A) \simeq \Phi/R \oplus M, \ \ M \simeq M_0\oplus M_0
$$
for some finitely-generated torsion $R$-module $M_0$ such that $M_0\simeq M^{(k)}(1)$ for all $k\gg 0$.
In fact, the proof of  \cite[Thm.~3.3.8]{eisenstein} shows that $M_0\simeq M^{(k)}(1)$ if $k>{\ind(\kappa_{1})+3r(C_1+C_2+m)}$.
In the current setting, the error term $C_{\alpha}$ of \emph{op. cit.} is equal to $m$; it is essentially this fact that prevents the arguments in {\em op.~cit.} from applying to prove the theorem in the current setting and is the reason we take a different approach below
to establish the bound 
\begin{equation}\label{eq:bound}
\tag{B}
s_1+\CE\geq {\rm length}_{R}(M^{(k)}(1)),
\end{equation}
where $s_1=\ind(\kappa_{1},\rH^1_{\Fcal}(K,T))$ and $\CE$ does not depend on $m$, provided $m$ is sufficiently large. 
As $\mathrm{\length}_R(M) = \mathrm{length}_R(M^{(k)}(1))$ for $k\gg 0$, the bound \eqref{eq:bound} implies 
the bound in Theorem \ref{thm:Zp-twisted}.

We now focus on the proof of \eqref{eq:bound}. 
%
A finite torsion $R$-module $X$ is isomorphic to a sum of cyclic $R$-modules:  $X \simeq \oplus_{i=1}^{s(X)} R/\fm^{d_i}$ for some uniquely-determined
integers $d_i\geq 0$. For an integer $t\geq 0$ we let $\rho_t(X)= \# \{i: d_i > t\}$.  In particular, for $n\in\cN^{(m)}$ we let
\[
\rho_t(n): = \rho_t(\rH^1_{\F(n)}(K, T^{(m)})^+) + \rho_t(\rH^1_{\F(n)}(K,T^{(m)})^-).
\]
Note that it $t<m$ then $\rho_t(n)\geq 1$ since the $\epsilon$ of Proposition \ref{propstructure} is $1$ (the latter fact is implicit in the above rank one result).
We also let
$$
\rho: = 2(\rho_e(1)-1).
$$
Note that $\rho< 2\dim_\mathbb{F}(\rH^1_{\Fcal}(K,T^{(1)}) = 2\dim_{\mathbb F_p}(\rH^1_\Fcal(K,E[p])$, where $\mathbb{F}= R/\fm$ is the residue field of $R$, and hence $\rho$ is bounded by a constant independent of $k$, $m$, and $\alpha$.


\begin{proof}[Proof of \eqref{eq:bound}]
Let $s(n)= \dim_{\mathbb{F}}\rH^1_{\F(n)}(K,T^{(1)})-1 = \dim_\mathbb{F}M(n)[\fm]$, and let
\[
\CE = (\rho + (s(1) +1+ 2\rho)(5\rho+1))e \ \ \text{and} \ \ \CM = (1+5\rho)e.
\]
Note that $\CE$ and $\CM$ are bounded by constants that are independent of $m$ (and depend on $\alpha$ only through the $\Zp$-rank $r$ of $R$). 
Let $k$ be a fixed integer such that 
\begin{equation}\label{eq:kbig}
k > \mathrm{length}_{R}(M(1)) + \ind(\kap_1)+m + (6 s(1)+ 2) e.
\end{equation}
We will show that \eqref{eq:bound} holds provided 
$$
m > \CM.
$$

%

If $\rho = 0$, then the exponent of $M(1)$ is at most $e$ and therefore
\[
\tfrac{1}{2} \mathrm{length}_{R}(M(1)) \leq \tfrac{1}{2} (s(1)+1)e \leq \ind(\kappa_1) + \CE.
\]
So we may assume $\rho>0$. 
%
%
%

We will find sequences of integers $1=n_0, n_1,..., n_\rho \in \cN^{(k)}$ and $1=x_0,x_1,...,x_\rho$ along with integers $b(n_i)\geq 0$ that satisfy
\begin{itemize}
\item[(a)] $s(n_{i+1})-2 \leq s(n_{i})\leq s(n_{i+1})+2$;
\item[(b)] $\tfrac{1}{2}\mathrm{length}_{R}(M(n_i))\geq  \tfrac{1}{2}\mathrm{length}_{R}(M(n_{i-1}))- b(n_i)$ ;
\item[(c)] $\tfrac{1}{2}\mathrm{length}_{R}(M(n_i)) \leq \tfrac{1}{2}\mathrm{length}_{R}(M(n_{i-1})) + e$;
\item[(d)] $\ord(\kappa_{n_i})\geq \ord(\kappa_{n_{i-1}})-e$;
\item[(e)] $\ind(\kappa_{n_{i-1}})+e\geq \ind(\kappa_{n_i})+b(n_i)$;
\item[(f)]  $x_{i-1}+2\leq x_i\leq x_{i-1} +5$, $\rho_{x_i e}(n_i) \leq \rho_{x_{i-1}e}(n_{i-1})$, and
$\rho_{x_i e}(n_i) = \rho_{x_{i-1}e}(n_{i-1})>1$ only if $\rho_{(x_i+1)e}(\rH^1_{\F(n_{i})}(K,T^{(m)})^\pm)\geq 1$;
\item[(g)] if $\rho_{x_{i-2}e}(n_{i-2})>1$ then 
$\rho_{x_{i} e}(n_{i}) < \rho_{x_{i-2}e}(n_{i-2})$.
\end{itemize}
Before explaining the existence of such sequences, we demonstrate that \eqref{eq:bound} follows from (a)--(g).


From repeated appeals to (a) we obtain 
$$
s(n_\rho) \leq s(1) + 2\rho. 
$$
From repeated appeals to (g) we see that 
either $\rho_{x_ie}(n_i) = 1$ for some $1\leq i<\rho$, in which case $1\leq \rho_{x_\rho e}(n_\rho) \leq \rho_{x_ie} (n_i) = 1$, 
or $1\leq \rho_{x_\rho e} (n_\rho) \leq \rho_{x_{\rho-2} e} (n_{\rho-2})-1 \leq \cdots \leq \rho_{x_0 e}(n_0) - \tfrac{1}{2}\rho = \rho_e(1) - \tfrac{1}{2}\rho = 1$.
In either case, we see
$$
\rho_{x_\rho e}(n_\rho) = 1.
$$
As $x_\rho \leq x_0 + 5\rho = 1 +5\rho$, it then follows that 
\begin{equation}\label{eq:length1}
\mathrm{length}_R(M(n_\rho)) \leq s(n_\rho)x_\rho e  \leq s(n_\rho)(5\rho+1) e \leq (s(1) + 2\rho)(5\rho+1) e.
\end{equation}
By repeatedly applying (b) and (e) we obtain
\begin{equation*}\begin{split}
\ind(\kap_1) + \rho e &  \geq \ind(\kap_{n_\rho}) + b(n_1) + b(n_2) + \cdots + b(n_\rho)  \\
&\geq \ind(\kap_{n_\rho}) + \tfrac{1}{2}\mathrm{length}_R(M(1)) - \tfrac{1}{2}\mathrm{length}_R(M(n_\rho)).
\end{split}
\end{equation*}
Combined with \eqref{eq:length1} this gives
\begin{equation*}\begin{split} 
\ind(\kap_1) + \CE & \geq \ind(\kap_{n_\rho}) + \tfrac{1}{2}\mathrm{length}_R(M(1)) - \tfrac{1}{2}\mathrm{length}_R(M(n_\rho)) + (s(1)+2\rho)(5\rho+1)e \\
& \geq \ind(\kap_{n_\rho}) + \tfrac{1}{2}\mathrm{length}_R(M(1)) + \tfrac{1}{2}\mathrm{length}_R(M(n_\rho))  \\
& \geq \tfrac{1}{2}\mathrm{length}_R(M(1)),
\end{split}
\end{equation*}
which is the bound \eqref{eq:bound}.


We will now define the sequence $n_0=1, n_1, \dots, n_T \in \mathscr{N}^{(k)}$ (and subsequently the $b(n_i)$ and $x_i$) by making repeated use of Proposition \ref{prop:prime2} to choose suitable primes in $\mathscr{L}^{(k)}$. 

Suppose $1=n_0, n_1, \dots, n_j \in \mathscr{N}^{(k)}$, $1=x_0, x_1, \dots, x_j$, and $0=b(n_0), b(n_1), \dots, b(n_j)$, $j<\rho$, are such that (a)--(f) hold for all $1\leq i\leq j$ (note that if $j=0$ then (a)--(f) are vacuously true).  
We will explain how to choose a prime $\ell\in \mathscr{L}^{(k)}$ such that $n_0,\dots,n_j,n_{j+1}=n_j\ell$
satisfy (a)--(f) for all $1\leq i\leq j+1$. Repeating this process yields the desired sequence $n_0,\dots,n_\rho$.

Let $c_0\in \rH^1_{\F(n_j)}(K,T^{(k)})$ generate an $R/\fm^{k}$-summand complementary to $M(n_{j})$, so
$\rH^1_{\F(n_j)}(K,T^{(k)}) = R \cdot c_0 \oplus M(n_j) \simeq R/\fm^k\oplus M(n_j)$.
Let $\nu \in \{\pm \}$ such that $\ord((1+\nu\tau)\varpi^{k-m}c_0)=m$, that is, the order of
$c^{\nu}= (1+\nu\tau)(\varpi^{k-m}c_0) \in \rH^1_{\F(n_j)}(K,T^{(m)})^{\nu}$ is $m$. Note that the order of $\varpi^{k-m}c_0$ is $m$, so there exists at least one $\nu \in \{\pm \}$ satisfying the desired condition.
%
Let $N: = \operatorname{exp} ( \rH^1_{\Fcal(n_j)}(K,T^{(m)})^{-\nu})$ and 
let $c^{-\nu} \in \rH^1_{\Fcal(n_j)}(K,T^{(m)})^{-\nu}$ have order $N$. 
%
%
We apply Proposition \ref{prop:prime2} to the classes $c^{\nu}$ and  $c^{-\nu}$ and obtain a prime $\ell\in \mathscr{L}^{(k)}$ such that
\[
\ord(\loc_{\ell}(\varpi^{k-m}c_0))\geq  \ord(\loc_{\ell}(c^{\nu}))\geq m-e \ \ \text{and} \ \ \ord(\loc_{\ell}(c^{-\nu}))\geq N-e,
\]
and $\loc_{\ell}( \rH^1_{\Fcal(n_j)}(K,T^{(m)}))$ has an $R$-submodule isomorphic to $R/\fm^{m-e}\oplus R/\fm^{N-e}$.
%
As $m>e$, it follows that $\ord(\loc_{\ell}(c_0))\geq k-e$, and
hence $\loc_\ell(\rH^1_{\F(n_j)}(K,T^{(k)})) \simeq R/\fm^{k-a}\oplus R/\fm^{b}$ for some $a\leq e$ and $b\geq N-e$.
In particular, there is a short exact sequence
\begin{equation}\label{eq:imageloc}
0\to H \to \rH^1_{\Fcal(n_j)}(K,T^{(k)}) \xrightarrow{\loc_{\ell}} R/\fm^{k-a}\oplus R/\fm^{b}\to 0, \ \ \ a\leq e, \ b\geq N-e,
\end{equation}
where $H :=\rH^1_{\Fcal(n_j)_{\ell}}(K,T^{(k)}))$ is the kernel of the localisation at $\ell$. 
Global duality then implies that there is an exact sequence
\begin{equation}\label{eq:dualexactseq}
0 \to H \to  \rH^1_{\Fcal(n_j\ell)}(K,T^{(k)})\simeq R/\fm^k \oplus M(n_j\ell)\xrightarrow{\loc_{\ell}} R/\fm^{k-b'}\oplus R/\fm^{a'}\to 0, \ \ \ a\geq a', b'\geq b.
\end{equation}
Here we have used that the arithmetic dual of $T^{(k)} = T^{(k)}_\alpha$ is $T^{(k)}_{\alpha^{-1}}$ and that the complex conjugation $\tau$ induces an isomorphism $\rH^1_{\CF(n)}(K,T^{(k)}_{\alpha^{-1}})\simeq \rH^1_{\CF(n)}(K,T^{(k)}_{\alpha})$.

We now show (a)-(e) hold for $i=j+1$ with $n_{j+1}=n_j\ell$ and $b(n_{j+1}) = b'$. 

Let $h: = \dim_{\mathbb{F}} H[\fm]$. From \eqref{eq:imageloc} it follows that $h \leq 1+s(n_j) \leq h+2$, and from \eqref{eq:dualexactseq} it follows that
$h\leq 1+s(n_{j+1})\leq h+2$. Hence $s(n_j) -2 \leq h-1 \leq s(n_{j+1}) \leq h+1 \leq s(n_j) +2$, which shows that (a) holds. 

From\eqref{eq:dualexactseq} and  \eqref{eq:imageloc}  we find
\[
\length_R(M(n_{j+1})) = \length_R(H) - b' + a' = \length_R(M(n_j)) - (b+b') + (a+a').
\]
As $(b+b') \leq 2b' = 2b(n_{j+1})$, it follows that (b) holds. And as $(a+a') \leq 2e$, it follows that (c) holds.
%
%



To verify that (d) holds for $i=j+1$, we first observe that 
$$
\ord(\kap_{n_{j+1}}) = \ord(\kap_{n_{j}\ell}) \geqslant \ord(\loc_{\ell}(\kap_{n_{j}\ell}))= \ord(\loc_\ell(\kap_{n_{j}})),
$$
the last equality following from the finite-singular relations of the Kolyvagin system.
So (d) holds if $\ord(\loc_\ell(\kap_{n_{j}})) \geqslant \ord(\kap_{n_{j}})-e$.
To see that this last inequality holds, we note that
$\ord(\kap_{n_{j}}) \geq \ord(\kap_{n_0}) - je$ by (d) for $1\leqslant i \leqslant j$. 
But $\ord(\kap_{n_0}) =\ord(\kap_1)= k-\ind(\kap_1)$ by the choice of $k$ (and the fact that $\rH^1_{\CF}(K,T)$ is torsion-free by \eqref{eq:h1}), 
and so by \eqref{eq:kbig} and repeated application of (c) for $1\leq i\leq j$ we have
\begin{equation}\label{eq:ordnj}\begin{split}
\ord(\kap_{n_j}) \geq k-\ind(\kap_1) - je & >  \length_R(M(1))+m+(6s(1)+2-j)e \\
& \geq \length_R(M(n_j))+m+(6s(1)+2-3j)e   \\
& \geq  \length_R(M(n_j)) + m + 2e  \\
& \geq  \length_R(M(n_j)) + 2e.
\end{split}\end{equation}
Write $\kap_{n_{j}} = x c_0 + y$ with $x\in R$ and $y \in M(n_j)$. 
Since $\ord(\kap_{n_j}) > \exp(M(n_j))$ by \eqref{eq:ordnj}, it follows that 
$x = \varpi^t u$ for $t = k-\ord(\kap_{n_{j}})$ and some $u\in R^\times$. 
It follows that
$$
\varpi^{\exp(M(n_{j}))}\loc_\ell(\kap_{n_j}) = \varpi^{\exp(M(n_{j}))+t} u\loc_\ell(c_0).
$$
By the choice of $\ell$, $\ord(\loc_\ell(c_0)) \geqslant k - e = t+\ord(\kap_{n_j}) - e > t+{\exp(M(n_{j}))}+e$, where the last inequality follows by
\eqref{eq:ordnj}. We then deduce that
$$
\ord(\loc_\ell(\kap_{n_j})) = \ord(\loc_\ell(c_0)) - t \geq k - e -t = \ord(\kap_{n_j})-e,
$$
which -- as noted at the start of this paragraph -- implies that (d) holds.

Next we verify (e) for $i={j+1}$. Let $c_1\in \rH^1_{\CF(n_{j+1})}(K,T^{(k)})$ be a generator of an $R/\fm^k$-summand
complementary to $M(n_{j+1})$, so $\rH^1_{\CF(n_{j+1})}(K,T^{(k)}) = R\cdot c_1\oplus M(n_{j+1})\simeq R/\fm^k\oplus M(n_{j+1})$. 
Write $\kap_{n_j} = u\pi^g c_1 + y$ and $\kap_{n_{j+1}} = v\pi^h c + y'$, where $u,v\in R^\times$, $y\in M(n_{j})$ and $y'\in M(n_{j+1})$. 
Arguing as in the preceding proof that (d) holds for $i={j+1}$ shows that 
$\ord(\kap_{n_i})> \exp(M(n_i))+2e$ for $1\leqslant i\leqslant j+1$ and in particular for $i=j$ and $i=j+1$. 
Hence  $g = k - \ord(\kap_{n_j})$ and $h = k - \ord(\kap_{n_{j+1}})$.
Arguing further as in the proof that (d) holds also yields
$$
\ord(\loc_\ell(\kap_{n_j}))  = \ord(\loc_\ell(c_0))-g \ \ \text{and} \ \  \ord(\loc_\ell(\kap_{n_{j+1}})) = \ord(\loc_\ell(c_1))-h.
$$
From the finite-singular relations for the Kolyvagin system the left-hand sides of both equalities are equal and therefore
$$
h-g = \ord(\loc_\ell(c_1))-\ord(\loc_\ell(c_0)).
$$
We refer again to the short exact sequences \eqref{eq:imageloc} and \eqref{eq:dualexactseq}.
By the choice of $\ell$, 
$\ord(\loc_\ell(c_0))\geq k-e> \exp(M(n_j))\geq b$, the last inequality by \cite[Lem.~3.3.10(ii)]{eisenstein}. Hence we must have 
$\ord(\loc_\ell(c_0))=k-a$. Similarly,  we also must have $\ord(\loc_\ell(c_1))=k-b'$. Thus we find
$$
h-g =  (k-b')-(k-a) = a-b' \leq e -b'.
$$
Since $h-g = \ord(\kap_{n_j})-\ord(\kap_{n_{j+1}})$, this proves $\ord(\kap_{n_j})+b'\leq \ord(\kap_{n_{j+1}})+e$ and hence, since we have shown $\ord(\kap_{n_j})=k-\ind(\kap_{n_j})$ and $\ord(\kap_{n_{j+1}})=k-\ind(\kap_{n_{j+1}})$, (e) holds.


So far, our arguments have not wandered far from the lanes of \cite[\S 3.3.3]{eisenstein}.
However, at this point we cannot continue along the same path and deduce -- in the notation of {\em op.~cit.} (see also below) --
that $d_t(M(n_{j+1})) \geqslant d_{t+2}(M(n_{j}))$, $t=1,\dots,\dim_{\mathbb{F}}(M(n_{i})[\fm])-2$. This is because 
knowing the localisation of $\rH^1_{\Fcal(n_j)}(K,T^{(k)})[\fm^m]^{-\nu}$ is not sufficient to determine which class(es) in $\rH^1_{\Fcal(n_j)}(K,T^{(k)})$ 
have localistion generating the summand $R/\fm^b$ in \eqref{eq:imageloc}.  Instead, to conclude the proof of \eqref{eq:bound}, we establish (f) and (g). This roughly  tells us that -- even without being able to control the individual $d_t(M(n_{j}))$s -- the number of large exponents decreases. Actually, in general we can only show that this number does not increase, but if we are in the unfortunate situation where it is stable (which happens essentially if all the ``big summands'' are in the same eigenspace), then at the step $n_{j+2}$ this number decreases.

It remains to define $x_{j+1}$ and verify (f) and (g).  To this end we introduce some more notation.  A finitely-generated torsion $R$-module $X$ can be written 
as sum of cyclic $R$-modules $X\simeq \oplus_{i=1}^{s(X)} R/\fm^{d_i(X)}$, with the exponents $d_i(X)$ uniquely determined.  We shall always suppose that
the $d_i(X)$ have been labeled so that $d_1(X)\geq d_2(X) \geq \cdots \geq d_{s(X)}(X)$.  Note that $s(X) = \dim_{\mathbb{F}} X[\fm]$.
We will adopt the convention that $d_i(X)= 0$ if $i>s(X)$, extending the $d_i(X)$ to all positive $i$.

Taking the $\fm^m$-torsion of the exact sequence \eqref{eq:imageloc} we obtain two short exact sequences:
\begin{equation}\label{eq:se-1}
0 \to H[\fm^m]^{\nu} \to \rH^1_{\F(n_j)}(K,T^{(m)})^{\nu} \xrightarrow{\loc_\ell} R/\fm^{m-a_{\nu}}\to 0, \ \ 0\leq a_{\nu}\leq e, 
\end{equation}
and
\begin{equation}\label{eq:se-2}
0 \to H[\fm^m]^{-\nu} \to \rH^1_{\F(n_j)}(K,T^{(m)})^{-\nu} \xrightarrow{\loc_\ell} R/\fm^{b_{\nu}}\to 0, \ \ N-e\leq b_{\nu}\leq N,
\end{equation}
The bounds on the exponents for the modules on the right come from the choice of $\ell$ with respect to the classes
$c^{\nu} = (1+\nu\tau)\varpi^{k-m}c_0$ and $c^{-\nu}$.  Global duality then yields two additional short exact sequences
\begin{equation}\label{eq:se-3}
0 \to H[\fm^m]^{\nu} \to \rH^1_{\F(n_{j+1})}(K,T^{(m)})^{\nu} \xrightarrow{\loc_\ell} R/\fm^{a'_{\nu}}\to 0, \ \ 0\leq a_{\nu}'\leq e, 
\end{equation}
and
\begin{equation}\label{eq:se-4}
0 \to H[\fm^m]^{-\nu} \to \rH^1_{\F(n_{j+1})}(K,T^{(m)})^{-\nu} \xrightarrow{\loc_\ell} R/\fm^{m-b'_{\nu}}\to 0, \ \ b_{\nu}'\leq b_{\nu}.
\end{equation}

As $\ord(c^{\nu}) = m$ and $\ord(\loc_\ell(c^{\nu})) = m-a$ with $a\leq e$, it follows from \eqref{eq:se-1}
that $d_i(H[\fm^m]^{\nu}) \leq d_{i+1}(\rH^1_{\F(n_j)}(K,T^{(m)})^{\nu})+e$.  From \eqref{eq:se-3} we deduce
that $d_i(\rH^1_{\F(n_{j+1})}(K,T^{(m)})^{\nu})\leq d_i(H[\fm^m]^{\nu}) +e$, and so
$$
d_i(\rH^1_{\F(n_{j+1})}(K,T^{(m)})^{\nu}) \leq d_{i+1}(\rH^1_{\F(n_j)}(K,T^{(m)})^{\nu})+2e.
$$
Let $i_0=\rho_{x_je}(\rH^1_{\F(n_j)}(K, T^{(m)})^{\nu})$. It follows that
$d_{i_0}(\rH^1_{\F(n_{j+1})}(K,T^{(m)})^{\nu}) \leq d_{i_0+1}(\rH^1_{\F(n_j)}(K,T^{(m)})^{\nu})+2e \leq x_j e + 2e$, 
so 
\begin{equation}\label{eq:rhopluspart}
\rho_{(x_j+2)e}(\rH^1_{\F(n_{j+1})}(K, T^{(m)})^{\nu}) \leq i_0-1  = \rho_{x_je}(\rH^1_{\F(n_j)}(K, T^{(m)})^{\nu})-1.
\end{equation}

Next we consider the exact sequence \eqref{eq:se-4}. 
One of the cyclic summands of $\rH^1_{\cF(n_{j+1})}(K,T^{(m)})^{-\nu}$, say one isomorphic to $R/\fm^{d_t}$ for $d_t = d_t(\rH^1_{\cF(n_{j+1})},T^{(m)})^{-\nu})$,
surjects under $\loc_\ell$ onto $R/\fm^{m-b_{\nu}'}$.  There is therefore an $R$-module surjection $H[\fm^m]^{-\nu}\twoheadrightarrow \rH^1_{\cF(n_{j+1})}(K,T^{(m)})^{-\nu}/(R/\fm^{d_t})$, and so -- upon taking Pontryagin duals -- an $R$-module injection
$\rH^1_{\cF(n_{j+1})}(K,T^{(m)})^{-\nu}/(R/\fm^{d_t})\hookrightarrow H[\fm^m]^{-\nu}$. From this together with the injection in \eqref{eq:se-2} we conclude
that
\begin{equation}\label{eq:missingd}
d_i(\rH^1_{\cF(n_{j+1})}(K,,T^{(m)})^{-\nu}) \leq \begin{cases} d_i(\rH^1_{\cF(n_{j})}(K,T^{(m)})^{-\nu}) & i< t \\ 
 d_{i-1}(\rH^1_{\cF(n_{j})}(K,T^{(m)})^{-\nu}) & i>t.\end{cases}
 \end{equation}
 It then follows that
 \begin{equation*}\label{eq:minusineq}
  \rho_{x_je}(\rH^1_{\cF(n_{j+1})}(K,T^{(m)})^{-\nu}) \leq \rho_{x_je}(\rH^1_{\cF(n_{j})}(K,T^{(m)})^{-\nu})+1
 \end{equation*}
 Together with \eqref{eq:rhopluspart} this implies
 \begin{equation}\label{eq:rhofirstineq}
 \rho_{(x_j+2)e}(n_{j+1}) \leq \rho_{x_je}(n_j),
 \end{equation}
 which proves the first inequality in (f) for any choice of $x_{j+1}\geq x_j+2$.


Suppose 
\begin{equation}\label{eq:case1} 
\tag{$\spadesuit$}
x_j e +e < N = \operatorname{exp} (\rH^1_{\F(n_j)}(K,T^{(m)})^{-\nu}).
\end{equation}
Considering the exact sequence \eqref{eq:se-2}, we see that one of the cyclic summands of $\rH^1_{\F(n_j)}(K,T^{(m)})^{-\nu}$, say one isomorphic
to $R/\fm^{d_h}$ for $d_h = d_h(\rH^1_{\F(n_j)}(K,T^{(m)})^{-\nu})$, surjects onto $R/\fm^{b_{\nu}}$. As $b_{\nu}\geq N - e > x_je$, it follows that
$d_h> x_je$, and so $\rho_{x_je}(\rH^1_{\F(n_j)}(K,T^{(m)})^{-\nu}/(R/\fm^{d_h})) = \rho_{x_je}(\rH^1_{\F(n_j)}(K,T^{(m)})^{-\nu}) - 1$.
On the other hand, since $d_h \leq N$, the $R$-module surjection $H[\fm^m]^{-\nu}\twoheadrightarrow \rH^1_{\cF(n_{j})}(K,T^{(m)})^{-\nu}/(R/\fm^{d_h})$ 
has kernel annihilated by $\varpi^e$ and so $\rho_{x_je+e}(H[\fm^m]^{-\nu}) \leq \rho_{x_je}(\rH^1_{\F(n_j)}(K,T^{(m)})^{-\nu}/(R/\fm^{d_h}))$.
Combining these inequalities yields
$$
\rho_{x_je+2e}(H[\fm^m]^{-\nu})\leq \rho_{x_je+e}(H[\fm^m]^{-\nu}) \leq \rho_{x_je}(\rH^1_{\F(n_j)}(K,T^{(m)})^{-\nu}) - 1.
$$
From the previously noted injection $\rH^1_{\cF(n_{j+1})}(K,T^{(m)})^{-\nu}/(R/\fm^{d_t})\hookrightarrow H[\fm^m]^{-\nu}$ we deduce that
$$
\rho_{x_je+2e}(\rH^1_{\cF(n_{j+1})}(K,T^{(m)})^{-\nu}) - 1 \leq \rho_{x_je+2e}(\rH^1_{\cF(n_{j+1})}(K,T^{(m)})^{-\nu}/(R/\fm^{d_t})) \leq
\rho_{x_je+2e}(H[\fm^m]^{-\nu}).
$$
Together the two displayed equations yield
$$
\rho_{x_je+2e}(\rH^1_{\cF(n_{j+1})}(K,T^{(m)})^{-\nu})\leq \rho_{x_je}(\rH^1_{\F(n_j)}(K,T^{(m)})^{-\nu}).
$$
Combining this with \eqref{eq:rhopluspart} we find 
\begin{equation}\label{eq:rhocase1}
\text{\eqref{eq:case1}} \implies \rho_{(x_j+2)e}(n_{j+1}) < \rho_{x_je}(n_j).
\end{equation}

If \eqref{eq:case1} does not hold, then $N=\operatorname{exp} ( \rH^1_{\Fcal(n_j)}(K,T^{(m)})^{-\nu})\leq (x_j+1)e$.
From \eqref{eq:missingd} we see that each $d_i(\rH^1_{\F(n_{j+1})}(K,T^{(m)})^{-\nu})\leq (x_j+1)e$ except possibly for $i=t$. 
So either $\rho_{(x_j+1)e}(\rH^1_{\F(n_{j+1})}(K,T^{(m)})^{-\nu})=0$ or $t=1$ and $\rho_{(x_j+1)e}(\rH^1_{\F(n_{j+1})}(K,T^{(m)})^{-\nu})=1$.
If $\rho_{(x_j+1)e}(\rH^1_{\F(n_{j+1})}(K,T^{(m)})^{-\nu})=0$, then it follows from \eqref{eq:rhopluspart} that 
$\rho_{(x_j+2)e}(n_{j+1}) < \rho_{x_je}(n_j)$. 
If $d_t \leq (x_j+3)e$, then we also have $\rho_{(x_j+3)e}(n_{j+1}) < \rho_{x_je}(n_j)$ by similar reasoning, where as above $d_t = d_t(\rH^1_{\cF(n_{j+1})},T^{(m)})^{-\nu})$ is the length of the summand
surjecting under $\loc_\ell$ onto $R/\fm^{m-b_{\nu}'}$ in \eqref{eq:se-4}.

Suppose $\rho_{(x_j+3)e}(n_j)=1$. The proof of \eqref{eq:rhofirstineq} also holds for $x_j$ replaced with $x_j+3$, which shows
$\rho_{(x_j+5)e}(n_{j+1}) \leq \rho_{(x_{j}+3)e}(n_{j})=1$.

To summarize, we have shown that 
\begin{equation}\label{eq:equalityconds}
\rho_{(x_j+5)e}(n_{j+1}) = \rho_{x_je}(n_j)>1 \implies
\left\{
\begin{array}{c} \text{\eqref{eq:case1} does not hold}, \\ \rho_{(x_j+1)e}(\rH^1_{\F(n_{j+1})}(K,T^{(m)})^{-\nu})=1, \\  
d_t > (x_j+3)e, \\ \rho_{(x_j+3)e}(n_j)>1.
\end{array} 
\right.
\end{equation}

Suppose $\rho_{(x_j+5)e}(n_{j+1}) = \rho_{x_je}(n_j)>1$.  Since $\rho_{(x_j+5)e}(n_{j+1}) \leq \rho_{(x_j+3)e}(n_{j+1}) \leq \rho_{(x_j+2)e}(n_{j+1}) \leq \rho_{x_je}(n_j)$,
it follows that we also have $\rho_{(x_j+3)e}(n_{j+1})=\rho_{(x_j+2)e}(n_{j+1}) = \rho_{x_je}(n_j)$. 
Furthermore, all the conditions on the right-hand side of
\eqref{eq:equalityconds} hold. As $d_t> (x_j+3)e$, $\mathrm{exp}(\rH^1_{\F(n_{j+1})}(K,T^{(m)})^{-\nu} )> (x_j+3)e$,
so $\rho_{(x_j+3)e}(\rH^1_{\F(n_{j+1})}(K,T^{(m)})^{-\nu} )\geq 1$. 
As $\rho_{(x_j+3)e}(\rH^1_{\F(n_{j+1})}(K,T^{(m)})^{-\nu}) \leq \rho_{(x_j+1)e}(\rH^1_{\F(n_{j+1})}(K,T^{(m)})^{-\nu})=1$, it follows
that $\rho_{(x_j+3)e}(\rH^1_{\F(n_{j+1})}(K,T^{(m)})^{-\nu}) =1$. 
Since $\rho_{x_je}(n_j) \geq 2$,
$\rho_{(x_j+3)e}(\rH^1_{\F(n_{j+1})}(K,T^{(m)})^{\nu}) =  \rho_{(x_j+3)e}(n_{j+1})  - \rho_{(x_j+3)e}(\rH^1_{\F(n_{j+1})}(K,T^{(m)})^{-\nu})  
= \rho_{x_je}(n_j) -1 \geq 1$. This shows
\begin{equation}\label{eq:equalityconds2}
\rho_{(x_j+5)e}(n_{j+1}) = \rho_{x_je}(n_j)>1 \implies
\left\{
\begin{array}{c} \rho_{(x_j+2)e}(n_{j+1}) = \rho_{x_je}(n_j), \\ \rho_{(x_j+3)e}(\rH^1_{\F(n_{j+1})}(K,T^{(m)})^{\pm})\geq 1.
\end{array}\right.
\end{equation}

We can now complete our definition of $x_{j+1}$ and the verification of (f):
\begin{itemize}
\item[(i)] If $\rho_{(x_j+5)e}(n_{j+1}) = 1$ or $\rho_{(x_j+5)e}(n_{j+1}) < \rho_{x_je}(n_j)$, then 
$x_{j+1}:= x_j+5$.
\item[(ii)] If $\rho_{(x_j+5)e}(n_{j+1}) = \rho_{x_je}(n_j)>1$, then $x_{j+1}: = x_j +2$ and 
\eqref{eq:equalityconds2} shows that 
$$\rho_{(x_{j+1}+1)e}(\rH^1_{\F(n_{j+1})}(K,T^{(m)})^{\pm}) \geq 1.$$
\end{itemize}
Hence (f) holds for $i=j+1$.

%

Finally, we verify (g) for $i=j+1$. Suppose $\rho_{x_{j-1}e}(n_{j-1})>1$. 
If $\rho_{x_j e}(n_j) < \rho_{x_{j-1}e}(n_{j-1})$, then $\rho_{x_{j+1}e}(n_{j+1})\leq \rho_{x_j e}(n_j) < \rho_{x_{j-1}e}(n_{j-1})$. Suppose then that
$\rho_{x_j e}(n_j) = \rho_{x_{j-1}e}(n_{j-1})$. It follows from (f) in the case $i=j$ (which holds by induction) 
that $\rho_{(x_j+1)e}(\rH^1_{\F(n_j)}(K,T^{(m)})^{\pm}) \geq 1$. This implies
that \eqref{eq:case1} holds, and so, by \eqref{eq:equalityconds} above, $\rho_{(x_j+5)e}(n_{j+1}) < \rho_{x_j e}(n_j)= \rho_{x_{j-1}e}(n_{j-1})$.
As $x_{j+1} = x_j+5$ in this case (see (i) above), this shows that (g) holds.
\end{proof}

This completes the proof of Theorem \ref{thm:Zp-twisted}.

\begin{rmk}\label{rmkkolyvaginargument}
The strategy to produce the $n_i\in \mathscr{N}^{(k)}$ as in the preceding proof is similar to the one employed in \cite[$\S$3.3.3]{eisenstein} but technically more delicate. In particular, comparing condition (b) here and condition (b) in \emph{op.\,cit.}, one will notice that the one stated herein is weaker. 
The issue is exactly the one hinted at previously: we have removed the dependence of the error term on $\alpha$, but at the cost of having to work with the $\fm^m$-torsion. This prevents us from proving that we can take $b_{M(n_{t-1})}(n_t)=d_1(M(n_{t-1}))-e$ (notation as in \cite{eisenstein}) -- and so being able to work with the stronger condition (b). This results in the need for the additional conditions (f) and (g), which can be thought of as an induction step on ``the number of summands of the $\fm^m$-torsion of the Selmer groups that are not bounded by (controlled) multiples of $e$''. Philosophically, this is what is done in the proof of \cite[Lemma 1.6.4]{howard}, where however, as $e=0$, one can work with the $\fm$-torsion and has an equality for the order in Proposition \ref{prop:prime2}. 
\end{rmk}

\subsection{The anticyclotomic Iwasawa main conjectures}\label{subsec:ac-IMC}

With Theorem \ref{thm:Zp-twisted} in hand, we can state and prove a strengthening of \cite[Thm.~3.4,1]{eisenstein} and consequent strengthenings 
of \cite[Thm.~4.1.2, Thm.~4.2.2]{eisenstein} as well as \cite[Cor.~4.2.3]{eisenstein}. Let $\Lambda=\Lambda_K^-$, 
%
and note that the modules 
\[
\mathcal{X}=\rH^1_{\Fcal_\Lambda}(K,M_E)^\vee,\quad\rH^1_{\Fcal_\Lambda}(K,\mathbf{T})
\] 
in \cite[\S{3.4}]{eisenstein} are the same as the modules $\X_{\rm ord}(E/K_\infty^-)$ and $\mathfrak{S}_{\rm ord}(E/K_\infty^-)$ in $\S\ref{subsec:Sel-str}$, respectively.

\begin{thm}\label{thm:howard}
Assume $E(K)[p]=0$ and suppose there is a Kolyvagin system $\kappa\in\mathbf{KS}(\mathbf{T},\Fcal_\Lambda,\cL_E)$ with $\kappa_1\neq 0$. Then $\mathfrak{S}_{\rm ord}(E/K_\infty^-)$ has $\Lambda$-rank one, and there is a finitely generated torsion $\Lambda$-module $M$ such that
\begin{itemize}
\item[(i)] $\X_{\rm ord}(E/K_\infty^-)\sim\Lambda\oplus M\oplus M$,
\item[(ii)] ${\rm char}_\Lambda(M)$ divides ${\rm char}_\Lambda\bigl(\mathfrak{S}_{\rm ord}(E/K_\infty^-)/\Lambda\kappa_1\bigr)$ in $\Lambda[1/p]$.
\end{itemize}
\end{thm}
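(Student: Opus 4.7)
The plan is to combine the divisibility proved in \cite[Thm.~3.4.1]{eisenstein}---which is established only in the localization $\Lambda[\tfrac{1}{p},\tfrac{1}{\gamma^--1}]$---with a new bound at the augmentation prime $\mathfrak{P}_0=(\gamma^--1)$ extracted from Theorem~\ref{thm:Zp-twisted}. First I would apply \cite[Thm.~3.4.1]{eisenstein} directly to the Kolyvagin system $\kappa$: this yields at once the rank-one statement for $\mathfrak{S}_{\rm ord}(E/K_\infty^-)$, the pseudo-isomorphism $\X_{\rm ord}(E/K_\infty^-)\sim\Lambda\oplus M\oplus M$ of part (i), and the divisibility of part (ii) at every height-one prime $\mathfrak{P}\neq\mathfrak{P}_0$. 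Hence only the inequality
\[
\mathrm{ord}_{\mathfrak{P}_0}\bigl(\mathrm{char}_\Lambda M\bigr)\;\leq\;\mathrm{ord}_{\mathfrak{P}_0}\bigl(\mathrm{char}_\Lambda(\mathfrak{S}_{\rm ord}(E/K_\infty^-)/\Lambda\kappa_1)\bigr)
\]
needs to be established.

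To prove it I would specialize along a sequence of characters. For each $m\geq\CM$, with $\CM$ as in Theorem~\ref{thm:Zp-twisted}, I pick a non-trivial $\alpha_m\colon\Gamma_K^-\to R_m^\times$ with values in the ring of integers $R_m$ of a finite extension $\Phi_m/\Q_p$, with uniformizer $\varpi_m$, satisfying $\alpha_m\equiv 1\pmod{\varpi_m^m}$ and $\kappa_1^{\alpha_m}\neq 0$; the latter condition is automatic for $m\gg 0$ by Weierstrass preparation and the non-vanishing of $\kappa_1$. Under the $\alpha_m$-specialization $\Lambda\to R_m$ the system $\kappa$ induces a Kolyvagin system $\kappa^{\alpha_m}\in\mathbf{KS}(T_{\alpha_m},\Fcal_{\rm ord},\cL_E)$, to which Theorem~\ref{thm:Zp-twisted} applies to yield
\[
\mathrm{length}_{R_m}\bigl(M^{\alpha_m}\bigr)\;\leq\;\mathrm{length}_{R_m}\bigl(\rH^1_{\Fcal_{\rm ord}}(K,T_{\alpha_m})/R_m\kappa_1^{\alpha_m}\bigr)+\CE,
\]
where, crucially, $\CE$ does not depend on $m$.

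Next, using control theorems in the spirit of those underlying Proposition~\ref{prop:euler-char} (together with the absence of non-trivial finite $\Lambda$-submodules in the relevant Selmer groups, as in Lemma~\ref{lem:almost-div-ord}), I would identify $M^{\alpha_m}$ and the quotient $\rH^1_{\Fcal_{\rm ord}}(K,T_{\alpha_m})/R_m\kappa_1^{\alpha_m}$ with $M\otimes_\Lambda R_m$ and $(\mathfrak{S}_{\rm ord}(E/K_\infty^-)/\Lambda\kappa_1)\otimes_\Lambda R_m$ respectively, up to kernel and cokernel of lengths that remain bounded as $m$ varies. Letting $F_M,F_{\mathfrak{S}}\in\Lambda$ denote characteristic power series for these two modules, the displayed bound then translates into
\[
\mathrm{ord}_{\varpi_m}F_M(\alpha_m)\;\leq\;\mathrm{ord}_{\varpi_m}F_{\mathfrak{S}}(\alpha_m)+\CE'
\]
for a constant $\CE'$ again independent of $m$.

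Finally, I would invoke the elementary fact that, for any non-zero $F\in\Lambda$, one has $\mathrm{ord}_{\varpi_m}F(\alpha_m)=m\cdot\mathrm{ord}_{\mathfrak{P}_0}(F)+O(1)$ whenever $\alpha_m\equiv 1\pmod{\varpi_m^m}$. Dividing the previous inequality by $m$ and letting $m\to\infty$ yields $\mathrm{ord}_{\mathfrak{P}_0}F_M\leq\mathrm{ord}_{\mathfrak{P}_0}F_\mathfrak{S}$, which is exactly the missing divisibility at $\mathfrak{P}_0$. The main obstacle will be the control-theorem step: keeping the comparison errors uniformly bounded in $m$ requires a careful variant of Mazur's control theorem for the twisted Selmer groups $\X_{\rm ord}(E(\alpha_m)/K_\infty^-)$, together with explicit control of the image of $\kappa_1$ under specialization—this is precisely the place where having $\CE$ (and hence $\CE'$) independent of $m$, as provided by Theorem~\ref{thm:Zp-twisted}, becomes indispensable.
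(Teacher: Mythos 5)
Your proposal matches the paper's argument in all essentials. The paper's own proof is deliberately terse — it simply says that the proof of \cite[Thm.~3.4.1]{eisenstein} carries over verbatim once the appeal to \cite[Thm.~3.2.1]{eisenstein} at the augmentation prime $\mathfrak{P}_0=(\gamma^--1)$ is replaced by an appeal to Theorem~\ref{thm:Zp-twisted} — and you have correctly reconstructed the implicit mechanism behind that remark: run the specialization-along-characters argument of \emph{op.~cit.} at primes $\mathfrak{P}\neq\mathfrak{P}_0$, and at $\mathfrak{P}_0$ use the $m$-uniform error constant $\CE$ of Theorem~\ref{thm:Zp-twisted} so that dividing by $m$ and passing to the limit yields the divisibility. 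One small inaccuracy: the control-theorem input for comparing $M^{\alpha_m}$ and $\rH^1_{\Fcal_{\rm ord}}(K,T_{\alpha_m})/R_m\kappa_1^{\alpha_m}$ with $\mathfrak{P}_0$-specializations of $M$ and $\mathfrak{S}_{\rm ord}(E/K_\infty^-)/\Lambda\kappa_1$ is the one already built into the proof of \cite[Thm.~3.4.1]{eisenstein}, not Proposition~\ref{prop:euler-char} of the present paper (which concerns the Greenberg Selmer structure and the cyclotomic versus anticyclotomic comparison, a different statement); citing the latter ``in spirit'' could mislead a reader, but it does not affect the soundness of the argument.
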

\begin{proof}
The proof is the same as that of \cite[Theorem 3.4.1]{eisenstein}. However, the height one prime $(\gamma^- -1)\subset \Lambda$ was excluded from the analysis
in {\em loc.~cit.} because the error term in \cite[Thm.~3.2.1]{eisenstein} increases as the characters $\alpha$ get $p$-adically closer to $1$. 
Replacing the appeal to {\em op.~cit.} with one to Theorem~\ref{thm:Zp-twisted} 
for the case of the prime $(\gamma^- -1)$,
yields the theorem.
\end{proof}

Applying Theorem~\ref{thm:howard} to the Kolyvagin system $\kappa^{\rm Hg}=\{\kappa_n^{\rm Hg}\}_{n\in\mathscr{N}}$ of \cite[Thm.~4.1.1]{eisenstein}, we thus obtain the following. 

\begin{thm}\label{thm:howard-HP}
Assume $E(K)[p]=0$. Then $\mathfrak{S}_{\rm ord}(E/K_\infty^-)$ has $\Lambda$-rank one, and there is a finitely generated torsion $\Lambda$-module $M$ such that
\begin{itemize}
\item[(i)] $\X_{\rm ord}(E/K_\infty^-)\sim\Lambda\oplus M\oplus M$,
\item[(ii)] ${\rm char}_\Lambda(M)$ divides ${\rm char}_\Lambda\bigl(\mathfrak{S}_{\rm ord}(E/K_\infty^-)/\Lambda\kappa_1^{\rm Hg}\bigr)$ in $\Lambda[1/p]$.
\end{itemize}
\end{thm}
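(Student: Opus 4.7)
The plan is essentially mechanical: apply Theorem~\ref{thm:howard} with the specific choice $\kappa=\kappa^{\rm Hg}$. First I would recall from \cite[Thm.~4.1.1]{eisenstein} the construction of the anticyclotomic Heegner Kolyvagin system
\[
\kappa^{\rm Hg}=\{\kappa_n^{\rm Hg}\}_{n\in\mathscr{N}}\;\in\;\mathbf{KS}(\mathbf{T},\mathcal{F}_\Lambda,\mathcal{L}_E),
\]
built from the norm-compatible system of Heegner points on $E$ over the anticyclotomic tower, attached to a fixed modular parametrization $\pi:X_0(N)\to E$ and the ideal $\mathfrak{N}\subset\mathcal{O}_K$ fixed in $\S\ref{subsec:L-ac}$. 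The hypothesis $E(K)[p]=0$ is part of the statement, matching the standing hypothesis of Theorem~\ref{thm:howard}, so the only additional input to verify is the non-vanishing $\kappa_1^{\rm Hg}\neq 0$.

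For this non-vanishing, I would invoke the theorem of Cornut--Vatsal \cite{cornut,vatsal}, which confirms Mazur's conjecture that the $\Lambda$-adic Heegner class is non-torsion---and, \emph{a fortiori}, non-zero---in the compact Selmer group $\mathfrak{S}_{\rm ord}(E/K_\infty^-)$ over $\Lambda=\Lambda_K^-$. Granted this, Theorem~\ref{thm:howard} applied to $\kappa=\kappa^{\rm Hg}$ yields both conclusions at once: $\mathfrak{S}_{\rm ord}(E/K_\infty^-)$ has $\Lambda$-rank one, and there is a finitely generated torsion $\Lambda$-module $M$ such that $\X_{\rm ord}(E/K_\infty^-)\sim\Lambda\oplus M\oplus M$ with
\[
{\rm char}_\Lambda(M)\;\bigm|\;{\rm char}_\Lambda\bigl(\mathfrak{S}_{\rm ord}(E/K_\infty^-)/\Lambda\kappa_1^{\rm Hg}\bigr)\quad\text{in }\Lambda[1/p].
\]

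No substantive obstacle is expected at this stage: the real work has already been absorbed into Theorem~\ref{thm:howard}, which itself rests on the new Kolyvagin-style argument at the height-one prime $(\gamma^--1)\subset\Lambda$ provided by Theorem~\ref{thm:Zp-twisted}, together with the Cornut--Vatsal non-vanishing. The one point that requires mild book-keeping is to confirm that the Selmer structure and the set of allowable primes used to manufacture $\kappa^{\rm Hg}$ in \cite[Thm.~4.1.1]{eisenstein} coincide with $(\mathcal{F}_\Lambda,\mathcal{L}_E)$ as used in Theorem~\ref{thm:howard}, and that the Eisenstein condition on $p$ does not disrupt the finite--singular local relations governing $\kappa^{\rm Hg}$; both of these compatibilities are established in \cite{eisenstein}, so the proof reduces to assembling those inputs.
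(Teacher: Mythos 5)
Your proposal is correct and matches the paper's argument exactly: the paper obtains this theorem by applying Theorem~\ref{thm:howard} to the Heegner Kolyvagin system $\kappa^{\rm Hg}$ of \cite[Thm.~4.1.1]{eisenstein}, with the non-vanishing of $\kappa_1^{\rm Hg}$ coming from Cornut--Vatsal as you note. No further comment is needed.
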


Using this, 
we conclude just as for \cite[Thm.~4.2.2]{eisenstein}:

\begin{thm}\label{thm:BDP-IMC}
Suppose $K$ satisfies hypotheses {\rm (\ref{eq:intro-Heeg})}, {\rm (\ref{eq:intro-spl})}, and {\rm (\ref{eq:intro-disc})}, and that
$E[p]^{ss}=\mathbb{F}_p(\phi)\oplus\mathbb{F}_p(\psi)$ as $G_\bQ$-modules, with $\phi\vert_{G_p}\neq\mathds{1},\omega$.
Then $\X_{\rm Gr}(E/K_\infty^-)$ is $\Lambda$-torsion, and
\[
{\rm char}_\Lambda(\X_{\rm Gr}(E/K_\infty^-))\Lambda^{\rm ur}=(\Lcal_p^{\rm BDP}(f/K))
\]
as ideals in $\Lambda^{\rm ur}$. Hence the anticyclotomic Iwasawa--Greenberg main conjecture in Conjecture~\ref{conj:IMC} holds.
\end{thm}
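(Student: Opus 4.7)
The plan is to transcribe the proof of \cite[Thm.~4.2.2]{eisenstein}, now using the strengthened Heegner-point bound of Theorem~\ref{thm:howard-HP}. The key novelty is that Theorem~\ref{thm:Zp-twisted}, by producing an error term independent of $m$, extends the Kolyvagin-system divisibility to the augmentation prime $\mathfrak{P}_0 = (\gamma^- - 1)\subset\Lambda$, thereby removing the corank-one assumption (Sel) that was imposed in \emph{loc.\,cit.}

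First I would apply Theorem~\ref{thm:howard-HP} to the Heegner Kolyvagin system $\kappa^{\rm Hg}$ of \cite[Thm.~4.1.1]{eisenstein}, noting that $E(K)[p] = 0$ follows from the hypothesis $\phi\vert_{G_p}\neq\mathds{1},\omega$ together with (disc), (Heeg), and (spl). The conclusion is that $\mathfrak{S}_{\rm ord}(E/K_\infty^-)$ has $\Lambda$-rank one, that $\X_{\rm ord}(E/K_\infty^-)_{\rm tors}\sim M\oplus M$ for some torsion $\Lambda$-module $M$, and that
\[
{\rm char}_\Lambda(M)\;\text{divides}\;{\rm char}_\Lambda\bigl(\mathfrak{S}_{\rm ord}(E/K_\infty^-)/\Lambda\kappa_1^{\rm Hg}\bigr);
\]
after squaring, this is one divisibility in Conjecture~\ref{conj:PR}. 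To pass to the Iwasawa--Greenberg formulation, I would invoke the equivalence recalled in the introduction and made explicit in \cite[Thm.~5.2]{BCK}: combining the Poitou--Tate exact sequence
\[
0\to\frac{\mathfrak{S}_{\rm ord}(E/K_\infty^-)}{\Lambda\kappa_1^{\rm Hg}}\to\frac{\rH^1_{/{\rm ord}}(K_{\bar v},T_pE\otimes\Lambda)}{\Lambda\,{\rm loc}_{\bar v}(\kappa_1^{\rm Hg})}\to\X_{\rm Gr}(E/K_\infty^-)\to\X_{\rm ord}(E/K_\infty^-)_{\rm tors}\to 0
\]
with the anticyclotomic explicit reciprocity law of Castella--Hsieh, which identifies the image of ${\rm loc}_{\bar v}(\kappa_1^{\rm Hg})$ under the BDP/Perrin-Riou big logarithm with $\Lcal_p^{\rm BDP}(f/K)$ up to a unit in $\Lambda^{\rm ur}$, the above divisibility converts into
\[
{\rm char}_\Lambda(\X_{\rm Gr}(E/K_\infty^-))\,\Lambda^{\rm ur}\;\supset\;\bigl(\Lcal_p^{\rm BDP}(f/K)\bigr).
\]

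For the reverse inclusion I would appeal to the Eisenstein-congruence lower bound already available in this setting (the analogue of \cite[Thm.~4.2.1]{eisenstein}), which exploits the factorization of $\Lcal_p^{\rm BDP}(f/K)$ into Katz $p$-adic $L$-functions arising from the reducibility of $E[p]$ together with Rubin's proof of the two-variable main conjecture for $K$ \cite{rubinmainconj}. Combining the two inclusions yields the claimed equality of ideals in $\Lambda^{\rm ur}$, and the $\Lambda$-torsion of $\X_{\rm Gr}(E/K_\infty^-)$ follows from the nonvanishing of $\Lcal_p^{\rm BDP}(f/K)$ established in Theorem~\ref{thm:BDP}. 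The principal obstacle, and the reason Theorem~\ref{thm:howard-HP} (hence Theorem~\ref{thm:Zp-twisted}) is indispensable here, is precisely the integral control at the augmentation prime $(\gamma^- - 1)$; once that is in hand, the rest of the argument is a line-for-line adaptation of \cite[Thm.~4.2.2]{eisenstein}.
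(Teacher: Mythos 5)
Your proposal matches the paper's argument: the paper's proof of this theorem is precisely the one-line "Using this [Theorem~\ref{thm:howard-HP}], we conclude just as for [eisenstein, Thm.~4.2.2]," and your write-up unpacks exactly the ingredients of that cited argument — the Poitou--Tate sequence, the Castella--Hsieh reciprocity law identifying ${\rm loc}_{\bar v}(\kappa_1^{\rm Hg})$ with $\Lcal_p^{\rm BDP}(f/K)$, and the reverse divisibility via the Eisenstein congruence and Rubin's main conjecture. You also correctly identify the role of Theorem~\ref{thm:Zp-twisted} in supplying integral control at the augmentation prime $(\gamma^--1)$, which is the point of the present section and what lets the corank-one hypothesis be dropped.
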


And just as for \cite[Cor.~4.2.3]{eisenstein} (noting that the ambiguity by powers of $p$ in \emph{loc.\,cit.} can be removed), we then have Theorem~\ref{thm:AC} in the Introduction:

\begin{cor}\label{cor:PR} 
Suppose $K$ satisfies hypotheses {\rm (\ref{eq:intro-Heeg})}, {\rm (\ref{eq:intro-spl})},  {\rm (\ref{eq:intro-disc})}, and that $E[p]^{ss}=\mathbb{F}_p(\phi)\oplus\mathbb{F}_p(\psi)$ as $G_\bQ$-modules, with $\phi\vert_{G_p}\neq\mathds{1},\omega$. 
Then both ${\rm H}^1_{\Fcal_\Lambda}(K,\mathbf{T})$ and 
$\rH^1_{\Fcal_\Lambda}(K,M_E)^\vee$ have $\Lambda$-rank one, and
\[
\mathrm{char}_\Lambda\bigl(\rH^1_{\Fcal_\Lambda}(K,M_E)^\vee_{\rm tors}\bigr)=\mathrm{char}_\Lambda\bigl({\rm H}^1_{\Fcal_\Lambda}(K,\mathbf{T})/\Lambda\kappa_\infty\bigr)^2.
\]
\end{cor}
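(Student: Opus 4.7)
The plan is to derive Corollary~\ref{cor:PR} as an essentially immediate consequence of Theorem~\ref{thm:howard-HP} and Theorem~\ref{thm:BDP-IMC}, by invoking the well-known equivalence between Perrin-Riou's main conjecture (Conjecture~\ref{conj:PR}) and the Iwasawa--Greenberg main conjecture for the BDP $p$-adic $L$-function, as recorded in e.g.\ \cite[Thm.~5.2]{BCK}. The non-trivial content has already been packaged into these two theorems; what remains is a formal combination.

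First, I will point out that Theorem~\ref{thm:howard-HP}, obtained by applying Theorem~\ref{thm:howard} to the Heegner point Kolyvagin system $\kappa^{\rm Hg}=\{\kappa_n^{\rm Hg}\}_{n\in\mathscr{N}}$ of \cite[Thm.~4.1.1]{eisenstein}, supplies the $\Lambda$-rank one assertions for both $\rH^1_{\Fcal_\Lambda}(K,\mathbf{T}) = \mathfrak{S}_{\rm ord}(E/K_\infty^-)$ and $\rH^1_{\Fcal_\Lambda}(K,M_E)^\vee = \X_{\rm ord}(E/K_\infty^-)$, together with a pseudo-isomorphism $\X_{\rm ord}(E/K_\infty^-)\sim \Lambda\oplus M\oplus M$ for some torsion $\Lambda$-module $M$, and the divisibility
$$
{\rm char}_\Lambda(M)\;\big|\;{\rm char}_\Lambda\bigl(\rH^1_{\Fcal_\Lambda}(K,\mathbf{T})/\Lambda\kappa_\infty\bigr)
$$
in $\Lambda[1/p]$, which is one half of the target equality (up to inverting $p$).

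Next, I will apply the equivalence of \cite[Thm.~5.2]{BCK}, whose validity requires precisely the $\Lambda$-rank one hypotheses just verified, together with $E(K)[p]=0$ (ensured by $\phi|_{G_p}\neq 1,\omega$ and (\ref{eq:intro-spl})). This equivalence proceeds via Poitou--Tate global duality applied to the defining exact sequence relating $\X_{\rm ord}(E/K_\infty^-)$ with $\X_{\rm Gr}(E/K_\infty^-)$, combined with the explicit reciprocity law identifying $p^-({\rm loc}_{\bar v}(\kappa_\infty))$ with (a unit times) $\Lcal_p^{\rm BDP}(f/K)$. Under this dictionary, the integral equality of Theorem~\ref{thm:BDP-IMC} translates directly into the reverse divisibility for the characteristic ideals of Conjecture~\ref{conj:PR}, in $\Lambda^{\rm ur}$; faithful flatness of $\Z_p^{\rm ur}$ over $\Z_p$ then brings it back to $\Lambda$. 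Combining both divisibilities yields the desired equality.

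The main obstacle addressed by this section is the integrality at the augmentation prime $\mathfrak{P}_0=(\gamma^--1)$: the earlier statement \cite[Cor.~4.2.3]{eisenstein} required the corank hypothesis (\ref{eq:intro-Sel}) and was only known up to powers of $p$. What changes here is that Theorem~\ref{thm:Zp-twisted} provides a Kolyvagin-style bound whose error term is independent of $m$ for characters $\alpha\equiv 1\pmod{\varpi^m}$, allowing the Kolyvagin argument to reach $\mathfrak{P}_0$ itself. With Theorem~\ref{thm:BDP-IMC} now proved unconditionally as an integral equality in $\Lambda^{\rm ur}$, the $p$-adic ambiguity disappears and the equality of Corollary~\ref{cor:PR} holds integrally in $\Lambda$.
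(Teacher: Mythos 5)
Your proof takes essentially the same route as the paper: the paper's own proof of Corollary~\ref{cor:PR} is to argue ``just as for [eisenstein, Cor.~4.2.3]'', i.e., to transport Theorem~\ref{thm:BDP-IMC} across the Perrin-Riou/BDP equivalence (of the type recorded in \cite[Thm.~5.2]{BCK}), with the rank-one and structure inputs supplied by Theorem~\ref{thm:howard-HP}. You invoke exactly these ingredients.

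One caution about your phrasing, which as stated does not quite close the argument. You take the $\Lambda[1/p]$-divisibility from Theorem~\ref{thm:howard-HP}, transport the \emph{equality} of Theorem~\ref{thm:BDP-IMC} into only a ``reverse divisibility'' in $\Lambda$, and then ``combine both divisibilities.'' But a $\Lambda[1/p]$-divisibility in one direction together with a $\Lambda$-divisibility in the other only pins down the characteristic ideal up to a power of $p$: it gives equality of $\lambda$-invariants but leaves the $\mu$-invariant comparison open, which is precisely the ambiguity the paper is at pains to remove. What actually carries the integrality is that Theorem~\ref{thm:BDP-IMC} is already an integral \emph{equality} in $\Lambda^{\rm ur}$, and the equivalence (being an equivalence of each of the two opposite divisibilities, as in the cyclotomic Proposition~\ref{prop:equiv}) therefore transports it to the integral \emph{equality} of Conjecture~\ref{conj:PR} in $\Lambda^{\rm ur}$, and then to $\Lambda$ by faithful flatness as you say. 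The Kolyvagin-system divisibility from Theorem~\ref{thm:howard-HP} is not needed at this last step; what you do need from it (beyond the rank-one assertions) is already consumed in the proof of Theorem~\ref{thm:BDP-IMC} itself. So the substance is right, but ``combining both divisibilities'' should be replaced by ``the equivalence transports the equality of Theorem~\ref{thm:BDP-IMC} to the asserted equality of characteristic ideals.''
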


\section{Mazur's main conjecture}\label{sec:MC}

In this section we put everything together to deduce the proof of Theorem~\ref{thm:CYC} in the Introduction.

\subsection{The main result}

The following is the main result of this paper.

\begin{thm}\label{thm:A}
Let $E/\Q$ be an elliptic curve, and $p>2$ a prime of good reduction for $E$ such that $E[p]^{ss}=\mathbb{F}_p(\phi)\oplus\mathbb{F}_p(\psi)$ 
with $\phi\vert_{G_{\Q_p}}\neq 1,\omega$. Then the module $\X_{\rm ord}(E/\Q_\infty)$ is $\Lambda_\Q$-torsion, with
\[
\ch_{\Lambda_\Q}\bigl(\X_{\rm ord}(E/\Q_\infty)\bigr)=\bigl(\Lcal_p^{\rm MSD}(E/\Q)\bigr).
\]
In other words, Mazur's main conjecture for $E$ holds.
\end{thm}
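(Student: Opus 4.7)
The plan is to prove the theorem first for the distinguished curve $E_\bullet/\Q$ in the isogeny class of $E$ (as in Lemma~\ref{lem:cong-deg}) and then invoke Proposition~\ref{prop:isog-inv}. Kato's divisibility (as refined by W\"uthrich) provides
\[
\ch_{\Lambda_\Q}\bigl(\X_{\rm ord}(E_\bullet/\Q_\infty)\bigr)\supset\bigl(\Lcal_p^{\rm MSD}(E_\bullet/\Q)\bigr),
\]
and analogously for $E_\bullet^K$. Hence by Proposition~\ref{prop:comp-Lcyc} and Proposition~\ref{prop:comp-Selcyc}, it will suffice to prove the reverse divisibility in
\[
\ch_{\Lambda_K^+}\bigl(\X_{\rm ord}(E_\bullet/K_\infty^+)\bigr)=\bigl(\Lcal_p^{\rm PR}(E_\bullet/K)^+\bigr)
\]
for an auxiliary imaginary quadratic field $K$ satisfying (\ref{eq:intro-disc}), (\ref{eq:intro-Heeg}) and (\ref{eq:intro-spl}), which can always be arranged and, thanks to the hypothesis $\phi|_{G_p}\neq 1,\omega$, can additionally be chosen so that $E_\bullet(K)[p]=0$.

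To obtain the $\Lambda_K^+$-main conjecture, I would leverage Theorem~\ref{thm:AC} (Corollary~\ref{cor:PR}), equivalently Theorem~\ref{thm:BDP-IMC}, to have at my disposal the anticyclotomic main conjecture in the $\X_{\rm Gr}$ formulation with $\Lcal_p^{\rm BDP}(f/K)$. To sidestep the rank one restriction of the toy argument in Sect.~\ref{sec:one}, I would then twist by a character $\alpha\colon\Gamma_K^-\to R^\times$ satisfying $\alpha\equiv 1\pmod{\varpi^m}$ for $m\gg 0$, chosen close enough to $1$ that both $\Lcal_p^{\rm BDP}(f(\alpha)/K)(0)\neq 0$ and the hypotheses of Lemma~\ref{lem:coinv} are met (possible since $\Lcal_p^{\rm BDP}(f/K)\neq 0$ by Theorem~\ref{thm:BDP}). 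For such $\alpha$, Theorem~\ref{thm:BF-ES} and the equivalence of Proposition~\ref{prop:equiv} yield the divisibility
\[
\ch_{\Lambda_K^+}\bigl(\X_{\rm ord}(E_\bullet(\alpha)/K_\infty^+)\bigr)\supset\bigl(\Lcal_p^{\rm PR}(E_\bullet(\alpha)/K)^+\bigr),
\]
while Proposition~\ref{prop:euler-char} compares the trivial-character specialisation of $\X_{\rm Gr}(E_\bullet(\alpha)/K_\infty^+)$ with that of $\X_{\rm Gr}(E_\bullet(\alpha)/K_\infty^-)$; by Theorem~\ref{thm:BDP-IMC} applied to the $\alpha$-twist, the latter is matched (up to a $p$-adic unit) with $\Lcal_p^{\rm BDP}(f(\alpha)/K)(0)$. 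This forces equality of Iwasawa invariants on the Selmer and $L$-function sides at the trivial character and, together with the one-sided divisibility above, elementary commutative algebra (as in \cite[Lem.~3.2]{skinner-urban}) then promotes the above to the equality
\[
\ch_{\Lambda_K^+}\bigl(\X_{\rm ord}^S(E_\bullet(\alpha)/K_\infty^+)\bigr)=\bigl(\Lcal_p^{\rm PR}(E_\bullet(\alpha)/K)^{+,S}\bigr)
\]
for a suitable finite set $S$ of primes away from $p$.

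The final stage, which is the main obstacle, is the congruence descent from $\alpha$ back to the trivial character. Proposition~\ref{prop:cong-Sel} and Lemma~\ref{lem:cong-L} ensure that characteristic power series for $\X_{\rm ord}^S(E_\bullet(\alpha)/K_\infty^+)$ and $\Lcal_p^{\rm PR}(E_\bullet(\alpha)/K)^{+,S}$ reduce modulo $\varpi^m$ to ones for $\X_{\rm ord}^S(E_\bullet/K_\infty^+)$ and $\Lcal_p^{\rm PR}(E_\bullet/K)^{+,S}$ respectively. Hence the twisted equality just obtained propagates to the congruence
\[
\ch_{\Lambda_K^+}\bigl(\X_{\rm ord}^S(E_\bullet/K_\infty^+)\bigr)\equiv\bigl(\Lcal_p^{\rm PR}(E_\bullet/K)^{+,S}\bigr)\pmod{\varpi^m}
\]
for every sufficiently large $m$. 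Since Kato's divisibility for $E_\bullet$ and $E_\bullet^K$ (via Propositions \ref{prop:comp-Lcyc} and \ref{prop:comp-Selcyc} and Corollary \ref{cor:imp-IMC}) already provides the containment $\supset$, letting $m\to\infty$ upgrades this to strict equality of characteristic ideals in $\Lambda_K^+$. Dividing this equality by the Kato divisibility for $E_\bullet^K$ (via the factorisations in Propositions \ref{prop:comp-Lcyc} and \ref{prop:comp-Selcyc}) yields the reverse divisibility for $\X_{\rm ord}(E_\bullet/\Q_\infty)$, completing Mazur's main conjecture for $E_\bullet$ and hence, by Proposition~\ref{prop:isog-inv}, for $E$. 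The hard part will be executing the congruence step cleanly, in particular ensuring that $\alpha$ can simultaneously be chosen to meet the Selmer-type hypotheses of Lemma~\ref{lem:coinv}, to avoid the zeros of $\Lcal_p^{\rm BDP}(f/K)$ at the trivial character of $\Gamma_K^+$, and to satisfy the congruence mod $\varpi^m$ for arbitrarily large $m$.
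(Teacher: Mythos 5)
Your proposal follows the same overall strategy as the paper's Section 6: reduce to $E_\bullet$, choose an auxiliary $K$, twist by $\alpha$ close to $1$ avoiding the zeros of $\Lcal_p^{\rm BDP}$, compare Euler characteristics of the cyclotomic and anticyclotomic $\X_{\rm Gr}$, and then descend from $\alpha$ to $1$ via congruences and Kato. All the key ingredients (Theorem~\ref{thm:BDP-IMC}, Theorem~\ref{thm:BF-ES}, Proposition~\ref{prop:equiv}, Proposition~\ref{prop:euler-char}, Proposition~\ref{prop:cong-Sel}, Lemma~\ref{lem:cong-L}) are invoked correctly.

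However, there is a logical gap in the order of operations that the paper takes care to address and you do not. Theorem~\ref{thm:BF-ES} gives the Beilinson--Flach upper bound divisibility only in $\Lambda_K^+\otimes\Q_p$, i.e.\ up to an unspecified power of $p$. You then apply the Skinner--Urban Euler-characteristic lemma to this one-sided divisibility to get the twisted equality, but that lemma requires the divisibility to hold integrally: from $(g)\subset(f)$ in $\Lambda_K^+$ and $g(0)\sim_p f(0)\neq 0$ one deduces $(g)=(f)$ because the cofactor specialises to a unit, whereas from a rational divisibility $p^kg=h'f$ with $k>0$ the evaluation only tells you $v_p(h'(0))=k$, which does not force $h'$ to be a unit. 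So your ``twisted equality just obtained'' is really only an equality of ideals in $\Lambda_K^+\otimes\Q_p$, and the subsequent congruence $\bmod\ \varpi^m$ you invoke is then not well-formed (one would need to keep track of a possibly positive power of $p$).

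The paper resolves this in Lemma~\ref{lem:int-div-PR}, \emph{before} the Euler-characteristic step: it combines Kato's \emph{integral} divisibility for the untwisted $E_\bullet$ with Proposition~\ref{prop:cong-Sel} and Lemma~\ref{lem:cong-L} to bound the $\mu$-invariant of $\Fcal_{\rm ord}^S(E_\bullet(\alpha)/K_\infty^+)$ by that of $\Lcal_p^{\rm PR}(E_\bullet(\alpha)/K)^{+,S}$, which is exactly what forces $k=0$ and promotes the rational BF divisibility to an integral one. Only then can the Euler-characteristic argument yield the genuine equality $(\ref{eq:alpha-equality})$ in $\Lambda_K^+$, and the final congruence descent in Theorem~\ref{thm:PR-IMC} goes through cleanly. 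You do mention a $\mu$/$\lambda$-invariant comparison via Kato and the congruences at the very end of your argument---this is the right idea, but it must be applied earlier, to the twist, to make the Euler-characteristic step legitimate. Rearranged this way, your plan matches the paper's.
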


\subsection{Proof of Mazur's main conjecture}

We divide it into three steps, similarly as we did in $\S\ref{sec:one}$. Choose an imaginary quadratic field $K$ satisfying hypotheses (\ref{eq:Heeg}), (\ref{eq:spl}), and (\ref{eq:disc}). As usual, we let $E_\bullet$ denote the elliptic curve in the isogeny class of $E$ constructed in \cite{wuthrich-int}, and put $S=\Sigma\smallsetminus\{p,\infty\}$.
\sk

\noindent\emph{Step 1}. The $p$-adic $L$-functions $\Lcal_p^{\rm PR}(E_\bullet/K)^+\in\Lambda_K^+$ and $\Lcal_p^{\rm BDP}(f/K)\in\Lambda_K^{-,{\rm ur}}$ are nonzero: For $\Lcal_p^{\rm PR}(E_\bullet/K)^+$ this follows from  Proposition~\ref{prop:comp-Lcyc}, Theorem~\ref{thm:MSD}, and Rohrlich's nonvanishing result \cite{rohrlich-cyc}; and for $\Lcal_p^{\rm BDP}(f/K)$ this is part of Theorem~\ref{thm:BDP}. Fix an integer $m>0$ such that 
\begin{equation}\label{eq:p^m}
\Lcal_p^{\rm PR}(E_\bullet/K)^+\neq 0\in\Lambda_K^+/p^m\Lambda_K^+,
\end{equation}
and take a crystalline character $\alpha:\Gamma_K^-\rightarrow R^\times$ with $\alpha\equiv 1\pmod{\varpi^m}$ such that  $\Lcal_p^{\rm BDP}(f(\alpha)/K)(0)\neq 0$. 

By Theorem~\ref{thm:BDP-IMC} 
we then have that $\X_{\rm Gr}(E_\bullet(\alpha)/K_\infty^-)$ is $\Lambda_K^-$-torsion, with
\begin{equation}\label{eq:ac-alpha}
\Fcal_{\rm Gr}(E_\bullet(\alpha)/K_\infty^-)(0)\;\sim_p\;\Lcal_p^{\rm BDP}(f(\alpha)/K)(0)\neq 0,
\end{equation}
where $\Fcal_{\rm Gr}(E_\bullet/K_\infty^-)\in R\llbracket{T}\rrbracket$ is any characteristic power series for $\X_{\rm Gr}(E_\bullet(\alpha)/K_\infty^-)$.
\sk

\noindent\emph{Step 2}. Since $\alpha$ is anticyclotomic, for $\alpha\neq 1$ the nonvanishing of $\Lcal_p^{\rm PR}(E_\bullet(\alpha)/K)^+$ and $\Lcal_p^{\rm Gr}(f(\alpha)/K)^+$ is not automatic, but for our choice of $\alpha$ we can show this easily.

\begin{lemma}\label{lem:nonzeroLs}
With $\alpha$ chosen as above, the $p$-adic $L$-functions $\Lcal_p^{\rm PR}(E_\bullet(\alpha)/K)^+$ and $\Lcal_p^{\rm Gr}(f(\alpha)/K)^+$ are both nonzero.
\end{lemma}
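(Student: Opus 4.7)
The plan is to prove the two nonvanishing statements separately: for $\Lcal_p^{\rm PR}(E_\bullet(\alpha)/K)^+$ I would invoke the congruence between the twisted and untwisted versions established in Lemma~\ref{lem:cong-L}, while for $\Lcal_p^{\rm Gr}(f(\alpha)/K)^+$ I would exploit the factorization relating it to the BDP $p$-adic $L$-function from Proposition~\ref{prop:comp-Lac}.

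First, since $\alpha\equiv 1\pmod{\varpi^m}$, Lemma~\ref{lem:cong-L} gives the congruence
\[
\Lcal_p^{\rm PR}(E_\bullet(\alpha)/K)^+\equiv\Lcal_p^{\rm PR}(E_\bullet/K)^+\pmod{\varpi^m}
\]
in $\Lambda_{K}^+\otimes R$. By the choice (\ref{eq:p^m}) of $m$, the right-hand side is nonzero modulo $p^m$, hence nonzero modulo $\varpi^m$ (after replacing $m$ by a suitable multiple to absorb the ramification of $R/\Z_p$, a harmless adjustment that does not affect the validity of the congruence above). Therefore the left-hand side is also nonzero in $\Lambda_K^+\otimes R$, which gives $\Lcal_p^{\rm PR}(E_\bullet(\alpha)/K)^+\neq 0$.

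Next, for $\Lcal_p^{\rm Gr}(f(\alpha)/K)^+$, the key observation is that the trivial characters of $\Gamma_K^+$ and of $\Gamma_K^-$ both arise from the trivial character of $\Gamma_K$ via the natural projections, so
\[
\Lcal_p^{\rm Gr}(f(\alpha)/K)^+(0)=\Lcal_p^{\rm Gr}(f(\alpha)/K)^-(0).
\]
By the $\alpha$-twisted analogue of Proposition~\ref{prop:comp-Lac} (which follows directly by applying ${\rm Tw}_\alpha$ to the equality there), one has
\[
\Lcal_p^{\rm Gr}(f(\alpha)/K)^-\cdot\Lambda_K^{-,\rm ur}=\Lcal_p^{\rm BDP}(f(\alpha)/K)\cdot\Lambda_K^{-,\rm ur},
\]
so up to a $p$-adic unit the value at $0$ of the left-hand side equals $\Lcal_p^{\rm BDP}(f(\alpha)/K)(0)$, which by the choice of $\alpha$ is nonzero. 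Combining these two identities gives $\Lcal_p^{\rm Gr}(f(\alpha)/K)^+(0)\neq 0$, whence $\Lcal_p^{\rm Gr}(f(\alpha)/K)^+\neq 0$.

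Overall the argument is essentially a bookkeeping exercise combining the congruences and factorizations already established in Sect.~\ref{sec:Lp}; no serious obstacle is anticipated beyond keeping track of the various projections, twists, and the (mild) compatibility between the $p^m$- and $\varpi^m$-adic congruences.
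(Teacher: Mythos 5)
Your proof is correct and follows essentially the same two steps as the paper: nonvanishing of $\Lcal_p^{\rm PR}(E_\bullet(\alpha)/K)^+$ via Lemma~\ref{lem:cong-L} together with (\ref{eq:p^m}), and nonvanishing of $\Lcal_p^{\rm Gr}(f(\alpha)/K)^+$ via the chain $\Lcal_p^{\rm Gr}(f(\alpha)/K)^+(0)=\Lcal_p^{\rm Gr}(f(\alpha)/K)^-(0)\sim_p\Lcal_p^{\rm BDP}(f(\alpha)/K)(0)\neq 0$ using Proposition~\ref{prop:comp-Lac}. Your parenthetical remark about reconciling the $p^m$ in (\ref{eq:p^m}) with the $\varpi^m$ in the congruence is a fair point that the paper glosses over; the intended reading is simply that $m$ is ultimately taken $\gg 0$ (and $\Lcal_p^{\rm PR}(E_\bullet/K)^+\neq 0$ outright), so the $\varpi$-adic nonvanishing can be arranged, though phrasing the fix as ``replacing $m$ by a suitable multiple'' is slightly awkward since $m$ also governs the choice of $\alpha$.
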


\begin{proof}
For $\Lcal_p^{\rm PR}(E_\bullet(\alpha)/K)^+$, this is clear from (\ref{eq:p^m}) and the congruence of Lemma~\ref{lem:cong-L}; and for $\Lcal_p^{\rm Gr}(f(\alpha)/K)^+$ it follows from the relations
\begin{equation}\label{eq:sim-p}
\Lcal_p^{\rm Gr}(f(\alpha)/K)^+(0)=\Lcal_p^{\rm Gr}(f(\alpha)/K)^-(0)\,\sim_p\,\Lcal_p^{\rm BDP}(f(\alpha)/K)(0),
\end{equation}
using Proposition~\ref{prop:comp-Lac} for the last equality up to a $p$-adic unit.
\end{proof}

In light of this nonvanishing, by Corollary~\ref{cor:KLZ} the class $BF_{\alpha}^+\in\mathfrak{S}_{\rm ord,\rm rel}(E_\bullet(\alpha)/K_\infty^+)$ is nonzero, and so by Theorem~\ref{thm:BF-ES} the Selmer group $\X_{\rm ord,\rm str}(E_\bullet(\alpha^{-1})/K_\infty^+)$ is $\Lambda_K^+$-torsion, with
\begin{equation}\label{eq:BF-div-alpha}
\ch_{\Lambda_K^+}\bigl(\X_{\rm ord,\rm str}(E_\bullet(\alpha^{-1})/K_\infty^+)\bigr)\supset\ch_{\Lambda_K^+}\bigl(\mathfrak{S}_{\rm ord,\rm rel}(E_\bullet(\alpha)/K_\infty^+)/\Lambda_K^+BF_{\alpha}^+\bigr)
\end{equation}
in $\Lambda_K^+\otimes\Q_p$. Note the need to invert $p$ in this divisibility, an ambiguity that we shall remove in the next result.


\begin{lemma}\label{lem:int-div-PR}
The module $\X_{\rm ord}(E_\bullet(\alpha)/K_\infty^+)$ is $\Lambda_K^+$-torsion, with
\begin{equation}\label{eq:cyc-ord-alpha}
\ch_{\Lambda_K^+}\bigl(\X_{\rm ord}(E_\bullet(\alpha)/K_\infty^+)\bigr)\supset\bigl(\Lcal_p^{\rm PR}(E_\bullet(\alpha)/K)^+\bigr) 
\end{equation}
in $\Lambda_K^+$.
\end{lemma}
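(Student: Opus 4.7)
The plan is to first establish the divisibility in $\Lambda_K^+\otimes\Q_p$ via the Beilinson--Flach Euler system, and then upgrade it to an integral statement through a congruence argument against the untwisted case $\alpha=1$, where integrality is known from Kato's work as refined by W\"{u}thrich.

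For the rational divisibility, I would observe that $BF_\alpha^+\neq 0$ by Corollary~\ref{cor:KLZ} combined with the nonvanishing in Lemma~\ref{lem:nonzeroLs}, so Theorem~\ref{thm:BF-ES} provides statement (i) of Proposition~\ref{prop:equiv} as a divisibility in $\Lambda_K^+\otimes\Q_p$. Lemma~\ref{lem:nonzeroLs} also supplies the nonvanishing hypotheses on $\Lcal_p^{\rm PR}(E_\bullet(\alpha)/K)^+$ and $\Lcal_p^{\rm Gr}(f(\alpha)/K)^+$ needed in Proposition~\ref{prop:equiv}. The equivalence (i)$\Leftrightarrow$(iii) there---whose proof via Corollary~\ref{cor:KLZ} and Poitou--Tate type exact sequences works equally well after inverting $p$---then yields that $\X_{\rm ord}(E_\bullet(\alpha)/K_\infty^+)$ is $\Lambda_K^+$-torsion, and that
\[
\ch_{\Lambda_K^+}\bigl(\X_{\rm ord}(E_\bullet(\alpha)/K_\infty^+)\bigr)\supset\bigl(\Lcal_p^{\rm PR}(E_\bullet(\alpha)/K)^+\bigr)
\]
holds in $\Lambda_K^+\otimes\Q_p$.

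To remove the powers of $p$, I would compare $\mu$-invariants with the untwisted case. Kato's divisibility, refined integrally by W\"{u}thrich and applied to both $E_\bullet$ and $E_\bullet^K$, combined with Propositions~\ref{prop:comp-Lcyc} and \ref{prop:comp-Selcyc}, gives the integral divisibility
\[
\ch_{\Lambda_K^+}\bigl(\X_{\rm ord}(E_\bullet/K_\infty^+)\bigr)\supset\bigl(\Lcal_p^{\rm PR}(E_\bullet/K)^+\bigr),
\]
and extending to $S$-imprimitive versions via Proposition~\ref{prop:prim-imprim-ord} and Definition~\ref{def:w-Euler} yields $\mu\bigl(\X_{\rm ord}^S(E_\bullet/K_\infty^+)\bigr)\geq\mu\bigl(\Lcal_p^{\rm PR}(E_\bullet/K)^{+,S}\bigr)$. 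After enlarging the $m$ fixed in (\ref{eq:p^m}) if necessary so that it exceeds both of these $\mu$-invariants (and re-choosing $\alpha$ accordingly, which is possible since the required nonvanishing of $\Lcal_p^{\rm BDP}(f(\alpha)/K)(0)$ defines a Zariski-open condition), Lemma~\ref{lem:cong-L} together with Definition~\ref{def:w-Euler} gives $\Lcal_p^{\rm PR}(E_\bullet(\alpha)/K)^{+,S}\equiv\Lcal_p^{\rm PR}(E_\bullet/K)^{+,S}\pmod{\varpi^m}$, while Proposition~\ref{prop:cong-Sel} produces characteristic power series for $\X_{\rm ord}^S(E_\bullet(\alpha)/K_\infty^+)$ and $\X_{\rm ord}^S(E_\bullet/K_\infty^+)$ that are congruent mod $\varpi^m$. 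Since a congruence mod $\varpi^m$ preserves $\mu$-invariants whenever $m$ exceeds them (immediate from Weierstrass preparation), these two congruences imply $\mu\bigl(\X_{\rm ord}^S(E_\bullet(\alpha)/K_\infty^+)\bigr)\geq\mu\bigl(\Lcal_p^{\rm PR}(E_\bullet(\alpha)/K)^{+,S}\bigr)$.

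Combining this $\mu$-invariant inequality with the rational divisibility from the first step (passed through to the $S$-imprimitive setting, where it controls the distinguished polynomial parts via Weierstrass preparation) yields the integral $S$-imprimitive divisibility
\[
\ch_{\Lambda_K^+}\bigl(\X_{\rm ord}^S(E_\bullet(\alpha)/K_\infty^+)\bigr)\supset\bigl(\Lcal_p^{\rm PR}(E_\bullet(\alpha)/K)^{+,S}\bigr),
\]
and canceling the common Euler factors $\prod_{w\in S}\mathcal{P}_w^+(\alpha)$ on both sides (using the twisted analogue of Proposition~\ref{prop:prim-imprim-ord}, whose proof goes through verbatim) produces the desired integral divisibility for $\X_{\rm ord}(E_\bullet(\alpha)/K_\infty^+)$. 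The main obstacle is essentially bookkeeping: ensuring that $m$ is simultaneously large enough for the congruences to faithfully transfer both $\mu$-invariants while still being compatible with the choice in (\ref{eq:p^m}) and the requirement $\Lcal_p^{\rm BDP}(f(\alpha)/K)(0)\neq 0$---this is precisely what forces $\alpha$ to be sufficiently close to~$1$.
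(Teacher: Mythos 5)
Your proof follows essentially the same two-step strategy as the paper's: derive the divisibility in $\Lambda_K^+\otimes\Q_p$ from the Beilinson--Flach Euler system divisibility (Theorem~\ref{thm:BF-ES}) via Proposition~\ref{prop:equiv} and Lemma~\ref{lem:nonzeroLs}, and then remove the ambiguity by powers of $\varpi$ using a $\mu$-invariant comparison against the integral untwisted divisibility coming from Kato and W\"uthrich, transferred to the $\alpha$-twist through the congruences in Proposition~\ref{prop:cong-Sel} and Lemma~\ref{lem:cong-L}. One correction: the containment $\ch_{\Lambda_K^+}\bigl(\X_{\rm ord}^S(E_\bullet/K_\infty^+)\bigr)\supset\bigl(\Lcal_p^{\rm PR}(E_\bullet/K)^{+,S}\bigr)$ means the characteristic power series \emph{divides} the $p$-adic $L$-function, so the resulting $\mu$-invariant inequality runs the opposite way from what you wrote: $\mu\bigl(\X_{\rm ord}^S(E_\bullet/K_\infty^+)\bigr)\leq\mu\bigl(\Lcal_p^{\rm PR}(E_\bullet/K)^{+,S}\bigr)$, and correspondingly $\mu\bigl(\X_{\rm ord}^S(E_\bullet(\alpha)/K_\infty^+)\bigr)\leq\mu\bigl(\Lcal_p^{\rm PR}(E_\bullet(\alpha)/K)^{+,S}\bigr)$, which is what you in fact need to conclude the integral divisibility. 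With these signs fixed the argument is correct and matches the paper's, which phrases the final step equivalently by writing $\Fcal_{\rm ord}^S(E_\bullet(\alpha)/K_\infty^+)\cdot h=\varpi^k\cdot\Lcal_p^{\rm PR}(E_\bullet(\alpha)/K)^{+,S}$ and using the $\mu$-comparison to deduce $\varpi^k\mid h$.
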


\begin{proof}
The combination of Proposition~\ref{prop:equiv}, Lemma~\ref{lem:nonzeroLs}, and (\ref{eq:BF-div-alpha}) shows that $\X_{\rm ord}(E_\bullet(\alpha)/K_\infty^+)$ is $\Lambda_K^+$-torsion, with the claimed divisibility holding in $\Lambda_K^+\otimes\Q_p$. Let $S=\Sigma\smallsetminus\{p,\infty\}$. By Corollary~\ref{cor:imp-IMC}, it follows that $\mathfrak{X}_{\rm ord}^S(E_\bullet(\alpha)/K_\infty^+)$ is also $\Lambda_K^+$-torsion, with
\begin{equation}\label{eq:ord-cyc-Qp}
\ch_{\Lambda_K^+}\bigl(\X_{\rm ord}^S(E_\bullet(\alpha)/K_\infty^+)\bigr)\supset\bigl(\Lcal_p^{\rm PR}(E_\bullet(\alpha)/K)^{+,S}\bigr) 
\end{equation}
in $\Lambda_K^+\otimes\Q_p$. Denote by $\Fcal_{\rm ord}^S(E_\bullet(\alpha)/K_\infty^+)\in\Lambda_K^+$ a characteristic power series for $\X_{\rm ord}^S(E_\bullet(\alpha)/K_\infty^+)$, so from the above we have
\begin{equation}\label{eq:div-h}
\Fcal_{\rm ord}^S(E_\bullet(\alpha)/K_\infty^+)\cdot h=\varpi^k\cdot\Lcal_p^{\rm PR}(E_\bullet(\alpha)/K)^{+,S}
\end{equation}
for some $h\in\Lambda_K^+$ and $k\in\Z$. If $k<0$ there is nothing to show, so assume $k\geq 0$. From Kato's divisibility \cite[Thm.~17.4]{kato-euler-systems} (refined to an integral statement as in \cite[Thm.~16]{wuthrich-int}), Proposition~\ref{prop:comp-Lcyc}, and Proposition~\ref{prop:comp-Selcyc} we have that the untwisted Selmer group $\X_{\rm ord}(E_\bullet/K_\infty^+)$ is $\Lambda_K^+$-torsion, with the integral divisibility
\begin{equation}\label{eq:final-cyc-ord-1}
\ch_{\Lambda_K^+}\bigl(\X_{\rm ord}(E_\bullet/K_\infty^+)\bigr)\supset\bigl(\Lcal_p^{\rm PR}(E_\bullet/K)^+\bigr)
\end{equation}
in $\Lambda_K^+$, and so from Proposition~\ref{prop:prim-imprim-ord} we get that $\X_{\rm ord}^S(E_\bullet/K_\infty^+)$ is also $\Lambda_K^+$-torsion, and we have the integral divisibility
\begin{equation}\label{eq:final-cyc-ord-1-imp}
\ch_{\Lambda_K^+}\bigl(\X_{\rm ord}^S(E_\bullet/K_\infty^+)\bigr)\supset\bigl(\Lcal_p^{\rm PR}(E_\bullet/K)^{+,S}\bigr)
\end{equation}
in $\Lambda_K^+$. By the congruences of Proposition~\ref{prop:cong-Sel} and Lemma~\ref{lem:cong-L}, it follows from (\ref{eq:final-cyc-ord-1-imp}) that for $\alpha$ sufficiently close to $1$ (i.e. taking $m>0$ in \eqref{eq:p^m} sufficiently large) the $\mu$-invariant of $\Fcal_{\rm ord}^S(E_\bullet(\alpha)/K_\infty^+)$ is at most that of $\Lcal_p^{\rm PR}(E_\bullet(\alpha)/K)^{+,S}$. Thus from (\ref{eq:div-h}) we see that $h$ is divisible by $\varpi^k$, and therefore the divisibility (\ref{eq:ord-cyc-Qp}) holds in $\Lambda_K^+$.  Together with Corollary~\ref{cor:imp-IMC}, this yields the result.
\end{proof}

From the preceding two lemmas and Proposition~\ref{prop:equiv}, we deduce that $\X_{\rm Gr}(E_\bullet(\alpha)/K_\infty^+)$ is $\Lambda_K^+$-torsion, with
\begin{equation}\label{eq:cyc-alpha-final}
\bigl(\Fcal_{\rm Gr}(E_\bullet(\alpha)/K_\infty^+)\bigr)\supset\bigl(\Lcal_p^{\rm Gr}(f(\alpha)/K)^+\bigr)
\end{equation}
in $\Lambda_K^{+,\rm ur}$, where $\Fcal_{\rm Gr}(E_\bullet(\alpha)/K_\infty^+)\in\Lambda_K^+$ is any characteristic power series for $\X_{\rm Gr}(E_\bullet(\alpha)/K_\infty^+)$. Together with the relations
\[
\Fcal_{\Gr}(E_\bullet(\alpha)/K_\infty^+)(0)\,\sim_p\,
\Fcal_{\Gr}(E_\bullet(\alpha)/K_\infty^-)(0)\,\sim_p\,\Lcal_p^{\rm BDP}(f(\alpha)/K)(0)\,\sim_p\,\Lcal_p^{\rm Gr}(f(\alpha)/K)^-(0)\neq 0
\]
following from Proposition~\ref{prop:euler-char}, relation (\ref{eq:ac-alpha}), and Proposition~\ref{prop:comp-Lac}, and noting that 
\[
\Lcal_p^{\rm Gr}(f(\alpha)/K)^-(0)=\Lcal_p^{\rm Gr}(f(\alpha)/K)^+(0),
\]
by easy commutative algebra (see \cite[Lem.\,3.2]{skinner-urban}) it follows that equality holds in (\ref{eq:cyc-alpha-final}), so we have
\[
\bigl(\Fcal_{\rm Gr}(E_\bullet(\alpha)/K_\infty^+)\bigr)=\bigl(\Lcal_p^{\rm Gr}(f(\alpha)/K)^+\bigr).
\]
(Note that conditions (a) and (b) in Lemma~\ref{lem:coinv} needed for the above application of Proposition~\ref{prop:euler-char} follow from our choice of $\alpha$ with $\Lcal_p^{\rm BDP}(f(\alpha)/K)(0)\neq 0$ together with the reciprocity law of \cite[Thm.\,5.7]{cas-hsieh1} and the result of \cite[Thm.\,3.2.1]{eisenstein} applied to the Heegner point Kolyvagin system in \cite[\S{4.1}]{eisenstein}.) 

In particular, for $\alpha$ as above sufficiently close to $1$, we conclude by Proposition~\ref{prop:equiv} that $\X_{\rm ord}(E_\bullet(\alpha)/K_\infty^+)$ is $\Lambda_K^+$-torsion, with
\begin{equation}\label{eq:alpha-equality}
\ch_{\Lambda_K^+}\bigl(\X_{\rm ord}(E_\bullet(\alpha)/K_\infty^+)\bigr)=\bigl(\Lcal_p^{\rm PR}(E_\bullet(\alpha)/K)^+\bigr).
\end{equation}

\noindent\emph{Step 3}. 
We are now in a position to prove Conjecture~\ref{conj:IMC-K} for $\X_{\rm ord}(E/K_\infty^+)$.

\begin{thm}\label{thm:PR-IMC}
Let $E/\Q$ be an elliptic curve, and $p>2$ a prime of good reduction for $E$ such that $E[p]^{ss}=\mathbb{F}_p(\phi)\oplus\mathbb{F}_p(\psi)$ 
with $\phi\vert_{G_{\Q_p}}\neq 1,\omega$. 
Then module $\X_{\rm ord}(E/K_\infty^+)$ is $\Lambda_K^+$-torsion, with
\[
\ch_{\Lambda_K^+}(\X_{\rm ord}(E/K_\infty^+))=(\Lcal_p^{\rm PR}(E/K)^+).
\]
\end{thm}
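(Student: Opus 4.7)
The plan is to first establish the corresponding equality for the distinguished elliptic curve $E_\bullet$ in the isogeny class of $E$, and then transfer to $E$ via isogeny invariance. To handle the local factors away from $p$ cleanly, I would work with the $S$-imprimitive Selmer groups and $p$-adic $L$-functions of Section~\ref{sec:Sel}, for $S=\Sigma\smallsetminus\{p,\infty\}$. The hypothesis $\phi\vert_{G_p}\neq 1,\omega$, together with the Weil pairing relation $\phi\psi=\omega$, forces both $\phi\vert_{G_p}$ and $\psi\vert_{G_p}$ to be nontrivial and hence $E_\bullet(K)[p]=0$, so Proposition~\ref{prop:prim-imprim-ord} applies; this, together with Corollary~\ref{cor:imp-IMC} and Proposition~\ref{prop:isog-inv}, reduces the statement to proving the $S$-imprimitive equality
\[
\ch_{\Lambda_K^+}\bigl(\X_{\rm ord}^S(E_\bullet/K_\infty^+)\bigr)=\bigl(\Lcal_p^{\rm PR}(E_\bullet/K)^{+,S}\bigr).
\]

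I would then assemble two key ingredients. First, applying Kato's main conjecture divisibility in the integrally refined form due to W\"uthrich to both $E_\bullet$ and its quadratic twist $E_\bullet^K$, and combining with Propositions~\ref{prop:comp-Lcyc},~\ref{prop:comp-Selcyc}, and~\ref{prop:prim-imprim-ord}, yields the integral divisibility
\[
\Fcal_{\rm ord}^S(E_\bullet/K_\infty^+)\,\bigm|\,\Lcal_p^{\rm PR}(E_\bullet/K)^{+,S}
\]
in $\Lambda_K^+$. Second, the equality (\ref{eq:alpha-equality}) established in \emph{Step~2} of the proof framework at the start of Section~\ref{sec:MC}---whose proof crucially relies on Theorem~\ref{thm:AC} to remove the rank-one assumption of \cite[Thm.~C]{eisenstein}---together with the $S$-imprimitive variant of Proposition~\ref{prop:prim-imprim-ord} gives
\[
\Fcal_{\rm ord}^S(E_\bullet(\alpha)/K_\infty^+)\,=\,u_\alpha\cdot\Lcal_p^{\rm PR}(E_\bullet(\alpha)/K)^{+,S}
\]
for some unit $u_\alpha\in\Lambda_{K,R}^+$, valid for any crystalline character $\alpha:\Gamma_K^-\to R^\times$ with $\alpha\equiv 1\pmod{\varpi^m}$ and $\Lcal_p^{\rm BDP}(f(\alpha)/K)(0)\neq 0$, provided $m$ is sufficiently large.

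The final move is the congruence argument. I would fix $m$ larger than the $\mu$-invariant of $\Lcal_p^{\rm PR}(E_\bullet/K)^{+,S}$ (finite since this $p$-adic $L$-function is nonzero by Rohrlich) and pick $\alpha$ as above with $\alpha\equiv 1\pmod{\varpi^m}$. The analytic congruence of Lemma~\ref{lem:cong-L}, combined with the clear congruence $\mathcal{P}_w^+(\alpha)\equiv\mathcal{P}_w^+(1)\pmod{\varpi^m}$ at the primes $w\in S$, gives
\[
\Lcal_p^{\rm PR}(E_\bullet(\alpha)/K)^{+,S}\,\equiv\,\Lcal_p^{\rm PR}(E_\bullet/K)^{+,S}\pmod{\varpi^m},
\]
while the algebraic congruence of Proposition~\ref{prop:cong-Sel} provides compatible choices of characteristic power series with
\[
\Fcal_{\rm ord}^S(E_\bullet(\alpha)/K_\infty^+)\,\equiv\,\Fcal_{\rm ord}^S(E_\bullet/K_\infty^+)\pmod{\varpi^m}.
\]
Together with the $\alpha$-twisted equality, these congruences yield $\Fcal_{\rm ord}^S(E_\bullet/K_\infty^+)\equiv u_\alpha\cdot\Lcal_p^{\rm PR}(E_\bullet/K)^{+,S}\pmod{\varpi^m}$. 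Since $m$ exceeds the $\mu$-invariant of the right-hand side, a Weierstrass preparation argument forces the two sides to share both their $\mu$- and $\lambda$-invariants; combined with the Kato divisibility, this promotes $F\mid L$ to $(F)=(L)$. Corollary~\ref{cor:imp-IMC} and Proposition~\ref{prop:isog-inv} then give the statement for $E$.

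The principal obstacle is not the final assembly, which proceeds along the lines sketched in Section~\ref{sec:one}, but rather ensuring that the $\alpha$-twisted equality holds uniformly as $\alpha$ approaches $1$; this is precisely what Theorem~\ref{thm:AC} (proved via the Kolyvagin system innovations of Section~\ref{sec:anticyc}) makes possible. One must also observe that characters $\alpha\equiv 1\pmod{\varpi^m}$ with $\Lcal_p^{\rm BDP}(f(\alpha)/K)(0)\neq 0$ exist for arbitrarily large $m$, which is clear from the nonvanishing of $\Lcal_p^{\rm BDP}(f/K)$ as an element of $\Lambda_K^{-,\rm ur}$ (Theorem~\ref{thm:BDP}), as its specialisations at characters arbitrarily close to $1$ cannot all vanish.
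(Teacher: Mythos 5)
Your proof proposal is correct and follows essentially the same route as the paper's own argument: reduce to $E_\bullet$ by isogeny invariance, pass to $S$-imprimitive Selmer groups and $p$-adic $L$-functions via Corollary~\ref{cor:imp-IMC}, obtain the integral ``lower bound'' divisibility from Kato and W\"uthrich via Propositions~\ref{prop:comp-Lcyc} and~\ref{prop:comp-Selcyc}, and then upgrade it to an equality by comparing Iwasawa invariants through the $\alpha$-twisted congruences (Lemma~\ref{lem:cong-L}, Proposition~\ref{prop:cong-Sel}) against the equality (\ref{eq:alpha-equality}) for a suitable $\alpha\equiv 1\pmod{\varpi^m}$. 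The minor points you spell out that the paper leaves implicit (the Euler-factor congruences $\mathcal{P}_w^+(\alpha)\equiv\mathcal{P}_w^+(1)\pmod{\varpi^m}$ at $w\in S$, the deduction of $E_\bullet(K)[p]=0$ using that $p$ splits in $K$, and the existence of admissible $\alpha$ from the nonvanishing of $\Lcal_p^{\rm BDP}(f/K)$) are all correctly handled.
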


\begin{proof}
By Proposition~\ref{prop:isog-inv}, it suffices to prove the result of $E_\bullet$. Let $S=\Sigma\smallsetminus\{p,\infty\}$. As shown above, from Kato's work we can deduce that $\X_{\rm ord}^S(E_\bullet/K_\infty^+)$ is $\Lambda_K^+$-torsion, and we have the integral divisibility
\[
\ch_{\Lambda_K^+}\bigl(\X_{\rm ord}^S(E_\bullet/K_\infty^+)\bigr)\supset\bigl(\Lcal_p^{\rm PR}(E_\bullet/K)^{+,S}\bigr)
\]
in $\Lambda_K^+$. Take a character $\alpha:\Gamma_K^-\rightarrow R^\times$ with 
\[
\alpha\equiv 1\;({\rm mod}\,\varpi^m)
\] 
for some $m\gg 0$ so that the equality (\ref{eq:alpha-equality}) holds. (Note that the argument in \emph{Step 1} and \emph{Step 2} leading to that equality only excludes finitely many $\alpha$.) By Corollary~\ref{cor:imp-IMC} it follows that $\X_{\rm ord}^S(E_\bullet/K_\infty^+)$ is also $\Lambda_K^+$-torsion, and denoting by $\Fcal_{\rm ord}^S(E_\bullet/K_\infty^+)$ and $\Fcal^S_{\rm ord}(E_\bullet(\alpha)/K_\infty^+)\in\Lambda_K^+$ characteristic power series for $\X^S_{\rm ord}(E_\bullet/K_\infty^+)$ and $\X^S_{\rm ord}(E_\bullet(\alpha)/K_\infty^+)$, respectively, we have 
\begin{equation}\label{eq:key-cong}
\Fcal^S_{\rm ord}(E_\bullet/K_\infty^+)\equiv\Fcal^S_{\rm ord}(E_\bullet(\alpha)/K_\infty^+)\equiv\Lcal_p^{\rm PR}(E_\bullet(\alpha)/K)^{+,S}\equiv\Lcal_p^{\rm PR}(E_\bullet/K)^{+,S}\;({\rm mod}\,\varpi^m),
\end{equation}
as a consequence of Proposition~\ref{prop:cong-Sel}, the combination of (\ref{eq:alpha-equality}) and Corollary~\ref{cor:imp-IMC}, and Lemma~\ref{lem:cong-L}, respectively. Taking $m\gg 0$, it follows from the congruence (\ref{eq:key-cong}) that $\Fcal_{\rm ord}^S(E_\bullet/K_\infty^+)$ and $\Lcal_p^{\rm PR}(E_\bullet/K)^{+,S}$ have \emph{the same} Iwasawa $\lambda$- and $\mu$-invariants, and so equality holds in (\ref{eq:final-cyc-ord-1-imp}). By Corollary~\ref{cor:imp-IMC}, this yields the proof of the theorem.
\end{proof}

The proof of Mazur's main conjecture for $E$ now follows easily.

\begin{proof}[Proof of Theorem~\ref{thm:A}] 
Choose an imaginary quadratic field $K$ satisfying hypotheses (\ref{eq:disc}), (\ref{eq:Heeg}), and (\ref{eq:spl}). As before, from \cite{kato-euler-systems} and \cite{wuthrich-int} we have the divisibilities
\[
\ch_{\Lambda_\Q}\bigl(\X_{\rm ord}(E/\Q_\infty)\bigr)\supset\bigl(\Lcal_p^{\rm MSD}(E/\Q)\bigr),\quad\ch_{\Lambda_\Q}\bigl(\X_{\rm ord}(E^K/\Q_\infty)\bigr)\supset\bigl(\Lcal_p^{\rm MSD}(E^K/\Q)\bigr)
\]
in $\Lambda_\Q$. If $\ch_{\Lambda_\Q}(\X_{\rm ord}(E/\Q_\infty))\neq(\Lcal_p^{\rm MSD}(E/\Q))$ then from Proposition~\ref{prop:comp-Lcyc} and Proposition~\ref{prop:comp-Selcyc} we conclude that 
\[
\bigl(\Lcal_p^{\rm PR}(E/K)^+\bigr)\subsetneq
\ch_{\Lambda_K^+}(\X_{\rm ord}(E/K_\infty^+)\bigr),
\]
but this contradicts Theorem~\ref{thm:PR-IMC}. Thus $\ch_{\Lambda_\Q}(\X_{\rm ord}(E/\Q_\infty))=(\Lcal_p^{\rm MSD}(E/\Q))$, concluding the proof.
\end{proof}

\subsection*{Conflict of interest statement} On behalf of all authors, the corresponding author states that there is no conflict of interest and that the manuscript complies to the Ethical Rules applicable for this journal.

\bibliography{references}
\bibliographystyle{alpha}

\end{document}